\theoremstyle{plain}
\newtheorem{thm}{Theorem}[section]
\newtheorem{pro}[thm]{Proposition}
\newtheorem{cor}[thm]{Corollary}
\newtheorem{lem}[thm]{Lemma}
\newtheorem*{thm*}{Theorem}
\newtheorem*{rep@theorem}{\rep@title}
\newcommand{\newreptheorem}[2]{%
	\newenvironment{rep#1}[1]{%
		\def\rep@title{#2 \ref{##1}}%
		\begin{rep@theorem}}%
		{\end{rep@theorem}}}
\newtheorem{thmx}{Theorem}
\newtheorem{prox}[thmx]{Proposition}
\newtheorem{corx}[thmx]{Corollary}
\theoremstyle{remark}
\newtheorem{rem}[thm]{Remark}
\newtheorem{de}[thm]{Definition}
\newcounter{step}
\xpretocmd{\proof}{\setcounter{step}{0}}{}{}
\newcommand{\step}[1]{%
	\par
	\addvspace{\medskipamount}%
	\stepcounter{step}%
	\noindent\emph{Step \thestep: #1.}\par\nobreak\smallskip
	\@afterheading
}
\newcommand{\N}{\mathbb{N}}
\newcommand{\R}{\mathbb{R}}
\newcommand{\C}{\mathbb{C}}
\newcommand{\pj}{\mathbb{P}}
\newcommand{\psl}{\mathbb{P}\mathrm{SL}(2,\mathbb{R})}
\newcommand{\hyp}{\mathbb{H}}
\newcommand{\hypu}{\widetilde{\mathbb{H}}}
\newcommand{\sph}{\mathbb{S}}
\newcommand{\ch}{\mathcal{CH}}
\newcommand{\cmclp}{\mathcal{CMC}}
\newcommand{\pd}{\partial}
\newcommand{\sq}{\subseteq}
\newcommand{\pr}[1]{\langle#1\rangle}
\newcommand{\sff}{\mathrm{I\!I}}
\newcommand{\Id}{\mathrm{Id}}
\newcommand{\past}{\mathbf{P}}
\newcommand{\fut}{\mathbf{F}}
\newcommand{\nablah}{\bar{\nabla}}
\newcommand{\Span}{\mathrm{Span}}
\newcommand{\tr}{\operatorname{tr}}
\newcommand{\gr}{\operatorname{graph}}
\newcommand{\hess}{\operatorname{Hess}}
\newcommand{\Gau}{\mathcal{G}}
\DeclareMathOperator{\dist}{dist}
\DeclareMathOperator{\arcsinh}{arcsinh}
\title[$H-$convexity and estimates for CMC hypersurfaces in $\mathbb{H}^{n,1}$]{Generalized convexity and quantitative estimates for constant mean curvature spacelike hypersurfaces in Anti-de Sitter space.}
\author[Enrico Trebeschi]{Enrico Trebeschi}
\address{Laboratoire J.A. Dieudonn\'e\\
	Université Côte d'Azur\\
	France}
\email{enrico.trebeschi@univ-cotedazur.fr}
\thanks{E.~Trebeschi acknowledges funding by the European Research Council under ERC-Advanced grant 101095722 AnSur (Geometric Analysis and Surface Groups), ERC-consolidator grant 101124349 GENERATE (GeomEtry and aNalysis for $(G,X)-$structurEs and their
	defoRmATion spacEs). Views and opinions expressed are however those of the author(s) only and do not necessarily reflect those of the European Union or the European Research Council Executive Agency. Neither the European Union nor the granting authority can be held responsible for them. This work has been partially funded by ANR JCJC grant GAPR (ANR-22-CE40-0001, Geometry and Analysis in the Pseudo-Riemannian setting), and PRIN F53D23002800001.}
\begin{document}
	\begin{abstract}
		We study the principal curvatures of properly embedded constant mean curvature hypersurfaces in the Anti-de Sitter space $\mathbb{H}^{n,1}$. We generalize the notion of convex hull and give an upper bound on the principal curvatures which only depends on the width of the $H-$shifted convex hull. This analysis has two direct consequences. First, it allows to bound the sectional curvature of $H-$hypersurfaces by an explicit function of the the width of the $H-$shifted convex hull. Second, we bound the quasiconfromal dilatation of a class of quasiconformal maps on the hyperbolic plane $\mathbb{H}^2$, called $\theta-$landslides, in terms of the cross-ratio norm of their quasi-symmetric extension on $\partial_\infty\mathbb{H}^2$.
	\end{abstract}
	
	\maketitle
	
	\tableofcontents
	\section{Introduction}
	The Anti-de Sitter space $\hyp^{n,1}$ is a Lorentzian manifold with constant sectional curvature $-1$. It identifies with the space of oriented negative lines of a non-degenerate bilinear form of signature $(n,2)$ and it admits a conformal asymptotic boundary $\pd_\infty\hyp^{n,1}$, consisting of oriented degenerate lines of such bilinear form, which is diffeomorphic to $\sph^{n-1}\times\sph^1$. The space $\hyp^{n,1}\cup\pd_\infty\hyp^{n,1}$ conformally compactifies the Anti-de Sitter space.
	
	An immersed hypersurface is \textit{spacelike} if the induced metric is Riemannian. A properly embedded spacelike hypersurface $\Sigma$ in $\hyp^{n,1}$ has a well defined asymptotic boundary
	\[\pd_\infty\Sigma:=\overline{\Sigma}\cap\pd_\infty\hyp^{n,1},\]
	for $\overline{\Sigma}$ the topological closure of $\Sigma$ inside $\hyp^{n,1}\cup\pd_\infty\hyp^{n,1}$. It turns out that $\pd_\infty\Sigma$ is the graph of a $1-$Lipschitz map from $\sph^n$ to $\sph^1$ containing no antipodal points. We call the graph of such a map an \textit{admissible boundary} (see Definition~\ref{de:adm}). 
	
	Properly embedded \textit{constant mean curvature} (CMC) spacelike hypersurfaces in Anti-de Sitter space are uniquely determined by the value $H\in\R$ of their mean curvature and their admissible asymptotic boundary $\Lambda$ by \cite[Theorem~A]{ecrin}:
	\begin{thm*}
		For any admissible asymptotic boundary $\Lambda$ in $\pd_\infty\hyp^{n,1}$ and for any real number $H\in\R$, there exists a unique properly embedded hypersurface $\Sigma$ such that $\pd_\infty\Sigma=\Lambda$ with constant mean curvature $H$.
	\end{thm*}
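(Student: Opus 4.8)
The plan is to separate the statement into an existence part and a uniqueness part, and to treat the constant mean curvature condition as a quasilinear elliptic equation for a height function. Fix a totally geodesic spacelike copy of $\hyp^{n}$ inside $\hyp^{n,1}$ and, in the warped-product coordinates adapted to this slice near $\pd_\infty\hyp^{n,1}$, write a candidate hypersurface asymptotic to $\Lambda$ as a graph $\gr(u)$. The hypersurface is spacelike exactly where $|\nabla u|<1$, and the prescribed mean curvature equation $\tr(\sff)=nH$ takes the schematic form
\[
\operatorname{div}\!\left(\frac{\nabla u}{\sqrt{1-|\nabla u|^{2}}}\right)=nH+Q(u,\nabla u),
\]
where $Q$ gathers the zeroth- and first-order contributions of the ambient curvature and of the warping. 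This operator is elliptic only on the spacelike region $\{|\nabla u|<1\}$, so the whole analytic difficulty is to keep $|\nabla u|$ uniformly bounded away from $1$.

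For existence I would argue by exhaustion together with a priori estimates. The admissibility (achronality) of $\Lambda$ lets me trap every competitor between two explicit barriers: the future and past components of the boundary of the convex hull of $\Lambda$, or, where these are not regular enough, umbilical hypersurfaces with boundary $\Lambda$ of controlled mean curvature. These yield the $C^{0}$ bound. I would then solve the Dirichlet problem on a compact exhaustion $\{\Omega_{k}\}$ of the base slice, with boundary data prescribed by the barriers, invoking the compact existence theory of Bartnik--Simon type for spacelike graphs of prescribed mean curvature. The decisive step is the interior gradient estimate: one must prove a bound $|\nabla u|\le 1-\varepsilon$ on each fixed compact set, uniform in $k$, which keeps the approximating graphs uniformly spacelike and prevents degeneration to a null hypersurface. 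With spacelikeness under control the equation is uniformly elliptic on compacta, so Schauder theory gives uniform interior $C^{2,\alpha}$ (and then higher) estimates, and a diagonal subsequence converges to a complete spacelike CMC hypersurface $\Sigma$. A final barrier argument localized near $\pd_\infty\hyp^{n,1}$ identifies $\pd_\infty\Sigma=\Lambda$.

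For uniqueness I would use a comparison argument. If $\Sigma_{1},\Sigma_{2}$ both have mean curvature $H$ and asymptotic boundary $\Lambda$, writing them as graphs $u_{1},u_{2}$ reduces the claim $u_{1}\equiv u_{2}$ to a maximum principle: the difference $w=u_{1}-u_{2}$ solves a linear elliptic equation obtained by integrating the linearisation of the operator along the segment joining $u_{2}$ to $u_{1}$, and $w\to 0$ at $\pd_\infty\hyp^{n,1}$. The only obstruction is that the zeroth-order coefficient produced by $Q$ need not be sign-favourable; I would bypass it with a geometric touching argument, letting an isometric time translation slide $\Sigma_{2}$ until it first meets $\Sigma_{1}$ and then invoking the strong maximum principle (Hopf lemma) at the interior contact point, where the two equal mean curvatures force the hypersurfaces to agree. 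Here the achronality of $\Lambda$ is exactly what confines both hypersurfaces to a common bounded region and guarantees that the first contact is interior.

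I expect the main obstacle to be the interior gradient (spacelikeness) estimate and its stability as one approaches $\pd_\infty\hyp^{n,1}$. Because the mean curvature operator above is only conditionally elliptic, nothing a priori prevents the approximating solutions from becoming lightlike; controlling this requires exploiting both the uniform timelike convexity furnished by the admissibility of $\Lambda$ and a Lorentzian analogue of the Bernstein-type gradient estimate. This same estimate is what ultimately guarantees that the limit hypersurface is properly embedded, globally spacelike, and has exactly the prescribed asymptotic boundary.
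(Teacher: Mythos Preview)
This theorem is not proved in the present paper at all: it is quoted verbatim from \cite[Theorem~A]{ecrin} as background for everything that follows. The paper's contribution begins only afterwards, with the $H$-convexity machinery and the curvature estimates. So there is no ``paper's own proof'' to compare your proposal against.

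That said, your outline is a faithful sketch of the strategy actually used in the literature culminating in \cite{ecrin} (and in the earlier works \cite{bbz,abbz,tamb} cited here): graphs over a totally geodesic slice, barriers coming from the boundary of the invisible domain / convex hull of $\Lambda$, Dirichlet problems on an exhaustion with Bartnik--Simon type compact existence, a uniform spacelikeness (gradient) estimate to prevent lightlike degeneration, Schauder to upgrade regularity, and the strong maximum principle for uniqueness. The compactness statement you would need in the limit step is exactly what this paper records as Proposition~\ref{pro:compact}, and the comparison principle underlying your uniqueness argument is Proposition~\ref{pro:maxprin}; both are again imported from \cite{ecrin}. One small caution on your uniqueness paragraph: in $\hyp^{n,1}$ there is no global ``time translation'' isometry as in Minkowski space. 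The sliding has to be done in the universal cover $\hypu^{n,1}$ using the splitting $\hyp^n\times\R$ of Definition~\ref{de:split}, or replaced by the direct graph-comparison form of the strong maximum principle; otherwise the touching argument as written does not literally make sense.
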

	We want to quantify this dependence by controlling the geometry of the CMC hypersurface in terms of the data $(H,\Lambda)$.
	
	\subsection{Historical background}
	The study of \textit{constant mean curvature} (CMC) hypersurfaces is at the crossroad of several branches of mathematics: in differential geometry is motivated by their geometrical features and in calculus of variations since they are critical points for area-like operators. In general relativity, \textit{spacelike} CMC hypersurfaces are preferred Cauchy data for Einstein equations and in Lorentzian geometry they induce natural time coordinates, useful to study the geometry of the ambient manifold (see for example \cite{fol3,fol1,fol2}). These motivations are behind a vast literature on spacelike CMC hypersurfaces in general Lorentzian manifolds (see for example \cite{cho,ecker,bartnikH,bartnik}).
	
	Spacelike CMC hypersurfaces in \textit{Lorentzian spaceforms} have been fully classified: for the flat case, \textit{i.e.} the Minkowski space $\R^{n,1}$, the program traces back to the work of \cite{cheng-yau}, followed by \cite{choi-trei,trei,abbz}, until the complete classification in \cite{bss}. For the negatively curved case, \textit{i.e.} the Anti-de Sitter space $\hyp^{n,1}$, the classification started with \cite{bbz}, followed by \cite{abbz,tamb}, and it has been recently completed by \cite{ecrin} by the statement above.
	
	Within the framework of geometric topology, spacelike CMC hypersurfaces have become an invaluable tool. The study of surfaces in $\R^{2,1}$ and $\hyp^{2,1}$ has become of great interest since the pioneering work of Mess \cite{mess}, mostly because of their relation with Teichm\"{u}ller theory (see \cite{bbz,benbon,univ,tamb,andreamax,bonsep}). In higher dimension, see \cite{abbz,barmer}.
	
	In higher co-dimension, CMC hypersurfaces generalize to \emph{parallel} mean curvature spacelike $n-$submanifold. Further progress have been made on specific pseudo-Riemannian spaces, notably on the so-called \textit{indefinite space-forms} of signature $(n,k)$: the pseudo-Euclidean space $\R^{n,k}$, the pseudo-hyperbolic space $\hyp^{n,k}$ and pseudo-spherical space $\sph^{n,k}$. See the works \cite{ishi,kkn,pmcbound,h2n,lt,mt} for estimates on the geometry of parallel mean curvature spacelike $n-$submanifolds in this setting. Recently, maximal spacelike $n-$submanifolds in $\hyp^{n,k}$ have been studied in relation with higher higher Teichm\"{u}ller theory (see \cite{dgk,ctt,ltw,lt,sst,beykas,mv,mt}).
	
	The natural question of relating the geometry of maximal hypersurfaces with the data $(H,\Lambda)$ has been adressed for $H=0$ and $n=2$. It is known that maximal surfaces in $\hyp^{2,1}$ has non-positive sectional curvature: from a \textit{qualitative} point of view, \cite{univ} gives a criterion to distinguish if a maximal surface is uniformly negatively curved or not only in term of its asymptotic boundary. An analogous result has been achieved in \cite{lt} in the broader setting of the pseudo-hyperbolic space $\hyp^{2,k}$, for any $k\ge1$. From a \textit{quantitative} point of view, \cite{andreamax} estimates the extrinsic curvature of maximal surfaces in $\hyp^{2,1}$ in terms of their convex hulls. The recent work \cite{alex} studies the subclass of admissible boundaries of $\hyp^{2,1}$ consisting of \textit{lightlike polygons}, \textit{i.e.} graphs of piecewise isometries of $\sph^1$, and estimates the total area of the corresponding maximal surface in terms of the number of the vertex of the polygon.
	
	\subsection{$H-$convexity} We introduce the notion of $H-$convexity (Definition~\ref{de:Hconv}), generalizing the usual notion of convexity. The \textit{$H-$shifted convex hull} of $\Lambda$ is the smallest $H-$convex subset of $\hyp^{n,1}$ containing $\Lambda$, denoted by $\ch_H(\Lambda)$ (see Definition~\ref{de:Hhull}).
	
	For $H=0$, the $H-$shifted convex hull of $\Lambda$ coincides with the usual convex hull, namely the intersection of halfspaces containing $\Lambda$. For $H\ne0$, we generalize the definition for making it more suitable for $H-$hypersurfaces. More precisely, a halfspace is one of connected component of the complement of a totally geodesic hypersurface: we call a $H-$halfspace one connected component of the complement of a $H-$umbilical hypersurface. The $\ch_H(\Lambda)$ is then the intersection of all the $H-$halfspaces containing $\Lambda$ (see Figure~\ref{fig:Hconvex}). This definition generalizes the notion of $H-$shifted convex hull introduced by \cite{coshshift} for the hyperbolic space.
	
%	\begin{figure}
%		\centering
%		\begin{minipage}[c]{.4\textwidth}
%			\centering
%			\includegraphics[width=\textwidth]{Images/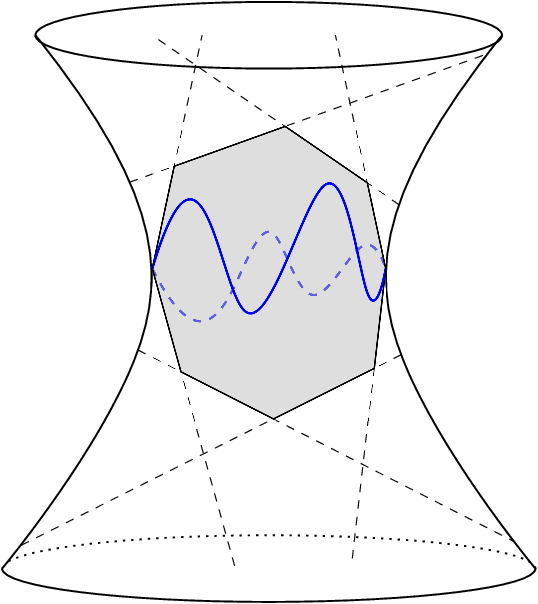}
%		\end{minipage}%
%		\hspace{1.5cm}
%		\begin{minipage}[c]{.4\textwidth}
%			\centering
%			\includegraphics[width=\textwidth]{Images/Hconvexhull.eps}
%		\end{minipage}%
%		\caption{The shaded areas represent the convex hull (left) and the $H-$convex hull (right) of the blue curve. The dashed lines represent the support totally geodesic (resp. totally umbilical) hypersurfaces.}\label{fig:Hconvex}
%	\end{figure}
		\begin{figure}[h]
		\centering
		\begin{minipage}[c]{.4\textwidth}
			\centering
			\includegraphics[width=\textwidth]{convexhull.eps}
		\end{minipage}%
		\hspace{1.5cm}
		\begin{minipage}[c]{.4\textwidth}
			\centering
			\includegraphics[width=\textwidth]{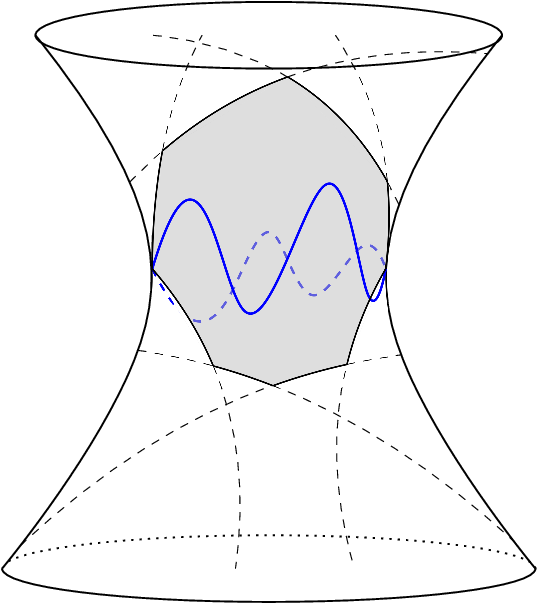}
		\end{minipage}%
		\caption{The shaded areas represent the convex hull (left) and the $H-$convex hull (right) of the blue curve. The dashed lines represent the support totally geodesic (resp. totally umbilical) hypersurfaces.}\label{fig:Hconvex}
	\end{figure}
	
	The \textit{width} of a subspace in the Anti-de Sitter space is defined as its timelike diameter. The $H-$width $\omega_H(\Lambda)$ of an admissible boundary $\Lambda$ is the width of its $H-$shifted convex hull.
	
	We begin by establishing some preliminary properties which will be of great importance in our main results.
	
	We give an explicit sharp upper bound for the $H-$width, seen as a function $\omega_H(\cdot)$ on the space of admissible boundaries (Corollary~\ref{cor:width}). Conversely, for a fixed $\Lambda$, we study the function $\omega_\bullet(\Lambda)$ defined on $\R$: 
		\begin{prox}\label{pro:width}
		Let $\Lambda$ be an admissible boundary in $\pd_\infty\hyp^{n,1}$. The function \[\omega_\bullet(\Lambda)\colon\R\to[0,\pi/2]\] has the following properties:
		\begin{enumerate}
			\item it is continuous;
			\item it is increasing (resp. decreasing) for $H\le0$ (resp. for $H\ge0$);
			\item it achieves its maximum at $H=0$;
			\item $\lim_{H\to\pm\infty}\omega_\bullet(\Lambda)=0$.
		\end{enumerate}
	\end{prox}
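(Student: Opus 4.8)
The plan is to start from an explicit dual description of $\ch_H(\Lambda)$. Realising $\hyp^{n,1}=\{\langle x,x\rangle=-1\}$ inside $\R^{n,2}$, every complete spacelike totally umbilical hypersurface is a level set $\{\langle\,\cdot\,,v\rangle=c\}$ with $v$ a timelike unit vector and $|c|<1$; its mean curvature with respect to the future normal is a fixed strictly increasing function of the level $c$, vanishing at $c=0$ and blowing up as $c\to\pm1$, so I may write $c=c(H)$ for its inverse (explicitly $c(H)=H/\sqrt{1+H^2}$ in the normalisation where $H$ is the common principal curvature). All these level sets share the asymptotic boundary $\sigma_v=\{\langle\,\cdot\,,v\rangle=0\}\cap\pd_\infty\hyp^{n,1}$, so whether an $H$-halfspace contains $\Lambda$ depends only on $v$, not on $H$. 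Using the time orientation to fix signs, this yields
\[\ch_H(\Lambda)=\bigcap_{v\in\mathcal A^+}\{\langle\,\cdot\,,v\rangle\ge c(H)\}\ \cap\ \bigcap_{v\in\mathcal A^-}\{\langle\,\cdot\,,v\rangle\le c(H)\},\]
where $\mathcal A^\pm$ are the two $H$-independent families of future directions supporting $\Lambda$ from the past and from the future, and only the common level $c(H)$ moves with $H$.

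Continuity (1) is then the softest point. Since $c(\cdot)$ is continuous, the two families of cutting walls vary continuously and locally uniformly in $H$; as all the hulls are contained in the fixed region $\ch_0(\Lambda)$ (which the monotonicity below shows to be the largest of them), their past and future boundaries depend continuously on $H$ uniformly on compacta, and the width — a supremum of the Lorentzian distance over a compactly varying family — is continuous.

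For (2) and (3) I would reduce the width to an explicit two–wall model. The timelike diameter of $\ch_H(\Lambda)$ is realised along a timelike geodesic $\gamma$ joining the past and future boundaries and orthogonal to the two supporting walls it meets; these walls belong to the $\mathcal A^+$ and $\mathcal A^-$ families, whose levels move in opposite directions as $H$ increases. Intersecting the totally geodesic timelike plane $\hyp^{1,1}$ through $\gamma$ with the configuration, the two walls become constant-curvature curves pinned by fixed asymptotic data, and the length of the realising segment becomes an explicit function $w(H)$, which one checks is continuous, increasing on $(-\infty,0]$, decreasing on $[0,\infty)$, and maximal at $H=0$. As these three properties are preserved under suprema and $\omega_H(\Lambda)$ is the supremum of the model lengths over all admissible supporting configurations, (2) and (3) follow.

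Finally (4) follows from the sharp bound of Corollary~\ref{cor:width}: as $|H|\to\infty$ one has $c(H)\to\pm1$, the cutting walls degenerate to lightlike hypersurfaces, and the uniform upper bound on $\omega_H(\cdot)$ tends to $0$; alternatively one squeezes the two boundaries between null walls pinned by $\Lambda$ and estimates their vanishing timelike separation. The main obstacle is the monotonicity step. Because the levels of $\mathcal A^+$ and $\mathcal A^-$ move in opposite directions, to first order in $H$ the hull merely translates in time, so $H=0$ is a critical point of the width; showing it is a genuine maximum (and that the monotonicity is strict on each half-line) is the delicate part, and this is exactly where the orientation conventions of Definition~\ref{de:Hhull} and the orthogonality of the realising geodesic to both supporting walls must be used. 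The ensuing computation of $w(H)$ in $\hyp^{1,1}$ is then elementary.
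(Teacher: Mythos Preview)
Your approach differs from the paper's. In the paper, all four items are read off from the two-sided estimate of Lemma~\ref{lem:width},
\[\omega_K(\Lambda)+\delta_K-\delta_H\le\omega_H(\Lambda)\le\omega_K(\Lambda)\qquad(|H|\ge|K|,\ HK\ge0),\]
together with Corollary~\ref{cor:width}: the right inequality gives (2) and (3), the squeeze gives (1) since $H\mapsto\delta_H=\arctan(H/n)$ is continuous, and (4) is the limiting case of Corollary~\ref{cor:width}. The substance lies in the proof of Lemma~\ref{lem:width}, which replaces an arbitrary timelike segment of length $l$ in $\ch_H(\Lambda)$ by a segment of an integral curve of the cosmological time $\tau_\past$ of the same length, and then \emph{translates that segment along the integral curve} by $\delta_K-\delta_H$, checking via the description in Corollary~\ref{cor:ch-time} that the translate lands in $\ch_K(\Lambda)$. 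No realising geodesic for the width is needed, and no two-wall comparison enters.

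Your dual description of the hull is essentially Proposition~\ref{pro:hconvex}, and your treatments of (1) and (4) are fine. The gap is in (2)--(3). The assertion that ``$\omega_H(\Lambda)$ is the supremum of the model lengths over all admissible supporting configurations'' is where the argument breaks. For a fixed timelike geodesic $\gamma$, the length of $\gamma\cap\ch_H(\Lambda)$ is the distance between its entry and exit points, and each of these is determined by the \emph{most restrictive} wall on its side; so the length is a difference of an infimum (over future walls) and a supremum (over past walls), not the distance between one fixed pair. As $H$ varies the binding walls can jump, and monotonicity of each individual two-wall distance---even if established---does not automatically transfer to this sup--inf combination. You have correctly diagnosed that to first order the hull merely translates in time, which is exactly why monotonicity is delicate; but the proposal stops at ``one checks\dots'', and the sign bookkeeping needed to carry out the $\hyp^{1,1}$ computation (and to show it controls the full width, not just one configuration) is precisely the content you have deferred. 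The paper sidesteps all of this by working with the cosmological time rather than with pairs of supporting walls.
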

	Moreover, in Lemma~\ref{lem:width} we describe the relation between the $\omega_H(\Lambda)$ and $\omega_{H'}(\Lambda)$, for $H\ne H'$.

	By the strong maximum principle for CMC hypersurfaces (see \cite[Proposition~5.2.1]{ecrin}, the CMC hypersurface $\Sigma$ determined by the data $(H,\Lambda)$ is contained in $\ch_H(\Lambda)$ (Corollary~\ref{cor:Hmaxprin}) generalizing \cite[Theorem~3.2]{coshshift}.
	
	The hypersurface $\Sigma$ is trapped in such portion of space: roughly speaking, we prove in Theorem~\ref{thm:Schauder} (see next subsection) that the extrinsic curvature of $\Sigma$ is controlled by the \textit{width} $\omega_H(\Lambda)$ (Definition~\ref{de:width}) of $\ch_H(\Lambda)$, namely its timelike diameter.
	
	\subsection{Upper bound} We prove that the width of the $H-$shifted convex hull is an upper bound for the extrinsic curvature, quantified by the norm of the traceless shape operator.
	\begin{thmx}\label{thm:Schauder}
		Let $L\ge K\ge0$. There exists a universal constant $C_L$ with the following property: let $\Sigma$ a properly embedded $H-$hypersurface in $\hyp^{n,1}$ with $H\in[K,L]$, and let $B_0$ be its traceless shape operator. Then,
		\[\|B_0\|_{C^0(\Sigma)}\le C_L\sin\left(\omega_K(\Lambda)\right),\]
		for $\omega_K$ the width of the $K-$shifted convex hull of the asymptotic boundary $\Lambda$ of $\Sigma$.
	\end{thmx}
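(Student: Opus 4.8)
The label of the statement already indicates the route: after representing $\Sigma$ locally as a graph solving the constant mean curvature equation, the bound will come from interior Schauder estimates applied to a linearization around a totally umbilical model. First I would reduce to a \emph{local interior estimate}. Fix an arbitrary $p\in\Sigma$ and, using the transitivity of the isometry group of $\hyp^{n,1}$ on pointed spacelike directions, normalize the position of $p$ and of $T_p\Sigma$. By Corollary~\ref{cor:Hmaxprin} we have $\Sigma\sq\ch_H(\Lambda)$, and by Proposition~\ref{pro:width}(2) the monotonicity of $\omega_\bullet(\Lambda)$ on $[0,\infty)$ together with $0\le K\le H$ gives $\omega_H(\Lambda)\le\omega_K(\Lambda)$. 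Hence $\Sigma$ is trapped in the slab bounded by the two $H$-umbilical support hypersurfaces $P^\pm$ of $\ch_H(\Lambda)$, and every point of $\Sigma$ lies at timelike distance at most $\omega_H(\Lambda)\le\omega_K(\Lambda)$ from $P:=P^-$. I would write $\Sigma$ near $p$ as a graph $u$ over $P$ in Fermi coordinates adapted to $P$. The decisive feature of this choice is that $P$ and $\Sigma$ have the \emph{same} mean curvature $H$. Since in the quadric model the sine of the timelike distance is the natural affine coordinate transverse to $P$, the confinement converts into the $C^0$-bound $\|u\|_{C^0}\le C\sin\!\big(\omega_K(\Lambda)\big)$; this is exactly where the factor $\sin(\omega_K(\Lambda))$ originates.

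The main obstacle is \emph{uniform ellipticity}: I need the CMC operator $\mathcal{Q}$, with $\mathcal{Q}[u]=H$, to be uniformly elliptic with $C^\alpha$-controlled coefficients on a ball of uniform radius, all constants depending only on $L$ and $n$. This reduces to a uniform gradient estimate for $u$, equivalently a uniform lower bound on the timelike component of the unit normal of $\Sigma$ (which also guarantees that $\Sigma$ is locally graphical over $P$ near $p$ in the first place). I expect this to be the hard part, because the estimate must hold at \emph{every} point of the noncompact hypersurface $\Sigma$, and the spacelike character degenerates as one approaches the asymptotic boundary $\pd_\infty\Sigma=\Lambda$, where $\Sigma$ becomes tangent to the lightcone. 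The admissibility of $\Lambda$ from Definition~\ref{de:adm} — that it is the graph of a strictly sub-Lipschitz map with no antipodal points — should supply a uniform spacelike cone condition which, combined with the umbilical barriers $P^\pm$ and the bound $H\le L$, yields a gradient bound uniform up to infinity. Granting this, a De~Giorgi--Nash--Moser (or Krylov--Safonov) a priori estimate upgrades $u$ to $C^{1,\alpha}$ with $L$-dependent norm, so that the coefficients of $\mathcal{Q}$ are indeed $C^\alpha$-bounded.

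With these ingredients in place, the conclusion follows by linearizing around the model. Let $u_0\equiv0$ be the graph function of $P$. Both $u$ and $u_0$ solve $\mathcal{Q}[\cdot]=H$ with the \emph{same} right-hand side, so expressing $\mathcal{Q}[u]-\mathcal{Q}[u_0]$ via the fundamental theorem of calculus shows that $w:=u-u_0=u$ satisfies a \emph{homogeneous} uniformly elliptic linear equation $\mathcal{L}w=0$, whose coefficients are $C^\alpha$-bounded by the previous step. Interior Schauder estimates then give $\|w\|_{C^{2,\alpha}(B_{1/2})}\le C_L\,\|w\|_{C^0(B_1)}\le C_L\sin\!\big(\omega_K(\Lambda)\big)$, \emph{with no additive constant} precisely because the equation is homogeneous — this is what turns a mere boundedness statement into the sharp linear bound. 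Finally, since the traceless second fundamental form of the umbilical $P$ vanishes and $\sff$ depends on $(u,Du,D^2u)$ in a controlled, to leading order linear, way, the traceless shape operator $B_0$ of $\Sigma$ at $p$ is estimated by $\|B_0\|(p)\le C_L\,\|w\|_{C^2}\le C_L\sin\!\big(\omega_K(\Lambda)\big)$. As $p\in\Sigma$ was arbitrary and all constants depend only on $L$ and $n$, taking the supremum over $\Sigma$ gives the claim.

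An alternative packaging of the same two ingredients proceeds by contradiction: a sequence violating the inequality is normalized by ambient isometries and, by the uniform elliptic estimates above, subconverges in $C^2_{loc}$ to a CMC hypersurface confined to a slab of width $\omega_\infty$. If $\omega_\infty>0$ the limiting traceless shape operator is finite while the right-hand side blows up, a contradiction; if $\omega_\infty=0$ one rescales the slab to unit width and obtains in the limit a nontrivial solution of the Jacobi equation about the umbilical model, contradicting the Schauder bound for that linear operator. Either way, the two crucial points remain the uniform gradient estimate up to $\pd_\infty\hyp^{n,1}$ and the homogeneous Schauder estimate about the $H$-umbilical model.
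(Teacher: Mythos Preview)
Your architecture matches the paper's --- pick a support $H$-umbilical hypersurface, measure the height of $\Sigma$ over it, and run interior Schauder estimates --- and your remark that linearizing against the umbilical model makes the equation homogeneous (so Schauder yields a multiplicative rather than additive bound) is exactly the mechanism. There are, however, two genuine gaps.

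The first is the claim that ``every point of $\Sigma$ lies at timelike distance at most $\omega_H(\Lambda)$ from $P:=P^-$''. A support umbilical hypersurface $P^-$ touches $\partial_-\ch_H(\Lambda)$ only along a contact set, and away from it $P^-$ falls strictly into the past of $\ch_H(\Lambda)$; hence for $q\in\Sigma$ intrinsically far from $p$, the distance $\dist(q,P^-)$ can exceed $\omega_H(\Lambda)$ by an uncontrolled amount. So confinement to $\ch_H(\Lambda)$ does \emph{not} give $\|u\|_{C^0(B_1)}\le C\sin\omega_K$ for free. The paper closes this via the \emph{multiplicative} gradient estimate $|\nabla v_H|\le G_L\,|v_H|$ for $v_H=\sin\bigl(\dist(\cdot,\mathcal{P}_{\delta_H})\bigr)$ (Proposition~\ref{pro:gradbound}), which integrates to a Harnack inequality $|v_H(y)|\le e^{G_L r}|v_H(p)|$ on intrinsic balls (Proposition~\ref{pro:v<w}); only then does the pointwise bound at $p$ propagate to a ball of uniform radius.

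The second is your source for the gradient bound. Admissibility of $\Lambda$ is not a uniform hypothesis: admissible boundaries can accumulate on degenerate ones, so Definition~\ref{de:adm} alone supplies no uniform spacelike cone, and the barriers $P^\pm$ do not by themselves prevent the tangent planes of $\Sigma$ from tilting. The input the paper actually uses is the a~priori bound $\|B\|\le C(|H|,n)$ on the full shape operator, imported from \cite{kkn} (cf.\ Remark~\ref{rem:shauder}). This drives the ODE argument in Proposition~\ref{pro:gradbound}, which in turn gives the tilt bound (Corollary~\ref{cor:tilt}), compactness of the class $\cmclp(L,\mathcal{P})$ (Corollary~\ref{cor:comp}), and finally the uniform ellipticity constants for $\Delta_\Sigma$ (Lemma~\ref{lem:beltrami}). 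Without this external bound neither the Harnack step nor the uniform Schauder constants are available; your contradiction/rescaling alternative runs into the same obstruction, since $\hyp^{n,1}$ has no scale invariance and the blow-up in the case $\omega_\infty=0$ is not governed by a clean linear problem until $\|B\|$ is already controlled.
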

	This result generalizes \cite[Theorem~1.A]{andreamax}, which focuses on maximal surfaces in $\hyp^{2,1}$, to any dimension and any values of mean curvature $H\in\R$.
	
	\begin{rem}\label{rem:shauder}
		The interest of this result lies close to the Fuchsian locus, namely for small values of $\omega_K$. This is the case if $\Lambda$ is closed to be the boundary of a totally umbilical hypersurface.
		
		The uniform bound on the norm of the shape operator (or, equivalently, of the second fundamental form) of $H-$hypersurfaces implied by substituting in Theorem~\ref{thm:Schauder} the maximum of $\omega_K(\cdot)$ (see Lemma~\ref{lem:width}) was already proved in \cite[Theorem~6.2.1]{ecrin}: in fact, the existence of a uniform bound is one of the key points in the proof of Theorem~\ref{thm:Schauder}, as we will explain in Subsection~\ref{sec:sketch}.
	\end{rem} 
	
	\subsection{Sectional curvature} Our first application of Theorem~\ref{thm:Schauder} is devoted to the study of the intrinsic curvature of CMC hypersurfaces.
	
	\begin{corx}\label{cor:sectional}
		For any $H\in\R$, there exists a universal constant $K_H>0$ such that, for any properly embedded $H-$hypersurface $\Sigma$, it holds
		\[K_\Sigma\le-1-\left(\frac{H}{n}\right)^2+K_H\sin\left(\omega_H(\Lambda)\right),\]	
		for $\Lambda$ the asymptotic boundary of $\Sigma$.
	\end{corx}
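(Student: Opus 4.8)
The plan is to read off the sectional curvature from the Gauss equation and then feed into it the uniform bound on the traceless shape operator provided by Theorem~\ref{thm:Schauder}. First I would recall that for a spacelike hypersurface $\Sigma$ in the Lorentzian spaceform $\hyp^{n,1}$ of constant curvature $-1$, the timelike character of the unit normal produces a sign change in the Gauss equation, so that for any $p\in\Sigma$ and any orthonormal pair $X,Y\in T_p\Sigma$,
\[K_\Sigma(X,Y) = -1 - \bigl(\langle BX,X\rangle\langle BY,Y\rangle - \langle BX,Y\rangle^2\bigr),\]
where $B$ denotes the shape operator; on a plane of principal directions $e_i,e_j$ this reads $K_\Sigma(e_i,e_j)=-1-\kappa_i\kappa_j$. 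I would then isolate the umbilical part by writing $B=\tfrac{H}{n}\Id+B_0$, with $B_0$ the traceless shape operator. Substituting and using $\langle X,Y\rangle=0$ pulls out the constant $-1-(H/n)^2$ and leaves only terms linear and quadratic in $B_0$:
\[K_\Sigma(X,Y)=-1-\Bigl(\frac{H}{n}\Bigr)^2 -\frac{H}{n}\bigl(\langle B_0X,X\rangle+\langle B_0Y,Y\rangle\bigr) -\bigl(\langle B_0X,X\rangle\langle B_0Y,Y\rangle-\langle B_0X,Y\rangle^2\bigr).\]

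Next, setting $\beta:=\|B_0\|_{C^0(\Sigma)}$ and using $|\langle B_0 v,w\rangle|\le\beta$ for unit vectors $v,w$, the two remainder terms are bounded pointwise by $\tfrac{2|H|}{n}\beta$ and $2\beta^2$, giving $K_\Sigma\le -1-(H/n)^2 + \tfrac{2|H|}{n}\beta + 2\beta^2$ everywhere on $\Sigma$. For $H\ge 0$ I would then apply Theorem~\ref{thm:Schauder} with $K=L=H$, which yields $\beta\le C_H\sin(\omega_H(\Lambda))$; since $\sin\le 1$ forces $\beta\le C_H$, the quadratic term is absorbed into a linear one, $2\beta^2\le 2C_H\beta$, and one obtains the claim with $K_H:=\bigl(2|H|/n+2C_H\bigr)C_H$. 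The case $H<0$ I would deduce from the $H\ge0$ case by means of a time-reversing isometry of $\hyp^{n,1}$: it sends an $H$-hypersurface to a $(-H)$-hypersurface, preserves the intrinsic sectional curvature and the value $\|B_0\|$, and carries $\ch_H(\Lambda)$ onto $\ch_{-H}$ of the reflected boundary with the same width; together with $(H/n)^2=(-H/n)^2$ this gives the bound with $K_H:=K_{-H}$.

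The individual steps are routine, and the substantive input is entirely Theorem~\ref{thm:Schauder}; the only genuine care needed is bookkeeping. One must get the sign of the Gauss equation right for a timelike normal, so that the ambient contribution reads $-1$ and the extrinsic term appears with the sign making $-1-(H/n)^2$ the umbilical base value, and one must match the hypotheses of Theorem~\ref{thm:Schauder}, which is phrased for $H\in[K,L]$ with $K\ge0$ and with the estimate in terms of $\omega_K$, to the single value $H$ in the corollary. This is precisely what forces the reduction $K=L=H$ together with the separate symmetry argument for negative $H$, and it is the place where a sign or convention error would be easiest to make.
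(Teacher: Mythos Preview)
Your proof is correct and follows the same route as the paper: expand the Gauss equation around the umbilical value $-1-(H/n)^2$, bound the remainder in terms of $\|B_0\|_{C^0(\Sigma)}$, and feed in Theorem~\ref{thm:Schauder}. The only cosmetic differences are that the paper first chooses the orthonormal pair so that $\langle B_0X,Y\rangle=0$, which sharpens $2\beta^2$ to $\beta^2$, and that the paper absorbs the quadratic term via the a~priori bound on $\|B\|$ from \cite{kkn} rather than via $\beta\le C_H$ coming from Theorem~\ref{thm:Schauder} itself; your choice is slightly more self-contained. Your explicit reduction of the case $H<0$ to $H\ge0$ by a time-reversing isometry (cf.\ Remark~\ref{rem:timeorientation}) is a point the paper leaves implicit.
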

	For $n=2$, CMC surfaces in $\hyp^{2,1}$ are known to be non-positively curved, as proved in\cite[Lemma~10.9]{particles} for $H=0$ and in \cite[Lemma~3.6]{tamb} for arbitrary values of $H\in\R$. Moreover, maximal surfaces in $\hyp^{2,1}$ are uniformely negatively curved precisely when the width of their convex hull is strictly lower than $\pi/2$ by \cite[Corollary~3.22]{univ}: for $n=2$, Corollary~\ref{cor:sectional} quantifies the relation between the intrinsic geometry of CMC surfaces and their asymptotic data (compare with Corollary~\ref{lem:sect}).
	
	The result is way more innovative in higher dimension, where nothing is known about the sectional curvature of CMC hypersurfaces, to the best of our knowledge. A first consequence is that it produces a large class of uniformly negatively curved CMC hypersurfaces: this fact corroborates the idea that all CMC hypersurfaces in $\hyp^{n,1}$ are Hadamard manifolds, \textit{i.e.} non-positively curved.
	
	In fact, a natural question is whether a closed manifold $S$ admits a spacelike embedding as a Cauchy hypersurface in a globally hyperbolic Anti-de Sitter manifold. Equivalently, whether $S\times\R$ admits an Anti-de Sitter structure, \textit{i.e.} a Lorentzian metric of constant sectional curvature $K=-1$ such that the fibers over $S$ are timelike curves.
	
	If $S\times\R$ carries an Anti-de Sitter structure, then $\tilde{S}\times\R$ embeds as an open set of $\hyp^{n,1}$, and $\tilde{S}\times\{t\}$ is a properly embedded spacelike hypersurface (see \cite[Proposition~6.3.1, Corollary~6.3.13]{benbon}), for any $t\in\R$. Moreover, it turns out that one can assume $S\times\{H\}$ to be an $H-$hypersurface, for any value of $H\in\R$ (see \cite[Theorem~1.1]{bbz} for $n=2$, \cite[Theorem~1.7]{abbz} for $n>2$ and \cite[Corollary~D]{ecrin} for the non-equivariant case), making the study of CMC hypersurfaces a preferred setting to attack this problem. 
	
	The case when $n=2$ is well understood, using the geometry of CMC surfaces: globally hyperbolic Cauchy compact Anti-de Sitter manifolds are of the form $S\times\R$ for $S$ a hyperbolic surface or a flat torus. In the hyperbolic case, the moduli space of Anti-de Sitter manifolds modeled on $S$ is isomorphic to the product of two copies of the Teichm\"uller space of $S$, while for the flat case it is corresponds precisely to the Teichm\"uller space of $S$. 
	
	In higher dimension, it remains an open question. Trivially, any hyperbolic manifold can be embedded as a spacelike Cauchy hypersurface in a globally hyperbolic Anti-de Sitter manifold. Exotic examples have been produced: for $n=4,5,6,7,8$, \cite{leemarquis} shows that $S$ can be chose to be not quasi-isometric to $\hyp^{n}$ (nor to any other symmetric space); for any $n>3$, \cite{mst} constructs examples where $S$ is a Gromov-Thurston manifold, \textit{i.e.} a manifold which can carry a negatively curved metric but not a hyperbolic one. 
	
	In the opposite direction, \cite[Corollary~1.2]{sst} gives a topological constraint: the manifold $S$ has to admits a smooth structure whose universal cover is diffeomorphic to $\R^n$. Proving these submanifolds are Hadamard, and classifing the uniformly negative ones, would give an even stronger constraint.

	So far, the achievements in this direction are only partial: by \cite[Theorem~B]{sst}, maximal hypersurfaces with regular boundaries are negatively curved up to a compact subset, and the curvature approximates exponentially fast $-1$ over diverging sequences; \cite{beykas} classifies, among the equivariant hypersurfaces, the Gromov-hyperbolic ones, namely the manifolds which resemble negatively curved ones from a metric point of view, and their work shows that the \textit{not} Gromov-hyperbolic ones correspond exactly to the ones maximizing the width. This class of admissible boundaries are the object of the upcoming work \cite{quasisph}. Finally, \cite[Theorem~C]{mt} shows that maximal hypersurfaces in the Anti-de Sitter space have non-positive Ricci curvature. 
	
	Corollary~\ref{cor:sectional} gives a stronger estimate on the intrinsic geometry of CMC hypersurfaces, and shows an explicit dependence of the sectional curvature on the width of the $H-$shifted convex hull. However, we are not able to fully characterize the sectional curvature of CMC hypersurfaces in terms of their asymptotic boundary.
	
	\subsubsection{Higher higher Teichm\"uller theory} The embedding of $\tilde{S}\times\R$ inside $\hyp^{n,1}$ induces a discrete and faithful representation
	\[\rho\colon\pi_1(S)\to\mathrm{Isom}(\hyp^{n,1})=\mathrm{O}(n,2),\]
	Higher higher Teichm\"uller theory is the branch of mathematics devoted to the study of connected component of the $G-$character variety of the fundamental group of a closed $n-$manifold $S$, for $G$ a semi-simple Lie group of rank higher than 2, consisting entirely of discrete and faithful representations. 
	
	For $G=\mathrm{O}(n,2)$ (resp. $G=\mathrm{O}(n,k+1)$), higher higher Teichm\"uller theory studies the moduli space of Anti-de Sitter (resp. pseudo-hyperbolic) structure of $S\times\R$ (resp. $S\times\R^k$). For previous studies connecting pseudo-hyperbolic geometry with higher higher Teichm\"uller theory, we suggest \cite{barb,barmer,dgk,ctt,ltw,hnnhiggs,lt,holo,sst,beykas,mv} and we refer to \cite{wien} for a survey on higher higher Teichm\"uller theory, although it focuses on surface groups.
	
	\subsection{Lower bound} Extending \cite[Proposition~1.C]{andreamax}, we prove also the inequality in the opposite direction of Theorem~\ref{thm:Schauder}. Indeed, the width of the $H-$shifted convex hull of $\Lambda$ is bounded from above by the extrinsic curvature of the $H-$hypersurface $\Sigma$. 
	\begin{thmx}\label{pro:w<B}
		Let $\Lambda$ be an admissible boundary in $\pd_\infty\hyp^{n,1}$ and $H\in\R$. Let $\Sigma$ the unique properly embedded spacelike $H-$hypersurface such that $\pd_\infty\Sigma=\Lambda$. Then
		\[\omega_H(\Lambda)\le\arctan\left(\sup_\Sigma\lambda_1\right)-\arctan\left(\inf_\Sigma \lambda_n\right),\]
		for $\lambda_1\ge\dots\ge\lambda_n$ be the principal curvatures of $\Sigma$, decreasingly ordered.
	\end{thmx}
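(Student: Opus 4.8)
The plan is to realize $\omega_H(\Lambda)$ as the length of a single maximal timelike geodesic and to bound that length by trapping the whole hull between two totally umbilical hypersurfaces whose principal curvatures are the extreme values $\sup_\Sigma\lambda_1$ and $\inf_\Sigma\lambda_n$. The $\arctan$ in the statement comes from an elementary computation in the hyperboloid model $\hyp^{n,1}=\{x\in\R^{n,2}:\pr{x,x}=-1\}$: for a unit timelike vector $v$, the level set $\{\pr{x,v}=c\}$ is a totally umbilical spacelike hypersurface of principal curvature $c/\sqrt{1-c^2}$, equidistant from the totally geodesic hypersurface $\{\pr{x,v}=0\}$ at signed timelike distance $\arcsin c=\arctan\!\big(c/\sqrt{1-c^2}\big)$. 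Hence two umbilical hypersurfaces with a common axis $v$ and principal curvatures $\mu,\mu'$ bound a slab whose timelike width equals $|\arctan\mu-\arctan\mu'|$, realized by any timelike geodesic orthogonal to both.

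Set $\mu_+=\sup_\Sigma\lambda_1$ and $\mu_-=\inf_\Sigma\lambda_n$; since $H$ is the average of the principal curvatures we have $\mu_+\ge H\ge\mu_-$. First I would prove that $\Sigma$ lies in $J^-(U)$ for every totally umbilical hypersurface $U$ of principal curvature $\mu_+$ that has $\Sigma$ in its past: this follows from the strong maximum principle \cite[Proposition~5.2.1]{ecrin}, because at a first point of interior contact the inequality $\lambda_i\le\mu_+$ between the shape operators would be forced to be an equality, which the maximum principle excludes unless the hypersurfaces coincide. As $\mu_+\ge H$, the past $J^-(U)$ is itself an $H$-convex set containing $\Lambda$, so by minimality it contains $\ch_H(\Lambda)$ (recall $\Sigma\subset\ch_H(\Lambda)$ by Corollary~\ref{cor:Hmaxprin}). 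Arguing dually with umbilical hypersurfaces of curvature $\mu_-$ from the past, I obtain that $\ch_H(\Lambda)$ is contained in the region $\Omega$ cut out by all these umbilical halfspaces, whose future boundary is enveloped by curvature-$\mu_+$ umbilical hypersurfaces and whose past boundary by curvature-$\mu_-$ ones.

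By Definition~\ref{de:width}, $\omega_H(\Lambda)$ is the timelike diameter of $\ch_H(\Lambda)$, which is at most that of $\Omega$. I would realize the latter by a maximal timelike geodesic $\gamma$ joining a past to a future boundary point of $\Omega$, necessarily orthogonal to the supporting hypersurfaces at its two endpoints; by construction these supports are umbilical of curvatures $\mu_-$ (past) and $\mu_+$ (future). Invoking the first paragraph, the timelike length of $\gamma$ — and therefore $\omega_H(\Lambda)$ — should be bounded by
\[\operatorname{length}(\gamma)\le\arctan\mu_+-\arctan\mu_-=\arctan\Big(\sup_\Sigma\lambda_1\Big)-\arctan\Big(\inf_\Sigma\lambda_n\Big).\]

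The main obstacle is precisely this last estimate. The two supports realizing the diameter of $\Omega$ are orthogonal to the common geodesic $\gamma$, but their underlying totally geodesic axes need not coincide, so the metric lemma of the first paragraph does not immediately yield the separation $\arctan\mu_+-\arctan\mu_-$; one must show that the trapping is \emph{tight}, i.e. that the diameter of $\Omega$ is attained between a curvature-$\mu_+$ and a curvature-$\mu_-$ support sharing one axis. I expect this to require a careful analysis of the maximum principle near the asymptotic boundary — where the extremal umbilical barriers of curvature $\mu_\pm$ touch $\Sigma$ only along $\Lambda$ — in the spirit of the two-dimensional maximal case \cite[Proposition~1.C]{andreamax} that the statement generalizes.
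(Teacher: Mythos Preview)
Your approach via umbilical barriers does not go through, for two reasons that are more fundamental than the obstacle you flag at the end.

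First, the maximum-principle step is backwards. At a first point of tangential contact with $\Sigma$ below a $\mu_+$-umbilical hypersurface $U$, the second-order comparison yields $B_\Sigma(p)\le \mu_+\,\Id$, i.e.\ $\lambda_i(p)\le\mu_+$. This is no contradiction---it is exactly the defining inequality $\lambda_1\le\sup_\Sigma\lambda_1$---so nothing forces $\Sigma\subset J^-(U)$. The strong maximum principle for CMC hypersurfaces (Proposition~\ref{pro:maxprin}) likewise goes the wrong way: $U$ has mean curvature $n\mu_+\ge H$, so it only gives $U\subset I^-(\Sigma)$ when $\pd_\infty U$ lies in the \emph{past} of $\Lambda$, not the statement you want. (Incidentally, $H$ here is the trace, so only $\mu_+\ge H/n$ holds, not $\mu_+\ge H$.)

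Second, and independently, the claim that $J^-(U)$ is $H$-convex is false. By Lemma~\ref{lem:Hconvex}, a $\mu_+$-umbilical hypersurface with $\mu_+> H/n$ is \emph{future}-$H$-convex, not past-$H$-convex; hence $J^-(U)$ is not a past-$H$-halfspace, and the minimality of $\ch_H(\Lambda)$ gives no inclusion $\ch_H(\Lambda)\subset J^-(U)$.

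The paper's argument avoids both problems by abandoning external umbilical barriers and instead flowing $\Sigma$ itself. Along the normal evolution $\Sigma_t$ the principal curvatures are $\lambda_i^t=\tan(\arctan\lambda_i-t)$ (Corollary~\ref{cor:principalcurvature}); thus at time $t=T_+-\delta_H$ one has $\sup_{\Sigma_t}\lambda_1^t=H/n$, so $\Sigma_t$ is genuinely past-$H$-convex by Lemma~\ref{lem:Hconvex}, and dually $\Sigma_{T_--\delta_H}$ is future-$H$-convex (Lemma~\ref{lem:tempoconvex}). The region between these two leaves is an $H$-convex set containing $\Lambda$ whose width is exactly $(T_+-\delta_H)-(T_--\delta_H)=T_+-T_-$, yielding the bound directly; a short case distinction, using Corollary~\ref{cor:width}, handles the possibility that the normal flow degenerates before these times. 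The ``shared axis'' issue never arises, because the two barriers are parallel leaves of a single flow.
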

	When the traceless shape operator is small enough, which is always the case for $\omega_H(\pd_\infty\Sigma)$ small enough, thanks to Theorem~\ref{thm:Schauder}, the inequality of Theorem~\ref{pro:w<B} can be written in a more expressive way. 
	\begin{corx}\label{cor:w<B}
		Let $\Lambda$ be an admissible boundary and $H\in\R$. Let $B_0$ be the traceless shape operator of the properly embedded spacelike $H-$hypersurface such that $\pd_\infty\Sigma=\Lambda$. If $\|B_0\|_{C^0(\Sigma)}^2\le 1+(H/n)^2$, the width of $\ch_H(\Lambda)$ satisfies
		\[\tan\left(\omega_H(\Lambda)\right)\le\frac{2\|B_0\|_{C^0(\Sigma)}}{1+(H/n)^2-\|B_0\|_{C^0(\Sigma)}^2}.\]
	\end{corx}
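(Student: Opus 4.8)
The plan is to combine the inequality of Theorem~\ref{pro:w<B} with the decomposition of the shape operator into its trace and traceless parts, and then to apply the tangent subtraction formula. Throughout, write $h := H/n$ and $b := \|B_0\|_{C^0(\Sigma)}$, and recall that since $H$ is constant the shape operator splits as $B = h\,\Id + B_0$. Hence the eigenvalues of $B_0$ are $\mu_i = \lambda_i - h$; since $B_0$ is traceless we have $\mu_1 \ge 0 \ge \mu_n$, and in any case $|\mu_i| \le b$ at every point of $\Sigma$ (this bound on each eigenvalue holds whether $\|\cdot\|$ denotes the operator norm or the Hilbert--Schmidt norm).

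First I would bound the two extreme principal curvatures. From $|\mu_i| \le b$ pointwise one obtains $\sup_\Sigma \lambda_1 = h + \sup_\Sigma \mu_1 \le h + b$ and $\inf_\Sigma \lambda_n = h + \inf_\Sigma \mu_n \ge h - b$. Substituting these into Theorem~\ref{pro:w<B} and using that $\arctan$ is increasing yields
\[\omega_H(\Lambda) \le \arctan\bigl(\sup_\Sigma \lambda_1\bigr) - \arctan\bigl(\inf_\Sigma \lambda_n\bigr) \le \arctan(h+b) - \arctan(h-b).\]

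The remaining step is to apply $\tan$ to this inequality, and here the hypothesis $b^2 \le 1 + h^2$ enters in an essential way: it is equivalent to $1 + (h+b)(h-b) = 1 + h^2 - b^2 \ge 0$, which is exactly the condition under which the tangent subtraction formula
\[\tan\bigl(\arctan(h+b) - \arctan(h-b)\bigr) = \frac{(h+b)-(h-b)}{1+(h+b)(h-b)} = \frac{2b}{1+h^2-b^2}\]
returns a value in $[0,\pi/2)$; equivalently, it guarantees $\arctan(h+b)-\arctan(h-b) \le \pi/2$. Since both $\omega_H(\Lambda)$ and $\arctan(h+b)-\arctan(h-b)$ then lie in $[0,\pi/2)$, where $\tan$ is increasing, applying $\tan$ to the displayed inequality preserves it and produces exactly
\[\tan\bigl(\omega_H(\Lambda)\bigr) \le \frac{2\|B_0\|_{C^0(\Sigma)}}{1+(H/n)^2-\|B_0\|_{C^0(\Sigma)}^2},\]
as claimed.

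I expect the only delicate point to be precisely this last monotonicity and range bookkeeping: one must verify that the hypothesis keeps $\arctan(h+b)-\arctan(h-b)$ no larger than $\pi/2$, so that the denominator $1+h^2-b^2$ is nonnegative and $\tan$ may be applied in its increasing regime. In the boundary case $1 + h^2 - b^2 = 0$ the right-hand side is $+\infty$ and the inequality holds trivially since $\omega_H(\Lambda) \le \pi/2$; away from it the denominator is strictly positive and the argument above applies verbatim.
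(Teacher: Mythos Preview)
Your proof is correct and follows essentially the same approach as the paper: bound $\sup_\Sigma\lambda_1$ and $\inf_\Sigma\lambda_n$ via the traceless part, apply Theorem~\ref{pro:w<B}, use the hypothesis $\|B_0\|^2\le 1+(H/n)^2$ to keep the arctangent difference in $[0,\pi/2]$, and then apply the tangent subtraction formula. Your bookkeeping of the range and the boundary case is in fact slightly more explicit than the paper's own argument.
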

	
	By Gauss equation, Corollary~\ref{cor:w<B} translates in terms of sectional curvature, in the $3-$dimensional case.
	\begin{corx}\label{lem:sect}
		Let $\Lambda$ be an admissible boundary in $\pd_\infty\hyp^{2,1}$ and $H\in\R$. Let $B_0$ be the traceless shape operator of the properly embedded spacelike $H-$hypersurface such that $\pd_\infty\Sigma=\Lambda$. Then,
		\[\tan\left(\omega_H(\Lambda)\right)\le\frac{2\|B_0\|_{C^0(\Sigma)}}{-\sup_\Sigma K_\Sigma}.\]
	\end{corx}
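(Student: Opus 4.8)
The plan is to read Corollary~\ref{cor:w<B} through the Gauss equation in the distinguished dimension $n=2$, where the traceless shape operator is controlled by a single scalar and the denominator appearing in Corollary~\ref{cor:w<B} becomes exactly $-\sup_\Sigma K_\Sigma$. First I would recall that, for a spacelike surface $\Sigma$ in the Lorentzian spaceform $\hyp^{2,1}$ of constant curvature $-1$, the Gauss equation reads pointwise
\[K_\Sigma=-1-\det(S),\]
where $S$ is the shape operator; the sign in front of the determinant reflects that the unit normal is timelike, $\langle\nu,\nu\rangle=-1$. Decomposing $S$ into its mean-curvature and traceless parts, $S=\tfrac{H}{n}\Id+B_0$, and using that for $n=2$ the traceless symmetric operator $B_0$ has eigenvalues $+\|B_0\|$ and $-\|B_0\|$ at each point (with $\|B_0\|$ the pointwise operator norm), one computes $\det(S)=(H/n)^2-\|B_0\|^2$. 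Substituting gives the pointwise identity
\[-K_\Sigma=1+(H/n)^2-\|B_0\|^2.\]

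Next I would pass from this pointwise identity to the global quantities in the statement. Since $H$ is constant, $K_\Sigma$ is an increasing function of $\|B_0\|^2$, so the supremum of $K_\Sigma$ over $\Sigma$ is governed by the supremum of $\|B_0\|^2$, namely $\|B_0\|_{C^0(\Sigma)}^2$. Hence
\[-\sup_\Sigma K_\Sigma=1+(H/n)^2-\|B_0\|_{C^0(\Sigma)}^2.\]
This shows that the denominator $1+(H/n)^2-\|B_0\|_{C^0(\Sigma)}^2$ occurring in Corollary~\ref{cor:w<B} is precisely $-\sup_\Sigma K_\Sigma$ when $n=2$. This clean collapse to a single scalar is exactly the feature that distinguishes the $3$-dimensional ambient case; in higher dimension the sectional curvature of a plane depends on two principal curvatures and no such identification holds.

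It then remains to verify the standing hypothesis of Corollary~\ref{cor:w<B}, namely $\|B_0\|_{C^0(\Sigma)}^2\le 1+(H/n)^2$. By the pointwise identity above this is equivalent to $K_\Sigma\le 0$, which is guaranteed by the non-positivity of the sectional curvature of CMC surfaces in $\hyp^{2,1}$ (\cite[Lemma~10.9]{particles} for $H=0$ and \cite[Lemma~3.6]{tamb} for arbitrary $H$). With the hypothesis met, substituting the identified denominator into Corollary~\ref{cor:w<B} yields
\[\tan\left(\omega_H(\Lambda)\right)\le\frac{2\|B_0\|_{C^0(\Sigma)}}{1+(H/n)^2-\|B_0\|_{C^0(\Sigma)}^2}=\frac{2\|B_0\|_{C^0(\Sigma)}}{-\sup_\Sigma K_\Sigma},\]
which is the claim (the inequality being vacuous in the degenerate case $\sup_\Sigma K_\Sigma=0$, where the right-hand side is $+\infty$). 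There is no genuine analytic obstacle here: the only points requiring care are fixing the sign in the Gauss equation correctly and the supremum/infimum bookkeeping, both of which are elementary once $n=2$ reduces $B_0$ to a single eigenvalue.
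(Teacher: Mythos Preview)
Your proof is correct and follows essentially the same route as the paper: in dimension $n=2$ the traceless shape operator has opposite eigenvalues, so the Gauss equation gives the pointwise identity $-K_\Sigma=1+(H/2)^2-\|B_0\|^2$, which both verifies the hypothesis of Corollary~\ref{cor:w<B} (via $K_\Sigma\le0$) and rewrites its denominator as $-\sup_\Sigma K_\Sigma$. The only cosmetic difference is that the paper supplies its own proof of $K_\Sigma\le0$ (Lemma~\ref{lem:curv}) rather than citing it.
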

	We recall that, for CMC surfaces, $K_\Sigma\le0$, and $\sup_\Sigma K_\Sigma<0$ if and only if $\Lambda$ is the graph of a quasi-symmetric homeomorphism of the circle, in the splitting of $\pd_\infty\hyp^{2,1}$ as $\R\pj^1\times\R\pj^1$ we will describe in the next subsection.
	
	\subsection{Teichm\"uller theory}
	The \textit{universal Teichm\"uller space} is the space of \textit{quasi-symmetric} homeomorphism of $\sph^1$ (Definition~\ref{de:quasisym}), up to post composition by $\psl$. The well known Ahlfors-Beuring theorem connects quasi-symmetric homeomorphisms of the circle with quasiconformal map of the disc (Definition~\ref{de:quasiconf}):
	\begin{thm*}[\cite{ahlbeu}]
		Every quasiconformal map $\Phi\colon\mathbb{D}^2\to\mathbb{D}^2$ extends to a unique quasi-symmetric map $\phi\colon\sph^1\to\sph^1$. Conversely, any quasi-symmetric map $\phi\colon\sph^1\to\sph^1$ admits a quasiconformal extension $\Phi\colon\mathbb{D}^2\to\mathbb{D}^2$.
	\end{thm*}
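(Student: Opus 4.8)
The plan is to prove the two implications separately after passing to the upper half-plane model. A M\"obius transformation identifies the disk $\mathbb{D}^2$ with the upper half-plane $\mathbb{U}=\{z\in\C:\operatorname{Im}z>0\}$ and the circle $\sph^1$ with $\R\cup\{\infty\}$; since both quasiconformality and quasi-symmetry are invariant under M\"obius maps, it suffices to argue in this model.

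For the forward direction I would use the geometric characterization of quasiconformality through the distortion of moduli of quadrilaterals (equivalently, extremal length): if $\Phi$ is $K$-quasiconformal then $K^{-1}\,\mathrm{mod}(Q)\le\mathrm{mod}(\Phi(Q))\le K\,\mathrm{mod}(Q)$ for every quadrilateral $Q$. First, $\Phi$ extends to a homeomorphism $\phi$ of the boundary: the boundary values exist and cannot collapse an arc to a point, since a degenerating family of quadrilaterals accumulating at such an arc would force an unbounded distortion of moduli. To obtain quasi-symmetry, fix $x\in\R$ and $t>0$ and compare the moduli of the quadrilaterals with vertices among $x-t$, $x$, $x+t$ and $\infty$: when the intervals $[x-t,x]$ and $[x,x+t]$ are equal the modulus is a universal constant, whereas the modulus of the image quadrilateral is a monotone function of the ratio
\[
\frac{\phi(x+t)-\phi(x)}{\phi(x)-\phi(x-t)}.
\]
The two-sided modulus bound then confines this ratio to an interval $[\lambda(K)^{-1},\lambda(K)]$ depending only on $K$, which is precisely the quasi-symmetry condition.

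For the converse I would use the explicit Beurling--Ahlfors extension. Given a $\rho$-quasi-symmetric increasing homeomorphism $f\colon\R\to\R$, set $F=U+iV$ on $\mathbb{U}$ with
\[
U(x,y)=\frac{1}{2y}\int_{x-y}^{x+y}f(s)\,ds,
\qquad
V(x,y)=\frac{1}{2y}\left(\int_{x}^{x+y}f(s)\,ds-\int_{x-y}^{x}f(s)\,ds\right),
\]
equivalently $U$ and $V$ are the averages $\tfrac12\int_0^1[f(x+ty)\pm f(x-ty)]\,dt$. From the monotonicity of $f$ one checks that $V>0$, so that $F$ maps $\mathbb{U}$ into itself, that $F$ extends continuously to $f$ on $\R$, and that $F$ is a locally Lipschitz homeomorphism onto $\mathbb{U}$. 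Differentiating under the integral sign, the entries of the Jacobian $DF$ are expressed through the two difference quantities $f(x+y)-f(x)$ and $f(x)-f(x-y)$, together with the averages $U-f(x)$ and $V$.

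The main obstacle is the remaining step: bounding the dilatation of $F$ uniformly in terms of $\rho$ alone. The quasi-symmetry inequality controls, at every scale $y>0$, the ratio of the two difference quantities above, as well as their ratios with the corresponding averages; feeding these bounds into the explicit expression for $DF$ lets one estimate the ratio of its eigenvalues and show that the complex dilatation satisfies
\[
\left\|\frac{F_{\bar z}}{F_z}\right\|_{\infty}\le k(\rho)<1
\]
for a constant $k(\rho)$ depending only on $\rho$. This is the genuinely analytic heart of the argument; the surrounding verifications---continuity up to the boundary, bijectivity, and local Lipschitz regularity---follow routinely from the monotonicity of $f$ and elementary properties of the averaging integrals.
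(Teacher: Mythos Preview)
The paper does not prove this statement: it is quoted twice (once in the introduction, once in Section~7) as a classical theorem of Beurling and Ahlfors, with a citation to \cite{ahlbeu}, and is used purely as background for the Teichm\"uller-theoretic applications. There is therefore no proof in the paper to compare your proposal against.

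For what it is worth, your outline is the standard Beurling--Ahlfors argument and is essentially correct: the modulus-of-quadrilaterals estimate for the forward direction and the explicit averaging extension for the converse are exactly the classical ingredients. You correctly identify the dilatation bound for the extension as the substantive analytic step; that step does go through via elementary but somewhat tedious inequalities comparing $f(x+y)-f(x)$, $f(x)-f(x-y)$, and the averaged quantities, all controlled by the quasi-symmetry constant~$\rho$. Since the paper treats the theorem as a black box, any correct proof is acceptable here, and yours is the canonical one.
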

	The quasiconformal extension is far from being unique, and it is a classical topic in Teichm\"uller theory to construct a suitable class of quasiconformal extensions to study the universal Teichm\"uller space. A classical problem consists of the comparison between the cross-ratio norm (Definition~\ref{de:quasisym}) of the quasi-symmetric map $\phi$, denoted by $\|\phi\|_{cr}$, and the \textit{quasiconformal dilatation} (see Equation~\eqref{eq:maxdil}) of the quasiconformal map extension $\Phi\colon\hyp^2\to\hyp^2$ in the preferred class, denoted by $\mathcal{K}(\Phi)$. Classical results in this direction are the estimates contained in \cite{ahlbeu,ahlbeuconstant,douear,douearlconstant}.
	
	Anti-de Sitter geometry in dimension $3$ has played an important role in Teichm\"uller theory since the groudbreaking work \cite{mess}. For $n=2$ the asymptotic boundary is ruled by two families of lightlike lines. These two families identify $\pd_\infty\hyp^{2,1}$ to $\R\pj^1\times\R\pj^1$: through this splitting, admissible boundaries become graphs of orientation preserving homeomorphisms of the circle. Furthermore, spacelike surfaces in $\hyp^{2,1}$ induce diffeomorphisms of the hyperbolic plane $\hyp^2$, through the so called \textit{Gauss map} (see Section~\ref{sub:gauss}). It turns out that \textit{minimal Lagrangian} diffeomorphisms, which have been widely studied (see for example \cite{minSchoen,minLab}), are induced by maximal surfaces: this correspondence has been exploited in \cite{minimalAdS,particles,univ,tou,andreamax} to study such diffeomorphisms using Anti-de Sitter geometry.
	
	Minimal Lagrangian diffeomorphisms are a particular case of $\theta-$landslides (Definition~\ref{de:theta}), for $\theta=\pi/2$. $\theta-$landslides have been introduced in \cite{theta} as smooth versions of earthquakes: if $\phi$ is a quasi-symmetric map, then the $\theta-$landslides $\Phi_\theta$ extending $\phi$ interpolates between the left and the right earthquake extending $\phi$, as $\theta$ varies in $(0,\pi)$. By the work \cite{areapres}, we know that the diffeomorphism induced by a CMC surface $\Sigma$ in $\hyp^{2,1}$ is a $\theta-$landslide, for $\theta$ depending on the mean curvature of $\Sigma$.
	
	In \cite[Theorem 2.A, Theorem 2.B and Corollary 2.D]{andreamax}, the quasiconformal dilatation of a quasiconformal minimal Lagrangian map $\Phi_{\pi/2}$ is bounded by the width of the convex hull of the admissible boundary of the maximal surface corresponding to $\Phi_{\pi/2}$ given by the graph of $\Phi_{\pi/2}|_{\pd_\infty\hyp^2}$, which is an orientation preserving homeomorphism of $\R\pj^1=\pd_\infty\hyp^2$. An application of Theorem~\ref{thm:Schauder} is to extend \cite[Theorem 2.A]{andreamax} to $\Phi_\theta$, for any $\theta$.
	\begin{thmx}\label{thm:qconformal}
		For any $\alpha\in(0,\pi/2)$, there exists universal constants $Q_\alpha,\eta_\alpha>0$ such that for all $\theta\in[\alpha,\pi-\alpha]$ and $\phi$ quasi-symmetric map satifing $\|\phi\|_{cr}\le\eta_\alpha$, then
		\[
		\ln(\mathcal{K}(\Phi_\theta))\le Q_\alpha\|\phi\|_{cr},
		\]
		for $\Phi_\theta$ the unique $\theta-$landslide extending $\phi$.
	\end{thmx}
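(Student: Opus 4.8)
The plan is to realize $\Phi_\theta$ as the Gauss map diffeomorphism of a CMC surface, bound its pointwise quasiconformal dilatation by the traceless shape operator uniformly in $\theta$, and then feed in Theorem~\ref{thm:Schauder} together with the comparison between the width of the convex hull and the cross-ratio norm. First I would invoke \cite{areapres}: the $\theta$-landslide $\Phi_\theta$ extending $\phi$ is induced by the Gauss map of the unique properly embedded spacelike CMC surface $\Sigma\subseteq\hyp^{2,1}$ whose asymptotic boundary $\Lambda$ is the graph of $\phi$ and whose mean curvature $H=H(\theta)$ is a fixed continuous monotone function of $\theta$ with $H(\pi/2)=0$. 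Since the landslide degenerates to an earthquake only as $\theta\to0,\pi$, the constraint $\theta\in[\alpha,\pi-\alpha]$ confines $H(\theta)$ to a compact interval $[-L_\alpha,L_\alpha]$ with $L_\alpha$ depending only on $\alpha$. The symmetry exchanging the two rulings of $\pd_\infty\hyp^{2,1}\cong\R\pj^1\times\R\pj^1$ reverses the sign of $H$ and replaces $\theta$ by $\pi-\theta$ while preserving the dilatation, so I may assume $H=H(\theta)\in[0,L_\alpha]$.

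Next I would express the pointwise dilatation of $\Phi_\theta$ in terms of the principal curvatures of $\Sigma$ and the angle $\theta$. Writing the pullbacks of the hyperbolic metric under the two Gauss maps through the operators encoding the landslide, the modulus of the Beltrami coefficient of $\Phi_\theta$ at a point $p$ depends pointwise only on $\|B_0\|(p)$ and $\theta$: in dimension $2$ the traceless shape operator has eigenvalues $\pm\|B_0\|(p)$, so one obtains $|\mu_\theta(p)|=f(\|B_0\|(p),\theta)$ for an explicit smooth function $f$ with $f(0,\theta)=0$ and with $\partial_t f(0,\theta)$ continuous, hence bounded on $[\alpha,\pi-\alpha]$. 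By the definition of the maximal dilatation in \eqref{eq:maxdil} one has $\ln\mathcal{K}(\Phi_\theta)=2\sup_\Sigma\operatorname{arctanh}|\mu_\theta|$, and a first-order expansion yields a uniform bound $\ln\mathcal{K}(\Phi_\theta)\le C_\alpha\|B_0\|_{C^0(\Sigma)}$, valid as soon as $\|B_0\|_{C^0(\Sigma)}$ lies below a threshold depending only on $\alpha$ (so that $|\mu_\theta|<1$ and the linearization controls $\operatorname{arctanh}$).

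I would then apply Theorem~\ref{thm:Schauder} with $K=0$ and $L=L_\alpha$: since $H=H(\theta)\in[0,L_\alpha]$, it gives $\|B_0\|_{C^0(\Sigma)}\le C_{L_\alpha}\sin(\omega_0(\Lambda))$, where $\omega_0(\Lambda)$ is the width of the ordinary convex hull of the graph of $\phi$ (the maximal width, by Proposition~\ref{pro:width}). It remains to compare $\omega_0(\Lambda)$ with $\|\phi\|_{cr}$: combining Theorem~\ref{thm:Schauder} in the maximal case with \cite[Theorem~2.A]{andreamax} provides, near the Fuchsian locus, a constant $C$ and a radius $\eta$ with $\sin(\omega_0(\Lambda))\le C\|\phi\|_{cr}$ whenever $\|\phi\|_{cr}\le\eta$. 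Chaining the three inequalities and choosing $\eta_\alpha\le\eta$ small enough that $C_{L_\alpha}\sin(\omega_0(\Lambda))$ stays below the threshold of the second step gives
\[
\ln\mathcal{K}(\Phi_\theta)\le C_\alpha C_{L_\alpha} C\,\|\phi\|_{cr}=:Q_\alpha\|\phi\|_{cr},
\]
as desired.

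The main obstacle is the second step: deriving the explicit pointwise dilatation of the $\theta$-landslide and controlling its $\theta$-dependence uniformly on $[\alpha,\pi-\alpha]$. This is precisely where the restriction $\theta\in[\alpha,\pi-\alpha]$ is essential, since both the constant $C_\alpha$ and the CMC correspondence itself degenerate as $\theta\to0,\pi$, i.e. as the landslide approaches an earthquake and $|H(\theta)|\to\infty$. The smallness hypothesis $\|\phi\|_{cr}\le\eta_\alpha$ plays a double role: it keeps the surface in the near-Fuchsian regime where the linearized dilatation bound is valid, and it guarantees, through Theorem~\ref{thm:Schauder}, that $\|B_0\|_{C^0(\Sigma)}$ remains below the required threshold.
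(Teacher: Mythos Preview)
Your approach is essentially the paper's: realize $\Phi_\theta$ via the Gauss map of a CMC surface with $|H|\le L_\alpha$, bound the Beltrami coefficient by $\|B_0\|$, apply Theorem~\ref{thm:Schauder}, and compare the width with $\|\phi\|_{cr}$. Two minor corrections: the paper replaces your abstract first-order expansion by the explicit formula $|\mu|^2=\|B_0\|^2/(1+(H/2)^2)$ from \cite[Proposition~6.2]{tamb} (Equation~\eqref{eq:dilatation}), which makes the $\theta$-uniformity immediate and avoids an extra smallness threshold; and for the final comparison you want Lemma~\ref{lem:cross} (\cite[Proposition~3.A]{andreamax}), which gives $\tan(\omega_0(\Lambda))\le\sinh(\|\phi\|_{cr}/2)$, rather than \cite[Theorem~2.A]{andreamax}, which goes in the opposite direction.
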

	
	\subsection{Sketch of the proofs}\label{sec:sketch}
	The core of this paper is Theorem~\ref{thm:Schauder}: to prove it, let us fix a properly embedded $H-$hypersurface $\Sigma$, a point $x\in\Sigma$ and a totally umbilical $H-$hypersurface $\mathcal{P}_{\delta_H}$ disjoint from $\Sigma$. We study the function $v_H\colon\Sigma\to\R$ associating to each point $x\in\Sigma$ its distance from $\mathcal{P}_{\delta_H}$, since its second derivatives approximate the traceless shape operator of $\Sigma$ around $x$ (Corollary~\ref{cor:hess}). The function $v_H$ is bounded from above by the width $H-$shifted convex hull of $\Sigma$ (Proposition~\ref{pro:v<w}), hence the proof of Theorem~\ref{thm:Schauder} reduces to bound the $C^2-$norm of $v_H$ by its $C^0-$norm, up to a universal constant.
	
	In Proposition~\ref{pro:hess}, we show that $v_H$ satisfies the elliptic PDE
	\begin{equation}\label{eq:pde}
		\Delta_\Sigma v_H-nv_h=f_H,
	\end{equation}
	for $\Delta_\Sigma$ the Laplace-Beltrami operator on $\Sigma$ and $f_H$ an explicit real function essentially depending on $H$, and we bound $f_H$ from above by $v_H$, up to a constant. By Schauder estimates (Proposition~\ref{pro:pde}), the $C^2-$norm of $v_H$ with the $C^0-$norm of $v_H$ and $f_H$, up to a constant depending on the elliptic operator $L=\Delta_\Sigma-n$.
	
	The goal is almost achieved: so far we get that
	\[\|B_0(x)\|\le C\omega_H(\Lambda),\]
	but, \textit{a priori}, the multiplicative constant is far from universal: the technical part lies in proving that the procedure does not depend on the choice of the $H-$hypersurface $\Sigma$, the point $x$ and the umbilical hypersurface $\mathcal{P}_{\delta_H}$. Under necessary but not restrictive assumptions on $\mathcal{P}_{\delta_H}$ (Definition~\ref{de:CMCLP}), we give bounds for the gradient (Proposition~\ref{pro:gradbound}) and for the Hessian (Corollary~\ref{cor:hess}) of $v_H$ over an open ball around $x$ of fixed radius (Corollary~\ref{cor:metricschaud}), which not depend on the choice of $\Sigma$ and $x$.
	
	Finally, we prove that the Laplace-Beltrami operators are uniformly elliptic over the space of $H-$hypersurfaces (Lemma~\ref{lem:beltrami}), hence Schauder estimates do not depend on the choice of $\Sigma$, concluding the proof of Theorem~\ref{thm:Schauder}.
	
	As anticipated in Remark~\ref{rem:shauder}, a key role in finding this uniform estimate is played by the fact that the norm of the shape operator is uniformly bounded for $H-$hypersurfaces in Anti-de Sitter space.
	
	\subsubsection{Applications} To prove Corollary~\ref{cor:sectional}, we apply Theorem~\ref{thm:Schauder} to Gauss equation, relating the extrinsic curvature to the intrinsic one.
	
	The quasiconformal dilatation of a $\theta-$landslide is explicitly linked to norm of the traceless operator (Equation~\ref{eq:dilatation}), while the cross-ratio norm bounds from above the width (Lemma~\ref{lem:cross}). We conclude the proof of Theorem~\ref{thm:qconformal} by substituting the estimates of Theorem~\ref{thm:Schauder} in the aformentioned estimates. 
	
	\subsubsection{Escaping the $H-$shifted convex hull} Theorem~\ref{pro:w<B} has a very geometrical proof: to build an $H-$convex subset $\mathcal{C}_H$ modeled on the geometry of the $H-$hypersurface $\Sigma$, and in particular containing its asymptotic boundary $\Lambda$. By minimality, $\mathcal{C}_H$ contains the $H-$shifted convex hull $\ch_H(\Lambda)$, hence $\omega_H(\Lambda)$ is bounded from above by the width of $\mathcal{C}_H$. To achieve this goal, we study the shape operator of $H-$convex hypersurfaces (Lemma~\ref{lem:Hconvex}). The shape operator has an explicit description along the normal evolution (Corollary~\ref{cor:principalcurvature}). Hence, we can estimate the time when the leaves become future (resp. past) $H-$convex in terms of the shape operator of $\Sigma$. The portion of space delimited by the two leaves $\mathcal{C}_H$. The technical part of this proof is to study the degeneracy of the normal flow and its consequences.
		
	\subsection{Organization of the paper}
	The necessary background is presented in the Section~\ref{sec:ads}. In Section~\ref{sec:Hconvex}, we introduce the main objects of this work, \textit{i.e.} the notion of $H-$convexity and the $H-$shifted convex hull. In Section~\ref{sec:w<B}, we prove Theorem~\ref{pro:w<B} and deduce Corollary~\ref{cor:w<B}. 
	
	Section~\ref{sec:B<w} is the core and the most technical part of this paper, in which we prove Theorem~\ref{thm:Schauder}.
	The last two section are devoted to applications: we prove Corollary~\ref{cor:sectional} in Section~\ref{sec:sectional}, and Theorem~\ref{thm:qconformal} in Section~\ref{sec:teich}.
	
	\subsection*{Aknowledgments} I would like to thank Francesco Bonsante and Andrea Seppi for their tireless support and encouragement. I would also thank Thierry Barbot and Jérémy Toulisse who carefully read the first drafts of this paper, and helped me to improve it. Finally, I thank Giulia Cavalleri, Farid Diaf, Timothé Lemistre, Alex Moriani, Eleonora Maggiorelli, Nicholas Rungi, Rym Sma\"i, Graham Smith, Edoardo Tolotti and Gabriele Viaggi for related discussions.
		
	\section{Preliminaries}\label{sec:ads}
	A Lorentzian manifold $(M,g)$ is the data of a smooth manifold $M$ of dimension $n+1$ and a symmetric non-degenerate $(0,2)-$tensor $g$ of signature $(n,1)$. Tangent vectors are distinguished by their causal properties: a vector $v\in TM$ is \textit{timelike}, \textit{lightlike} or \textit{spacelike} if $g(v,v)$ is respectively negative, null or positive. Furthermore, non-spacelike vector are called \textit{causal}. A curve $c\colon I\to M$ is called timelike (resp. lightlike, spacelike, causal), if its tangent vector $c'(t)$ is timelike (resp. lightlike, spacelike, causal) for all $t\in I$. 
	
	\begin{de}
		Two points are \textit{time-related} (resp. \textit{light-related}, \textit{space-related}) if there exists a timelike (resp. lightlike, spacelike) geodesic joining them.
	\end{de}
	
	\begin{de}\label{de:space}
		A $C^1-$submanifold of $M$ is \textit{spacelike} if the induced metric is Riemannian.
	\end{de}
	
	A Lorentzian manifold is \textit{time-orientable} if the set of timelike vectors in $TM$ has two connected components, which will always be our case. A time-orientation is the choice of one of the connected component: vectors are called \textit{future-directed} if they belong to the chosen connected component, \textit{past-directed} otherwise. 
	
	\begin{de}\label{de:cone}
		The \textit{cone} of a subset $X$ of $M$ is the set $I(X)$ of points of $M$ that can be joined to $X$ by a timelike curve. 
		
		Once a time-orientation is set, one can distinguish the \textit{future} cone $I^+(X)$ and the \textit{past} cone $I^-(X)$, containing respectively the points that can be reached from $X$ along a future-directed or past-directed timelike curve.
	\end{de}
	\subsection{Anti-de Sitter geometry}
	Anti-de Sitter manifolds are the Lorentzian analogous of hyperbolic manifolds, \emph{i.e.} pseudo-Riemannian manifolds with signature $(n,1)$ and constant sectional curvature $-1$. In the following, we present two models of Anti-de Sitter geometry: the quadric model $\hyp^{n,1}$ and the universal cover $\hypu^{n,1}$.	
	
	\subsubsection{Quadric model}\label{sub:quadric}
	The pseudo-Euclidean space $\R^{n,2}$ is the vector space $\R^{n+2}$ endowed with the non-degenerate bilinear form of signature $(n,2)$
	\begin{equation*}
		\pr{x,y}:=x_1y_1+\dots+x_n y_n-x_{n+1}y_{n+1}-x_{n+2}y_{n+2}.
	\end{equation*}
	The quadric model for Anti-de Sitter geometry is the set of unitary negative vectors of $\R^{n,2}$, \emph{i.e.}
	\[\hyp^{n,1}:=\{x\in\R^{n,2}, \pr{x,x}=-1\},\]
	endowed with the iduced metric. It generalizes the hyperboloid model for the hyperbolic space in the Lorentzian setting.
	
	As for the hyperbolic space, $\hyp^{n,1}$ admits a conformal boundary, which consists of the oriented isotropic lines for the bilinear form $\pr{\cdot,\cdot}$, and it is conformal to the Einstein univers $\pd_\infty\hyp^n\times\sph^1$, endowed with the Lorentzian metric $g_{\sph^{n-1}}-g_{\sph^{1}}$. 
	
	The tangent space $T_x\hyp^{n,1}$ identifies with $x^\perp=\{y\in\R^{n,2},\pr{x,y}=0\}$ and the restriction of the scalar product to $T\hyp^{n,1}$ is a time-orientable Lorentzian metric with constant sectional curvature $-1$. 
	
	The isometry group of $\hyp^{n,1}$ is $\mathrm{O}(n,2)$, and the action \textit{maximal}: any linear isometry from $T_x\hyp^{n,1}$ to $T_y\hyp^{n,1}$ is the tangent map of an isometry of $\hyp^{n,1}$ sending $x$ to $y$. In particular, the action is transitive. 
	
	Totally geodesic $k-$submanifolds of $\hyp^{n,1}$ are precisely open subsets of the intersection between $\hyp^{n,1}$ and $(k+1)-$vector subspaces of $\R^{n,2}$. In the following, we will abusively refer to \textit{maximal} totally geodesic submanifold simply as totally geodesic submanifold, where maximality is to be intended in the sense of inclusion for connected submanifolds.
	
	In particular, geodesics of $\hyp^{n,1}$ starting from $x$ are of the form
	\begin{equation}\label{eq:geod}
		\exp_x(tv)=\begin{cases}
			\cos(t)x+\sin(t)v & \text{if $\pr{v,v}=-1$};\\
			x+tv & \text{if $\pr{v,v}=0$};\\
			\cosh(t)x+\sinh(t)v & \text{if $\pr{v,v}=1$}.
		\end{cases}
	\end{equation}
	Indeed, any curve $\gamma(t)$ satisfying one of the equations above is contained in $\hyp^{n,1}$, and an easy computation shows that $\gamma''(t)$, namely the covariant derivative of $\gamma$ with respect to the flat metric of $\R^{n,2}$, is proportional to $\gamma(t)$, \emph{i.e.} normal to $T_{\gamma(t)}\hyp^{n,1}$.

	\begin{rem}\label{oss:P+-}
		For $x\in\hyp^{n,1}$, the tangent space $T_x\hyp^{n,1}$ identifies with the linear subspace $x^\perp\sq\R^{n,2}$. The intersection of $x^\perp$ with $\hyp^{n,1}$ is the set of unitary timelike vectors in $T_x\hyp^{n,1}$. By the above discussion, $\hyp^{n,1}\cap x^\perp$ is a totally geodesic hypersurface, and a direct calculation shows that it is isometric to two disjoint copies of the hyperbolic space $\hyp^n$.
	\end{rem}
\begin{de}\label{de:P+-}
	We denote by $P_+(x)$ the set of \emph{future-directed} unitary timelike vectors and by $P_-(x)$ the set of \emph{past-directed} unitary timelike vectors in $T_x\hyp^{n,1}$, once a time-orientation is set.
\end{de} 
	Equation~\eqref{eq:geod} shows that timelike geodesics starting at $x$ are periodic curves $\gamma\colon\R\to\hyp^{n,1}$ such that $\gamma(k\pi)=(-1)^k x$. Moreover, as
	\[\exp_x\left(\pm\frac{\pi}{2}v\right)=\pm v,\]
	future-directed timelike geodesics intersect orthogonally $P_+(x)$ at $t=\pi/2+2k\pi$ and $P_-(x)$ at $t=-\pi/2+2k\pi$.

	\subsubsection{The universal cover}
	Let $P$ be a totally geodesic spacelike hypersurface of $\hyp^{n,1}$, and $p\in P$. Denote $N$ the unitary future-directed normal vector to $P$ at $p$, and define the map
	\begin{equation}\label{eq:split}
		\begin{tikzcd}[row sep=1ex]
			\psi_{(p,P)}\colon P\times\R\arrow[r] &\hyp^{n,1}\\
			(x,t)\arrow[r,maps to] & R_t(x),
		\end{tikzcd}
	\end{equation}
	where $R_t$ is the linear map which is a rotation of angle $t$ restricted to $\Span(p,N)$ and fixes its orthogonal complement $\Span(p,N)^\perp$.
	
	The map $\psi_{(p,P)}$ is a covering whose domain is simply connected for any pair $(p,P)$. Hence, the universal cover for the Anti-de Sitter space is $\hypu^{n,1}:=\hyp^n\times\R$, endowed with the pull-back metric
	\begin{equation*}
		\begin{split}
			g_{\hypu^{n,1}}:&=\psi_{(p,P)}^*g_{\hyp^{n,1}}=g_{P}-\pr{p,\cdot}^2dt^2\\
			&=g_{\hyp^n}-\cosh^2\left(\mathrm{d}_{\hyp^n}(p,\cdot)\right)dt^2,
		\end{split}
	\end{equation*}
	for $g_P$ the restriction of $g_{\hyp^{n,1}}$ to $P$, which is isometric to $\hyp^n$, while by Equation~\eqref{eq:geod} \[\cosh\left(\mathrm{d}_{\hyp^n}(p,\cdot)\right)=-\pr{p,\cdot}.\] 
	
	\begin{de}\label{de:split}
		A \textit{splitting} of $\hypu^{n,1}$ is the choice of a pair $(p,P)$ identifing $\hypu^{n,1}$ with $\hyp^{n}\times\R$. We denote $x_0:=(0,\dots,0,1)\in\hyp^n$, namely $\{x_0\}\times\R=\psi_{(p,P)}^{-1}(\gamma)$, for $\gamma$ the timelike geodesic normal to $P$ at $p$.
	\end{de}
	
	By lifting the isometries of $\hyp^{n,1}$, it turns out that $\hypu^{n,1}$ has maximal isometry group, too. Moreover, since $\psi_{(p,P)}$ restricted to the slices $P\times\{t\}$ is linear, $P\times\{t\}$ is a totally geodesic spacelike hypersurface, for all $t\in\R$. In contrast, $\{x_0\}\times\R$ is the only fiber which is a (timelike) geodesic. 
	
	We fix once and for all a time-orientation in the two models for Anti-de Sitter geometry: in the universal cover $\hypu^{n,1}$, we choose the one coinciding with the natural orientation of $\R$, and for the quadric model $\hyp^{n,1}$ the one induced by any covering map $\psi$ as in Equation~\eqref{eq:split}.
	
	\subsubsection{Timelike distance}\label{sub:Up}
	The length of a piecewise $C^1-$timelike curve $c\colon(a,b)\to\hypu^{n,1}$ is defined as
	\[
	\ell(c):=\int_a^b \sqrt{-g_{\hypu^{n,1}}\left(\dot{c}(t),\dot{c}(t)\right)}dt.
	\]
	\begin{de}\label{de:dist}
		Let $p\in\hypu^{n,1}$ and $q\in I(p)$. Their Lorentzian distance is
		\begin{equation*}
			\dist(p,q):=\sup\{\ell(c),c\text{ timelike curve joining $p$ and $q$}\},
		\end{equation*} 
	\end{de}
	The Lorentzian distance satisfies the reverse triangle inequality
	\begin{equation}\label{eq:triangleineq}
		\dist(p,q)\ge\dist(p,r)+\dist(r,q),
	\end{equation}
	provided that the three quantities are well defined and $r$ is chronologically between $p$ and $q$, that is either $r\in I^+(p)\cap I^-(q)$ or $r\in I^+(q)\cap I^-(p)$ (see Definition~\ref{de:cone}).
	
	For a splitting $(p,P)$, let $\gamma$ be the timelike geodesic in $\hyp^{n,1}$ parameterized by arclength so that $\gamma(0)=p$ and $\gamma'(0)\perp P$, namely 
	\[\gamma(t)=\psi_{(p,P)}\left(\{x_0\}\times\{t\}\right).\]
	Denote by $Q(t)$ the totally geodesic spacelike hyperplane orthogonal to $\gamma$ at $\gamma(t)$: a fundamental domain for $\psi_{(p,P)}$ is $\hyp^n\times\left(t,t+2\pi\right)$, for any $t\in\R$, and its image through $\psi_{(p,P)}$ is $\hyp^{n,1}\setminus Q(t)$.
	
	\begin{de}\label{de:Pp}
		For $p\in\hypu^{n,1}$, we denote $\mathcal{P}_+(p)$ (resp. $\mathcal{P}_-(p)$) the set at Lorentzian distance $\pi/2$ from $p$ in the future (resp. in the past). We will call it \textit{dual hypersurface} to $p$ in the future (resp. in the past) in $\hypu^{n,1}$. We also denote $p_\pm$ the unique point time-related to $p$ contained in $\{\dist(p,\cdot)=\pi\}\cap I^\pm(p)$.
	\end{de}
	
	\begin{rem}\label{oss:dist}
		The distance between $p,q$ is achieved through a timelike geodesic if $q\in I^-(p_+)\cap I^+(p_-)$ (\cite[Corollary~2.13, Lemma~2.14]{univ}). This condition is not restrictive for our purposes, as we will show in Remark~\ref{rem:invdom}.
	\end{rem}
	The timelike distance can be explicitly computed using Equation~\eqref{eq:geod}. We refer to \cite[Proposition~2.4.4]{ecrin} for the explicit computation.
	\begin{pro}\label{pro:distprod}
		For any splitting $\psi\colon\hypu^{n,1}\to\hyp^{n,1}$,
		\[
		\pr{\psi(p),\psi(q)}=-\cos\left(\dist(p,q)\right),
		\]
		for any pair $p,q$ of time-related points such that $q\in I^-(p_+)\cap I^+(p_-)$.
	\end{pro}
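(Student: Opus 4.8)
The plan is to realize the Lorentzian distance $\dist(p,q)$ by an explicit timelike geodesic and then read off the inner product $\pr{\psi(p),\psi(q)}$ directly from the geodesic equation~\eqref{eq:geod}. The hypothesis $q\in I^-(p_+)\cap I^+(p_-)$ is precisely what guarantees, via Remark~\ref{oss:dist}, that the supremum defining $\dist(p,q)$ is attained by a timelike geodesic in $\hypu^{n,1}$; call it $\tilde\gamma\colon[0,d]\to\hypu^{n,1}$, parametrised by arclength, with $\tilde\gamma(0)=p$, $\tilde\gamma(d)=q$ and $d=\dist(p,q)$.

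First I would push this geodesic down to the quadric model by the covering map. Since $\psi$ is a local isometry (by construction the metric on $\hypu^{n,1}$ is the pull-back of $g_{\hyp^{n,1}}$), the curve $\gamma:=\psi\circ\tilde\gamma$ is again a timelike geodesic parametrised by arclength, with $\gamma(0)=\psi(p)$, $\gamma(d)=\psi(q)$, and the same length $d$. Writing $v:=\gamma'(0)$, the vector $v$ is a unit timelike vector in $T_{\psi(p)}\hyp^{n,1}=\psi(p)^\perp$, so that $\pr{v,v}=-1$ and $\pr{\psi(p),v}=0$. The first case of~\eqref{eq:geod} then gives the closed form
\[
\gamma(t)=\cos(t)\,\psi(p)+\sin(t)\,v,
\]
and evaluating at $t=d$, together with $\pr{\psi(p),\psi(p)}=-1$, yields
\[
\pr{\psi(p),\psi(q)}=\pr{\psi(p),\gamma(d)}=\cos(d)\pr{\psi(p),\psi(p)}+\sin(d)\pr{\psi(p),v}=-\cos(d),
\]
which is the claim.

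The only genuine point to watch is the bookkeeping between the covering space and the quadric. I must check that the length of $\gamma$ as a curve in $\hyp^{n,1}$ coincides with the distance $\dist(p,q)$ computed upstairs: this is exactly where the constraint $d<\pi$, forced by $p_\pm$ lying at timelike distance $\pi$ from $p$, enters, ensuring that $\tilde\gamma$ remains within a single fundamental domain of $\psi$ so that no periodicity of timelike geodesics in $\hyp^{n,1}$ (recall $\gamma(k\pi)=(-1)^k\psi(p)$) interferes with the identification. Finally, the sign is uniform across the future and past cases because $\cos$ is even and $\pr{\cdot,\cdot}$ is symmetric: interchanging the roles of $p$ and $q$, or replacing $v$ by a past-directed unit timelike vector, leaves the final value $-\cos(d)$ unchanged.
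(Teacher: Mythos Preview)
Your argument is correct and follows exactly the route the paper indicates: the paper does not spell out a proof but states that ``the timelike distance can be explicitly computed using Equation~\eqref{eq:geod}'' and refers to \cite[Proposition~2.4.4]{ecrin}, which is precisely what you do by realizing $\dist(p,q)$ via a timelike geodesic (Remark~\ref{oss:dist}), pushing it down through the local isometry $\psi$, and reading off the inner product from the explicit formula $\gamma(t)=\cos(t)\psi(p)+\sin(t)v$.
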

	
	\begin{cor}\label{cor:P}
		In any splitting, $\psi(\mathcal{P}_\pm(p))=P_\pm(\psi(p))$ and $\psi(p_\pm)=-\psi(p)$. In particular, $\mathcal{P}_\pm(p)$ is a totally geodesic spacelike hypersurface.
	\end{cor}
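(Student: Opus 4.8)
The plan is to read off both identities from the explicit description of timelike geodesics in Equation~\eqref{eq:geod} together with the inner-product formula of Proposition~\ref{pro:distprod}, and then to obtain the ``in particular'' clause for free from the fact that $\psi$ is a local isometry.

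First I would prove $\psi(p_\pm)=-\psi(p)$. Let $\tilde\gamma$ be the timelike geodesic through $p$ parameterized by arclength with $\tilde\gamma(0)=p$ and $\tilde\gamma'(0)$ future-directed; by Definition~\ref{de:Pp} (and the distance realization in Remark~\ref{oss:dist}) the points $p_\pm$ are exactly $\tilde\gamma(\pm\pi)$. Since $\psi$ is a local isometry it maps $\tilde\gamma$ to a timelike geodesic of $\hyp^{n,1}$ through $\psi(p)$, which by Equation~\eqref{eq:geod} has the form $t\mapsto\cos(t)\psi(p)+\sin(t)w$ with $w=(\psi\circ\tilde\gamma)'(0)$ a future-directed unit timelike vector. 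Evaluating at $t=\pm\pi$ gives $\psi(p_\pm)=\cos(\pm\pi)\psi(p)+\sin(\pm\pi)w=-\psi(p)$, as claimed.

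Next I would establish $\psi(\mathcal{P}_\pm(p))=P_\pm(\psi(p))$, treating the future case (the past case being symmetric). The key preliminary observation is that, by the discussion following Equation~\eqref{eq:geod} together with Remark~\ref{oss:P+-}, the set $P_+(\psi(p))$ is precisely the future-directed component of $\hyp^{n,1}\cap\psi(p)^\perp$, i.e.\ the set of $y\in\hyp^{n,1}$ with $\pr{\psi(p),y}=0$ and $y\in I^+(\psi(p))$. For the inclusion $\subseteq$, take $q\in\mathcal{P}_+(p)$, so $\dist(p,q)=\pi/2$ and $q\in I^+(p)$. Since $\dist(p,q)=\pi/2<\pi$, the point $q$ lies strictly between $p_-$ and $p_+$, that is $q\in I^-(p_+)\cap I^+(p_-)$, so Proposition~\ref{pro:distprod} applies and yields $\pr{\psi(p),\psi(q)}=-\cos(\pi/2)=0$; hence $\psi(q)\in\hyp^{n,1}\cap\psi(p)^\perp$. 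Because $q\in I^+(p)$ and $\psi$ preserves the time-orientation fixed after Equation~\eqref{eq:split}, we get $\psi(q)\in I^+(\psi(p))$, so $\psi(q)$ lies in the future component, i.e.\ $\psi(q)\in P_+(\psi(p))$. For the reverse inclusion I would fix $y\in P_+(\psi(p))$ and take the lift $q$ of $y$ under $\psi$ lying in $I^+(p)$ inside a single fundamental domain; then $\pr{\psi(p),y}=0$ together with Proposition~\ref{pro:distprod} forces $\cos(\dist(p,q))=0$, and the choice of fundamental domain pins down $\dist(p,q)=\pi/2$, whence $q\in\mathcal{P}_+(p)$.

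Finally, the ``in particular'' statement is immediate: $P_\pm(\psi(p))$ is a totally geodesic spacelike hypersurface by Remark~\ref{oss:P+-}, and since $\psi$ is a local isometry the equality $\psi(\mathcal{P}_\pm(p))=P_\pm(\psi(p))$ transports this property back to $\mathcal{P}_\pm(p)$. The hard part will be the causal bookkeeping: verifying cleanly that every $q\in\mathcal{P}_\pm(p)$ satisfies the hypothesis $q\in I^-(p_+)\cap I^+(p_-)$ of Proposition~\ref{pro:distprod}, and --- for the reverse inclusion --- selecting the correct sheet of the $2\pi$-periodic covering $\psi$, since $\hyp^{n,1}\cap\psi(p)^\perp$ has two components but infinitely many preimages in $\hypu^{n,1}$. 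Matching the right lift to the prescribed Lorentzian distance $\pi/2$ is the delicate point, and is where the time-orientation convention does the essential work.
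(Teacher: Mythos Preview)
Your proposal is correct and is exactly the argument the paper has in mind: the corollary is stated without proof, immediately after Proposition~\ref{pro:distprod}, as a direct consequence of that inner-product formula together with the explicit description of timelike geodesics in Equation~\eqref{eq:geod} and Remark~\ref{oss:P+-}. Your write-up simply unpacks this, including the causal bookkeeping that the paper leaves to the reader.
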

	
	\subsection{Spacelike hypersurfaces}\label{sec:graphs}
	From a qualitative point of view, properly embedded spacelike CMC hypersurfaces are well understood: properly embedded spacelike hypersurfaces in $\hypu^{n,1}$ are all graphs of functions $\hyp^n\to\R$ (Lemma~\ref{lem:graph}), the convergence of sequences of CMC hypersurfaces is fully determined by the convergence of their boundaries and mean curvature (Proposition~\ref{pro:compact}) and the strong maximum principle determines the reciprocal position of pairs of CMC hypersurfaces (Proposition~\ref{pro:maxprin}).
	
	\subsubsection{Graphs}
	A subset $X$ of $\hypu^{n,1}\cup\pd_\infty\hypu^{n,1}$ is \textit{achronal} (resp. \textit{acausal}) if no pair of points of $X$ can be joined by a timelike (resp. causal) curve of $\hypu^{n,1}\cup\pd_\infty\hypu^{n,1}$.
	
	An achronal subset meets each fiber $\{x\}\times\R$, for $x\in\hyp^n$, at most once: indeed, the fibers are timelike curves in $\hyp^{n,1}$. Hence, any achronal subset can be written as the graph of a function defined over a subset of $\hyp^n$. If the function is defined over the whole $\hyp^n$, we call the graph \textit{entire}.
	
	We recall here \cite[Lemma~4.1.2]{bonsep} and \cite[Lemma~4.1.3]{bonsep}, which are proved for $\hyp^{2,1}$. However, the arguments do not depend on the dimension.
	
	\begin{lem}\label{lem:graph}
		For a subset $X$ of $\hypu^{n,1}\cup\pd_\infty\hypu^{n,1}$, the following statements are equivalent:
		\begin{enumerate}
			\item $X$ is achronal (resp. acausal);\label{it:1}
			\item there exists a splitting $(p,P)$ such that $X$ is the graph of a $1-$Lipschitz (resp. strictly $1-$Lipschitz) function;\label{it:2}
			\item for any splitting $(p,P)$, $X$ is the graph of a $1-$Lipschitz (resp. strictly $1-$Lipschitz) function.\label{it:3}
		\end{enumerate}
		Moreover, an achronal hypersurface $\Sigma$ in $\hypu^{n,1}$ is properly embedded if and only if it is an entire graph.
	\end{lem}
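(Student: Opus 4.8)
The plan is to reduce everything to the causal structure of the warped product $(\hyp^n\times\R,\,g_{\hyp^n}-\cosh^2(d_{\hyp^n}(p,\cdot))\,dt^2)$. Fix a splitting $(p,P)$ and introduce on $\hyp^n$ the conformally rescaled Riemannian metric
\[
\hat g:=\cosh^{-2}\!\big(d_{\hyp^n}(p,\cdot)\big)\,g_{\hyp^n},
\]
with associated distance $\delta$; the notion of ``$1$-Lipschitz'' in items \eqref{it:2}--\eqref{it:3} is understood with respect to $\delta$. My first and main step is to prove the characterization: two points $(x_1,s_1)$ and $(x_2,s_2)$ are joined by a timelike (resp.\ causal) curve if and only if $|s_1-s_2|>\delta(x_1,x_2)$ (resp.\ $|s_1-s_2|\ge\delta(x_1,x_2)$).

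To prove it I would observe that a vector $(v,a)\in T_x\hyp^n\oplus\R$ is causal exactly when $|v|_{g_{\hyp^n}}\le\cosh(d_{\hyp^n}(p,x))\,|a|$, i.e.\ when $|v|_{\hat g}\le|a|$, with equality characterizing lightlike vectors. Hence for any causal curve $c(s)=(x(s),t(s))$ one has $|t'(s)|\ge|x'(s)|_{\hat g}$ pointwise. Since a timelike curve has $t'$ nowhere zero and a causal curve is, for the fixed time-orientation, future- or past-directed, in both cases $t$ is monotone; integrating yields $|s_1-s_2|=\int|t'|\ge\int|x'|_{\hat g}\ge\delta(x_1,x_2)$, strictly in the timelike case. (Monotonicity is essential here: it is exactly what rules out spacelike-separated points being joined by a non-monotone lightlike path.) Conversely, given $|s_1-s_2|>\delta(x_1,x_2)$, I would lift a $\hat g$-geodesic $\gamma$ from $x_1$ to $x_2$, parametrised proportionally to $\hat g$-arclength, to a curve affine in the $t$-coordinate; the inequality above shows it is timelike precisely because $|s_1-s_2|/\delta(x_1,x_2)>1$, while the equality case produces a lightlike connecting curve, giving the causal statement.

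With the characterization in hand the equivalences are bookkeeping. If $X$ is achronal, then it meets each timelike fiber $\{x\}\times\R$ at most once (as recalled before the statement), so in every splitting $X=\gr(u)$ for some $u\colon D\to\R$, $D\sq\hyp^n$; the characterization converts the absence of timelike (resp.\ causal) chords into $|u(x)-u(y)|\le\delta(x,y)$ (resp.\ $<$ for $x\ne y$), i.e.\ $u$ is $1$-Lipschitz (resp.\ strictly). This gives \eqref{it:1}$\Rightarrow$\eqref{it:3}, and \eqref{it:3}$\Rightarrow$\eqref{it:2} is immediate. For \eqref{it:2}$\Rightarrow$\eqref{it:1}, a timelike (resp.\ causal) chord of the graph would force $|u(x)-u(y)|>\delta(x,y)$ (resp.\ $\ge$ with $x\ne y$), contradicting the Lipschitz bound, so $X$ is achronal (resp.\ acausal). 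Since achronality is intrinsic, this shows the ``some splitting'' and ``any splitting'' versions agree.

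For the last assertion, an entire graph is the graph of a continuous ($\delta$-$1$-Lipschitz) function on all of $\hyp^n$, hence closed in $\hypu^{n,1}$ with projection a homeomorphism onto it, so it is properly embedded. Conversely, let $\Sigma=\gr(u)$ be achronal, spacelike and properly embedded. Spacelikeness makes the tangent planes transverse to the fibers (the timelike $\partial_t$ never lies in a spacelike plane), so $\pi\colon\Sigma\to\hyp^n$ is a local diffeomorphism and $D=\pi(\Sigma)$ is open. If $D\ne\hyp^n$, pick $x_0\in\partial D$ and $x_k\in D$ with $x_k\to x_0$; since $\hat g$ is comparable to $g_{\hyp^n}$ on a compact neighbourhood of $x_0$, the sequence $(x_k)$ is $\delta$-Cauchy, so the $1$-Lipschitz bound makes $u(x_k)$ converge to some $L$. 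Then $(x_k,u(x_k))\to(x_0,L)$, which lies in $\Sigma$ by properness (closedness), forcing $x_0\in D$ and contradicting $x_0\in\partial D$. Hence $D=\hyp^n$. The main obstacle is the causal-structure identity of the first two paragraphs — constructing the explicit monotone timelike connecting curve and correctly handling the orientation/monotonicity so that the lightlike equality case is controlled — after which the remaining equivalences and the properness argument are formal.
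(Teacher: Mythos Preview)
The paper does not supply its own proof of this lemma; it defers to \cite[Lemma~4.1.2, Lemma~4.1.3]{bonsep} and only remarks that the argument there is dimension-independent, together with the clarification that ``$1$-Lipschitz'' is meant with respect to the spherical metric obtained by identifying $\hyp^n\cup\pd_\infty\hyp^n$ with a closed hemisphere. Your conformal rescaling $\hat g=\cosh^{-2}(d_{\hyp^n}(p,\cdot))\,g_{\hyp^n}$ is exactly that metric (the substitution $\sin\rho=\tanh r$ turns $\hat g$ into $d\rho^2+\sin^2\rho\,g_{\sph^{n-1}}$ on the open hemisphere), so your causal characterisation $|s_1-s_2|\gtrless\delta(x_1,x_2)$ and the three-way equivalence are correct and are precisely the argument the cited reference carries out. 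The existence of the connecting $\hat g$-geodesic you need is guaranteed because the open hemisphere is geodesically convex.

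There is one small slip in the properness direction: you assume $\Sigma$ is \emph{spacelike} in order to get that $D=\pi(\Sigma)$ is open, but the statement only says ``achronal hypersurface'', which in this paper includes Lipschitz (not necessarily $C^1$) graphs. The fix is immediate: since $\Sigma$ is an $n$-dimensional topological manifold and $\pi\colon\Sigma\to\hyp^n$ is injective and continuous, invariance of domain already gives that $\pi$ is open, so $D$ is open without any differentiability assumption. With that substitution your open-and-closed argument for $D=\hyp^n$ goes through unchanged.
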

	\begin{rem}
		The function is $1-$Lipschitz as a map from $\hyp^n\cup\pd_\infty\hyp^n$ endowed with the spherical metric induced by identifying it with a closed hemisphere of $\sph^n$.
	\end{rem}
	In particular, an entire achronal hypersurface of $\hypu^{n,1}$ has a unique $1-$Lipschitz extension to the asymptotic boundary (\cite[Theorem~1]{mcs}).
	
	Conversely, any $1-$Lipschitz function \[\phi\colon\pd_\infty\hyp^n=\sph^{n-1}\to\R\] can be extended to an achronal entire graph. It turns out that the extension is unique if and only if $\phi$ contains antipodal points, in which case the graph of the unique extension is a totally geodesic degenerate hyperplane (see \cite[Proposition~4.1.5]{ecrin}).
	\begin{de}\label{de:adm}
		A set $\Lambda\sq\pd_\infty\hypu^{n,1}$ is an \textit{admissible boundary} if it is the graph of a $1-$Lipschitz map $f\colon\pd_\infty\hyp^n\to\R$ containing no antipodal points.
	\end{de}	
	
	\subsubsection{Spacelike graphs}\label{sub:graph}
	As anticipated in Definition~\ref{de:space}, a $C^1-$embedded hypersurface $\Sigma$ is spacelike if the induced metric is Riemannian, or equivalently if the normal vector is timelike at every point.
	
	\begin{pro}{\cite[Lemma~4.1.5]{bonsep}}\label{pro:spaceacausal}
		A properly embedded spacelike hypersurface $\Sigma$ in $\hypu^{n,1}$ is acausal.
	\end{pro}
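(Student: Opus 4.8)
The plan is to realize $\Sigma$ as an entire graph and then invoke the strict version of Lemma~\ref{lem:graph}, which characterizes acausal sets. I would fix a splitting identifying $\hypu^{n,1}$ with $\hyp^n\times\R$, so that $g_{\hypu^{n,1}}=g_{\hyp^n}-\cosh^2(\mathrm{d}_{\hyp^n}(p,\cdot))\,dt^2$, and let $\pi\colon\hyp^n\times\R\to\hyp^n$ be the projection onto the first factor. First I would observe that $\pi|_\Sigma$ is a local diffeomorphism: the vertical direction $\pd_t$ spanning $\ker d\pi$ is timelike, whereas $T_x\Sigma$ is spacelike by Definition~\ref{de:space}, so $T_x\Sigma\cap\ker d\pi=0$ and $d(\pi|_\Sigma)$ is injective between spaces of equal dimension $n$. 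Writing $\Sigma$ locally as $\gr(u)$, its tangent vectors have the form $(v,du(v))$, and spacelikeness reads $g_{\hyp^n}(v,v)>\cosh^2(\mathrm{d}_{\hyp^n}(p,\cdot))\,du(v)^2$ for all $v\neq0$, which by Cauchy--Schwarz (taking $v=\nabla u$ as the extremal case) is equivalent to the gradient bound $\cosh(\mathrm{d}_{\hyp^n}(p,\cdot))\,|\nabla u|_{\hyp^n}<1$; in particular $|\nabla u|_{\hyp^n}<1$.

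The main obstacle is globalizing this to an entire graph. I would prove that $\pi|_\Sigma$ has the path-lifting property and is therefore a global diffeomorphism. Given a piecewise-geodesic path $\gamma$ in $\hyp^n$ and a starting lift in $\Sigma$, a maximal lift $\tilde\gamma$ exists because $\pi|_\Sigma$ is a local diffeomorphism; along it the vertical coordinate satisfies $|\tfrac{d}{ds}(t\circ\tilde\gamma)|=|du(\dot\gamma)|\le|\nabla u|_{\hyp^n}\,|\dot\gamma|_{\hyp^n}\le|\dot\gamma|_{\hyp^n}$, so the $t$-coordinate varies by at most the finite length of $\gamma$. Hence the lift stays in a compact region of $\hypu^{n,1}$; since $\Sigma$ is closed (being properly embedded), the lift subconverges to a point of $\Sigma$ lying over the endpoint of $\gamma$, where it extends, so the maximal lift is defined on all of $\gamma$. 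This is the crux: the local spacelike condition only controls $|\nabla u|$, and it is precisely the bound $|\nabla u|_{\hyp^n}<1$ forced by the $\cosh^2$ factor (i.e. by the Anti-de Sitter geometry), together with properness, that prevents the lift from escaping to infinity. Path-lifting, together with uniqueness of lifts (automatic for a local diffeomorphism into a Hausdorff space), shows $\pi|_\Sigma$ is a covering map; as $\Sigma$ is connected and $\hyp^n$ is simply connected, $\pi|_\Sigma$ is a diffeomorphism, so $\Sigma=\gr(u)$ is the entire graph of a global $u\colon\hyp^n\to\R$.

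Finally I would conclude acausality. Under the conformal identification of $\hyp^n$ with the open hemisphere of $\sph^n$, for which $g_{\sph^n}=\cosh^{-2}(\mathrm{d}_{\hyp^n}(p,\cdot))\,g_{\hyp^n}$, the spacelike gradient bound becomes $|\nabla u|_{\sph^n}=\cosh(\mathrm{d}_{\hyp^n}(p,\cdot))\,|\nabla u|_{\hyp^n}<1$. Integrating this strict pointwise inequality along any spherical geodesic segment of positive length yields $|u(x)-u(y)|<\mathrm{d}_{\sph^n}(x,y)$, so $u$ is strictly $1$-Lipschitz in the hemisphere metric of the Remark following Lemma~\ref{lem:graph}. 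By the strict case of Lemma~\ref{lem:graph}, the entire graph of a strictly $1$-Lipschitz function is acausal, which proves the Proposition. Equivalently, injectivity of $\pi|_\Sigma$ already shows that $\Sigma$ meets each timelike fiber at most once and is thus achronal; the strict inequality $|\nabla u|_{\sph^n}<1$ is exactly what upgrades achronal to acausal.
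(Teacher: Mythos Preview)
The paper does not supply its own proof of this proposition; it is quoted verbatim from \cite[Lemma~4.1.5]{bonsep}. Your argument is correct and is essentially the standard one found in that reference: prove that the vertical projection $\pi|_\Sigma$ is a local diffeomorphism because $\ker d\pi$ is timelike, use the a~priori bound $\cosh(\mathrm{d}_{\hyp^n}(p,\cdot))\,|\nabla u|_{\hyp^n}<1$ coming from spacelikeness together with properness to run a path-lifting argument, deduce that $\pi|_\Sigma$ is a covering of the simply connected $\hyp^n$ and hence a diffeomorphism, and finally translate the gradient bound via the conformal identification $g_{\sph^n}=\cosh^{-2}(\mathrm{d}_{\hyp^n}(p,\cdot))\,g_{\hyp^n}$ into the strict $1$-Lipschitz condition required by Lemma~\ref{lem:graph}. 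Your verification of the conformal factor and your observation that compactness of the geodesic segment turns the strict pointwise bound into a strict integrated bound are both accurate. The only implicit assumption you use is that $\Sigma$ is connected, which is the standing convention in the paper; if one drops it, the same argument applies componentwise.
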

	
	\begin{rem}\label{oss:Up}
		Remark that subset $X$ is achronal if and only if it is contained in the complement of $I(p)$, for any $p\in X$. One can check that $\hypu^{n,1}\setminus I(p)$ is contained in the region bounded by $\mathcal{P}_-(p)$ and $\mathcal{P}_-(p)$ (see Definition~\ref{de:Pp}), hence in a fundamental domain of $\psi_{(p,P)}$. 
		
		It follows by Proposition~\ref{pro:spaceacausal} and Remark~\ref{oss:Up} that any properly embedded spacelike hypersurface is contained in a fundamental domain of for $\psi_{(p,P)}$, for a suitable choice of $p\in\hypu^{n,1}$. In other words, there is a natural correspondence between properly embedded spacelike hypersurfaces in the three models of Anti-de Sitter geometry introduced in this thesis.
	\end{rem}
	
	\subsubsection{CMC graphs} If $\Sigma$ is a $C^2-$spacelike hypersurface, the second fundamental form $\sff$ is defined as the projection of the ambient Levi-Civita connection over the normal bundle of $\Sigma$. Since $\Sigma$ has codimension one, the normal space $N\Sigma$ naturally identifies with $\R$ by choosing the future directed normal vector as generator, so we consider $\sff$ as a symmetric $(0,2)-$tensor on $\Sigma$.
	
	\begin{de}
		A $C^2-$spacelike hypersurface is a \textit{constant mean curvature} (CMC) hypersurface its mean curvature is constant.
		
		Hereafter, we will call $H-$hypersurface a CMC spacelike hypersurface with constant mean curvature $H$, when the value of its mean curvature is relevant.
	\end{de}
	
	We collect here two important result which will be repeatedly used in the rest of the article. Their prove can be found in \cite[Proposition~5.2.1]{ecrin} and \cite[Proposition~5.2.1]{ecrin}, respectively.
	
	\begin{pro}\label{pro:compact}
			Let $\Sigma_k$ be a sequence of properly embedded spacelike $H_k-$hypersurfaces, contained in a precompact set of $\hypu^{n,1}\cup\pd_\infty\hypu^{n,1}$. Up to taking a subsequence, we can assume that $\Sigma_k$ converges to an entire achronal graph $\Sigma_\infty$ and $H_k$ converges to $H_\infty\in\R\cup\{\pm\infty\}$. 
			
			Then, exactly one of the following holds:
			\begin{enumerate}
					\item if $\pd_\infty\Sigma_\infty$ is not admissible, then $\Sigma_\infty$ is a totally geodesic lightlike hypersurface;
					\item if $\pd_\infty\Sigma_\infty$ is admissible and $H_\infty=\pm\infty$, then $\Sigma_\infty=\pd_\mp(\Omega(\pd_\infty\Sigma_\infty))$;
					\item if $\pd_\infty\Sigma_\infty$ is admissible and $H_\infty\in\R$, then $\Sigma_\infty$ is a $H_\infty-$hypersurface.
				\end{enumerate}
			Moreover, in the last case, $\Sigma_k$ converges to $\Sigma_\infty$ smoothly as a graph over compact sets, in any splitting.
		\end{pro}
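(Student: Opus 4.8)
The plan is to split the argument into a soft compactness part, which produces the limit graph $\Sigma_\infty$ and the limiting mean curvature $H_\infty$, and a classification part organized as a trichotomy according to whether $\pd_\infty\Sigma_\infty$ is admissible and whether $H_\infty$ is finite. First I would fix a splitting $(p,P)$ and, using Proposition~\ref{pro:spaceacausal} together with Lemma~\ref{lem:graph}, write each $\Sigma_k$ as the graph of a strictly $1-$Lipschitz function $u_k\colon\hyp^n\to\R$. Since the $\Sigma_k$ lie in a fixed precompact subset of $\hypu^{n,1}\cup\pd_\infty\hypu^{n,1}$, the $u_k$ are uniformly bounded, and being $1-$Lipschitz they are equicontinuous. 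Arzelà--Ascoli then furnishes a subsequence converging uniformly on compact sets to a $1-$Lipschitz function $u_\infty$, whose graph $\Sigma_\infty$ is an entire achronal graph by Lemma~\ref{lem:graph}; passing to a further subsequence gives $H_k\to H_\infty\in\R\cup\{\pm\infty\}$. The whole content of the statement lies in identifying $\Sigma_\infty$.

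For the trichotomy I would first observe that $\pd_\infty\Sigma_\infty$ is the limit of the boundary data $\pd_\infty\Sigma_k$. If this limiting boundary function contains a pair of antipodal points, i.e.\ $\pd_\infty\Sigma_\infty$ is not admissible, then by the uniqueness of achronal extensions recalled after Lemma~\ref{lem:graph} (see \cite[Proposition~4.1.5]{ecrin}), the only achronal entire graph with such boundary is the totally geodesic lightlike hyperplane; since $\Sigma_\infty$ is precisely such an extension, this yields case~(1). When $\pd_\infty\Sigma_\infty$ is admissible, the limit is genuinely spacelike in the interior and the analysis is governed by the behaviour of $H_\infty$.

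The crucial case is $H_\infty\in\R$. Here the essential analytic input is the uniform a priori bound on the norm of the shape operator of $H-$hypersurfaces in Anti-de Sitter space, quoted in Remark~\ref{rem:shauder} from \cite[Theorem~6.2.1]{ecrin}: it provides uniform $C^2$ control on the $u_k$ over any compact set on which $\Sigma_\infty$ remains uniformly spacelike. Bootstrapping this via interior Schauder estimates for the constant mean curvature equation (of the type used in Proposition~\ref{pro:pde}) upgrades the uniform $C^0$ convergence to smooth convergence on compact sets in the chosen splitting, and passing to the limit in the mean curvature equation shows that $\Sigma_\infty$ is a smooth $H_\infty-$hypersurface. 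As splittings differ by isometries of $\hypu^{n,1}$, the smooth convergence holds in any splitting. For case~(2), where $H_\infty=\pm\infty$, the same estimate degenerates and I would instead argue by comparison: using the strong maximum principle against a family of totally umbilical hypersurfaces of arbitrarily large mean curvature as barriers (Proposition~\ref{pro:maxprin}), one shows the $\Sigma_k$ are squeezed against the appropriate causal boundary component of the domain of dependence of $\pd_\infty\Sigma_\infty$, so that $\Sigma_\infty=\pd_\mp(\Omega(\pd_\infty\Sigma_\infty))$.

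The main obstacle I expect is twofold. The smooth convergence in case~(3) rests entirely on the interior uniform curvature estimate, which is the deep analytic fact; although it can be quoted from \cite[Theorem~6.2.1]{ecrin}, without it one only obtains a Lipschitz limit. The second difficulty is the careful treatment of the two degenerate regimes: one must verify that in case~(1) the achronal limit is \emph{exactly} the lightlike hyperplane, and that in case~(2) the squeezing barriers genuinely pin $\Sigma_\infty$ to the predicted boundary of the invisible domain rather than to some intermediate achronal graph. The rigidity of achronal extensions with non-admissible boundary and the explicit umbilical comparison surfaces are the technical heart of these two cases.
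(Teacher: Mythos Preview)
The paper does not actually prove this proposition: it is stated as a preliminary fact and the proof is explicitly deferred to \cite[Proposition~5.2.1]{ecrin}. So there is no ``paper's own proof'' to compare against, and your sketch is rather a plausible reconstruction of the argument one would find in that reference.

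Your outline is broadly correct and follows the expected lines (Arzel\`a--Ascoli on $1$--Lipschitz graphs, rigidity of achronal extensions with non-admissible boundary for case~(1), barriers for case~(2), curvature estimates plus Schauder for case~(3)). One genuine gap worth flagging: in case~(3) you assert that when $\pd_\infty\Sigma_\infty$ is admissible ``the limit is genuinely spacelike in the interior'', and then invoke the uniform bound on $\|B\|$ to get $C^2$ control. But the bound on the shape operator from \cite[Theorem~6.2.1]{ecrin} does not by itself prevent the gradient $|\nabla u_k|$ from approaching $1$ at interior points; you need a separate local gradient (tilt) estimate, or a barrier argument, to rule out interior lightlike degeneration of the limit before the second-order estimates become useful. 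This is exactly the kind of step that is carried out carefully in \cite{ecrin}, and it is the real analytic content separating the Lipschitz limit from a smooth $H_\infty$--hypersurface. You acknowledge this obstacle in your last paragraph, but as written the body of your argument glosses over it.
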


	\begin{pro}[Strong maximum principle for CMC hypersurfaces]\label{pro:maxprin}
			Let $\Sigma_i$ be a properly embedded spacelike $H_i-$hypersurface, for $i=1,2$. 
			
			If $H_1\ge H_2$ and $\pd_\infty\Sigma_1\sq\overline{I^-(\Sigma_2)}$, then either $\Sigma_1\sq I^-(\Sigma_2)$ or $\Sigma_1\sq\Sigma_2$.
		\end{pro}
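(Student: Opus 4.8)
The plan is to pass to the universal cover and reduce the statement to a comparison principle for the quasilinear elliptic equation governing CMC graphs. By Remark~\ref{oss:Up} and Lemma~\ref{lem:graph}, each $\Sigma_i$ is the entire graph of a smooth function $u_i\colon\hyp^n\to\R$ in a fixed splitting $(p,P)$, and since the fibres $\{x\}\times\R$ are timelike and both hypersurfaces are acausal (Proposition~\ref{pro:spaceacausal}), the causal relations translate into pointwise inequalities between $u_1$ and $u_2$: concretely, $\Sigma_1\sq I^-(\Sigma_2)$ is equivalent to $u_1<u_2$ everywhere, while $\Sigma_1\cap\Sigma_2\ne\emptyset$ corresponds to $u_1(x_0)=u_2(x_0)$ for some $x_0$, and $\Sigma_1\sq\Sigma_2$ (equivalently $\Sigma_1=\Sigma_2$, both being entire graphs) to $u_1\equiv u_2$. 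Setting $w:=u_2-u_1$, the whole statement becomes: \emph{either $w>0$ on all of $\hyp^n$, or $w\equiv0$}.

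Next I would write down the mean curvature operator. For the warped metric $g_{\hyp^n}-\cosh^2\!\big(\mathrm{d}_{\hyp^n}(p,\cdot)\big)\,dt^2$, the graph $\gr(u)$ is spacelike exactly when $|\nabla u|_{\hyp^n}<1/\cosh\!\big(\mathrm{d}_{\hyp^n}(p,\cdot)\big)$, a fibrewise convex constraint on $\nabla u$; its mean curvature computed with the future-directed normal is a second-order quasilinear operator $\mathcal{M}(u)$, uniformly elliptic on compact sets whenever $u$ is spacelike, and with no zeroth-order term, so that $\mathcal{M}(u_i)=H_i$. Because the spacelike condition is convex in $\nabla u$, the linear interpolation $u_s:=(1-s)u_1+s u_2$ is spacelike for every $s\in[0,1]$; writing $\mathcal{M}(u_2)-\mathcal{M}(u_1)=\int_0^1\tfrac{d}{ds}\mathcal{M}(u_s)\,ds$ exhibits $w$ as a solution of a linear elliptic equation
\[
Lw=a^{ij}\partial_{ij}w+b^i\partial_i w=\mathcal{M}(u_2)-\mathcal{M}(u_1)=H_2-H_1\le0,
\]
with $(a^{ij})$ positive definite on compact sets and no zeroth-order term. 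Thus $w$ is a supersolution of the elliptic operator $L$.

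The two remaining steps use the maximum principle. First I would show $w\ge0$: the asymptotic hypothesis $\pd_\infty\Sigma_1\sq\overline{I^-(\Sigma_2)}$ forces the $1$-Lipschitz boundary functions to satisfy $f_1\le f_2$, hence $\liminf_{x\to\pd_\infty\hyp^n}w(x)\ge0$; if $m:=\inf_{\hyp^n}w$ were negative it would be attained at an interior point $x_0$ (a minimizing sequence cannot escape to infinity, where $w$ is asymptotically nonnegative), and there $\nabla w=0$, $D^2w\ge0$ would give $Lw(x_0)\ge0$, hence $Lw(x_0)=0$; the strong maximum principle for supersolutions would then make $w\equiv m<0$, contradicting $\liminf_\partial w\ge0$. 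Thus $w\ge0$. Finally, either $w>0$ on all of $\hyp^n$, giving $\Sigma_1\sq I^-(\Sigma_2)$, or $w$ vanishes somewhere; in the latter case $w$ attains its minimum value $0$ at an interior point, so Hopf's strong maximum principle forces $w\equiv0$, i.e.\ $\Sigma_1\sq\Sigma_2$.

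The main obstacle is the structural analysis of the operator in the second step: verifying that the difference of two CMC graph functions solves a genuinely elliptic linear equation with no zeroth-order term, that the ellipticity (spacelike) region is preserved along the interpolation $u_s$, and that the orientation conventions are such that the sign of the inhomogeneity $\mathcal{M}(u_2)-\mathcal{M}(u_1)$ is correctly dictated by $H_1\ge H_2$ so that ``lower mean curvature'' is compatible with ``lying in the past''. Once this is in place, together with the translation of the asymptotic hypothesis into $f_1\le f_2$, the two applications of the strong maximum principle are routine.
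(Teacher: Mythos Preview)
The paper does not give its own proof of this proposition: it is quoted from \cite[Proposition~5.2.1]{ecrin} and used as a black box. So there is no in-paper argument to compare against. That said, your outline is exactly the standard route to such a statement and is essentially correct.

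A few remarks on the details you flagged. The absence of a zeroth-order term in the linearized operator $L$ follows cleanly from the fact that the warped metric $g_{\hyp^n}-\cosh^2\!\big(\mathrm{d}_{\hyp^n}(p,\cdot)\big)\,dt^2$ is independent of $t$, so the mean curvature of a graph depends only on $(x,\nabla u,D^2u)$; your interpolation argument then produces $L=a^{ij}\partial_{ij}+b^i\partial_i$ with no $cw$ term, as you asserted. The convexity of the spacelike constraint in $\nabla u$ is likewise immediate from the explicit condition $\cosh^2\!\big(\mathrm{d}_{\hyp^n}(p,\cdot)\big)\,|\nabla u|^2_{\hyp^n}<1$. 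For the sign, it helps to check against Lemma~\ref{lem:maxumbilical}: when two umbilical hypersurfaces are tangent at a point, the one with larger $H$ lies in the future away from that point; this is consistent with your scheme, since if $\Sigma_1$ (larger $H$) were to cross into the future of $\Sigma_2$ it would produce an interior positive maximum of $u_1-u_2$, forbidden by the inequality $L(u_1-u_2)=H_1-H_2\ge0$.

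The only point where you should be slightly more careful is the attainment of the infimum: you need that $u_1,u_2$ extend continuously to $\overline{\hyp^n}$ (they are $1$-Lipschitz, hence they do) so that $\liminf_{x\to\partial_\infty\hyp^n}w(x)\ge0$ is genuinely a boundary condition, after which the argument that a negative infimum is realised on a compact set is fine.
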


	\subsection{Convexity}\label{sec:causal} A subset $\mathcal{C}$ of $\hypu^{n,1}\cup\pd_\infty\hypu^{n,1}$ is \textit{geodesically convex} if any pair of points in $\mathcal{C}$ is joined by at least one geodesic of $\hypu^{n,1}$ and any geodesic connecting them lies in $\mathcal{C}$. It follows that the intersection of geodesically convex sets is still geodesically convex.
	
	In general, the notion of geodesical convexity can be tricky. However, for our purpose, it coincides with the usual notion of convexity, after projectivization.
	\begin{lem}\label{lem:convproj} 
		An open subset $\mathcal{C}$ of $\hypu^{n,1}$ is geodesically convex if and only if it maps homeomorphically onto $\pj\circ\psi(\mathcal{C})$, for any $\phi$ as in Equation~\eqref{eq:split}, and its image is convex in $\R\pj^{n+1}$.
	\end{lem}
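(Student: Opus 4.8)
The plan is to analyze the single map $\Pi:=\pj\circ\psi\colon\hypu^{n,1}\to\R\pj^{n+1}$ and to show that it restricts to the asserted homeomorphism precisely on geodesically convex sets. First I would record three structural features of $\Pi$, all immediate from the quadric model. Its image is the open Klein domain $\Omega:=\pj(\{\pr{x,x}<0\})$; since $\psi$ is a covering and $\pj$ is a submersion on the cone of negative vectors, $\Pi$ is a covering of $\Omega$, whose deck group is generated by the isometry $\sigma\colon p\mapsto p_+$ (Definition~\ref{de:Pp}), because $\psi(p_\pm)=-\psi(p)$ by Corollary~\ref{cor:P}; thus $\Pi(a)=\Pi(b)$ if and only if $b=\sigma^k(a)$ for some $k\in\Z$. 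By \eqref{eq:geod}, geodesics of $\hypu^{n,1}$ are carried by $\Pi$ onto segments of projective lines $\pj(W)$, $W$ a $2$-plane. Finally, the same formula \eqref{eq:geod} shows that the geodesics issuing from a point $\psi(a)$ of $\hyp^{n,1}$ reach exactly the points $R$ with $\pr{\psi(a),R}\le 1$ (timelike, spacelike and lightlike rays realizing respectively $\pr{\cdot,\cdot}\in[-1,1]$, $(-\infty,-1]$ and $\{-1\}$); in particular, if $\pr{\psi(a),\psi(c)}>1$ then \emph{no} geodesic of $\hypu^{n,1}$ joins $a$ and $c$. As geodesic convexity and the pairing $\pr{\psi(\cdot),\psi(\cdot)}$ are intrinsic, the arguments below are uniform in the splitting $\psi$, which is what settles the clause ``for any $\psi$''.

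For the forward implication I first prove that $\Pi|_{\mathcal{C}}$ is injective, the heart of the statement. Suppose $a\ne b$ in $\mathcal{C}$ satisfy $\Pi(a)=\Pi(b)$, i.e.\ $b=\sigma^k(a)$. Consider first the antipodal case $\psi(b)=-\psi(a)$, where $a_+$ and $b$ share a $\psi$-fibre; since $\mathcal{C}$ is open I pick $c=\exp_{a_+}(\delta u)\in\mathcal{C}$ with $u$ a spacelike unit vector and $\delta$ small, and compute with \eqref{eq:geod} that $\pr{\psi(a),\psi(c)}=\cosh\delta\,\pr{\psi(a),\psi(a_+)}=\cosh\delta>1$, so $a$ and $c$ are joined by no geodesic, contradicting convexity. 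In the remaining case $\psi(a)=\psi(b)$ with $a\ne b$, any geodesic joining $a,b$ inside $\mathcal{C}$ (one exists by convexity) projects to a closed geodesic of $\hyp^{n,1}$ based at $\psi(a)$; as only timelike geodesics are closed, it is timelike and passes through $a_+$, so $a_+\in\mathcal{C}$ and we are reduced to the antipodal case. Hence $\Pi|_{\mathcal{C}}$ is an injective local homeomorphism, thus a homeomorphism onto the open set $V:=\Pi(\mathcal{C})$. Convexity of $V$ then follows: given two of its points, a geodesic of $\mathcal{C}$ joining their preimages maps to a projective line-segment inside $V$, while injectivity forbids a full projective line from lying in $V$, so $V$ meets every projective line in a single arc.

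For the converse I assume $\Pi|_{\mathcal{C}}$ is a homeomorphism onto a convex $V$. Given $a,b\in\mathcal{C}$, convexity yields a projective segment $[\Pi(a),\Pi(b)]\subseteq V$; applying $(\Pi|_{\mathcal{C}})^{-1}$ produces a curve in $\mathcal{C}$ lying over a single projective line $\pj(W)$, so $\psi$ sends it into $\hyp^{n,1}\cap W$, and the curve is therefore a geodesic joining $a$ and $b$ in $\mathcal{C}$. To see that \emph{every} geodesic $\gamma$ from $a$ to $b$ lies in $\mathcal{C}$, note that $V$ is convex, hence simply connected and evenly covered by $\Pi$, so $\mathcal{C}$ is the unique sheet over $V$ containing $a$; the path $\Pi\circ\gamma$ runs along the projective line $\ell$ through $\Pi(a),\Pi(b)$, and by unique path-lifting the lift of the arc $\ell\cap V$ beginning at $a$ stays in $\mathcal{C}$ and ends at the unique preimage $b$ of $\Pi(b)$ in $\mathcal{C}$. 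Since convexity makes $\ell\cap V$ the only arc of $\ell$ joining the two points inside $V$, the geodesic $\gamma$ coincides with this lift and so is contained in $\mathcal{C}$.

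The main obstacle is the injectivity step of the forward direction: the failure of geodesic connectedness in Anti-de Sitter space, encoded by the inequality $\pr{\psi(a),\psi(c)}>1$, is exactly the phenomenon that prevents a geodesically convex open set from wrapping around the timelike direction, and the delicate point is organizing this into the clean dichotomy above, in particular reducing the case $\psi(a)=\psi(b)$ to the antipodal one through the conjugate point $a_+$. The remaining ingredients, namely that $\Pi$ is a covering of $\Omega$ taking geodesics to projective lines and the reachability criterion drawn from \eqref{eq:geod}, are routine consequences of the quadric description.
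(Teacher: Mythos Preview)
The paper does not give its own proof of this lemma; it simply refers the reader to \cite[Section~4.6]{bonsep}. Your argument is therefore not being compared to anything in the paper, and on its own merits it is largely sound: the covering structure of $\Pi=\pj\circ\psi$, the correspondence between geodesics and projective segments, and the reachability obstruction $\pr{\psi(a),\psi(c)}>1$ drawn from \eqref{eq:geod} are exactly the right ingredients, and the injectivity step is the heart of the matter.

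Two points deserve attention. In the antipodal case you take $c=\exp_{a_+}(\delta u)\in\mathcal{C}$ by openness, but you have not established $a_+\in\mathcal{C}$; you only know $b\in\mathcal{C}$. The repair is immediate: set $c=\exp_{b}(\delta u)$, and since $\psi(b)=-\psi(a)$ the same computation gives $\pr{\psi(a),\psi(c)}=\cosh\delta>1$. (Alternatively one can argue, as you do in the second case, that any geodesic from $a$ to $b$ is timelike and therefore passes through $a_+$ or $a_-$, placing that point in $\mathcal{C}$.) More substantively, the passage ``$V$ meets every projective line in a single arc, hence $V$ is convex in $\R\pj^{n+1}$'' is true but not immediate: convexity in projective space means containment in some affine chart together with ordinary convexity there, and producing a hyperplane disjoint from $V$ from the arc property alone still requires an argument or a reference. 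Finally, in the converse direction your path-lifting detour is more than needed: once $\Pi(a)\ne\Pi(b)$ the $2$-plane $\Span\bigl(\psi(a),\psi(b)\bigr)$ is uniquely determined, so there is at most one geodesic of $\hypu^{n,1}$ from $a$ to $b$, namely the one you already constructed inside $\mathcal{C}$.
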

	For a more detailed discussion, we suggest \cite[Section~4.6]{bonsep}. 
	
	Motivated by this result, hereafter we will rather call \textit{convex} a geodesically convex subset of $\hypu^{n,1}\cup\pd_\infty\hypu^{n,1}$. Lemma~\ref{lem:convproj} extends the notion of geodesical convexity to the quadric model: an open subset $C$ of $\hyp^{n,1}\cup\pd_\infty\hyp^{n,1}$ is \textit{convex} if, equivalently
	\begin{enumerate}
		\item its cone in $\R^{n,2}$ is convex;
		\item its projectivization $\pj(C)$ is convex in $\R\pj^{n+1}$;
		\item $\psi^{-1}\left(C\right)$ is a union of geodesically convex subset of $\hyp^{n,1}\cup\pd_\infty\hyp^{n,1}$.
	\end{enumerate} 
	
	\subsubsection{Invisible domain} Let $X$ be an achronal subset of $\hypu^{n,1}\cup\pd_\infty\hypu^{n,1}$. The \textit{invisible domain} of $X$, denoted by $\Omega(X)$, is the set of points of $\hypu^{n,1}$ that are connected to $X$ by no causal curve.
	
	Equivalently, the set $\overline{\Omega(X)}$ is the union of all achronal subsets containing $X$. In light of Lemma~\ref{lem:graph}, let $X=\gr u$ in a splitting $(p,P)$: we can equivalently say that a point $q=(y,t)$ belongs to $\overline{\Omega(X)}$ if and only if there exists a $1-$Lipschitz function $\tilde{u}$ extending $u$ such that $\tilde{u}(y)=t$.
	
	Let us define $u^-$ (resp. $u^+$) as the infimum (resp. the supremum) of the $1-$Lipschitz functions extending $u$. These function are well defined and they are called \textit{extremal extension} of $u$, since by definition
	\[
	u^-\le\tilde{u}\le u^+.
	\]
	for any $1-$Lipschitz extension $\tilde{u}$ of $u$. Their graphs do not depend on the splitting: indeed, for two functions $f,g\colon\hyp^n\to\R$,
	\[
	f\le g \iff\gr f\sq I^-(\gr g),
	\]
	which does not depend on the splitting. These functions allows gives a more operative description of the invisible domain, as proved in \cite[Lemma~4.2.2]{bonsep}.
	
	\begin{lem}\label{lem:extremal}
		Let $X$ be an achronal subset of $\hypu^{n,1}\cup\pd_\infty\hypu^{n,1}$, $u^\pm$ its extremal extensions.
		\begin{enumerate}
			\item $\Omega(X)=I^+(\gr u^-)\cap I^-(\gr u^+)$;
			\item $\gr u^\pm$ is an achronal entire graph.
		\end{enumerate}
	\end{lem}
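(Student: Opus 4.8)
The plan is to prove part (2) first, since the graphs $\gr u^\pm$ must be known to be achronal entire graphs before the formula in part (1) is even meaningful. By the discussion preceding Definition~\ref{de:adm}, $u$ admits at least one $1-$Lipschitz extension to all of $\hyp^n$ (a McShane--Whitney type extension), so the families defining $u^-$ and $u^+$ are nonempty, and every such extension $\tilde u$ satisfies $u^-\le\tilde u\le u^+$ pointwise; in particular $u^\pm$ are finite. The elementary fact to invoke is that a pointwise infimum (resp. supremum) of a family of $1-$Lipschitz functions is again $1-$Lipschitz wherever finite: if each $f_i$ is $1-$Lipschitz, then taking the infimum over $i$ in $\inf_j f_j(x)\le f_i(x)\le f_i(y)+d(x,y)$ yields $\inf_j f_j(x)-\inf_j f_j(y)\le d(x,y)$, and symmetrically. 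Hence $u^\pm$ are $1-$Lipschitz and defined on all of $\hyp^n$, i.e.\ entire; Lemma~\ref{lem:graph} then gives that $\gr u^\pm$ are achronal. This proves (2).

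For (1), I would first identify the closure $\overline{\Omega(X)}$ with the closed slab bounded by the two extremal graphs, namely $\{(y,t):u^-(y)\le t\le u^+(y)\}$. One inclusion is immediate from the description recalled in the text: a point $q=(y,t)$ lies in $\overline{\Omega(X)}$ exactly when some $1-$Lipschitz extension $\tilde u$ of $u$ has $\tilde u(y)=t$, and any such $\tilde u$ obeys $u^-(y)\le t\le u^+(y)$. For the reverse inclusion I would exhibit, for each $t\in[u^-(y),u^+(y)]$, an explicit extension through $q$: the function $x\mapsto\max\bigl(u^-(x),\,t-d(x,y)\bigr)$ is a maximum of two $1-$Lipschitz functions, equals $t$ at $x=y$ since $t\ge u^-(y)$, and agrees with $u$ on $\mathrm{dom}(u)$ because $t\le u^+(y)\le u(a)+d(a,y)$ forces $t-d(x,y)\le u(x)$ there while $u^-(x)=u(x)$. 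Thus $\overline{\Omega(X)}$ is precisely the closed region between $\gr u^-$ and $\gr u^+$.

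It then remains to pass from the closed slab to the \emph{open} invisible domain and to recognize it as $I^+(\gr u^-)\cap I^-(\gr u^+)$. Using the dictionary of Lemma~\ref{lem:graph} together with the fact that the vertical ordering in a splitting encodes the causal relations ($f\le g\iff\gr f\sq I^-(\gr g)$, refined by achronality), the chronological future of the entire achronal graph $\gr u^-$ is its strict epigraph $\{(y,t):t>u^-(y)\}$, and dually $I^-(\gr u^+)=\{(y,t):t<u^+(y)\}$; intersecting gives the open slab $\{u^-<t<u^+\}$. I would then verify that this open slab is exactly $\Omega(X)$: an interior point lies strictly between the two maximal achronal graphs and hence can be joined to $X$ by no causal curve, whereas a boundary point $(y,u^\pm(y))$ sits on a maximal achronal extension of $X$ and is therefore reached from $X$ by a (lightlike) causal curve, so it belongs to $\overline{\Omega(X)}\setminus\Omega(X)$.

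The main obstacle is precisely this last step: carefully relating the open invisible domain to the chronological future and past of the two extremal graphs. The delicate points are (i) justifying that the chronological future of an entire achronal graph is exactly its strict epigraph --- which is where the achronal $\iff$ $1-$Lipschitz correspondence and the splitting-independence of the ordering are essential --- and (ii) correctly locating $\gr u^\pm$ as causally but not chronologically related to $X$, so that the open slab, rather than the closed one, is recovered. Once these are settled, the two chronological conditions assemble to $\Omega(X)=I^+(\gr u^-)\cap I^-(\gr u^+)$, completing the proof.
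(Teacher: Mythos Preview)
The paper does not give its own proof of this lemma; it is quoted from \cite[Lemma~4.2.2]{bonsep}, so there is no in-paper argument to compare against. Your proof of (2) is correct, as are the identifications $\overline{\Omega(X)}=\{u^-\le t\le u^+\}$ and $I^+(\gr u^-)=\{t>u^-\}$, $I^-(\gr u^+)=\{t<u^+\}$.

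The genuine gap is the last sentence: you assert that a point $(y,u^+(y))$ on the extremal graph ``is therefore reached from $X$ by a (lightlike) causal curve'', but lying on the supremal $1$-Lipschitz extension does not, by itself, produce such a curve. Without a closedness hypothesis on $X$ the conclusion can in fact fail: for $X=\{(a_k,0):k\in\N\}$ with $a_k\to a_\infty\notin X$ one has $u^-(a_\infty)=u^+(a_\infty)=0$, yet any causal curve joining two points with the same $t$-coordinate in a splitting is constant, so $(a_\infty,0)\in\Omega(X)$ while $(a_\infty,0)\notin I^+(\gr u^-)\cap I^-(\gr u^+)$. A correct way to finish is to first show $I^+(X)=\{t>u^+\}$ and $I^-(X)=\{t<u^-\}$ (this follows from your description of $I^\pm(\gr u^\mp)$, the inclusion of $X$ in $\gr u^\pm$, and the disjointness of $I^+(X)$ and $I^-(X)$ for achronal $X$), and then, assuming $X$ closed---as it is in every application the paper makes---pass to the limit along $(y,u^+(y)+\varepsilon)\in I^+(X)$ to obtain a causal curve from $X$ to $(y,u^+(y))$, so that $\gr u^\pm\cap\Omega(X)=\emptyset$.
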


	By construction, if $X=\bigcup_{i\in I} X_i$ is an achronal subset, then \[\Omega(X)=\bigcap_{i\in I}\Omega(X_i).\]
	The invisible domain of a point $p\in\pd_\infty\hypu^{n,1}$ is the portion of space contained between the two totally geodesic degenerate hyperplane containing $p$: it follows that the invisible domain of an admissible boundary $\Lambda$ isometrically embeds in $\hyp^{n,1}$, via any projection $\psi$ as in Equation~\eqref{eq:split}, and it has an explicit description, which can be checked in light of Proposition~\ref{pro:distprod}:
	\begin{equation}\label{eq:convexinvdom}
		\psi\left(\Omega(\Lambda)\right)=\{x\in\hyp^{n,1},\pr{x,\psi(q)}<0,\,\forall q\in\Lambda\}.
	\end{equation}
	By Lemma~\ref{lem:convproj}, Equation~\eqref{eq:convexinvdom} shows that $\Omega(\Lambda)$ is a convex subset, for any admissible boundary $\Lambda$ (compare with \cite[Proposition~4.6.1]{bonsep}).
	
	\begin{rem}\label{rem:invdom}
	It follows that the invisible domain is well defined in the quadric model, and we will abusively denote it as $\Omega(\Lambda)$, for $\Lambda$ and admissible boundary in $\pd_\infty\hyp^{n,1}$.
	
	In fact, one can prove that $\overline{\Omega(\Lambda)}$ is convex, and the covering $\psi$ (Equation~\eqref{eq:split}) restricts to an homeomorphism onto its image (see \cite[Lemma~4.7]{univ} and \cite[Proposition~4.6.1]{bonsep}). 
	\end{rem}

	\subsubsection{Convex hull}\label{sub:ch} For a subset $X$ of $\hypu^{n,1}\cup\pd_\infty\hypu^{n,1}$ contained in a geodesically convex set $\mathcal{C}$, the \textit{convex hull} of $X$, denoted $\ch(X)$, is the smallest geodesically convex subset of $\hypu^{n,1}\cup\pd_\infty\hypu^{n,1}$ containing $X$.
	
	If $\Lambda$ an admissible boundary, by Remark~\ref{rem:invdom} the closure of its invisible domain is convex, and it does not depend on the model. It follows that the convex hull of an admissible boundary is well defined, and it does not depend on the model.
	
	Since $\overline{\Omega(\Lambda)}$ intersects the asymptotic boundary $\pd_\infty\hypu^{n,1}$ exactly in $\Lambda$, the asymptotic boundary of $\ch(\Lambda)$ coincides with $\Lambda$. Indeed, by minimality $\overline{\Omega(\Lambda)}$ contains $\ch(\Lambda)$, hence
	\[
	\Lambda\sq\ch(\Lambda)\cap\pd_\infty\hypu^{n,1}\sq\overline{\Omega(\Lambda)}\cap\pd_\infty\hypu^{n,1}=\Lambda.
	\]
	
	Moreover, the projection $\psi(\ch(X))\sq\hyp^{n,1}$ is the intersection of $\hyp^{n,1}$ and the convex hull of $X$ in $\R^{n,2}$. For a more detailed discussion, we suggest \cite[Section~4.6]{bonsep}.
	
	\subsubsection{Time functions}\label{sub:pastfut}
	The invisible domain of an admissible boundary $\Lambda$
	We introduce two relevant functions on $\Omega(\Lambda)$ and state some of their main properties.
	\begin{de}
		For $p\in\Omega(\Lambda)$, we denote $\tau_\past(p)$ the Lorentzian distance of $p$ from $\pd_-\Omega(\Lambda)$, that is
		\[\tau_\past(p):=\sup_{q\in\pd_-\Omega(\Lambda)\cap I^-(q)}\dist(p,q).\]
		Analogously, $\tau_\fut$ stands for the Lorentzian distance from $\pd_+\Omega(\Lambda)$.
	\end{de}
	These functions \textit{time functions}, namely they strictly monotone along timelike paths. Usually, in the literature it is specified if they are strictly increasing or strictly decreasing functions, and the latter are called \textit{reverse} time functions.
	
	Moreover, $\tau_\past$ and $\tau_\fut$ have further remarkable properties, for which are known in the literature as cosmological times (see for example \cite{time}), when restricted respectively to the past and the future of an admissible boundary.
	\begin{de}\label{de:pastfutpart}
		For an admissible boundary $\Lambda$, we define its \textit{past part} and its \textit{future part} to be 
		\begin{equation}\label{eq:pastfutpart}
			\begin{split}
				\past(\Lambda)&:=I^-\left(\pd_+\ch(\Lambda)\right)\cap\Omega(\Lambda);\\
				\fut(\Lambda)&:=I^+\left(\pd_-\ch(\Lambda)\right)\cap\Omega(\Lambda).
			\end{split}
		\end{equation}
	\end{de}
	
	The following result has been proved in \cite[Proposition~6.19]{benbon} for the 3-dimensional case. However, the argument does not depend on the dimension, as already remarked in \cite{univ}.
	
	\begin{pro}\label{pro:benbon}	
		Let $\Lambda$ be an admissible boundary. Then $\tau_\past$ is a cosmological time for $\past(\Lambda)$, taking values in $(0,\pi/2)$. Specifically, for every point $p\in\past(\Lambda)$, there exist exactly two points $\rho^\past_-(p)\in\pd_-\Omega(\Lambda)$ and $\rho^\past_+(p)\in\pd_+\ch(\Lambda)$ such that:
		\begin{enumerate}
			\item $p$ belongs to the timelike segment joining $\rho^\past_-(p)$ and $\rho^\past_+(p)$;
			\item $\tau_\past(p)=\dist(\rho^\past_-(p),p)$;
			\item $\dist(\rho^\past_-(p),\rho^\past_+(p))=\pi/2$;
			\item $P(\rho^\past_\pm(p))$ is a support plane for $\past(\Lambda)$ passing through $\rho^\past_\mp(p)$;
			\item $\tau_\past$ is $C^1$ and $\nablah\tau_\past(p)$ is the unitary timelike tangent vector such that
			\[\exp_p\left(\tau_\past(p)\nablah\tau_\past(p)\right)=\rho^\past_-(p).\]
		\end{enumerate}
		for $\nablah\tau_\past$ the gradient of $\tau_\past$.
	\end{pro}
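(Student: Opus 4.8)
The plan is to identify $\tau_\past$ with a \emph{regular cosmological time} in the sense of Andersson--Galloway--Howard \cite{time} on the globally hyperbolic region $\past(\Lambda)$, and to extract properties (1)--(5) from the behaviour of its maximizing geodesics combined with the point--hyperplane polarity of Anti-de Sitter space recorded in Proposition~\ref{pro:distprod} and Corollary~\ref{cor:P}. The decisive structural input is the polar duality between the convex body $\overline{\Omega(\Lambda)}$ and the convex hull $\ch(\Lambda)$, which is manifest in $\R^{n,2}$ from Equation~\eqref{eq:convexinvdom} and is insensitive to the dimension.

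First I would record that $\Omega(\Lambda)$ is globally hyperbolic: by Lemma~\ref{lem:extremal} it equals $I^+(\gr u^-)\cap I^-(\gr u^+)$, the region squeezed between two achronal entire graphs, so the spacelike slices of any splitting restrict to Cauchy hypersurfaces. Global hyperbolicity makes the Lorentzian distance between causally related points attained by a timelike geodesic (Avez--Seifert), which is the compactness needed for the supremum defining $\tau_\past(p)$ to be realized. Thus for $p\in\past(\Lambda)$ there is a unit-speed past-directed timelike geodesic $\gamma$ with $\gamma(0)\in\pd_-\Omega(\Lambda)$ and $\gamma(\tau_\past(p))=p$ realizing the distance; maximality forces $\gamma$ to meet $\pd_-\Omega(\Lambda)$ orthogonally, so the totally geodesic spacelike hyperplane $Q$ through $\gamma(0)$ orthogonal to $\gamma'(0)$ is a support plane of the achronal convex set $\overline{\Omega(\Lambda)}$. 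Setting $\rho^\past_-(p):=\gamma(0)$ and $\rho^\past_+(p):=\gamma(\pi/2)$ gives (1), while $Q=P(\rho^\past_+(p))$ by Corollary~\ref{cor:P}, establishing one half of (4) and, through $\dist(\rho^\past_-(p),\rho^\past_+(p))=\pi/2$, property (3); in the quadric model this last equality reads $\pr{\psi(\rho^\past_-(p)),\psi(\rho^\past_+(p))}=0$ by Proposition~\ref{pro:distprod}.

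The heart of the argument is to locate $\rho^\past_+(p)$ on $\pd_+\ch(\Lambda)$ and to verify the remaining support-plane statement. Here I would invoke the polar duality: since $Q$ is a support plane of $\overline{\Omega(\Lambda)}$ touching at the past-boundary point $\rho^\past_-(p)$, its dual point $\rho^\past_+(p)=\gamma(\pi/2)$ lies on the boundary of the dual convex body, namely on $\pd_+\ch(\Lambda)$ (duality exchanging past and future), and symmetrically the dual hyperplane $P(\rho^\past_-(p))=\mathcal{P}_+(\rho^\past_-(p))$ is a future support plane of $\ch(\Lambda)$ through $\rho^\past_+(p)$, completing (4). The range then follows: positivity of $\tau_\past$ from $p\in I^+(\pd_-\Omega(\Lambda))$, and $\tau_\past(p)<\pi/2$ because $p$ lies strictly in the past of the support plane $\mathcal{P}_+(\rho^\past_-(p))$, which sits at distance exactly $\pi/2$ from $\rho^\past_-(p)$.

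It remains to prove uniqueness of $\rho^\past_\pm(p)$ and the regularity (5). For uniqueness I would argue by contradiction with the reverse triangle inequality \eqref{eq:triangleineq}: two distinct maximizing segments from $p$, together with the (spacelike) convexity of $\pd_-\Omega(\Lambda)$, would produce a broken timelike curve from $\pd_-\Omega(\Lambda)$ to $p$ of length strictly exceeding $\tau_\past(p)$, contradicting maximality---this is the usual mechanism behind smoothness of cosmological times. Given uniqueness of the realizing geodesic at every point, the Andersson--Galloway--Howard regularity theory \cite{time} yields that $\tau_\past$ is $C^1$ and that $\nablah\tau_\past(p)$ is the past-pointing unit timelike tangent of that geodesic, i.e.\ the vector with $\exp_p(\tau_\past(p)\nablah\tau_\past(p))=\rho^\past_-(p)$, which is (5). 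I expect the main obstacle to be a clean treatment of the polar duality and the uniqueness in the presence of the pleated, stratified structure of $\pd_-\Omega(\Lambda)$, which is more intricate than in the surface case of \cite{benbon}; however, both Proposition~\ref{pro:distprod} and the convexity of $\overline{\Omega(\Lambda)}$ (Remark~\ref{rem:invdom}) hold verbatim in every dimension, so the three-dimensional argument of \cite[Proposition~6.19]{benbon} transfers without change.
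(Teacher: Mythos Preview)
The paper does not actually prove this proposition: it is stated with the remark that it ``has been proved in \cite[Proposition~6.19]{benbon} for the 3-dimensional case'' and that ``the argument does not depend on the dimension, as already remarked in \cite{univ}.'' Your sketch is a faithful reconstruction of that cited argument---global hyperbolicity of $\Omega(\Lambda)$, existence of maximizing geodesics, polar duality between $\pd_-\Omega(\Lambda)$ and $\pd_+\ch(\Lambda)$, and cosmological-time regularity---so there is nothing to compare against in the paper itself.

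One small caution: the Andersson--Galloway--Howard theory \cite{time} gives in general only Lipschitz regularity and almost-everywhere differentiability of the cosmological time; the $C^1$ conclusion here genuinely relies on the uniqueness of the maximizing segment, which in turn uses the convexity of $\overline{\Omega(\Lambda)}$ rather than just the reverse triangle inequality. Your contradiction argument for uniqueness is slightly imprecise as stated (two maximizing segments from $p$ do not by themselves produce a longer broken curve), and the clean route is the one in \cite{benbon}: if $q_1\neq q_2$ both realize $\tau_\past(p)$, the midpoint of the spacelike geodesic $[q_1,q_2]$ lies in $\overline{\Omega(\Lambda)}$ by convexity and is strictly further from $p$ by the reverse triangle inequality applied in the timelike triangle $p,q_1,q_2$.
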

	\begin{rem}
		A symmetric result holds for $\tau_\fut$ in $\fut(\Lambda)$.
	\end{rem}
	
	\section{$H-$convexity}\label{sec:Hconvex}
	
	For an admissible boundary $\Lambda\sq\pd_\infty\hyp^{n,1}$, we adapt the notion of convex hull to any value of $H\in\R$: the \textit{$H-$shifted convex hull} $\ch_H(\Lambda)$ (Definition~\ref{de:Hhull}). Then, we study how it controls the geometry of the $H-$hypersurface asymptotic to $\Lambda$.
	
	\subsection{Definition and properties}
	We recalled in Section~\ref{sub:ch} that a convex subset $X$ in $\hyp^{n,1}$ can be written as intersection of half-spaces bounded by totally geodesic achronal hypersurfaces. To define \textit{$H-$convexity}, we replace half-spaces with connected components of the complement of totally umbilical spacelike hypersurfaces with mean curvature $H$. Hereafter, we will denote by $\mathcal{P}_\delta$ the totally umbilical spacelike hypersurfaces with constant mean curvature $H=n\tan(\delta)$. Equivalently, they are the hypersurfaces at oriented distance $-\delta$ from the spacelike totally geodesic hypersurface $\mathcal{P}_0$ with whom they share the same boundary. 
	
	\begin{de}\label{de:Hconv}
		A subset $X$ of $\hypu^{n,1}$ is \textit{future-$H-$convex} if it can be written as the intersection of the future of totally umbilical spacelike hypersurfaces of mean curvature $H$, namely if there exists $(\mathcal{P}^j_{\delta_H})_{j\in J}$, for $\delta_H:=\arctan(H/n)$, such that
		\[X=\bigcap_{j\in J}I^+(\mathcal{P}^j_{\delta_H}).\]
		Conversely, the subset $X$ is \textit{past-$H-$convex} if 
		\[X=\bigcap_{j\in J}I^-(\mathcal{P}^j_{\delta_H}).\]
		Finally, $X$ is \textit{$H-$convex} if $X=X_+\cap X_-$, for $X_+,X_-$ a future-$H-$convex and a past-$H-$convex subset, respectively.
	\end{de}
	
	\begin{rem}
		One could expect that, for $H=0$, the actual notion of convexity is recovered. However, this is not the case: a convex set is $0-$convex if and only if it admits only achronal support hyperplanes. For example, a geodesic timelike segment is convex in $\hyp^{n,1}$ but not $0-$convex. On the opposite direction, one can check that $\hypu^{n,1}$ is $0-$convex but not convex.
	\end{rem}
	
	\begin{rem}\label{rem:timeorientation}
		One can easily check that $H-$convexity is preserved by time-preserving isometries, while time-reversing isometries send future-$H-$convex subsets to past-$(-H)-$convex ones, and viceversa. For this reason, most of the following result will be stated and proved for future-$H-$convex subsets, without loss of generality.
	\end{rem}
	
	The following maximum principle for umbilical hypersurfaces will be useful for discovering properties of $H-$convexity.
	\begin{lem}\label{lem:maxumbilical}
		Consider two umbilical hypersurfaces $\mathcal{P}_{\delta_H}$ and $\mathcal{P}_{\delta_L}$ in $\hypu^{n,1}$. If $H>L$ and they meet tangentially at a point $p$, then $\mathcal{P}_{\delta_H}\setminus\{p\}\sq I^+(\mathcal{P}_{\delta_L})$.
	\end{lem}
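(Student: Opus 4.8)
The plan is to compare the two umbilical hypersurfaces $\mathcal{P}_{\delta_H}$ and $\mathcal{P}_{\delta_L}$ along the normal geodesic flow emanating from their common tangency point $p$. Since $H > L$ means $\delta_H > \delta_L$, and both hypersurfaces are totally umbilical with the future-directed unit normal sharing the same direction at $p$ (they meet tangentially), the essential idea is that $\mathcal{P}_{\delta_H}$ is "more curved toward the future" than $\mathcal{P}_{\delta_L}$, so it bends into the future of $\mathcal{P}_{\delta_L}$ everywhere except at $p$. I would first reduce to an explicit model computation using the quadric description of Equation~\eqref{eq:geod}.

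**The explicit model.** First I would use the maximality of the isometry group to place $p$ and the common tangent plane in a standard position. Recall from the discussion before Definition~\ref{de:Hconv} that $\mathcal{P}_\delta$ is the set of points at oriented timelike distance $-\delta$ from a fixed totally geodesic spacelike hypersurface $\mathcal{P}_0$ with which it shares its boundary; equivalently, each $\mathcal{P}_\delta$ is a level set of the timelike distance from $\mathcal{P}_0$. Then I would parametrize both $\mathcal{P}_{\delta_H}$ and $\mathcal{P}_{\delta_L}$ as graphs over $\mathcal{P}_0$ via the normal exponential map in the form $\exp_x(t\,N(x))$ with $N$ the future unit normal to $\mathcal{P}_0$, using the timelike case $\cos(t)x+\sin(t)v$ of Equation~\eqref{eq:geod}. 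The oriented-distance description forces $\mathcal{P}_{\delta_H}$ to sit at constant offset $-\delta_H$ and $\mathcal{P}_{\delta_L}$ at offset $-\delta_L$ measured from $\mathcal{P}_0$ along each normal geodesic; tangency at $p$ fixes the two families to share the same $\mathcal{P}_0$ near $p$, and the monotonicity of the offset translates directly into the timelike ordering we want.

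**Extracting the inequality via timelike distance.** The cleanest way to conclude is through Proposition~\ref{pro:distprod}: for any point $q\in\mathcal{P}_{\delta_H}$, I would compute $\langle \psi(q),\psi(q')\rangle = -\cos(\dist(q,q'))$ against the corresponding point $q'\in\mathcal{P}_{\delta_L}$ lying on the same $\mathcal{P}_0$-normal geodesic, and show this inner product is strictly less than $-1$ away from $p$, which by the reverse-triangle framework of Remark~\ref{oss:dist} certifies that $q\in I^+(\mathcal{P}_{\delta_L})$. Concretely, along each normal geodesic the two hypersurfaces are separated by a timelike distance $\delta_H-\delta_L>0$, and at $p$ this separation degenerates to zero because the geodesics coincide there; the strictness for all $q\neq p$ comes from the umbilicity ensuring the separation is realized along a genuine timelike segment rather than collapsing. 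This gives $\mathcal{P}_{\delta_H}\setminus\{p\}\sq I^+(\mathcal{P}_{\delta_L})$.

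**Main obstacle.** The delicate point is the strictness and the global (as opposed to merely local near $p$) conclusion. Near $p$ the comparison is an infinitesimal second-order statement about the shape operators, which is straightforward from $H>L$; but to push it to all of $\mathcal{P}_{\delta_H}\setminus\{p\}$ I must verify that the normal geodesics from $\mathcal{P}_0$ do not refocus or cross before reaching $\mathcal{P}_{\delta_H}$, so that the oriented-distance function remains well-behaved and the timelike segment realizing $\dist(q',q)$ actually exists. Here I would invoke Remark~\ref{oss:dist}: since both hypersurfaces lie within a fundamental domain and the relevant offsets $\delta_H,\delta_L$ are bounded by $\pi/2$ in absolute value, the points stay inside the region $I^-(p_+)\cap I^+(p_-)$ where Proposition~\ref{pro:distprod} applies, ruling out the degeneracy. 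The rest is a monotonicity check on $\cos$ over the interval $(0,\pi/2)$.
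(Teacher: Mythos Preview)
Your argument contains a genuine geometric error that invalidates the approach. You assert that ``tangency at $p$ fixes the two families to share the same $\mathcal{P}_0$ near $p$,'' and then that along each normal geodesic to this common $\mathcal{P}_0$ the two hypersurfaces sit at constant offsets $-\delta_H$ and $-\delta_L$. But if $\mathcal{P}_{\delta_H}$ and $\mathcal{P}_{\delta_L}$ were both equidistant from the \emph{same} totally geodesic hypersurface $\mathcal{P}_0$, they would be at constant timelike distance $|\delta_H-\delta_L|>0$ from each other everywhere, hence disjoint --- contradicting the hypothesis that they meet at $p$. In reality, each umbilical hypersurface is equidistant from its \emph{own} totally geodesic hypersurface: the one for $\mathcal{P}_{\delta_H}$ lies at signed distance $\delta_H$ from $p$ along the common normal, while the one for $\mathcal{P}_{\delta_L}$ lies at distance $\delta_L$, and since $\delta_H\neq\delta_L$ these are distinct hyperplanes. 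Your own sentence ``at $p$ this separation degenerates to zero'' is inconsistent with the constant-offset claim you just made, which is a signal that the picture is off. The subsequent inner-product computation inherits this flaw (and also note that $-\cos(\dist)\in(-1,1)$, not strictly less than $-1$).

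The paper's proof avoids this by taking a different reference: it writes both hypersurfaces as graphs $f_H,f_L$ over the common \emph{tangent} hyperplane $T_p\mathcal{P}_{\delta_H}=T_p\mathcal{P}_{\delta_L}$ in the splitting $(p,T_p\mathcal{P}_{\delta_H})$, derives an explicit formula for $f_h$ from the quadric description $\langle\cdot,e\rangle=-\sin(\delta_h)$ (with the dual point $e$ depending on $h$), and then checks directly that $h\mapsto f_h(x)$ is strictly increasing for each $x\neq x_0$. If you want to salvage your approach, you should likewise graph both hypersurfaces over a single, correctly chosen reference --- the tangent plane at $p$, not a putative common $\mathcal{P}_0$ --- and compare the resulting height functions.
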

	\begin{proof}
		For $h\in\R$, denote by $\delta_h=\arctan(h/n)$ and let $f_h\colon\hyp^n\to\R$ be the function defined by
		\begin{align*}
			f_h(x)&=\arccos\left(\frac{\sin(\delta_h)}{\sqrt{1+\sum_{i=1}^n x_i^2}}\right)-\arccos\left(\sin(\delta_h)\right).
		\end{align*}
		We claim that $\gr f_h$ is a $h-$umbilical spacelike hypersurface which is tangent to $\hyp^n\times\{0\}$ at $(x_0,0)$. In particular, choosing the splitting $(p,T_p\mathcal{P}_{\delta_H})=(p,T_p\mathcal{P}_{\delta_L})$, we have $\mathcal{P}_{\delta_H}=\gr f_H$ and $\mathcal{P}_{\delta_L}=\gr f_L$.
		
		One can check that the function $h\mapsto f_h(x)$ is strictly increasing for any $x\in\hyp^n$, $x\ne x_0$. Hence, $f_H(x)>f_L(x)$ for any $x\ne x_0$, that is \[\mathcal{P}_{\delta_H}\setminus\{p\}\sq I^+(\mathcal{P}{\delta_L}),\]
		which concludes the proof.
		
		To prove the claim, project the problem on $\hyp^{n,1}$ through $\psi$. One can check that $P_{\delta_h}:=\psi(\mathcal{P}_{\delta_h})$ is described by the equation
		\begin{equation}\label{eq:descrPdelta}
			\pr{p,e}=-\sin(\delta_h).
		\end{equation} 
		for a suitable choice of $e$. Indeed, in $\hyp^{n,1}$ the umbilical hypersurface $P_{\delta_h}$ is equidistant to a totally geodesic $P$. Let $e_\pm$ be the point such that $e_\pm^\perp=P$, then $\pr{p,e_\pm}=\mp\sin(\delta_h)$, that is $e:=e_-$ satisfies Equation~\eqref{eq:descrPdelta}.
		
		Timelike geodesics starting from $e$ intersect $P_{\delta_h}$ orthogonally: if $\mathcal{P}_{\delta_h}$ is tangent to $\hyp^n\times\{0\}$ at $(x_0,0)$, then $\psi^{-1}(e)$ lies in the fiber $\{x_0\}\times\R$. Denote by $e_h:=(x_0,T_h)$ the closest point to $\mathcal{P}_{\delta_h}$ in $\psi^{-1}(e)\cap I^-\left(\mathcal{P}_{\delta_h}\right)$, namely
		\[
		T_h=\arccos\left(\sin(\delta_h)\right)=\delta_h-\frac{\pi}{2}.
		\]
		
		Let $q=(x,t)\in\mathcal{P}_{\delta_h}$, for $x=(x_1,\dots,x_{n+1})\in\hyp^n$, then
		\begin{align*}\sin(\delta_h)&=-\pr{\psi(q),\psi(e_h)}=-\pr{\psi(q),e}=x_{n+1}\left(\cos(t)\cos(T_h)+\sin(t)\sin(T_h)\right)\\
			&=x_{n+1}\cos(t-T_h)=\sqrt{1+\sum_{i=1}^n x_i^2}\cos(t-T_h).\end{align*}
		Hence, in this splitting, the umbilical hypersurface $\mathcal{P}_{\delta_h}$ is the graph of the function
		\[f_h(x)=\arccos\left(\frac{\sin(\delta_h)}{\sqrt{1+\sum_{i=1}^n x_i^2}}\right)-T_h,\]
		proving the claim and concluding the proof.	
	\end{proof}
	
	\begin{lem}\label{lem:bordoconvesso}
		Let $X$ be a non-empty future-$H-$convex (resp. past-$H-$convex) set. Then, there exists a properly embedded achronal hypersurface $S$ such that $\overline{X}=\overline{I^+(S)}$ (resp. $\overline{X}=\overline{I^-(S)}$).
	\end{lem}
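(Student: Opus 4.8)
The plan is to realize the boundary hypersurface $S$ explicitly, in a fixed splitting, as the graph of the supremum of the graph functions of the umbilical hypersurfaces cutting out $X$. First I would reduce to the future case: by Remark~\ref{rem:timeorientation} a time-reversing isometry carries a past-$H-$convex set to a future-$(-H)-$convex one, so it suffices to treat $X=\bigcap_{j\in J}I^+(\mathcal{P}^j_{\delta_H})$. Fixing a splitting $(p,P)$ identifies $\hypu^{n,1}$ with $\hyp^n\times\R$. Each $\mathcal{P}^j_{\delta_H}$ is spacelike, hence acausal by Proposition~\ref{pro:spaceacausal}, and is an entire graph---as the explicit description in the proof of Lemma~\ref{lem:maxumbilical} shows, up to an isometry---so by Lemma~\ref{lem:graph} I may write it as $\gr f_j$ for a $1-$Lipschitz function $f_j\colon\hyp^n\to\R$ in this splitting.

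The crux is the identification $I^+(\gr g)=\{(y,s):s>g(y)\}$ for any $1-$Lipschitz $g$. The inclusion $\sq$ in the direction $\supseteq$ is free: the fibers $\{y\}\times\R$ are timelike and future-directed in the direction of increasing $s$ (the time-orientation chosen on $\hypu^{n,1}$), so the vertical segment from $(y,g(y))\in\gr g$ to $(y,s)$ is a future-directed timelike curve. For the reverse inclusion I would argue by contradiction using transitivity of the chronological relation: if a point $(y,s)\in I^+(\gr g)$ had $s\le g(y)$, then composing the timelike curve reaching it with the vertical segment up to $(y,g(y))$ would chronologically relate two points of $\gr g$ (or a point to itself), contradicting either the achronality of $\gr g$ from Lemma~\ref{lem:graph} or the absence of closed timelike curves in $\hypu^{n,1}$. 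This is the step I expect to require the most care, since it is where the global causal structure enters.

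With this in hand the rest is formal. Applying the identification to each $f_j$ gives $X=\{(y,s):s>f_j(y)\ \forall j\in J\}$; setting $u:=\sup_{j\in J}f_j$, the non-emptiness of $X$ forces $u$ to be finite, and a supremum of $1-$Lipschitz functions is $1-$Lipschitz, so $S:=\gr u$ is a properly embedded achronal hypersurface by Lemma~\ref{lem:graph}. By construction $\{s>u\}\sq X\sq\{s\ge u\}$, while the identification applied to $u$ gives $I^+(S)=\{s>u\}$, whose closure is $\{s\ge u\}$ since $u$ is continuous. Passing to closures in the two inclusions yields $\overline{X}=\{s\ge u\}=\overline{I^+(S)}$, as desired. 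The passage to closures is precisely what absorbs the possibly delicate boundary behaviour of the family $\{f_j\}$---the supremum need not be attained---which is why the statement is phrased for closures rather than for the sets themselves.
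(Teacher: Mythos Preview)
Your proof is correct and follows essentially the same approach as the paper: fix a splitting, write each umbilical hypersurface as the graph of a $1$-Lipschitz function $f_j$, take $u=\sup_j f_j$, and conclude via Lemma~\ref{lem:graph}. You are simply more explicit than the paper about the identification $I^+(\gr g)=\{s>g(y)\}$ and about the passage to closures, both of which the paper takes for granted.
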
	
	\begin{proof}
		Assume that $X$ is future-$H-$convex. In a splitting, the set
		\[\overline{X}=\bigcap_{j\in J}\overline{I^+(\mathcal{P}^j_{\delta_H})}\]
		is the epigraph of the function \[f=\sup_{j\in J}f_j,\] for $f_j$ the strictly $1-$Lipschitz functions describing the $\mathcal{P}^j_{\delta_H}$'s. It follows that the function $f$ is $1-$Lipschitz, \textit{i.e.} $S:=\gr f$ is an achronal properly embedded hypersurface by Lemma~\ref{lem:graph}.
	\end{proof}
	\begin{de}\label{de:convexsurf}
		With a slight abuse of notation, we will call $S$ a \textit{future-$H-$convex} (resp. \textit{past-$H-$convex}) hypersurface.
	\end{de}
	
	\begin{de}
		Let $S$ be a properly embedded achronal hypersurface, and $p\in S$. An achronal properly embedded hypersurface $\mathcal{P}$ is a past (resp. future) \textit{support} hypersurface to $S$ at $p$ if it contains $p$ and it is contained in $\overline{I^-(S)}$ (resp. $\overline{I^+(S)}$).
	\end{de}
	
	The following characterization descends directly from Lemma~\ref{lem:bordoconvesso}.
	\begin{cor}\label{cor:support}
		A properly embedded hypersurface $S$ is future-$H-$convex (resp. past-$H-$convex) if and only if it admits, at any point $p$, a past (resp. future) support hypersurface which is either an umbilical $H-$hypersurface or a totally geodesic degenerate hypersurface.
	\end{cor}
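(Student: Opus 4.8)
The plan is to combine the epigraph description provided by Lemma~\ref{lem:bordoconvesso} with a single structural fact about the closure of the family of umbilical $H$-hypersurfaces. By Remark~\ref{rem:timeorientation} it suffices to treat the future-$H$-convex case, since a time-reversing isometry interchanges the two statements. The fact I would isolate first is the following \emph{dichotomy}: every graphical (locally uniform) limit of umbilical $H$-hypersurfaces is either an umbilical $H$-hypersurface or a totally geodesic degenerate hypersurface, and conversely every totally geodesic degenerate hypersurface arises as a monotone limit from below of umbilical $H$-hypersurfaces. The forward half follows from Proposition~\ref{pro:compact}: a sequence of umbilical $H$-hypersurfaces is a sequence of $H_k$-hypersurfaces with $H_k\equiv H$ (staying in a fixed precompact region once they are pinned near a point), so the trichotomy forces the limit to be a totally geodesic lightlike, hence degenerate, hypersurface when its asymptotic boundary fails to be admissible, and an $H$-hypersurface otherwise, the case $H_\infty=\pm\infty$ being excluded; since the convergence is smooth on compact sets, umbilicity (vanishing of the traceless shape operator) passes to the limit in the latter case. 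The converse half is a direct computation in the level-set description $\pr{\cdot,e}=-\sin(\delta_H)$ of Equation~\eqref{eq:descrPdelta}: letting the timelike unit vector $e$ degenerate to a lightlike direction $w$ from the appropriate side produces umbilical $H$-hypersurfaces increasing to $\{\pr{\cdot,w}=0\}$.

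For the forward implication I would write $\overline{X}=\overline{I^+(S)}$ with $S=\gr f$ and $f=\sup_{j\in J}f_j$ exactly as in the proof of Lemma~\ref{lem:bordoconvesso}, each $f_j$ being the $1$-Lipschitz function describing an umbilical $H$-hypersurface $\mathcal{P}^j_{\delta_H}$. Fixing $p=(x,f(x))\in S$, I choose $j_k$ with $f_{j_k}(x)\to f(x)$; the $f_{j_k}$ are $1$-Lipschitz and locally uniformly bounded (between $f$ above and $f(x)-\dist(x,\cdot)-1$ below), so by Arzel\`a--Ascoli a subsequence converges locally uniformly to a $1$-Lipschitz $g$ with $g\le f$ and $g(x)=f(x)$. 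Then $\gr g$ contains $p$ and lies in $\overline{I^-(S)}$, hence is a past support hypersurface, and by the dichotomy it is either an umbilical $H$-hypersurface or a totally geodesic degenerate one.

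For the converse, let $\mathcal{F}$ be the family of all umbilical $H$-hypersurfaces contained in $\overline{I^-(S)}$ and set $X:=\bigcap_{\mathcal{P}\in\mathcal{F}}I^+(\mathcal{P})$, which is future-$H$-convex by Definition~\ref{de:Hconv}; a computation as in Lemma~\ref{lem:bordoconvesso} identifies $\overline{X}$ with $\bigcap_{\mathcal{P}\in\mathcal{F}}\overline{I^+(\mathcal{P})}$. Each $\mathcal{P}\in\mathcal{F}$ lies below $S$, giving $\overline{I^+(S)}\sq\overline{X}$. Conversely, if $q=(y,t)\notin\overline{I^+(S)}$, that is $t<f(y)$, I examine the support hypersurface at $p=(y,f(y))$: if it is an umbilical $H$-hypersurface it belongs to $\mathcal{F}$ and separates $q$ from $\overline{X}$, its height at $y$ being $f(y)>t$; if it is degenerate, the converse half of the dichotomy supplies umbilical $H$-hypersurfaces lying below that support (hence below $S$, hence in $\mathcal{F}$) whose height at $y$ tends to $f(y)>t$, so one of them separates $q$. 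Thus $\overline{X}=\overline{I^+(S)}$, and $S$ is a future-$H$-convex hypersurface in the sense of Definition~\ref{de:convexsurf}.

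The main obstacle is precisely the case of a \emph{non-attained supremum}: at such points no umbilical $H$-hypersurface touches $S$, so the support is genuinely degenerate, and both implications rest on the interface between umbilical $H$-hypersurfaces and totally geodesic degenerate ones encoded in the dichotomy. Everything else reduces to routine bookkeeping with epigraphs of $1$-Lipschitz functions, which is why the statement \emph{descends} from Lemma~\ref{lem:bordoconvesso} once this interface is understood.
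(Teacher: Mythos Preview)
Your proposal is correct and follows essentially the same approach as the paper: both directions rest on Proposition~\ref{pro:compact} (your ``dichotomy'') for the forward implication and on approximating a degenerate support hypersurface from below by umbilical $H$-hypersurfaces for the converse. Your converse is organized slightly differently---you intersect over the full family $\mathcal{F}$ of umbilical $H$-hypersurfaces below $S$ and verify $\overline{X}=\overline{I^+(S)}$ pointwise, whereas the paper builds explicit approximating sequences $\mathcal{P}_p^k$ at each $p$ and asserts the boundary equality---but the substance is the same.
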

	\begin{proof}
		Let us fix $p\in S$. If $S$ is future-$H-$convex, there exists a collection of umbilical hypersurfaces $(\mathcal{P}_{\delta_H}^j)_{j\in J}$, such that $S\sq\overline{I^+(\mathcal{P}_{\delta_H}^j)}$ for all $j\in J$ and $S$ is the boundary of
		\[X=\bigcap_{j\in J}I^+(\mathcal{P}_{\delta_H}^j).\]
		Let us extract a sequence $(j_k)_{k\in\N}\sq J$ such that \[\dist(\mathcal{P}^{j_k}_{\delta_H},p)<1/k.\]
		Thanks to Proposition~\ref{pro:compact}, the sequence subconverges to a properly embedded achronal hypersurface $\mathcal{P}_\infty$ which is either a totally geodesic degenerate hypersurface or a $H-$hypersurface. In the latter case, it is clear that $\mathcal{P}_\infty$ is a totally umbilical spacelike hypersurface. By construction, the point $p$ belongs to $\mathcal{P}_{\infty}$ and by continuity $S\sq\overline{I^+(\mathcal{P}_\infty)}$, namely $\mathcal{P}_\infty$ is a support hypersuface for $S$, as requested. Since $p$ was arbitrary, this holds for any point of $S$.
		
		Conversely, let $\mathcal{P}_p$ be a support past hypersurface for $S$ at $p$ as in the statement: we can build a sequence $(P^k_p)_{k\in\N}$ of totally umbilical $H-$hypersurfaces lying in the past of $\mathcal{P}_p$ such that the limit in the Hausdorff topology of $\mathcal{P}_p^k$ is $\mathcal{P}_p$. If $\mathcal{P}_p$ is  totally umbilical spacelike $H-$hypersurface, take the splitting $(p,\mathcal{P})$, for $\mathcal{P}$ the totally geodesic spacelike hypersurface equidistant to $\mathcal{P}_p$, and define $\mathcal{P}_p^k$ to be the totally umbilical $H-$hypersurfaces equidistant to $\hyp^n\times\{-1/k\}$. Otherwise, that is if $\mathcal{P}_p$ is a totally geodesic degenerate hypersurface, take totally umbilical spacelike $H-$hypersurface equidistant to the totally geodesic spacelike hypersurface $\mathcal{P}_-(p_k)$, for $p_k$ a sequence in $\mathcal{P}_p$ converging to its dual future point (Definition~\ref{de:Pp}). Then, the set
		\[X:=\bigcap_{p\in S}\bigcap_{k\in\N}I^+\left(\mathcal{P}_p^k\right)\]
		is future $H-$convex and its boundary coincides with $S$, concluding the proof.	
		
	\end{proof}
		
	We can deduce that $H-$convexity is a closed property.
	\begin{cor}\label{cor:Hconvclosed}
		Let $X_k$ be a sequence of properly embedded future-$H-$convex subsets of $\hyp^{n,1}$, converging to $X_\infty$ in the Hausdorff topology. Then, $X_\infty$ is future-$H-$convex.
	\end{cor}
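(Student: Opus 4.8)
The plan is to leverage the characterization of $H$-convexity in terms of support hypersurfaces provided by Corollary~\ref{cor:support}, combined with the compactness result Proposition~\ref{pro:compact}, since the statement is precisely a stability-under-limits assertion. By Corollary~\ref{cor:support}, each $X_k$ being future-$H$-convex is equivalent to saying that its boundary hypersurface $S_k$ admits, at every point, a past support hypersurface which is either a totally umbilical $H$-hypersurface or a totally geodesic degenerate hypersurface. My goal is therefore to show that the limit $X_\infty$ also admits such support hypersurfaces at every boundary point, which by the converse direction of Corollary~\ref{cor:support} will establish that $X_\infty$ is future-$H$-convex.

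First I would fix a point $p_\infty$ in the boundary $S_\infty$ of $X_\infty$. Using Hausdorff convergence $X_k\to X_\infty$, I would produce a sequence of points $p_k\in S_k$ with $p_k\to p_\infty$. For each $k$, Corollary~\ref{cor:support} furnishes a past support hypersurface $\mathcal{P}_k$ to $S_k$ at $p_k$, each of which is either an umbilical $H$-hypersurface or a totally geodesic degenerate hypersurface; in both cases these are properly embedded achronal hypersurfaces with prescribed asymptotic behavior. Since the $p_k$ stay in a precompact region, the $\mathcal{P}_k$ are contained in a precompact subset of $\hypu^{n,1}\cup\pd_\infty\hypu^{n,1}$, so I can apply Proposition~\ref{pro:compact}: up to a subsequence, the $\mathcal{P}_k$ converge to an entire achronal graph $\mathcal{P}_\infty$. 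Because each $\mathcal{P}_k$ is an $H$-hypersurface (with fixed $H$) or a degenerate totally geodesic hypersurface, the trichotomy of Proposition~\ref{pro:compact} forces the limit $\mathcal{P}_\infty$ to be again either a totally umbilical $H$-hypersurface or a totally geodesic degenerate (lightlike) hypersurface — exactly the two admissible types appearing in Corollary~\ref{cor:support}.

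It then remains to verify that $\mathcal{P}_\infty$ is genuinely a past support hypersurface to $S_\infty$ at $p_\infty$, meaning $p_\infty\in\mathcal{P}_\infty$ and $S_\infty\sq\overline{I^+(\mathcal{P}_\infty)}$. The containment of $p_\infty$ follows from $p_k\in\mathcal{P}_k$, $p_k\to p_\infty$, and the (subsequential) convergence $\mathcal{P}_k\to\mathcal{P}_\infty$. The inclusion $S_\infty\sq\overline{I^+(\mathcal{P}_\infty)}$ I would obtain by passing to the limit in the relations $S_k\sq\overline{I^+(\mathcal{P}_k)}$: given any $q_\infty\in S_\infty$, approximate it by $q_k\in S_k\sq\overline{I^+(\mathcal{P}_k)}$, and use that the closed future region varies continuously under the convergence of achronal graphs (equivalently, the causal relation $\overline{I^+(\cdot)}$ is stable under Hausdorff/graph convergence, as encoded by the monotonicity $f\le g\iff \gr f\sq I^-(\gr g)$ recalled before Lemma~\ref{lem:extremal}). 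Applying Corollary~\ref{cor:support} in the converse direction then yields that $S_\infty=\pd X_\infty$ is future-$H$-convex, hence $X_\infty$ is future-$H$-convex.

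The main obstacle I anticipate is the last continuity argument for the causal containment under the limit: one must ensure that no degeneration of the support hypersurfaces $\mathcal{P}_k$ destroys the inclusion when passing to the boundary at infinity, in particular when some $\mathcal{P}_k$ (or the limit $\mathcal{P}_\infty$) is a totally geodesic degenerate hypersurface rather than a spacelike umbilical one. Handling this uniformly requires the graph description of achronal hypersurfaces from Lemma~\ref{lem:graph} together with the fact that the relevant limits are controlled by convergence of the associated $1$-Lipschitz functions, so that $\overline{I^+(\mathcal{P}_k)}$ converges to $\overline{I^+(\mathcal{P}_\infty)}$ in a way compatible with the inclusions. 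Once this stability of the closed future regions under graph convergence is established, the proof assembles cleanly from Corollary~\ref{cor:support} and Proposition~\ref{pro:compact}.
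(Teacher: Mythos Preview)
Your proposal is correct and follows essentially the same strategy as the paper: pass to the boundary hypersurfaces $S_k$, use Corollary~\ref{cor:support} to produce past support hypersurfaces $\mathcal{P}_k$ at approximating points, extract a limit $\mathcal{P}_\infty$ via Proposition~\ref{pro:compact}, and verify that $\mathcal{P}_\infty$ is a past support hypersurface for $S_\infty$.

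The one noteworthy difference is in how the final inclusion $S_\infty\sq\overline{I^+(\mathcal{P}_\infty)}$ is obtained. You argue by passing to the limit in the relations $S_k\sq\overline{I^+(\mathcal{P}_k)}$, and correctly identify this step as the delicate one. The paper sidesteps this entirely with a small trick: writing $S_k=\gr f_k$ and $S_\infty=\gr f_\infty$ in a splitting, it replaces $f_k$ by $f_k-a_k$ where $a_k:=\sup|f_\infty-f_k|$, so that (after this vertical shift, which preserves future-$H$-convexity) each $S_k$ already lies in the past of $S_\infty$. Then $S_\infty\sq\overline{I^+(S_k)}\sq\overline{I^+(\mathcal{P}_k)}$ holds for every $k$, and the desired inclusion for $\mathcal{P}_\infty$ follows by a single limit rather than a diagonal argument. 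Your route works too (it reduces to $f_k\ge g_k\Rightarrow f_\infty\ge g_\infty$ under uniform convergence of the $1$-Lipschitz graph functions), but the shifting trick is what eliminates the ``obstacle'' you anticipated.
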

	\begin{proof}
		Denote $S_k$ the achronal entire hypersurfaces such that $\overline{X_k}=\overline{I^+(S_k)}$. In a splitting, the $S_k$'s are graphs of $1-$Lipschitz functions $f_k$, which are uniformly bounded since the sequence $X_k$ converges in the Hausdorff topology. It follows that the sequence $f_k$ converges uniformly to a $1-$Lipschitz function $f_\infty$, \textit{i.e.} the sequence $S_k$ converges to the entire achronal graph $S_\infty=\gr f_\infty$ in the Hausdorff topology, and $\overline{X_\infty}=\overline{I^+(S_\infty)}$.
		
		To prove that $S_\infty$ is future-$H-$convex, let $a_k:=\sup|f-f_k|$ and replace $f_k$ by $f_k-a_k$, so that $S_k$ lies in the past of $S_\infty$, for any $k\in\N$. Fix a point $p:=f_\infty(x)$ in $S_\infty$, and consider $p_k:=f_k(x)$. Since $S_k$ is future-$H-$convex, by Corollary~\ref{cor:support} there exists a past support hypersurface $\mathcal{P}_k$ to $S_k$ at $p_k$, which is a $H-$umbilical hypersurface or a degenerate totally geodesic hypersurface. In particular, we have
		\[S\sq\overline{I^+(S_k)}\sq\overline{I^+(\mathcal{P}_k)}.\]
		
		We claim that the sequence $\mathcal{P}_k$ converges to a past support hypersurface $\mathcal{P}_\infty$. Indeed, $\mathcal{P}_\infty$ contains $p$ and lies in the past of $S_\infty$ by continuity. Moreover, by Proposition~\ref{pro:compact}, $\mathcal{P}_\infty$ is either a $H-$umbilical hypersurface or a degenerate totally geodesic hypersurface. Since the choice of $p$ was arbitrary, by Corollary~\ref{cor:support} we conclude the proof.
	\end{proof}
	
	\begin{cor}\label{cor:Hcontinuity}
		Let $S$ be an achronal properly embedded hypersurface in $\hypu^{n,1}$, and $L\in\R$.
		\begin{itemize}
			\item If $S$ is future-$H-$convex, for every $H\in(-\infty,L)$, then $S$ is future-$L-$convex;
			\item if $S$ is past-$H-$convex, for every $H\in(L,+\infty)$, then $S$ is past-$L-$convex.
		\end{itemize}
	\end{cor}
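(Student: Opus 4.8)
The plan is to reduce both statements to a single one and then exploit the support-hypersurface characterization of $H-$convexity together with a compactness argument. By Remark~\ref{rem:timeorientation}, a time-reversing isometry interchanges future-$H-$convexity with past-$(-H)-$convexity; composing this with the substitution $H\mapsto -H$ transforms the first bullet into the second, so it suffices to prove the future case. Concretely, I would show: if $S$ is future-$H-$convex for every $H<L$, then $S$ is future-$L-$convex. In view of Corollary~\ref{cor:support}, this amounts to producing, at every point $p\in S$, a past support hypersurface which is either an $L-$umbilical spacelike hypersurface or a totally geodesic degenerate hypersurface.

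First I would fix $p\in S$ and a sequence $H_k\nearrow L$. For each $k$, future-$H_k-$convexity of $S$ together with Corollary~\ref{cor:support} provides a past support hypersurface $\mathcal{P}_k$ at $p$ which is either $H_k-$umbilical or totally geodesic degenerate. The degenerate case is immediate: this class of support hypersurfaces does not depend on $H$, so if any single $\mathcal{P}_k$ is totally geodesic degenerate it already serves as a past support for future-$L-$convexity, and we are done at $p$. Hence I may assume that every $\mathcal{P}_k$ is a spacelike $H_k-$umbilical hypersurface passing through $p$.

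To pass to the limit I would invoke Proposition~\ref{pro:compact}. The essential input is precompactness: each $\mathcal{P}_k$ is achronal and contains $p$, so by the argument recalled in Remark~\ref{oss:Up} it is disjoint from $I(p)$ and thus contained in the fixed region bounded by $\mathcal{P}_-(p)$ and $\mathcal{P}_+(p)$, whose closure in $\hypu^{n,1}\cup\pd_\infty\hypu^{n,1}$ is compact. Since $H_k\to L$ is finite, Proposition~\ref{pro:compact} yields a subsequential limit $\mathcal{P}_\infty$ which is either an $L-$hypersurface or a totally geodesic degenerate (lightlike) hypersurface; in the former case the smooth convergence on compact sets forces $\mathcal{P}_\infty$ to remain umbilical. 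By continuity $\mathcal{P}_\infty$ passes through $p$, and being a limit of hypersurfaces contained in the closed set $\overline{I^-(S)}$ it lies in $\overline{I^-(S)}$; hence it is a past support hypersurface of exactly the required type at $p$. As $p\in S$ was arbitrary, Corollary~\ref{cor:support} gives that $S$ is future-$L-$convex, and the past case follows by time reversal.

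The hard part will be precisely the step just described: guaranteeing that the limiting support $\mathcal{P}_\infty$ carries mean curvature exactly $L$ and retains its umbilical (or degenerate) structure, which hinges on the precompactness of the family of supports anchored at $p$ and on the convergence statement of Proposition~\ref{pro:compact}. It is worth noting that Lemma~\ref{lem:maxumbilical} is \emph{not} needed here: future-$H-$convexity for a single $H<L$ does not by itself imply future-$L-$convexity, and it is genuinely the availability of supports for \emph{all} $H<L$, combined with the closedness encoded in Proposition~\ref{pro:compact}, that produces the limiting $L-$support.
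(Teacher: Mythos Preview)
Your proposal is correct and follows essentially the same approach as the paper: fix $p\in S$, take $H_k\nearrow L$, extract past support hypersurfaces via Corollary~\ref{cor:support}, and pass to the limit using Proposition~\ref{pro:compact}. You actually supply more detail than the paper does (the precompactness of the family anchored at $p$, the preservation of umbilicality under smooth convergence, and the explicit time-reversal reduction), all of which are implicit in the paper's terse argument.
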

	\begin{proof}
		The result follows directly from Corollary~\ref{cor:support}: fix a point $p\in S$, consider a sequence $H_k\nearrow L$.
		Since $S$ is future-$H_k-$convex, it admits at $p$ a past support hypersurface $\mathcal{P}_k$ which is a $H_k-$totally umbilical spacelike hypersurfaces or a totally geodesic degenerate hypersurface. By Proposition~\ref{pro:compact}, the sequence $\mathcal{P}_k$ converges to a support hypersurface $\mathcal{P}_\infty$ which is either a $L-$totally umbilical spacelike hypersurfaces or totally geodesic degenerate hypersurface, proving the $L-$convexity of $S$ at $p$. Since $p$ was arbitrary, this concludes the proof.
	\end{proof}
	
	\begin{cor}
		Let $S$ be an achronal properly embedded hypersurface in $\hypu^{n,1}$, and $H\ge L$.
		\begin{itemize}
			\item If $S$ is future-$H-$convex, then $S$ is future-$L-$convex;
			\item if $S$ is past-$L-$convex, then $S$ is past-$H-$convex.
		\end{itemize}
		In particular, if $H\ge0$ (resp. $H\le0$), then $X$ is future-convex (resp. past-convex).
	\end{cor}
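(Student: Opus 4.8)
The plan is to run everything through the support-hypersurface characterization of Corollary~\ref{cor:support}, which reduces $H$-convexity to a pointwise condition, and then to use the tangential comparison of umbilical hypersurfaces provided by Lemma~\ref{lem:maxumbilical} to trade an $H$-umbilical support for an $L$-umbilical one. The case $H=L$ is vacuous, so throughout I assume $H>L$. By the symmetry recorded in Remark~\ref{rem:timeorientation}, it suffices to prove the first bullet: applying a time-reversing isometry turns a past-$L$-convex hypersurface into a future-$(-L)$-convex one, and since $H\ge L$ gives $-L\ge -H$, the first bullet (with $-L$ in the role of $H$ and $-H$ in the role of $L$) yields future-$(-H)$-convexity, which reverts to past-$H$-convexity.

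For the first bullet, fix a point $p\in S$. Since $S$ is future-$H$-convex, Corollary~\ref{cor:support} provides a past support hypersurface $\mathcal{P}$ at $p$ that is either a totally geodesic degenerate hypersurface or an $H$-umbilical hypersurface $\mathcal{P}_{\delta_H}$. If $\mathcal{P}$ is degenerate totally geodesic, it serves unchanged as an admissible past support hypersurface for $L$-convexity, since this option is available for every value of the mean curvature. If instead $\mathcal{P}=\mathcal{P}_{\delta_H}$, I would choose the splitting $(p,T_p\mathcal{P}_{\delta_H})$ and let $\mathcal{P}_{\delta_L}$ be the $L$-umbilical hypersurface tangent to $\mathcal{P}_{\delta_H}$ at $p$, exactly as produced by the graphs $f_h$ in the proof of Lemma~\ref{lem:maxumbilical}. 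Because $H>L$, that lemma gives $\mathcal{P}_{\delta_H}\setminus\{p\}\sq I^+(\mathcal{P}_{\delta_L})$, so $\mathcal{P}_{\delta_L}$ lies in the past of $\mathcal{P}_{\delta_H}$, whence $\overline{I^+(\mathcal{P}_{\delta_H})}\sq\overline{I^+(\mathcal{P}_{\delta_L})}$.

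Combining with $S\sq\overline{I^+(\mathcal{P}_{\delta_H})}$ (the defining property of a past support hypersurface), one obtains $S\sq\overline{I^+(\mathcal{P}_{\delta_L})}$, and since $p\in\mathcal{P}_{\delta_L}$, the hypersurface $\mathcal{P}_{\delta_L}$ is a past support $L$-umbilical hypersurface to $S$ at $p$. As $p$ was arbitrary, Corollary~\ref{cor:support} concludes that $S$ is future-$L$-convex. The only point requiring care is the direction of the causal inclusions, namely that the tangential $L$-umbilical hypersurface sits in the past of the $H$-umbilical one rather than its future; this is precisely the content of Lemma~\ref{lem:maxumbilical}, so I regard it as the technical heart of the argument rather than a genuine obstacle. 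Finally, the ``in particular'' statement follows by specialization: taking $L=0$ in the first bullet shows that future-$H$-convexity with $H\ge0$ implies future-$0$-convexity, and taking the target value $0$ with $L=H\le0$ in the second bullet shows that past-$H$-convexity with $H\le0$ implies past-$0$-convexity.
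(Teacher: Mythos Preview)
Your proof is correct and follows essentially the same approach as the paper: both arguments reduce to the pointwise support-hypersurface characterization of Corollary~\ref{cor:support}, handle the degenerate case trivially, and in the umbilical case replace the $H$-umbilical support at $p$ by the tangent $L$-umbilical one via Lemma~\ref{lem:maxumbilical}. Your write-up is slightly more detailed (spelling out the symmetry reduction for the second bullet and the inclusion $\overline{I^+(\mathcal{P}_{\delta_H})}\sq\overline{I^+(\mathcal{P}_{\delta_L})}$), but the strategy is identical.
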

	\begin{proof}
		Let $S$ be a future-$H-$convex hypersurface. To prove that $S$ future-$L-$convex as well, it suffices to exhibit, at each point $p\in S$, a past support $L-$umbilical hypersurface or a a past support totally geodesic degenerate hypersurface.
		
		Fix $p\in S$: by Corollary~\ref{cor:support}, there exists a past support $H-$umbilical hypersurface $\mathcal{P}_{\delta_H}$ or a totally geodesic degenerate hypersurface at $p$. In the latter case, we are done. Otherwise, by Lemma~\ref{lem:maxumbilical}, the hypersurface $\mathcal{P}_{\delta_L}$ tangent to $\mathcal{P}_{\delta_H}$ at $p$ is a past support hypersurface for $S$. Since $p$ was arbitrary, this concludes the proof.
	\end{proof}
	
	For regular hypersurfaces, the $H-$convexity is strictly linked with the definiteness of the shape operator.
	\begin{lem}\label{lem:Hconvex}
		An entire spacelike $C^2-$hypersurface in $\hyp^{n,1}$ is future-$H-$convex (resp. past-$H-$convex) if and only if its shape operator $B$ satisfies $B\ge(H/n)\Id$ (resp. $B\le(H/n)\Id$).
	\end{lem}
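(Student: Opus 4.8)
The plan is to convert the global definition of $H$-convexity, which by Corollary~\ref{cor:support} is equivalent to the existence of an umbilical past support at every point, into the pointwise matrix inequality on $B$. I will work in $\hypu^{n,1}$ and, for each point under scrutiny, choose a splitting $(p,P)$ adapted to it, so that $S=\gr u$ near that point and the umbilical $H$-hypersurfaces tangent there are the explicit graphs $\gr f_H$ of Lemma~\ref{lem:maxumbilical}.

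\emph{The local dictionary.} The decisive structural fact is that the metric $g_{\hypu^{n,1}}=g_{\hyp^n}-\cosh^2(\mathrm{d}_{\hyp^n}(x_0,\cdot))\,\mathrm{d}t^2$ is independent of the coordinate $t$; hence $\pd_t$ is a Killing field, and the shape operator of a spacelike graph $\gr\phi$ at $(x,\phi(x))$ depends only on $x$, $\nabla\phi(x)$ and $\hess\phi(x)$, not on the height $\phi(x)$. Choosing the splitting so that the tangency is horizontal (i.e. $\nabla\phi(x_0)=0$), the induced metric agrees with $g_{\hyp^n}$ at $x_0$, and there the shape operator coincides with $\hess\phi(x_0)$, the sign being fixed by the future normal so that the higher of two tangent graphs has the larger shape operator (as one confirms on the umbilical model, where $\mathcal{P}_{\delta_H}$ has shape operator $(H/n)\Id$). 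Thus, at a point where two spacelike $C^2$ graphs are tangent, comparing their heights to second order is the same as comparing their shape operators: the graph lying in the future closure of $\mathcal{P}_{\delta_H}$ at the contact is precisely the one whose shape operator dominates $(H/n)\Id$ there.

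\emph{Necessity.} Suppose $S$ is future-$H$-convex. By Corollary~\ref{cor:support}, at every $p\in S$ there is a past support hypersurface that is either $H$-umbilical or a totally geodesic degenerate hypersurface. The degenerate alternative is impossible here: a lightlike totally geodesic hypersurface through $p$ rises linearly from $p$ in graph coordinates, while the $C^2$ spacelike $S$ departs from its tangent plane only quadratically, so such a hypersurface cannot stay in the past of $S$ near $p$. Hence the support is $H$-umbilical; being spacelike, below $S$, and passing through $p\in S$, it is tangent to $S$ at $p$, and the dictionary yields $B(p)\ge(H/n)\Id$. As $p$ is arbitrary, $B\ge(H/n)\Id$ on $S$.

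\emph{Sufficiency.} Assume $B\ge(H/n)\Id$. Fix $p$ and let $\mathcal{P}_p=\gr f$ be the $H$-umbilical hypersurface tangent to $S$ at $p$ with the same future normal; by the dictionary $f\le u$ near $x_0$, so $\mathcal{P}_p$ lies locally in the past of $S$. By Corollary~\ref{cor:support} it suffices to promote this to the global inclusion $\mathcal{P}_p\sq\overline{I^-(S)}$ for every $p$. I would argue by the continuity method on the set $\mathcal{A}=\{p\in S:\mathcal{P}_p\sq\overline{I^-(S)}\}$: it is non-empty (slide a fixed umbilical hypersurface along the Killing field $\pd_t$ until it is disjoint from $S$, then raise it to first contact, where it is tangent from below, using that entire spacelike graphs are bounded so the contact is attained at an interior point), and closed (a limit of umbilical past supports is again one, by Proposition~\ref{pro:compact}). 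The crux, and the main obstacle, is openness: showing that if $\mathcal{P}_p$ is a global past support then so is $\mathcal{P}_{p'}$ for $p'$ near $p$. This is a Lorentzian analogue of Blaschke's rolling theorem, and the purely local information $B\ge(H/n)\Id$ does not suffice on its own—a tangent umbilical with the correct second-order contact may still pierce $S$ far away—so one must exploit the completeness of $S$ together with the geometric strong maximum principle, in the spirit of Proposition~\ref{pro:maxprin}, to control the global contact locus and rule out degenerate equality configurations. Once $\mathcal{A}=S$ is established by connectedness, Corollary~\ref{cor:support} gives that $S$ is future-$H$-convex, and the past-$H$-convex statement follows by the time-reversal symmetry of Remark~\ref{rem:timeorientation}.
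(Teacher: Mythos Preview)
Your necessity argument is essentially the paper's: compare Hessians at the contact point in a splitting adapted to $p$. Your extra remark ruling out a degenerate support is correct and in fact fills a small gap the paper glosses over.

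Your sufficiency argument has a genuine gap. You set up a continuity method on $\mathcal{A}=\{p\in S:\mathcal{P}_p\sq\overline{I^-(S)}\}$, and you yourself identify openness as ``the crux, and the main obstacle''; but then you do not prove it. Saying that one must exploit completeness and the strong maximum principle ``in the spirit of Proposition~\ref{pro:maxprin}'' is not a proof, and Proposition~\ref{pro:maxprin} as stated applies to two CMC hypersurfaces, not to a hypersurface with $B\ge(H/n)\Id$; you would need a comparison principle for sub/supersolutions, which you neither state nor prove. Moreover, under the non-strict hypothesis $B\ge(H/n)\Id$ the local picture only gives $f\ge g$ to second order, not strictly, so even the local step feeding into a continuity argument is delicate.

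The paper bypasses all of this with two ideas you are missing. First, it reduces to the strict case $B>(H/n)\Id$: if $B\ge(H/n)\Id$ then $B>(L/n)\Id$ for every $L<H$, so $S$ is future-$L$-convex for all $L<H$, and Corollary~\ref{cor:Hcontinuity} then gives future-$H$-convexity. Second, under the strict inequality, the tangent umbilical $\mathcal{P}_{\delta_H}$ is a \emph{strict} local past support, and globality is proved by contradiction via a two-dimensional slice: if some $q\ne p$ lay on $\mathcal{P}_{\delta_H}$, intersect with the totally geodesic Lorentzian $2$-plane $\mathcal{Q}$ through $p$, $q$ and the normal line at $p$; the curve $\gamma=S\cap\mathcal{Q}$ stays in the invisible domain of $\pd_\infty\mathcal{P}_{\delta_H}$, so the distance to the dual point $e$ attains a maximum at some $r\in\gamma$, where $\gamma$ is tangent to an umbilical $\mathcal{P}_{\delta_L}$ with $\delta_L<\delta_H$; comparing the normal curvatures of $\gamma$ and of $\mathcal{P}_{\delta_L}\cap\mathcal{Q}$ at $r$ then forces $L\ge H$, a contradiction. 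This is a direct Blaschke-type argument that replaces your unproven openness step.
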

	\begin{proof}
		Let $S$ be an entire spacelike $C^2-$hypersurface. Pick a point $p\in S$, an umbilical hypersurface $\mathcal{P}_{\delta_H}$ tangent to $S$ at $p$. Choose a splitting $\hyp^{n,1}=\hyp^n\times\R$ such that $p=(x_0,0)$ and $T_p S=\hyp^n\times\{0\}$, and let $f,g\colon\hyp^n\to\R$ be the $1-$Lipschitz functions realizing the two entire spacelike hypersurface $S$ and $\mathcal{P}_{\delta_H}$ as graphs. By construction, the point $x_0$ is a critical point for both $f$ and $g$, and $f(x_0)=g(x_0)=0$.
		
		First, assume $S$ is future-$H-$convex: it follows that $\mathcal{P}_{\delta_H}$ is a past support umbilical hypersurface for $S$, namely $f\ge g$. In particular, the function $f-g$ is non-negative and it achieves a global minimum at $p=(x_0,0)$. Hence, at such a point \[\hess(f-g)=B-\frac{H}{n}\Id\ge0.\]
		
		To prove the converse implication, we assume that $B$ is \textit{strictly} greater than $(H/n)\Id$. The general case follows from Corollary~\ref{cor:Hcontinuity}: indeed, if $B\ge(H/n)\Id$, it follows that $B>(L/n)\Id$, hence $\Sigma$ is future-$L-$convex, for any $L<H$.
		
		Fix a $H-$umbilical hypersurface $\mathcal{P}_{\delta_H}$ tangent to $S$ at a point $p$: we need to prove that $\mathcal{P}_{\delta_H}$ is a past support hypersurface for $\Sigma$ at $p$. In fact, we are proving a strong maximum principle-like statement similar to Lemma~\ref{lem:maxumbilical}, namely that $S\setminus\{p\}\sq I^+(\mathcal{P}_{\delta_H})$.
		
		In the splitting $(p,T_p S)$, let $S=\gr f$ and $\mathcal{P}_{\delta_H}=\gr g$. Since $d_{x_0}f=d_{x_0}g$ and \[\hess_{x_0}(f-g)=B(x_0)-(H/n)\Id>0,\] the point $x_0$ is a \textit{strict} local minimum for the function $f-g$. Then, there exists an open neighbourhood $U$ of $p$ in $S$ such that $U\setminus\{p\}\sq I^+(\mathcal{P}_{\delta_H})$.
		
		By contradiction, assume there exists a point $q\in S\setminus\{p\}$ belonging to $\mathcal{P}_{\delta_H}$. Consider the totally geodesic Lorentzian plane $\mathcal{Q}$ containing the geodesic connecting $p$ and $q$ and the geodesic $\exp_p(\R N^S_p)$. Denote by $\gamma\colon[a,b]\to S$ the curve connecting $p$ and $q$ in $S\cap\mathcal{Q}$, parameterized by arc-length. We claim that $\gamma$ is contained in the invisible domain of $\pd_\infty\mathcal{P}_{\delta_H}$. Indeed, denote by $t_1$, $t_2$ the projection of $p,q$ to \[\hyp^1\times\{0\}=(\hyp^n\times\{0\})\cap\mathcal{Q}\] and define the function
		\begin{equation*}
			h(t)=\begin{cases}
				g(t) & \text{if $t\in[t_1,t_2]$}\\
				f(t) & \text{otherwise}.
			\end{cases}
		\end{equation*}
		Since both $p=\gamma(a)$ and $q=\gamma(b)$ lie on the $\mathcal{P}_{\delta_H}$, the function $h$ is continuous. It follows that $h$ is a (strictly) $1-$Lipschitz map which agrees with $f$ at the boundary, hence is contained in the invisible domain of $\pd_\infty\mathcal{P}_{\delta_H}\cap\mathcal{Q}$ in $\hyp^{1,1}$. Denote by $e$ the dual past point of $\mathcal{P}_0$ (see Definition~\ref{de:Pp}): since $e$ is contained in $\mathcal{Q}$, one can check that $\Omega\left(\pd_\infty\mathcal{P}_{\delta_H}\cap\mathcal{Q}\right)=\Omega\left(\pd_\infty\mathcal{P}_{\delta_H}\right)\cap\mathcal{Q}$, hence $\gamma$ is contained in $\Omega(\pd_\infty\mathcal{P}_{\delta_H})$. 
		
		It follows that there exists a point $r\in\gamma$ maximising the distance from $e$. Then, $\gamma$ is tangent to $\mathcal{P}_{\delta_L}$ at $r$, for
		\begin{equation}\label{eq:Hconvex}
			\delta_L=-\dist(e,r)<-\dist\left(e,\gamma(a)\right)=-\dist(e,p)=\delta_H.
		\end{equation}
		
		Denote by $c$ the curve $\mathcal{P}_{\delta_L}\cap\mathcal{Q}$, parameterized by arc-length. Up to translation, we can assume $\gamma(0)=c(0)=r$. Since both $\gamma$ and $c$ are plane curves, we have that $\nablah_{\gamma'}\gamma'$ and $\nablah_{c'}{c'}$ are proportional at $t=0$. In particular, they are orthogonal to $\mathcal{P}_{\delta_L}$, since $c$ is a geodesic for $\mathcal{P}_{\delta_L}$. Moreover, since $r$ maximizes the distance from $e$, $\gamma$ is contained in $\overline{I^-(\mathcal{P}_{\delta_L})}$. Seeing $\gamma$ and $c$ as hypersurfaces of $\hypu^{1,1}$, the first part of the proof implies that the shape operator of $\gamma$ is greater or equal than $L/n$, that is \[\pr{\nablah_{\gamma'}\gamma'-\nablah_{c'}{c'},N^{\mathcal{P}_{\delta_L}}_r}\ge0,\]
		for $N^{\mathcal{P}_{\delta_L}}(r)$ the unitary future-directed vector normal to $\mathcal{P}_{\delta_L}$.
		
		It follows that
		\begin{align*}
			\frac{L}{n}&=\pr{B_{\mathcal{P}_{\delta_L}}\left(c'(0)\right),c'(0)}=-\pr{\nablah_{c'}{c'},N^{\mathcal{P}_{\delta_L}}_r}\\
			&\ge-\pr{\nablah_{\gamma'}\gamma',N^{\mathcal{P}_{\delta_L}}_r}\ge-\pr{\nablah_{\gamma'}\gamma',N^\Sigma_r}=\frac{H}{n},
		\end{align*} 
		contradicting Equation~\eqref{eq:Hconvex}, since $\delta_H=\arctan(H/n)$.
	\end{proof}
	
	\subsection{$H-$shifted convex hull} We are finally ready to introduce one of the main objects of this work.
	\begin{de}\label{de:Hhull}
		Let $\Lambda$ be an admissible boundary in $\pd_\infty\hypu^{n,1}$ (resp. $\pd_\infty\hyp^{n,1}$). The \textit{$H-$shifted convex hull} of $\Lambda$, denoted by $\ch_H(\Lambda)$, is the smallest $H-$convex set of $\hypu^{n,1}\cup\pd_\infty\hypu^{n,1}$ (resp. $\hyp^{n,1}\cup\pd_\infty\hyp^{n,1}$) containing $\Lambda$.
	\end{de}
	
	The next proposition shows that the definition is well-posed and explicitly describes $\ch_H(\Lambda)$: indeed, the $H-$shifted convex hull of $\Lambda$ is the intersection of the future (resp. past) of $H-$umbilical hypersurfaces whose boundary is contained in the past (resp. future) of $\Lambda$. 
	\begin{pro}\label{pro:hconvex}
		Let $\Lambda\sq\pd_\infty\hypu^{n,1}$ be an admissible boundary and denote by $\delta_H=~\arctan(H/n)$, then
		\[\ch_H(\Lambda)=\bigcap_{\pd_\infty\mathcal{P}_{\delta_H}\sq I^-(\Lambda)}I^+\left(\mathcal{P}_{\delta_H}\right)\cap\bigcap_{\pd_\infty\mathcal{P}_{\delta_H}\sq I^+(\Lambda)}I^-\left(\mathcal{P}_{\delta_H}\right).\]	
	\end{pro}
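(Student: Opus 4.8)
The plan is to establish the identity by proving the two inclusions separately, after recording the structural observation that makes the statement well-posed. Write the right-hand side as $Y:=Y_+\cap Y_-$, where
$Y_+:=\bigcap_{\pd_\infty\mathcal{P}_{\delta_H}\sq I^-(\Lambda)}I^+(\mathcal{P}_{\delta_H})$ and $Y_-:=\bigcap_{\pd_\infty\mathcal{P}_{\delta_H}\sq I^+(\Lambda)}I^-(\mathcal{P}_{\delta_H})$. Since $H$-convexity is evidently preserved under arbitrary intersections (an intersection of future-$H$-convex sets is again an intersection of futures of $H$-umbilical hypersurfaces, and similarly in the past), the intersection of all $H$-convex sets containing $\Lambda$ is itself $H$-convex; this is exactly the smallest such set, so $\ch_H(\Lambda)$ is well-defined and equals that intersection. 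By the same token $Y_+$ is future-$H$-convex and $Y_-$ is past-$H$-convex, so $Y$ is $H$-convex in the sense of Definition~\ref{de:Hconv}. It therefore remains to check that $Y$ contains $\Lambda$ and that it is minimal with this property.

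First I would show $\Lambda\sq Y$. Fix a splitting and write $\Lambda=\gr f$ for a $1$-Lipschitz $f$ on $\pd_\infty\hyp^n$ (Lemma~\ref{lem:graph}). For each umbilical hypersurface indexing $Y_+$, the condition $\pd_\infty\mathcal{P}_{\delta_H}\sq I^-(\Lambda)$ reads $g<f$ pointwise, where $\gr g=\pd_\infty\mathcal{P}_{\delta_H}$; since $\mathcal{P}_{\delta_H}$ is the graph of a $1$-Lipschitz function on $\hyp^n$ with asymptotic values $g$, the boundary point $(\xi,f(\xi))$ lies strictly in the future trace of $\mathcal{P}_{\delta_H}$, whence $\Lambda$ lies in the asymptotic boundary of $I^+(\mathcal{P}_{\delta_H})$. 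Intersecting over the defining family gives $\Lambda\sq Y_+$, and the symmetric argument gives $\Lambda\sq Y_-$. As $Y$ is $H$-convex and contains $\Lambda$, minimality of $\ch_H(\Lambda)$ yields $\ch_H(\Lambda)\sq Y$.

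For the reverse inclusion $Y\sq\ch_H(\Lambda)$, I use that $\ch_H(\Lambda)$ is the intersection of all $H$-convex sets containing $\Lambda$: it suffices to fix an arbitrary $H$-convex $Z\supseteq\Lambda$ and prove $Y\sq Z$. Writing $Z$ as an intersection of futures and pasts of $H$-umbilical hypersurfaces, each factor $I^\pm(\mathcal{P})$ must itself contain $\Lambda$, because $\Lambda\sq Z$. For a future factor $I^+(\mathcal{P})$, comparing asymptotic boundaries in a splitting as above shows that $\Lambda\sq I^+(\mathcal{P})$ forces $g<f$, i.e. $\pd_\infty\mathcal{P}\sq I^-(\Lambda)$; hence $I^+(\mathcal{P})$ is precisely one of the halfspaces defining $Y_+$, so $Y_+\sq I^+(\mathcal{P})$. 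The same reasoning applied to the past factors gives $Y_-\sq I^-(\mathcal{P})$ for each of them, whence $Y\sq Z$. Intersecting over all such $Z$ gives $Y\sq\ch_H(\Lambda)$ and completes the proof.

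The main obstacle is the bookkeeping at the asymptotic boundary: one must make precise what it means for the open halfspace $I^+(\mathcal{P})$ to contain the boundary set $\Lambda$, and verify that this is equivalent, at the level of the $1$-Lipschitz graphs, to the strict comparison $g<f$ that indexes the intersections. The delicate case is when $\pd_\infty\mathcal{P}$ is only weakly below $\Lambda$ (tangent at infinity); there I would exploit the timelike translation flow of the splitting, which is an isometry of $\hypu^{n,1}$ fixing the mean curvature, to push $\mathcal{P}$ slightly into the past and obtain umbilical hypersurfaces strictly below $\Lambda$, then pass to the limit invoking that $H$-convexity is closed under Hausdorff convergence (Corollary~\ref{cor:Hconvclosed}). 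Checking that there exist enough umbilical hypersurfaces on each side of $\Lambda$, so that $Y$ is genuinely cut out, is a routine consequence of the admissibility of $\Lambda$.
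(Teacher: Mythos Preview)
Your argument is correct and is essentially the same as the paper's: both directions hinge on the observation that every $H$-halfspace $I^+(\mathcal{P}_{\delta_H})$ containing $\Lambda$ already appears in the indexing family of the right-hand side, so the right-hand side is the minimal $H$-convex set. The paper organizes the reverse inclusion contrapositively---taking $p\notin\ch_H(\Lambda)$, invoking Lemma~\ref{lem:bordoconvesso} to place $p$ in the past (or future) of a boundary component, and then exhibiting a separating umbilical halfspace---whereas you argue directly that $Y\sq Z$ for every $H$-convex $Z\supseteq\Lambda$; these are equivalent formulations. You are actually more careful than the paper about the one genuinely delicate point: the passage from ``$\Lambda$ lies in the halfspace $I^+(\mathcal{P})$'' to ``$\pd_\infty\mathcal{P}\sq I^-(\Lambda)$ strictly,'' which the paper asserts without comment and which you correctly flag and resolve via the timelike translation.
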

	\begin{proof}
		Denote by $X$ the set claimed to coincide with $\ch_H(\Lambda)$. By construction, $X$ is $H-$convex and contains $\Lambda$, hence $\ch_H(\Lambda)\sq X$ by minimality. 
		
		Conversely, consider a point $p\notin\ch_H(\Lambda)$. Since $\ch_H(\Lambda)$ contains $\Lambda$, it is non-empty, hence it has two boundary components, $\pd_\pm\ch(\Lambda)$, which are properly embedded achronal hypersurfaces (see Lemma~\ref{lem:bordoconvesso}). Hence, without loss of generality, we can assume that $p$ lies in the past of $\pd_-\ch_H(\Lambda)$. Equivalently, there exists an umbilical hypersurface $\mathcal{P}_{\delta_H}$ such that
		\[
		\begin{cases}
			\ch_H(\Lambda)\sq I^+(\mathcal{P}_{\delta_H})\\
			p\in\overline{I^-(\mathcal{P}_{\delta_H})}.
		\end{cases}\]
		The first condition implies that $\Lambda\sq I^+(\pd_\infty\mathcal{P}_{\delta_H})$: then, by definition of $X$, the second condition states that $p\notin X$, that is $X\sq\ch_H(\Lambda)$, concluding the proof.
	\end{proof}
	
	\begin{lem}\label{lem:ch-contained-invisible}
		Let $\Lambda\sq\pd_\infty\hypu^{n,1}$ be an admissible boundary, then $\ch_H(\Lambda)$ is contained in $\overline{\Omega(\Lambda)}$.
	\end{lem}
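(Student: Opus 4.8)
The plan is to deduce the inclusion from the minimality built into Definition~\ref{de:Hhull}: since $\ch_H(\Lambda)$ is by definition the \emph{smallest} $H-$convex subset containing $\Lambda$, it suffices to prove that $\overline{\Omega(\Lambda)}$ is \emph{itself} an $H-$convex set, for every $H\in\R$. That it contains $\Lambda$ is already known, since $\overline{\Omega(\Lambda)}\cap\pd_\infty\hypu^{n,1}=\Lambda$ was recorded in Subsection~\ref{sub:ch}. So the whole content is the $H-$convexity of $\overline{\Omega(\Lambda)}$.

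By Lemma~\ref{lem:extremal}, $\overline{\Omega(\Lambda)}$ is the region bounded below by $\pd_-\Omega(\Lambda)=\gr u^-$ and above by $\pd_+\Omega(\Lambda)=\gr u^+$, so that $\overline{\Omega(\Lambda)}=\overline{I^+(\gr u^-)}\cap\overline{I^-(\gr u^+)}$. In view of Definition~\ref{de:Hconv} (and Lemma~\ref{lem:bordoconvesso}, Definition~\ref{de:convexsurf}, which turn the convexity of a hypersurface into that of the corresponding region), it is therefore enough to show that the hypersurface $\gr u^-$ is future-$H-$convex and $\gr u^+$ is past-$H-$convex, for every $H\in\R$.

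To verify the first I would use the characterization of Corollary~\ref{cor:support}: it suffices to exhibit, at each point of $\gr u^-$, a past support hypersurface that is a \emph{totally geodesic degenerate} hypersurface, since such a support hypersurface is admissible for \emph{every} value of $H$ simultaneously. Writing $\Lambda=\bigcup_{q\in\Lambda}\{q\}$ and using $\Omega(\Lambda)=\bigcap_{q\in\Lambda}\Omega(q)$, together with the fact that for $q\in\pd_\infty\hypu^{n,1}$ the invisible domain $\Omega(q)$ is exactly the region between the two totally geodesic degenerate hyperplanes through $q$, a splitting gives $u^-=\sup_{q\in\Lambda}u^-_q$, where $\gr u^-_q$ is the lower degenerate hyperplane of $\Omega(q)$. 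Fixing $x\in\hyp^n$, compactness of $\Lambda$ and continuity of $q\mapsto u^-_q(x)$ make the supremum attained at some $q_0$, so $u^-_{q_0}\le u^-$ with equality at $x$. Hence the degenerate totally geodesic hyperplane $\gr u^-_{q_0}$ passes through $(x,u^-(x))$ and lies in $\overline{I^-(\gr u^-)}$, i.e. it is a past support hypersurface of $\gr u^-$ at that point. By Corollary~\ref{cor:support}, $\gr u^-$ is future-$H-$convex for every $H$; the argument for $\gr u^+$ is the mirror image, exchanging past and future.

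Combining the two, $\overline{\Omega(\Lambda)}$ is $H-$convex and contains $\Lambda$, so minimality yields $\ch_H(\Lambda)\sq\overline{\Omega(\Lambda)}$. The step I expect to be the main obstacle is the structural one in the previous paragraph: identifying $\pd_-\Omega(\Lambda)$ as an upper envelope $\sup_q u^-_q$ of degenerate totally geodesic hyperplanes and checking that this supremum is \emph{attained} pointwise, so that a degenerate—rather than merely achronal—support hypersurface is available everywhere. This is precisely what allows Corollary~\ref{cor:support} to apply uniformly in $H$; the remaining verifications (continuity of $q\mapsto u^-_q(x)$, and that the closure distributes over the two half-regions bounding $\overline{\Omega(\Lambda)}$) are routine.
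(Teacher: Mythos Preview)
Your argument is correct and in fact proves slightly more than needed: that $\overline{\Omega(\Lambda)}$ is $H$-convex for \emph{every} $H\in\R$. The paper, however, takes a shorter and more direct route. Rather than establishing the $H$-convexity of $\overline{\Omega(\Lambda)}$ and then invoking minimality, it simply observes (via Lemma~\ref{lem:bordoconvesso}) that the boundary components $\pd_\pm\ch_H(\Lambda)$ are achronal entire hypersurfaces whose asymptotic boundary is $\Lambda$; since $\overline{\Omega(\Lambda)}$ is precisely the union of all achronal sets containing $\Lambda$ (equivalently, by Lemma~\ref{lem:extremal}), both boundary components lie in $\overline{\Omega(\Lambda)}$, and hence so does the region they bound.

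Your approach is more structural: it isolates the fact that $\pd_\pm\Omega(\Lambda)$ is supported at every point by a totally geodesic \emph{degenerate} hyperplane, which through Corollary~\ref{cor:support} yields $H$-convexity uniformly in $H$. This is a pleasant observation in its own right. The paper's approach trades that structural content for brevity: it needs only the achronality of $\pd_\pm\ch_H(\Lambda)$, already available from Lemma~\ref{lem:bordoconvesso}, together with the defining property of the invisible domain, and avoids the support-hypersurface machinery entirely.
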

	\begin{proof}
		By Lemma~\ref{lem:bordoconvesso}, the boundary components $\pd_\pm\ch_H(\Lambda)$ are properly embedded achronal hypersurfaces. One can easily check that their asymptotic boundary is $\Lambda$, hence they are contained in $\overline{\Omega(\Lambda)}$ by Lemma~\ref{lem:extremal}, concluding the proof.
	\end{proof}
	\begin{rem}
		Since the restriction of $\psi$ (see Equation~\eqref{eq:split}) to $\overline{\Omega(\Lambda)}$ is a diffeomorphism onto its image for any admissible boundary $\Lambda\sq\pd_\infty\hypu^{n,1}$ (Remark~\ref{rem:invdom}), we have 
		\[\ch_H\left(\psi(\Lambda)\right)=\psi\left(\ch_H(\Lambda)\right),\]
		namely the $H-$shifted convex hull does not depend on the model.
	\end{rem}
	
	In light of Lemma~\ref{lem:ch-contained-invisible}, we can characterize $\ch_H(\Lambda)$ in terms of the cosmological time functions $\tau_\past,\tau_\fut$, which have been defined in Equation~\eqref{eq:pastfutpart}.
	
	\begin{cor}\label{cor:ch-time}
		Let $\Lambda$ be an admissible boundary in $\hypu^{n,1}$. Then
		\[\ch_H(\Lambda)=\left\{p\in\overline{\Omega(\Lambda)}\ |\ \tau_\past(p)\le\frac{\pi}{2}-\delta_H,\ \tau_\fut(p)\ge\frac{\pi}{2}+\delta_H\right\},\]
		for $\delta_H=\arctan(H/n)$.
	\end{cor}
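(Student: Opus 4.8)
The plan is to combine the explicit description of $\ch_H(\Lambda)$ from Proposition~\ref{pro:hconvex} with the focal-point structure of the cosmological times supplied by Proposition~\ref{pro:benbon}. By Lemma~\ref{lem:ch-contained-invisible} we already have $\ch_H(\Lambda)\sq\overline{\Omega(\Lambda)}$, so throughout we may work inside $\overline{\Omega(\Lambda)}$, where both $\tau_\past$ and $\tau_\fut$ are defined. The heart of the argument is to reinterpret each umbilical hypersurface $\mathcal{P}_{\delta_H}$ occurring in Proposition~\ref{pro:hconvex} as a single level set of the Lorentzian distance from a point, and then to match the resulting distance inequality with the cosmological time.

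First I would record the key identity. By Proposition~\ref{pro:distprod} together with the computation carried out in the proof of Lemma~\ref{lem:maxumbilical}, the totally umbilical hypersurface $\mathcal{P}_{\delta_H}$ is exactly the set of points at Lorentzian distance $\tfrac{\pi}{2}-\delta_H$ from its \emph{past} focal point (the past dual of the totally geodesic $\mathcal{P}_0$ sharing its boundary), and equivalently at distance $\tfrac{\pi}{2}+\delta_H$ from its \emph{future} focal point. Crucially, $I^{+}(\mathcal{P}_{\delta_H})$ is then cut out by the inequality $\dist(e,\cdot)>\tfrac{\pi}{2}-\delta_H$, with $e$ the past focal point, and $I^{-}(\mathcal{P}_{\delta_H})$ by $\dist(\cdot,e')>\tfrac{\pi}{2}+\delta_H$ with $e'$ the future focal point. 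Feeding this into Proposition~\ref{pro:hconvex}, the requirement that $p$ lie in the future of \emph{every} $\mathcal{P}_{\delta_H}$ with $\pd_\infty\mathcal{P}_{\delta_H}\sq I^{-}(\Lambda)$ becomes a single uniform bound $\inf_e\dist(e,p)\ge\tfrac{\pi}{2}-\delta_H$, the infimum taken over the past focal points of all admissible lower umbilical hypersurfaces, and symmetrically for the future boundary.

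The decisive step is to identify the extremal umbilical hypersurface. I would show that the admissible lower hypersurface binding at $p$ is the one tangent to $\pd_-\ch_H(\Lambda)$, whose past focal point lies on the timelike geodesic realizing $\tau_\past(p)$; concretely it is the foot point $\rho^\past_-(p)\in\pd_-\Omega(\Lambda)$ of Proposition~\ref{pro:benbon}, reached from $p$ along $\nablah\tau_\past$ by item~(5), so that $\dist(\rho^\past_-(p),p)=\tau_\past(p)$ by item~(2). Hence $\inf_e\dist(e,p)=\tau_\past(p)$, and the uniform distance bound is precisely the condition on $\tau_\past$ defining $\pd_-\ch_H(\Lambda)$ as the level set $\{\tau_\past=\tfrac{\pi}{2}-\delta_H\}$, the hull lying on the side $\tau_\past\ge\tfrac{\pi}{2}-\delta_H$ of it. The support-plane clause~(4), together with the length $\tfrac{\pi}{2}$ of the realizing segment in~(3), is exactly what certifies that the plane $P(\rho^\past_+(p))$ through $\rho^\past_-(p)$ is admissible and that no competing focal point lies closer. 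Running the time-reversed argument on the upper family (Remark~\ref{rem:timeorientation}) produces the companion condition $\tau_\fut(p)\ge\tfrac{\pi}{2}+\delta_H$, the extra $\delta_H$ reflecting that an upper umbilical hypersurface sits at radius $\tfrac{\pi}{2}+\delta_H$ from its future focal point; intersecting the two regions yields the claimed description of Corollary~\ref{cor:ch-time}.

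The main obstacle is exactly this extremal identification: a priori the focal points of admissible lower umbilical hypersurfaces form a whole family, and one must prove that the infimum of their distances to $p$ is attained, in the limit, at the cosmological foot point $\rho^\past_-(p)$ rather than at some tilted support hypersurface whose focal point could be nearer. This is where the fine structure of Proposition~\ref{pro:benbon} is genuinely needed, and where one must separately treat the degenerate case in which the relevant support hypersurface is a totally geodesic \emph{lightlike} hyperplane (so that $p$ sits on $\pd\Omega(\Lambda)$ and $\tau_\past$ or $\tau_\fut$ reaches an endpoint of its range): there no umbilical hypersurface of finite focal distance can obstruct membership, and the boundary inequality must be recovered as a limit. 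Handling this boundary degeneracy of the focal map, parallel to the degeneracy of the normal flow, is the only delicate point; the rest is the bookkeeping of the two distance identities above.
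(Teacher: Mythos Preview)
Your proposal contains a genuine orientation error that reverses the inequality. You associate the \emph{lower} umbilical barriers (those with $\pd_\infty\mathcal{P}_{\delta_H}\sq I^-(\Lambda)$) with the past cosmological time $\tau_\past$, and conclude that the hull lies on the side $\tau_\past\ge\tfrac{\pi}{2}-\delta_H$. This contradicts the statement, which requires $\tau_\past\le\tfrac{\pi}{2}-\delta_H$. The source of the error is the identification of the extremal focal point: by Proposition~\ref{pro:benbon}(4), the plane $P(\rho^\past_-(p))$ dual to the foot point $\rho^\past_-(p)\in\pd_-\Omega(\Lambda)$ is a support plane for $\pd_+\ch(\Lambda)$, hence a \emph{future} support plane for $\Lambda$ (its asymptotic boundary lies in $I^+(\Lambda)$), not a past one. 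Thus the past focal points lying on $\pd_-\Omega(\Lambda)$ parameterise the \emph{upper} barriers, and membership in $\bigcap_{\pd_\infty\mathcal{P}_{\delta_H}\sq I^+(\Lambda)}I^-(\mathcal{P}_{\delta_H})$ translates to $\dist(e,p)\le\tfrac{\pi}{2}-\delta_H$ for all such $e$, i.e.\ $\tau_\past(p)=\sup_e\dist(e,p)\le\tfrac{\pi}{2}-\delta_H$ (a supremum, not an infimum). The lower barriers correspond, by the time-reversed argument, to $\tau_\fut$.

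Once this swap is corrected your strategy is essentially the paper's own: reinterpret each $\mathcal{P}_{\delta_H}$ as a distance level set from its focal point, identify the extremal focal point via Proposition~\ref{pro:benbon}, and read off the $\tau_\past$ (resp.\ $\tau_\fut$) inequality. The paper carries this out directly for the upper family and obtains the lower family by time reversal; your discussion of the degenerate lightlike support case is not needed, since the argument works pointwise in the open region and the boundary inequalities follow by continuity.
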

	\begin{proof}
		It suffices to prove that
		\begin{align*}
			\bigcap_{\pd_\infty\mathcal{P}_{\delta_H}\sq I^+(\Lambda)}I^-\left(\mathcal{P}_{\delta_H}\right)=\left\{\tau_\past\le\frac{\pi}{2}-\delta_H\right\}, && \bigcap_{\pd_\infty\mathcal{P}_{\delta_H}\sq I^-(\Lambda)}I^+\left(\mathcal{P}_{\delta_H}\right)=\left\{\tau_\fut\ge\frac{\pi}{2}+\delta_H\right\}.
		\end{align*}
		
		We claim the first identity holds: then, the second follows by applying a time-orientation reversing isometry, concluding the proof.
		
		To prove the claim, fix a point $p\in\overline{\Omega(\Lambda)}\cap I^+\left(\ch_H(\Lambda)\right)$. Then, there exists a future support hyperplane $\mathcal{P}_0$ for $\Lambda$ such that $p\in I^+\left(\mathcal{P}_{\delta_H}\right)$. By construction, the distance between $p$ and $P_0$ is strictly greater then $-\delta_H$, while the distance between $\pd_-\Omega(\Lambda)$ and a future support hyperplane is at least $\pi/2$: indeed $\tau_\past^{-1}(\pi/2)=\pd_+\ch(\Lambda)$ (Proposition~\ref{pro:benbon}).
		Hence, 
		\[\tau_\past(p)=\dist\left(p,\pd_-\Omega(\Lambda)\right)>\frac{\pi}{2}-\delta_H.\]
		
		Conversely, assume $\tau_\past(p)>(\pi/2)-\delta_H$ consider a timelike geodesic $\gamma$ realizing the distance $\tau_\past(p)$ from $\pd_-\Omega(\Lambda)$. Then, $\gamma$ intersects orthogonally $\pd_-\Omega(\Lambda)$ at the point $\gamma(0)$ and $p=\gamma\left(\tau_\past(p)\right)$. 
		
		By Proposition~\ref{pro:benbon}, the dual future hyperplane $\mathcal{P}_0=\gamma(0)_+^\perp$ is a future support hyperplane for $\pd_+\ch(\Lambda)$ containing $\gamma(\pi/2)$. It follows that $p=\gamma\left(\tau_\past(p)\right)$ is at distance $\tau_\past(p)-(\pi/2)$ from $\mathcal{P}_0$, or equivalently $p$ belongs to $\mathcal{P}_{(\pi/2)-\tau_\past(p)}$, which is the umbilical hypersurface at oriented distance $\tau_\past(p)-(\pi/2)$ from $P_0$.
		
		Since $\tau_\past(p)>(\pi/2)-\delta_H$, the umbilical hypersurface $\mathcal{P}_{(\pi/2)-\tau_\past(p)}$ lies in the future of $\mathcal{P}_{\delta_H}$, hence \[p\notin\bigcap_{\pd_\infty\mathcal{P}_{\delta_H}\sq I^-(\Lambda)}\overline{I^-\left(\mathcal{P}_{\delta_H}\right)},\]
		proving the claim and concluding the proof.
	\end{proof}
	\begin{rem}
		For $H\ge0$, the future boundary of the $H-$shifted convex hull coincides with a leaf of the past cosmological time. More precisely,
		\[\pd_+\ch_H(\Lambda)=\tau_\past^{-1}\left(\frac{\pi}{2}-\delta_H\right).\]
		Hence, $\pd_+\ch_H(\Lambda)$ is $C^{1,1}$ by Proposition~\ref{pro:benbon}. On the other hand, no regularity is assured for $\pd_-\ch_H(\Lambda)$, except for the fact that it is a $1-$Lipschitz graph, since it is acausal by Lemma~\ref{lem:bordoconvesso}.
		
		Conversely, for $H\le0$, the past boundary is a leaf of $\tau_\fut$.
	\end{rem}
	
	As a consequence of the strong maximum principle for CMC hypersurface (Proposition~\ref{pro:maxprin}), we show that $H-$hypersurfaces are contained in the $H-$shifted convex hull of their boundary.
	
	\begin{cor}\label{cor:Hmaxprin}
		Let $H\in\R$ and $\Sigma_H$ be a properly embedded spacelike $H-$hypersurface. If $\Sigma_H$ is totally umbilical, then $\Sigma_H=\ch_H(\pd_\infty\Sigma_H)$, otherwise $\Sigma_H\sq\mathrm{Int}\left(\ch_H(\pd_\infty\Sigma_H)\right)$.
	\end{cor}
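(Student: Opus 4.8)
The plan is to use the explicit description of $\ch_H(\Lambda)$ from Proposition~\ref{pro:hconvex} as an intersection of futures and pasts of totally umbilical $H$-hypersurfaces, and to compare each such defining hypersurface with $\Sigma_H$ through the strong maximum principle (Proposition~\ref{pro:maxprin}). Write $\Lambda=\pd_\infty\Sigma_H$ and $\delta_H=\arctan(H/n)$, and recall that every $\mathcal{P}_{\delta_H}$ is itself an entire totally umbilical $H$-hypersurface. Fix a defining hypersurface of the past family, namely a $\mathcal{P}_{\delta_H}$ with $\pd_\infty\mathcal{P}_{\delta_H}\sq I^-(\Lambda)$. Since $\Lambda=\pd_\infty\Sigma_H\sq\overline{I^-(\Sigma_H)}$, the asymptotic boundary of $\mathcal{P}_{\delta_H}$ lies in $\overline{I^-(\Sigma_H)}$; as $\mathcal{P}_{\delta_H}$ and $\Sigma_H$ share the mean curvature $H$, Proposition~\ref{pro:maxprin} applies and gives the dichotomy $\mathcal{P}_{\delta_H}\sq I^-(\Sigma_H)$ or $\mathcal{P}_{\delta_H}=\Sigma_H$. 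The strict inclusion $\pd_\infty\mathcal{P}_{\delta_H}\sq I^-(\Lambda)$ forbids equality, so $\Sigma_H\sq I^+(\mathcal{P}_{\delta_H})$. Running the symmetric argument over the future family and intersecting, Proposition~\ref{pro:hconvex} yields $\Sigma_H\sq\ch_H(\Lambda)$, which holds in both cases of the statement.

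For the umbilical case I would close the argument by minimality. If $\Sigma_H$ is totally umbilical, its shape operator equals $(H/n)\Id$, so by Lemma~\ref{lem:Hconvex} the hypersurface $\Sigma_H$ is simultaneously future- and past-$H$-convex; hence $\overline{\Sigma_H}=\overline{I^+(\Sigma_H)}\cap\overline{I^-(\Sigma_H)}$ is an $H$-convex set whose asymptotic boundary is $\Lambda$. By minimality of the $H$-shifted convex hull one gets $\ch_H(\Lambda)\sq\overline{\Sigma_H}$, and combined with the inclusion $\Sigma_H\sq\ch_H(\Lambda)$ already established this forces $\ch_H(\Lambda)=\Sigma_H$.

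The non-umbilical case requires showing that $\Sigma_H$ meets neither boundary leaf $\pd_\pm\ch_H(\Lambda)$; I stress that strict inclusion in each open half-space is \emph{not} by itself sufficient, since an infinite intersection of open sets need not be open. I would argue by contradiction: suppose $x\in\Sigma_H\cap\pd_-\ch_H(\Lambda)$. The leaf $\pd_-\ch_H(\Lambda)$ is a future-$H$-convex hypersurface by Lemma~\ref{lem:bordoconvesso}, so by Corollary~\ref{cor:support} it admits at $x$ a past support hypersurface $\mathcal{P}_x$ which is either $H$-umbilical or a degenerate totally geodesic hypersurface. Since $\Sigma_H\sq\ch_H(\Lambda)\sq\overline{I^+(\pd_-\ch_H(\Lambda))}\sq\overline{I^+(\mathcal{P}_x)}$ and $x\in\Sigma_H\cap\mathcal{P}_x$, the hypersurface $\mathcal{P}_x$ is a past support for $\Sigma_H$ at $x$ as well. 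The degenerate case is excluded because $\Sigma_H$ is spacelike, hence acausal (Proposition~\ref{pro:spaceacausal}): the lightlike hyperplane $\mathcal{P}_x$ is transverse to the spacelike plane $T_x\Sigma_H$, so $\Sigma_H$ would cross $\mathcal{P}_x$ and enter $I^-(\mathcal{P}_x)$ near $x$, contradicting $\Sigma_H\sq\overline{I^+(\mathcal{P}_x)}$. In the umbilical case, applying Proposition~\ref{pro:maxprin} to the entire $H$-hypersurfaces $\mathcal{P}_x$ and $\Sigma_H$ (noting $\mathcal{P}_x\sq\overline{I^-(\Sigma_H)}$ gives the required asymptotic condition) returns $\mathcal{P}_x\sq I^-(\Sigma_H)$ or $\mathcal{P}_x=\Sigma_H$; the touching at $x$ rules out the strict inclusion, so $\Sigma_H=\mathcal{P}_x$ would be umbilical, against our hypothesis. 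The symmetric argument excludes $\Sigma_H\cap\pd_+\ch_H(\Lambda)\ne\emptyset$. Thus $\Sigma_H$ lies in $\ch_H(\Lambda)$ and avoids both boundary leaves, giving $\Sigma_H\sq\mathrm{Int}(\ch_H(\Lambda))$.

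I expect the main obstacle to be precisely this non-umbilical interior statement: the support-hypersurface argument is what circumvents the infinite-intersection topology, but it rests on two delicate points that must be checked carefully, namely the exclusion of degenerate support hyperplanes at finite points (where admissibility of $\Lambda$ and acausality of $\Sigma_H$ are essential) and the verification that the possibly only $C^{1,1}$ leaf $\pd_-\ch_H(\Lambda)$ really does supply, at a touching point, a genuine entire umbilical support to which Proposition~\ref{pro:maxprin} can be applied.
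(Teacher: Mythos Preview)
Your proof is correct and rests on the same engine as the paper's, namely the strong maximum principle (Proposition~\ref{pro:maxprin}) applied against each umbilical $H$-hypersurface defining $\ch_H(\Lambda)$. The paper's argument is much terser: it observes that $\mathrm{Int}(\ch_H(\Lambda))=\emptyset$ precisely when $\Lambda$ bounds a totally umbilical hypersurface (this disposes of the first case), and then records that for every $\mathcal{P}_{\delta_H}$ with $\pd_\infty\mathcal{P}_{\delta_H}\sq\overline{I^\pm(\Lambda)}$ the dichotomy $\Sigma_H\sq I^\pm(\mathcal{P}_{\delta_H})$ or $\Sigma_H=\mathcal{P}_{\delta_H}$ holds, declaring the proof finished. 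Your support-hypersurface argument via Corollary~\ref{cor:support}, together with the transversality exclusion of degenerate supports, makes explicit exactly the step the paper elides, namely why strict containment in each open half-space survives the infinite intersection; this is a genuine improvement in rigor. For the umbilical case you take a slightly different route, invoking Lemma~\ref{lem:Hconvex} plus minimality rather than the empty-interior characterization; both are short and either is fine.
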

	\begin{proof}
		As for the convex hull, one can easily check that the interior of $\ch_H(\Lambda)$ is empty if and only if $\Lambda$ is the boundary of a totally geodesic spacelike hypersurface, or equivalently the boundary of a totally umbilical CMC spacelike hypersurface.
		
		The boundary of $I^\pm(\mathcal{P}_{\delta_H})$ in $\hyp^{n,1}$ is an umbilical hypersurface with constant mean curvature $H$. By the strong maximum principle (Proposition~\ref{pro:maxprin}), if $\pd_\infty\mathcal{P}\sq\overline{I^\pm(\Lambda)}$, then $\Sigma\sq I^\pm(\mathcal{P}_{\delta_H})$, or $\Sigma=\mathcal{P}_{\delta_H}$ concluding the proof.
	\end{proof}
	
	\subsection{Width} We introduce the timelike diameter of $\ch_H(\Lambda)$ to measure how far an admissible boundary $\Lambda$ is from being a totally geodesic one.
	
	\begin{de}\label{de:width}
		Let $A$ be a subset $\sq\hyp^{n,1}$, the \textit{width} of $A$ is the supremum of the length of timelike curves contained in $A$. Hereafter, $\omega_H(\Lambda)$ will denote the width of the $H-$shifted convex hull $\ch_H(\Lambda)$.
	\end{de}
	
	A first estimate for the width of $\ch_H(\Lambda)$ follows from Corollary~\ref{cor:ch-time}.
	\begin{cor}\label{cor:width}
		Let $\Lambda\sq\pd_\infty\hyp^{n,1}$ be an admissible boundary, then \[\omega_H(\Lambda)\le\frac{\pi}{2}-|\delta_H|,\] for $\omega_H(\Lambda)$ the width of $\ch_H(\Lambda)$ and $\delta_H=\arctan(H/n)$.
	\end{cor}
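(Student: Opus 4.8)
The plan is to read the estimate straight off the cosmological-time description of $\ch_H(\Lambda)$ furnished by Corollary~\ref{cor:ch-time}, using that the past cosmological time grows at unit speed along timelike curves, and then to dispatch the sign $H\le 0$ by a time-reversal symmetry.

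First I would isolate the monotonicity property I need. I claim that if $c$ is a future-directed timelike curve inside $\overline{\Omega(\Lambda)}$ joining $p_0$ to $p_1$, then
\[
\ell(c)\le\dist(p_0,p_1)\le\tau_\past(p_1)-\tau_\past(p_0).
\]
The first inequality is the definition of $\dist$. For the second, given $\epsilon>0$ I pick $q\in\pd_-\Omega(\Lambda)\cap I^-(p_0)$ with $\dist(q,p_0)>\tau_\past(p_0)-\epsilon$; since $p_0$ is chronologically between $q$ and $p_1$, the reverse triangle inequality \eqref{eq:triangleineq} gives $\dist(q,p_1)\ge\dist(q,p_0)+\dist(p_0,p_1)$, and $q$ is still admissible in the supremum defining $\tau_\past(p_1)$, so $\tau_\past(p_1)\ge\tau_\past(p_0)-\epsilon+\dist(p_0,p_1)$; letting $\epsilon\to0$ proves the claim. (This is exactly the statement that $\tau_\past$ is a cosmological time, but I prefer to record the short self-contained argument.)

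Next, for $H\ge0$ I combine this with the characterization of $\ch_H(\Lambda)$. By Lemma~\ref{lem:ch-contained-invisible} any timelike curve $c$ contained in $\ch_H(\Lambda)$ lies in $\overline{\Omega(\Lambda)}$, and we may assume it is future-directed. For any two of its points $p_0,p_1$ with $p_1\in I^+(p_0)$, Corollary~\ref{cor:ch-time} gives $\tau_\past(p_1)\le\frac{\pi}{2}-\delta_H$, while $\tau_\past(p_0)\ge0$, so the displayed inequality yields
\[
\ell\bigl(c|_{[p_0,p_1]}\bigr)\le\tau_\past(p_1)-\tau_\past(p_0)\le\frac{\pi}{2}-\delta_H.
\]
Passing to the supremum over sub-segments and then over all such curves gives $\omega_H(\Lambda)\le\frac{\pi}{2}-\delta_H=\frac{\pi}{2}-|\delta_H|$, the last equality because $\delta_H\ge0$ when $H\ge0$.

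Finally I would reduce $H\le0$ to the case just treated. By Remark~\ref{rem:timeorientation} a time-reversing isometry $\iota$ exchanges the future- and past-$H$-convex halves with the past- and future-$(-H)$-convex ones, hence carries $\ch_H(\Lambda)$ onto $\ch_{-H}(\iota(\Lambda))$ and, being an isometry, preserves the timelike diameter; thus $\omega_H(\Lambda)=\omega_{-H}(\iota(\Lambda))$. Since $-H\ge0$ and $\delta_{-H}=-\delta_H$, the previous case gives $\omega_{-H}(\iota(\Lambda))\le\frac{\pi}{2}-\delta_{-H}=\frac{\pi}{2}+\delta_H=\frac{\pi}{2}-|\delta_H|$, finishing the proof. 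The one step that needs care is the monotonicity bound: I must make sure $p_0$ genuinely lies chronologically between the chosen past-boundary point $q$ and $p_1$ so that \eqref{eq:triangleineq} applies and that $q$ stays admissible in the supremum for $\tau_\past(p_1)$; everything else is bookkeeping. One could instead handle $H\le0$ directly via $\tau_\fut$, but invoking time reversal avoids re-deriving the symmetric monotonicity of $\tau_\fut$.
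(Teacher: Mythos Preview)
Your proof is correct and follows essentially the same route as the paper: both reduce to $H\ge0$ by time-reversal, invoke Corollary~\ref{cor:ch-time} to trap $\ch_H(\Lambda)$ in the sublevel set $\tau_\past^{-1}\bigl([0,\tfrac{\pi}{2}-\delta_H]\bigr)$, and conclude by the unit-speed growth of the cosmological time along timelike curves. The only difference is expository: the paper asserts this last monotonicity as a known property of cosmological time, whereas you supply the short argument via the reverse triangle inequality~\eqref{eq:triangleineq}.
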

	\begin{proof}
		Without loss of generality, let us assume $H\ge0$. In this case, by Corollary~\ref{cor:ch-time} we have
		\[\ch_H(\Lambda)\sq\tau_\past^{-1}\left(\left[0,\frac{\pi}{2}-\delta_H\right]\right),\] 
		namely $\pd_+\ch_H(\Lambda)$ is the leaf of cosmological time on the past part of $\Lambda$. By definition of the cosmological time, the width of $\tau_\past^{-1}(I)$ coincides with the length of the interval $I$, which concludes the proof.
	\end{proof}
	
	\begin{lem}\label{lem:width}
		Let $\Lambda\sq\pd_\infty\hyp^{n,1}$ be an admissible boundary. Let $H,K\in\R$ such that $|H|\ge|K|$ and $HK\ge0$. Then
		\[
		\omega_K(\Lambda)+\delta_K-\delta_H\le\omega_H(\Lambda)\le\omega_K(\Lambda).
		\] 
	\end{lem}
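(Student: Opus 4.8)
The plan is to reduce to the case $H\ge K\ge0$ and then establish the two inequalities separately from the cosmological-time description of the shifted hulls in Corollary~\ref{cor:ch-time}. Since $HK\ge0$ the two parameters share a sign, and $|H|\ge|K|$ gives $\delta_H,\delta_K$ the same sign with $|\delta_H|\ge|\delta_K|$. After applying, if needed, a time-orientation reversing isometry (Remark~\ref{rem:timeorientation}) — which preserves the width, interchanges $\tau_\past$ and $\tau_\fut$, and replaces $\delta_\bullet$ by $-\delta_\bullet$ — I may assume $H\ge K\ge0$, so that $\delta_H\ge\delta_K\ge0$.

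For the upper inequality I would simply compare the defining conditions. By Corollary~\ref{cor:ch-time},
\[
\ch_H(\Lambda)=\Big\{\tau_\past\le\tfrac{\pi}{2}-\delta_H\Big\}\cap\Big\{\tau_\fut\ge\tfrac{\pi}{2}+\delta_H\Big\},
\]
and likewise for $K$; since $\delta_H\ge\delta_K$ both conditions are tighter for $H$, giving $\ch_H(\Lambda)\sq\ch_K(\Lambda)$. The width is monotone under inclusion, because any timelike curve contained in $\ch_H(\Lambda)$ is also contained in $\ch_K(\Lambda)$, so $\omega_H(\Lambda)\le\omega_K(\Lambda)$.

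The lower inequality is the substantial point, and amounts to showing that the width cannot drop by more than $\delta_H-\delta_K$ when passing from $K$ to $H$. I would fix a timelike geodesic $\gamma$ in $\ch_K(\Lambda)$ of length as close as desired to $\omega_K(\Lambda)$, with future endpoint $b$ on $\pd_+\ch_K(\Lambda)=\tau_\past^{-1}(\tfrac{\pi}{2}-\delta_K)$, which by the remark following Corollary~\ref{cor:ch-time} is a $C^1$ leaf of the past cosmological time. Being length-maximising with free endpoint on this smooth spacelike leaf, $\gamma$ is orthogonal to it at $b$, hence tangent there to $\nablah\tau_\past$; by Proposition~\ref{pro:benbon} it therefore coincides near $b$ with the cosmological geodesic along which $\tau_\past$ is arclength, and the neighbouring leaf $\pd_+\ch_H(\Lambda)=\tau_\past^{-1}(\tfrac{\pi}{2}-\delta_H)$ sits exactly cosmological time $\delta_H-\delta_K$ to its past. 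Retracting $\gamma$ along this geodesic until it meets $\pd_+\ch_H(\Lambda)$ thus removes precisely $\delta_H-\delta_K$ from its length, and I would bound the length of the remaining curve in $\ch_H(\Lambda)$ below by $\omega_K(\Lambda)-(\delta_H-\delta_K)$ using the reverse triangle inequality~\eqref{eq:triangleineq}; letting $\gamma$ tend to a maximiser yields the claim.

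The hard part is precisely that the retracted curve must be shown to lie in $\ch_H(\Lambda)$, i.e.\ to satisfy the sharper past condition $\tau_\fut\ge\tfrac{\pi}{2}+\delta_H$, without sacrificing further length at the past end. For $H\ge0$ the past boundary $\pd_-\ch_H(\Lambda)$ is only Lipschitz (again the remark after Corollary~\ref{cor:ch-time}), so it is not a cosmological leaf and cannot be treated by the same orthogonality argument; the crux is to argue that the single rigid cosmological displacement of the smooth future leaf already accounts for the entire loss of width, while the adjustment at the past boundary is free. I expect to control this, together with the degenerate situations in which the maximiser is not attained or the cosmological geodesic degenerates, by means of the compactness of the space of $H$-hypersurfaces and umbilical hypersurfaces furnished by Proposition~\ref{pro:compact}.
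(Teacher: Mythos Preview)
Your argument rests on the inclusion $\ch_H(\Lambda)\subseteq\ch_K(\Lambda)$ for $H\ge K\ge0$, and this is false. The $H$-shifted hull is not nested in the $K$-shifted one; rather, as $H$ increases (with $H\ge0$) the whole hull slides to the past: both $\pd_+\ch_H$ and $\pd_-\ch_H$ move down. The simplest witness is $\Lambda=\pd_\infty\mathcal{P}_0$ for a totally geodesic $\mathcal{P}_0$: then $\ch_H(\Lambda)=\mathcal{P}_{\delta_H}$ and $\ch_K(\Lambda)=\mathcal{P}_{\delta_K}$ are two distinct equidistant leaves, so neither contains the other. More generally, $\pd_-\ch_H(\Lambda)$ is the upper envelope of $H$-umbilical hypersurfaces $\mathcal{P}_{\delta_H}$ with $\pd_\infty\mathcal{P}_{\delta_H}\subseteq I^-(\Lambda)$; since $\mathcal{P}_{\delta_H}$ lies in the past of $\mathcal{P}_{\delta_K}$ (same asymptotic boundary, larger $\delta$), one gets $\pd_-\ch_H(\Lambda)\subseteq \overline{I^-(\pd_-\ch_K(\Lambda))}$, the opposite of what you need. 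You were presumably reading the condition ``$\tau_\fut\ge\tfrac{\pi}{2}+\delta_H$'' in Corollary~\ref{cor:ch-time} literally; the time-reversal in that proof actually produces ``$\tau_\fut\le\tfrac{\pi}{2}+\delta_H$'', and with the corrected sign both constraints move in the \emph{same} direction as $\delta_H$ grows.

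This reverses the roles of the two inequalities. The lower bound $\omega_H\ge\omega_K-(\delta_H-\delta_K)$ is the easy one: since $\pd_-\ch_K$ lies in the future of $\pd_-\ch_H$, one has
\[
\ch_H(\Lambda)\ \supseteq\ I^-\!\big(\pd_+\ch_H(\Lambda)\big)\cap I^+\!\big(\pd_-\ch_K(\Lambda)\big),
\]
which is $\ch_K(\Lambda)$ with the top slab $\{\tfrac{\pi}{2}-\delta_H\le\tau_\past\le\tfrac{\pi}{2}-\delta_K\}$ removed; any timelike curve in $\ch_K$ loses at most $\delta_H-\delta_K$ of its length in that slab. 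So your ``hard part'' --- pushing the past endpoint below a \emph{sharper} past barrier --- is a non-issue: the past barrier for $H$ is looser, not sharper. Conversely, the upper bound $\omega_H\le\omega_K$ is the one requiring work, since a long timelike curve in $\ch_H$ may well dip below $\pd_-\ch_K$. The paper handles this by replacing the curve with a cosmological geodesic segment of the same length, translating it to the future along that geodesic by $\delta_H-\delta_K$ so that its future end lands on $\pd_+\ch_K$, and then showing by a barrier argument with support $K$- and $H$-umbilical hypersurfaces that the past end cannot have crossed $\pd_-\ch_K$. Your proposal contains no mechanism for this step.
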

	\begin{figure}[h]
		\centering
		\includegraphics{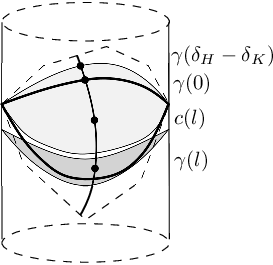}
		\caption{Proof of Lemma~\ref{lem:width}: $\past(\Lambda)$ is dashed, $\ch_K(\Lambda)$ in light gray, $\ch_H(\Lambda)$ in heavier line, and $U_{K,H}$ in dark gray.}
	\end{figure}
	\begin{proof}
		Let us fix $\Lambda$ and admissible boundary and, without loss of generality, assume $H>K\ge0$. To prove the first inequality, it suffices to remark that 
		\[\ch_H(\Lambda)\supseteq I^-\left(\pd_+\ch_H(\Lambda)\right)\cap I^+\left(\pd_-\ch_K(\Lambda)\right),\]
		which is $\ch_K(\Lambda)$ deprived of
		\[\overline{I^-\left(\pd_+\ch_K(\Lambda)\right)}\cap\overline{I^+\left(\pd_-\ch_H(\Lambda)\right)}=\left\{\frac{\pi}{2}-\delta_H\le\tau_\past\le\frac{\pi}{2}-\delta_K\right\},\]
		whose width is $\delta_H-\delta_K$.
		
		To prove the second inequality, consider a timelike curve $c$ of length $l$ contained in $\ch_H(\Lambda)$: we want to construct a curve with the same length in $\ch_K(\Lambda)$.
		
		Assume that $c$ is past-directed. Since it contained in $\ch_H(\Lambda)$, we have that 
		\[\dist\left(c(l),\pd_+\ch(\Lambda)\right)\ge l.\]
		Consider the integral line for $\nablah\tau_\past$ passing through $c(l)$, that is the timelike geodesic
		\[\gamma:=[\rho_+\left(c(l)\right),\rho_-\left(c(l)\right)]\]
		(compare with Proposition~\ref{pro:benbon}), parameterized so that it is past-directed and $\gamma(0)\in\pd_+\ch_H(\Lambda)$. Then, since 
		\[c(l)=\gamma\left(\dist\left(c(l),\pd_+\ch(\Lambda)\right)\right)\in I^+\left(\gamma(l)\right),\]
		we have that $\gamma([0,l])$ is contained in $\ch_H(\Lambda)$.
		
		We claim that the geodesic segment \[\gamma([\delta_K-\delta_H,l+\delta_K-\delta_H])\] is contained in $\ch_K(\Lambda)$. In particular, if we choose $\gamma$ of length $l=\omega_H(\Lambda)-\varepsilon$, we prove that $\omega_K(\Lambda)\ge\omega_H(\Lambda)-\varepsilon$: since $\varepsilon$ is arbitrary, we conclude that $\omega_K(\Lambda)\ge\omega_H(\Lambda)$.
		
		To prove the claim, we recall that $\gamma$ is an integral line for $\nablah\tau_\past$: hence, it meets orthogonally each level set of $\tau_\past$. By construction, the distance between two level sets is precisely the difference of the values they are preimages of, hence \[\gamma(\delta_K-\delta_H)\in\pd_+\ch_K(\Lambda).\] To conclude, we need to show that the point $p:=\gamma\left(l+\delta_K-\delta_H\right)$ is contained in $\ch_K(\Lambda)$. Consider an umbilical $K-$hypersurface $\mathcal{P}_{\delta_K}$ whose boundary lies in the past of $\Lambda$. By contradiction, assume that $p$ lies in the past of $\mathcal{P}_{\delta_K}$: hence, the geodesic segment 
		\[\gamma\left([l+\delta_K-\delta_H,l]\right)\] is contained in the open set 
		\[U_{K,H}:=I^-\left(\mathcal{P}_{\delta_K}\right)\cap I^+\left(\mathcal{P}_{\delta_H}\right),\] for the umbilical $H-$hypersurface $\mathcal{P}{\delta_H}$ sharing the same boundary as $\mathcal{P}_{\delta_K}$. By openness of $U_{K,H}$, the segment can prolonged, hence $U_{K,H}$ contains a timelike segment whose length is greater then $\delta_H-\delta_K$, which is the width of $U_{K,H}$, leading to a contradiction and concluding the proof.
	\end{proof}
	The following result descends directly.
	\begin{repprox}{pro:width}
		Let $\Lambda$ be an admissible boundary, the function $\omega_\bullet(\Lambda)\colon\R\to[0,\pi/2]$
		\begin{enumerate}
			\item is continuous;
			\item is increasing (resp. decreasing) for $H\le0$ (resp. for $H\ge0$);
			\item achieves its maximum at $H=0$;
			\item $\lim_{H\to\pm\infty}\omega_\bullet(\Lambda)=0$.
		\end{enumerate}
	\end{repprox}

\section{The width is a lower bound for the extrinsic curvature}\label{sec:w<B}
	In this section, we show that if a $H-$hypersurface is closed to be totally umbilical, \textit{i.e.} its traceless shape operator is small, then the $H-$width of its boundary is small. In particular, we prove
	\begin{repcorx}{cor:w<B}
		Let $\Lambda$ be an admissible boundary and $H\in\R$. Let $B_0$ be the traceless shape operator of the properly embedded spacelike $H-$hypersurface such that $\pd_\infty\Sigma=\Lambda$. If $\|B_0\|_{C^0(\Sigma)}^2\le 1+(H/n)^2$, the width of $\ch_H(\Lambda)$ satisfies
		\[\tan\left(\omega_H(\Lambda)\right)\le\frac{2\|B_0\|_{C^0(\Sigma)}}{1+(H/n)^2-\|B_0\|_{C^0(\Sigma)}^2}.\]
	\end{repcorx}
	In fact, we prove the following more general result, which does not require the additional bound on the norm of the traceless shape operator.
	\begin{repthmx}{pro:w<B}
		Let $\Lambda$ be an admissible boundary in $\pd_\infty\hyp^{n,1}$ and $H\in\R$. Let $\Sigma$ the unique properly embedded spacelike $H-$hypersurface such that $\pd_\infty\Sigma=\Lambda$. Then
		\[\omega_H(\Lambda)\le\arctan\left(\sup_\Sigma\lambda_1\right)-\arctan\left(\inf_\Sigma \lambda_n\right),\]
		for $\lambda_1\ge\dots\ge\lambda_n$ be the principal curvatures of $\Sigma$, decreasingly ordered.
	\end{repthmx}
	However, the estimate is more interesting near the Fuchsian locus, that is for small values of $\|B_0\|$, and the expression in Corollary~\ref{cor:w<B} is more direct than the one in Theorem~\ref{pro:w<B}.
	
	The plan is to use the normal evolution (Equation~\eqref{eq:normflow}) to compare the extrinsic curvature of an $H-$hypersurface $\Sigma$ to the width of the $H-$shifted convex hull of $\pd_\infty\Sigma$. In short, by Proposition~\ref{cor:Hmaxprin} $\Sigma$ is contained in $\ch_H(\pd_\infty\Sigma)$, and by Corollary~\ref{cor:principalcurvature} the principal curvatures are monotone along the normal flow. Hence, we look for the two times $T_+,T_-$ when $\Sigma_t$ becomes $H-$convex respectively in the past and in the future. The set $I^-(\Sigma_{T_+})\cap I^+(\Sigma_{T_-})$ is then $H-$convex and contains $\pd_\infty\Sigma$: by minimality, it contains the $H-$shifted convex hull, dominating its width.
	
	\subsection{Normal flow} Let $\Sigma$ be a $C^1-$spacelike hypersurface. Denote by $N$ the unit future-directed normal vector field over $\Sigma$. The \textit{normal flow} of $\Sigma$ is the function
	\begin{equation}\label{eq:normflow}
		\begin{tikzcd}[row sep=1ex]
			F\colon\Sigma\times\R\arrow[r] & \hypu^{n,1}\\
			(x,t)\arrow[r,maps to] & \exp_x(t N(x)).
		\end{tikzcd}
	\end{equation} 
	
	The pull-back metric on $\Sigma\times\R$ can be explicitly computed.
	\begin{lem}[Lemma~6.22 in \cite{bonsep}]\label{lem:normmetric}
		Let $\Sigma$ be a $C^2-$spacelike hypersurface in $\hypu^{n,1}$. Denote by $\Sigma_t:=F\left(\Sigma\times\{t\}\right)$ the leaves of the normal flow of $\Sigma$. The pull-back to metric on $\Sigma\times\R$ is
		\[(F^*g_{\hypu^{n,1}})_{(p,t)}(v,w)=-dt^2+g\left(\cos(t)v+\sin(t)B(v),\cos(t)w+\sin(t)B(w)\right),\]
		for $g$ the metric of $\Sigma$ and $B$ its shape operator.
	\end{lem}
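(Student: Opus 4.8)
The plan is to reduce to the quadric model and differentiate the explicit geodesic formula~\eqref{eq:geod}. Since the covering $\psi$ of~\eqref{eq:split} is a local isometry and the normal flow is a local construction, it suffices to establish the identity for $\psi\circ F$, which is precisely the normal flow of $\psi(\Sigma)$ inside $\hyp^{n,1}\sq\R^{n,2}$. I would write the embedding as $\sigma\colon\Sigma\to\hyp^{n,1}$, with $N\colon\Sigma\to\R^{n,2}$ the future-directed unit normal, so that the three orthogonality relations
\[
\pr{N,N}=-1,\qquad \pr{\sigma,N}=0,\qquad \pr{d\sigma(v),N}=0\ \ (v\in T\Sigma)
\]
hold. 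As $N$ is unit timelike, the first line of~\eqref{eq:geod} gives the closed form
\[
F(x,t)=\cos(t)\,\sigma(x)+\sin(t)\,N(x),
\]
which I would take as the starting point.

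Next I would differentiate $F$ in the two types of directions. In the fiber direction, $\pd_t F=-\sin(t)\,\sigma+\cos(t)\,N$, and the relations above give at once $\pr{\pd_t F,\pd_t F}=-1$, accounting for the $-dt^2$ summand. For a spatial direction $v\in T_x\Sigma$ one has $dF_{(x,t)}(v)=\cos(t)\,d\sigma(v)+\sin(t)\,dN(v)$. The key ingredient is the Weingarten relation in the ambient space: decomposing the flat derivative of $\R^{n,2}$ as $D_v N=\nablah_v N+\pr{d\sigma(v),N}\sigma=\nablah_v N$, and recalling the convention $\nablah_v N=d\sigma(B(v))$ for the shape operator (the covariant derivative of the future normal, consistent with $\hess=B$ in Lemma~\ref{lem:Hconvex}), we obtain
\[
dF_{(x,t)}(v)=d\sigma\bigl(\cos(t)\,v+\sin(t)\,B(v)\bigr).
\]
This already isolates the combination $\cos(t)v+\sin(t)B(v)$ appearing in the statement, with the correct $+$ sign.

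It then remains to assemble the metric blockwise. The fiber--fiber block is $\pr{\pd_t F,\pd_t F}=-1$, computed above. The mixed block vanishes: expanding $\pr{\pd_t F,dF(v)}$, the four terms are killed respectively by $\pr{\sigma,d\sigma(v)}=0$, $\pr{N,d\sigma(v)}=0$, $\pr{N,dN(v)}=\tfrac12 v\pr{N,N}=0$, and $\pr{\sigma,dN(v)}=-\pr{d\sigma(v),N}=0$ (the last obtained by differentiating $\pr{\sigma,N}=0$). Geometrically this records that the normal geodesics stay orthogonal to every leaf $\Sigma_t$. Finally, since $d\sigma$ is an isometry onto $T\Sigma$ for the induced metric $g$, the spatial block is
\[
\pr{dF(v),dF(w)}=g\bigl(\cos(t)v+\sin(t)B(v),\ \cos(t)w+\sin(t)B(w)\bigr).
\]
Summing the three blocks yields the asserted formula for $F^*g_{\hypu^{n,1}}$.

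The only delicate point I anticipate is bookkeeping the sign in the Weingarten relation, so that the factor reads $\cos(t)v+\sin(t)B(v)$ and not $\cos(t)v-\sin(t)B(v)$. This is entirely fixed by the convention $B=\nablah_\bullet N$ for the shape operator (with the future-directed normal), and can be sanity-checked on a totally umbilical leaf $\mathcal{P}_{\delta_H}$, whose shape operator $(H/n)\Id$ matches the Hessian computation underlying Lemma~\ref{lem:Hconvex}. Everything else is a routine expansion using the three orthogonality relations and the self-adjointness of $B$.
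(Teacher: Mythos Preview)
Your argument is correct and complete. The paper itself does not prove this lemma: it is quoted from \cite{bonsep} (Lemma~6.22 there) and stated without proof, so there is no in-paper argument to compare against. Your direct computation in the quadric model via the explicit timelike geodesic formula~\eqref{eq:geod}, together with the Weingarten relation $D_vN=\nablah_vN=d\sigma(B(v))$ (which is consistent with the paper's convention, as confirmed by Corollary~\ref{cor:principalcurvature}), is the standard route.
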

	We can then compute the shape operator of the leaves of the normal flow, when they are non-degenerate.
	\begin{cor}\label{cor:principalcurvature}
		Let $\Sigma$ be a $C^2-$spacelike properly embedded hypersurface in $\hypu^{n,1}$. Assume that the leaf $\Sigma_t$ is non-degenerate. Then, the principal curvatures of $\Sigma_t$ are
		\[\lambda_i^t=\tan\left(\arctan(\lambda_i)-t\right),\]
		for $\lambda_i$ the the principal curvatures of $\Sigma$.
	\end{cor}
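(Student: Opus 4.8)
The plan is to read the shape operator of the leaf $\Sigma_t$ directly off the explicit pull-back metric provided by Lemma~\ref{lem:normmetric}. Writing $A_t:=\cos(t)\Id+\sin(t)B$, that lemma identifies the induced metric on $\Sigma_t$ (after the natural identification of $T\Sigma_t$ with $T\Sigma$ along the flow lines) with $g_t(v,w)=g(A_tv,A_tw)$. Because $B$ is $g$-self-adjoint, both $A_t$ and its derivative $\dot A_t=-\sin(t)\Id+\cos(t)B$ are $g$-self-adjoint and mutually commuting, which is what will let me diagonalize at the end.

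First I would use the fact that the $-dt^2$ term in Lemma~\ref{lem:normmetric} records precisely that $F$ is a unit-speed timelike geodesic flow orthogonal to every leaf; hence the $\Sigma_t$ are equidistant hypersurfaces and $\partial_t F$ is their future-directed unit normal. For such a flow the first variation of the induced metric computes the second fundamental form, $\partial_t g_t=2\sff_t$ with $\sff_t(v,w)=g_t(B_tv,w)$ and $B_t$ the shape operator of $\Sigma_t$; the sign convention is fixed once and for all by the check at $t=0$, where $A_0=\Id$, $\dot A_0=B$, and one recovers $B_0=B$. Differentiating $g_t(v,w)=g(A_tv,A_tw)$ and using self-adjointness and commutativity gives $\partial_t g_t(v,w)=2g(A_t\dot A_t v,w)$, while $2\sff_t(v,w)=2g(A_t^2 B_t v,w)$. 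Comparing yields the operator identity $A_t^2 B_t=A_t\dot A_t$.

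On the open set of times where $\Sigma_t$ is non-degenerate — which is exactly the standing hypothesis — the operator $A_t$ is invertible, so $B_t=A_t^{-1}\dot A_t=\dot A_tA_t^{-1}$. I would then diagonalize in a $g$-orthonormal eigenbasis of $B$: on the $\lambda_i$-eigenspace $B_t$ acts as the scalar $\dfrac{-\sin t+\lambda_i\cos t}{\cos t+\lambda_i\sin t}$, and substituting $\lambda_i=\tan(\arctan\lambda_i)$ and applying the tangent subtraction formula collapses this to $\tan(\arctan\lambda_i-t)$, which is the asserted $\lambda_i^t$.

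The computation is short; the points that genuinely require care, and which I regard as the main obstacle, are the sign and orientation conventions and the role of non-degeneracy. I would pin the signs down not only by the $t=0$ normalization but also by testing the totally geodesic case $B=0$: there the formula predicts $\lambda_i^t=\tan(-t)$, and matching this against the definition of $\mathcal{P}_\delta$ (mean curvature $H=n\tan\delta$ at oriented distance $-\delta$) confirms the orientation of the flow. The non-degeneracy assumption is exactly what makes $A_t$ invertible, so that $B_t$ is defined and the division in $A_t^{-1}\dot A_t$ is legitimate. An equally short alternative, if one prefers to bypass the first-variation formula, is to observe that each $J(t):=dF_{(p,t)}(v)$ is a Jacobi field along the timelike geodesic $t\mapsto\exp_p(tN)$; in the constant-curvature ambient the Jacobi equation reduces to $J''+J=0$ with $J(0)=v$ and $J'(0)=Bv$, giving $J(t)=A_tv$ up to parallel transport, whereupon the Weingarten identity $B_tJ=J'$ produces $B_tA_t=\dot A_t$ directly.
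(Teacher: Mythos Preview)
Your proposal is correct. The paper states this corollary without proof, as an immediate consequence of Lemma~\ref{lem:normmetric}; your argument via the first variation $\partial_t g_t = 2\sff_t$ applied to $g_t(\cdot,\cdot)=g(A_t\cdot,A_t\cdot)$ is exactly the computation that the word ``Corollary'' is signalling, and your sign checks (at $t=0$ and against the totally geodesic case $\mathcal{P}_\delta$) are the right way to anchor the conventions. The Jacobi-field alternative you sketch is an equally valid route and yields the same operator identity $B_tA_t=\dot A_t$.
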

	
	\begin{lem}\label{pro:tempodeg}
		Let $\Sigma$ be a $C^2-$spacelike hypersurface. Denote by $\lambda_1\ge\dots\ge\lambda_n$ the principal curvatures of $\Sigma$. Define
		\begin{equation}\label{eq:Apm}
			A_+=\arctan\left(\inf_\Sigma\lambda_n\right)+\frac{\pi}{2} \qquad A_-=\arctan\left(\sup_\Sigma\lambda_1\right)-\frac{\pi}{2}	
		\end{equation}
		For any $t\in(A_-,A_+)$, the leaf of the normal flow at time $t$ is a spacelike hypersurface.
	\end{lem}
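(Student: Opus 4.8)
The plan is to read off the induced metric on the leaf $\Sigma_t$ directly from Lemma~\ref{lem:normmetric} and to reduce spacelikeness to an elementary sign condition on a self-adjoint operator. Fixing $t$ and restricting the pull-back metric of Lemma~\ref{lem:normmetric} to the vectors tangent to $\Sigma\times\{t\}$, the induced metric on $\Sigma_t$ at the point $F(x,t)$ is
\[
g_t(v,w)=g\bigl(M_t v,\,M_t w\bigr),\qquad M_t:=\cos(t)\Id+\sin(t)B,
\]
where $g$ and $B$ are the first fundamental form and the shape operator of $\Sigma$ at $x$. Since $g$ is positive definite (as $\Sigma$ is spacelike), the quadratic form $g_t(v,v)=\lvert M_t v\rvert_g^2$ is positive definite if and only if $M_t$ has trivial kernel, i.e.\ if and only if $M_t$ is invertible. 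Thus, by Definition~\ref{de:space}, $\Sigma_t$ is a spacelike hypersurface precisely when $M_t$ is invertible at every point of $\Sigma$, and I would like to show this holds for all $t\in(A_-,A_+)$.

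First I would analyze $M_t$ pointwise. As $B$ is $g$-self-adjoint, we may diagonalize it: in an eigenbasis with eigenvalues $\lambda_1(x)\ge\dots\ge\lambda_n(x)$, the operator $M_t$ is diagonal with eigenvalues $\cos(t)+\lambda_i(x)\sin(t)$. Writing $\alpha_i(x):=\arctan(\lambda_i(x))\in(-\pi/2,\pi/2)$, so that $\lambda_i(x)=\tan(\alpha_i(x))$ and $\cos(\alpha_i(x))>0$, the addition formula gives
\[
\cos(t)+\lambda_i(x)\sin(t)=\frac{\cos\bigl(t-\alpha_i(x)\bigr)}{\cos\bigl(\alpha_i(x)\bigr)}.
\]
Hence the eigenvalues of $M_t$ never vanish — equivalently $M_t$ is invertible — if and only if $\cos(t-\alpha_i(x))\ne0$ for every index $i$ and every point $x\in\Sigma$. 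This is consistent with Corollary~\ref{cor:principalcurvature}: the principal curvature $\lambda_i^t=\tan(\alpha_i-t)$ of $\Sigma_t$ blows up precisely when $t-\alpha_i\equiv\pi/2\pmod\pi$, which is exactly where the leaf degenerates.

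It then remains to control $t-\alpha_i(x)$ uniformly over $\Sigma$. Monotonicity of $\arctan$ together with $\inf_\Sigma\lambda_n\le\lambda_i(x)\le\sup_\Sigma\lambda_1$ gives
\[
A_+-\tfrac{\pi}{2}=\arctan\Bigl(\inf_\Sigma\lambda_n\Bigr)\le\alpha_i(x)\le\arctan\Bigl(\sup_\Sigma\lambda_1\Bigr)=A_-+\tfrac{\pi}{2}.
\]
Consequently, for any $t\in(A_-,A_+)$ one has $\alpha_i(x)-\tfrac{\pi}{2}\le A_-<t$ and $t<A_+\le\alpha_i(x)+\tfrac{\pi}{2}$, so that $t-\alpha_i(x)\in(-\pi/2,\pi/2)$ and therefore $\cos(t-\alpha_i(x))>0$. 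In particular every eigenvalue of $M_t$ is strictly positive, so $M_t$ is positive definite — a fortiori invertible — at every point, and $\Sigma_t$ is spacelike.

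I expect the only genuinely delicate point to be this uniformity step: the principal curvatures vary from point to point on the (noncompact) hypersurface $\Sigma$, and the suprema and infima defining $A_\pm$ need not be attained, so the argument must be phrased through the global bounds $\inf_\Sigma\lambda_n$ and $\sup_\Sigma\lambda_1$ rather than through any single worst point. One also checks en passant that the interval is nonempty: $A_+-A_-=\pi+\arctan(\inf_\Sigma\lambda_n)-\arctan(\sup_\Sigma\lambda_1)>0$, since both arctangents lie in $(-\pi/2,\pi/2)$. Everything else is the bookkeeping of the sign of a diagonalized self-adjoint operator, which the displayed identity makes transparent.
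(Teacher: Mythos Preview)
Your proof is correct and follows essentially the same approach as the paper: both diagonalize $M_t=\cos(t)\Id+\sin(t)B$ in an eigenbasis of $B$ and verify that each eigenvalue $\cos(t)+\sin(t)\lambda_i(x)$ stays positive for $t\in(A_-,A_+)$. Your use of the addition formula $\cos(t)+\tan(\alpha_i)\sin(t)=\cos(t-\alpha_i)/\cos(\alpha_i)$ is a clean way to package the sign analysis, whereas the paper separates the cases $t\ge0$ and $t\le0$ and records the explicit lower bound $(F_t)^*g_t\ge\beta(x,t)^2g$, but the underlying argument is the same.
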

		
	\begin{proof}
		Denote by $g_t$ the induced metric, $F_t:=F|_{\Sigma\times\{t\}}$ and define
		\begin{align*}
			\beta(x,t)=\begin{cases}
				\cos(t)+\sin(t)\lambda_n(x) & \text{for $t\ge0$}\\
				\cos(t)+\sin(t)\lambda_1(x) & \text{for $t\le0$}
			\end{cases}.
		\end{align*}
		We claim that $(F_t)^*g_t\ge\beta(x,t)^2 g$, which concludes the proof since $\beta(\cdot,t)$ is a non-vanishing function over $\Sigma$, for all $t\in(A_-,A_+)$.
		
		To prove the claim, consider the orthonormal basis $(e_1,\dots,e_n)$ of $T_x\Sigma$, for $e_i$ the unit eigenvector relative to $\lambda_i(x)$. Take tangent vector \[v=\sum_{i=1}^n a_i e_i\in T_x\Sigma.\] By Lemma~\ref{lem:normmetric}, we have
		\begin{align*}
			(F_t)^*g_t(v,v)&=g\left(\cos(t)v+\sin(t)B(v),\cos(t)v+\sin(t)B(v)\right)\\
			&=\sum_{i=1}^n a_i^2 g\left(\cos(t)e_i+\sin(t)B(e_i),\cos(t)e_i+\sin(t)B(e_i)\right)\\
			&=\sum_{i=1}^n a_i^2\left(\cos(t)+\sin(t)\lambda_i(x)\right)^2.
		\end{align*}
		For $t>0$, denote by $a_+(x)\in(0,\pi)$ the solution of the equation
		\[-\frac{1}{\tan(s)}=\lambda_n(x)\iff s=\frac{\pi}{2}+\arctan\left(\lambda_n(x)\right).\]
		
		Since $\cos(t)+\sin(t)\lambda_i(x)>0$ for any $i=1,\dots,n$ and $t\in\left[0,a_+(x)\right)$, we get
		\[\left(\cos(t)+\sin(t)\lambda_i(x)\right)^2\ge\left(\cos(t)+\sin(t)\lambda_n(x)\right)^2=\beta(x,t)^2.
		\]
		By definition, $A_+=\inf_\Sigma a_+(x)$: hence, \[(F_t)^*g_t(v,v)\ge\beta(x,t)^2 g(v,v),\quad\forall v\in T\Sigma,\forall t\in[0,A_+),\] proving the claim for $t\ge0$. The same argument works for $t<0$, concluding the proof.
	\end{proof}

	\begin{rem}\label{rem:lunghezzanormflow}
		It is directly checked that $A_+ -A_-\le\pi$, with equality if and only if $\Sigma$ is totally umbilical.
	\end{rem}
		
	\subsection{The width is a lower bound for the extrinsic curvature}
	Lemma~\ref{lem:Hconvex} characterizes $H-$convexity in terms of principal curvatures, and Corollary~\ref{cor:principalcurvature} describes the principal curvature along the normal flow. Combining the two results, we can relate the extrinsic curvature of a $H-$hypersurface with the width of its $H-$shifted convex hull. 
	\begin{lem}\label{lem:tempoconvex}
		Let $\Sigma$ be a properly embedded spacelike hypersurface. Denote by $\lambda_1\ge\dots\ge\lambda_n$ the principal curvatures of $\Sigma$. Define
		\begin{align*}
			T_+:=\arctan\left(\sup_\Sigma\lambda_1\right), &&	T_-:=\arctan\left(\inf_\Sigma\lambda_n\right).
		\end{align*}
		Let $t\in(A_-,A_+)$ as in Lemma~\ref{pro:tempodeg}. Then, $\Sigma_t$ is past-$H-$convex (resp. future-$H-$convex) if and only if $t\ge T_+ -\delta_H$ (resp. $t\le T_- -\delta_H$).
	\end{lem}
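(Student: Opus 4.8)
The plan is to read off the $H$-convexity of each leaf $\Sigma_t$ directly from its principal curvatures, converting convexity into a pointwise inequality on the shape operator via Lemma~\ref{lem:Hconvex} and tracking how those curvatures evolve under the flow via Corollary~\ref{cor:principalcurvature}. Fix $t\in(A_-,A_+)$; by Lemma~\ref{pro:tempodeg} the leaf $\Sigma_t$ is then spacelike, and since $\Sigma$ is properly embedded one checks that $\Sigma_t$ is an entire $C^2$ graph, so Lemma~\ref{lem:Hconvex} applies. It says that $\Sigma_t$ is past-$H$-convex if and only if its shape operator $B^t$ satisfies $B^t\le(H/n)\,\Id$, that is, if and only if every principal curvature $\lambda_i^t$ of $\Sigma_t$ is $\le H/n=\tan(\delta_H)$ at every point; dually, $\Sigma_t$ is future-$H$-convex if and only if every $\lambda_i^t\ge\tan(\delta_H)$.

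By Corollary~\ref{cor:principalcurvature} one has $\lambda_i^t=\tan\bigl(\arctan(\lambda_i)-t\bigr)$, so the crux is to guarantee that $\tan$ is strictly increasing on the relevant range, so that comparing tangents reduces to comparing their arguments. First I would check that for $t\in(A_-,A_+)$, every point $x$ and every index $i$, the argument $\arctan(\lambda_i(x))-t$ lies in $(-\pi/2,\pi/2)$. This is immediate from the definitions $A_-=T_+-\pi/2$ and $A_+=T_-+\pi/2$ together with the bounds $T_-\le\arctan(\lambda_n(x))\le\arctan(\lambda_i(x))\le\arctan(\lambda_1(x))\le T_+$: the upper estimate gives $\arctan(\lambda_i(x))-t<T_+-A_-=\pi/2$ and the lower one gives $\arctan(\lambda_i(x))-t>T_--A_+=-\pi/2$. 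On this interval $\tan$ is a strictly increasing bijection onto $\R$, and $\delta_H=\arctan(H/n)\in(-\pi/2,\pi/2)$ as well.

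With this monotonicity in hand the equivalence becomes a direct translation. Past-$H$-convexity of $\Sigma_t$ means $\tan\bigl(\arctan(\lambda_i(x))-t\bigr)\le\tan(\delta_H)$ for all $i$ and all $x$, equivalently $\arctan(\lambda_i(x))-t\le\delta_H$, equivalently $t\ge\arctan(\lambda_i(x))-\delta_H$ for all $i,x$. Since $\lambda_1$ dominates every $\lambda_i$ and $\arctan$ is continuous and increasing (hence commutes with the supremum), the smallest admissible $t$ is $\sup_x\arctan(\lambda_1(x))-\delta_H=\arctan\bigl(\sup_\Sigma\lambda_1\bigr)-\delta_H=T_+-\delta_H$; thus $\Sigma_t$ is past-$H$-convex if and only if $t\ge T_+-\delta_H$. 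The future case is perfectly symmetric: reversing the inequality and replacing $\lambda_1$ and the supremum by $\lambda_n$ and the infimum yields that $\Sigma_t$ is future-$H$-convex if and only if $t\le\arctan\bigl(\inf_\Sigma\lambda_n\bigr)-\delta_H=T_--\delta_H$.

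The only genuinely delicate point is the branch control of the second paragraph, which is precisely what confines the discussion to the spacelike range $(A_-,A_+)$ of Lemma~\ref{pro:tempodeg}; once it is secured, everything else is the tautological passage between the sign condition on $B^t-(H/n)\,\Id$ and the inequality on $t$. A second, minor point to verify is that the extrema defining $T_\pm$ interact correctly with the ``for all $x$'' quantifiers, which holds because $\ge$ and $\le$ are closed conditions, so bounding by the supremum (resp.\ infimum) is equivalent to bounding at every point even when these extrema are not attained.
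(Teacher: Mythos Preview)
Your proof is correct and follows essentially the same approach as the paper: both combine Lemma~\ref{lem:Hconvex} with Corollary~\ref{cor:principalcurvature} to reduce the convexity condition to the inequality $\arctan(\lambda_i)-t\le\delta_H$ (resp.\ $\ge$), then pass to the supremum (resp.\ infimum). You are simply more explicit than the paper about the branch control ensuring $\arctan(\lambda_i(x))-t\in(-\pi/2,\pi/2)$, which justifies the monotonicity step the paper uses without comment.
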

	\begin{proof}
		Let us prove that $\Sigma_t$ is past-$H-$convex if and only if $t\ge T_+ -\delta_H$: the other proof is completely analogous.
		
		As remarked in Corollary~\ref{cor:principalcurvature}, the principal curvatures of $\Sigma_t$ are \[\lambda_i^t=\tan\left(\arctan(\lambda_i)-t\right).\]
		In particular, the function $t\mapsto\lambda_i^t$ is strictly decreasing in $t$. By Lemma~\ref{lem:Hconvex}, the leaf $\Sigma_t$ is past-$H-$convex if and only if $\lambda_i^t\le H/n$, for all $i=1,\dots,n$. This is equivalent to require
		\begin{align*}
			\frac{H}{n}&\ge\sup_\Sigma\lambda_1^t=\sup_\Sigma\tan\left(\arctan(\lambda_1)-t\right)=\tan\left(\arctan\left(\sup_\Sigma\lambda_1\right)-t\right)\\
			&\iff\delta_H\ge\arctan\left(\sup_\Sigma\lambda_1\right)-t=T_+-t,		
		\end{align*}
		concluding the proof.
	\end{proof}
	We can finally bound the width of the $\ch_H(\Lambda)$ using the extrinsic curvature of the unique $H-$hypersurface bounding $\Lambda$.
	\begin{repthmx}{pro:w<B}
		Let $\Lambda$ be an admissible boundary in $\hypu^{n,1}$ and $\Sigma$ the unique properly embedded spacelike $H-$hypersurface such that $\pd_\infty\Sigma=\Lambda$. Then
		\[\omega_H(\Lambda)\le\arctan\left(\sup_\Sigma\lambda_1\right)-\arctan\left(\inf_\Sigma\lambda_n\right).\]
	\end{repthmx}
	\begin{proof}
		Corollary~\ref{cor:Hmaxprin} implies that $\Sigma$ is contained in $\ch_H(\Lambda)$. We follow the normal flow of $\Sigma$ until the leaves become $H-$convex in the future (resp. in the past).
		
		Consider $A_\pm$ as in Equation~\eqref{eq:Apm} and $T_\pm$ as in Lemma~\ref{lem:tempoconvex}. In particular,
		\[A_+=T_- +\frac{\pi}{2},\qquad A_-=T_+ -\frac{\pi}{2}.\]
		
		We distinguish three different situation. First, assume that $T_+-\delta_H\ge A_+$. Then
		\begin{align*}
			T_+ -\delta_H\ge T_- +\frac{\pi}{2}\iff T_+-T_-\ge\frac{\pi}{2}+\delta_H\ge\frac{\pi}{2}-|\delta_H|.
		\end{align*}
		It follows by Corollary~\ref{cor:width} that $T_+-T_-\ge\omega_H(\Lambda)$. Second, assume that $T_--\delta_H\le A_-$: the same argument proves that
		\[T_+-T_-\ge\frac{\pi}{2}-|\delta_H|\ge\omega_H(\Lambda).\]
		
		The last case is when $A_-<T_-\le T_+<A_+$, namely $[T_-,T_+]\sq(A_-,A_+)$. Then, by Lemma~\ref{pro:tempodeg} and Lemma~\ref{lem:tempoconvex}, the set
		\[U:=\overline{I^-(\Sigma_{T_+})}\cap\overline{I^+(\Sigma_{T_-})}\]
		is a closed $H-$convex set containing $\Lambda$ and $\omega(U)=T_+-T_-$. By minimality, $U$ contains $\ch_H(\Lambda)$, hence 
		\[\arctan\left(\sup_\Sigma\lambda_1\right)-\arctan\left(\inf_\Sigma\lambda_n\right)=T_+-T_-=\omega(U)\ge\omega_H(\Lambda),\]
		concluding the proof.
	\end{proof}
	
	\begin{de}
		Let $\Sigma$ be a Riemannian manifold and let $B$ be a tensor of type $(1,1)$ on $\Sigma$. We denote the operator norm of $B(x)\in\mathrm{End}(T_x\Sigma,\pr{\cdot,\cdot})$ by
		\[\|B(x)\|^2:=\sup_{v\in T_x\Sigma}\frac{\pr{B(x)v,B(x)v}}{\pr{v,v}}.\]
	\end{de}
	We recall that $\|B(x)\|^2$ coincides with the maximal eigenvalue of $B(x){}^tB(x)$. In particular, $\|B(\cdot)\|\colon\Sigma\to\R$ is a continuous function.
	
	\begin{repcorx}{cor:w<B}
		Let $\Lambda$ be an admissible boundary in $\pd_\infty\hypu^{n,1}$. Let $B_0$ be the traceless shape operator of the properly embedded spacelike $H-$hypersurface $\Sigma$ such that $\pd_\infty\Sigma=\Lambda$. If \[\|B_0\|_{C^0(\Sigma)}^2\le 1+(H/n)^2,\] the width of $\ch_H(\Lambda)$ satisfies
		\[\tan\left(\omega_H(\Lambda)\right)\le\frac{2\|B_0\|_{C^0(\Sigma)}}{1+(H/n)^2-\|B_0\|_{C^0(\Sigma)}^2}.\]
	\end{repcorx}
	\begin{proof}
		By definition of traceless shape operator, we have \[\|B_0\|_{C^0(\Sigma)}=\max\left\{\|a_1\|_{C^0(\Sigma)},\|a_n\|_{C^0(\Sigma)}\right\},\] 
		for $a_i=\lambda_i-(H/n)$ the eigenvalues of $B_0$. In particular,
		\[\sup_\Sigma\lambda_1=\frac{H}{n}+\|a_1\|_{C^0(\Sigma)},\qquad\inf_\Sigma\lambda_n=\frac{H}{n}-\|a_n\|_{C^0(\Sigma)}\]
		By Theorem~\ref{pro:w<B}, it follows that 
		\begin{align*}
			\omega_H(\Lambda)&\le\arctan\left(\frac{H}{n}+\|a_1\|_{C^0(\Sigma)}\right)-\arctan\left(\frac{H}{n}-\|a_n\|_{C^0(\Sigma)}\right)\\
			&\le\arctan\left(\|B_0\|_{C^0(\Sigma)}+\frac{H}{n}\right)+\arctan\left(\|B_0\|_{C^0(\Sigma)}-\frac{H}{n}\right)\le\frac{\pi}{2}
		\end{align*}
		under the hypothesis $\|B_0\|_{C^0(\Sigma)}^2\le 1+(H/n)^2$. Since the tangent is strictly increasing over $[0,\pi/2]$, we obtain the result.
	\end{proof}
	
	\section{The width is an upper bound for the extrinsic curvature}\label{sec:B<w}
	The goal of this section is to prove that the width of the $H-$shifted convex hull is an upper bound of the norm of the traceless shape operator, namely an estimate going in the opposite direction with respect to the one found in Theorem~\ref{pro:w<B}. To be precise, we will show the following:
	\begin{repthmx}{thm:Schauder}
		For any $L\ge0$, there exists a universal constant $C_L$ with the following property: let $K\in[0,L]$ and $\Sigma$ be a properly embedded $H-$hypersurface in $\hyp^{n,1}$ with $H\in[K,L]$, and let $B_0$ be its traceless shape operator. Then,
		\[\|B_0\|_{C^0(\Sigma)}\le C_L\sin\left(\omega_K(\pd_\infty\Sigma)\right),\]
		for $\omega_K(\pd_\infty\Sigma)$ the width of the $K-$shifted convex hull of $\pd_\infty\Sigma$.
	\end{repthmx}
	Before going into details, we briefly explain the structure of this section. Let us fix an $H-$hypersurface $\Sigma$, a point $x\in\Sigma$ and a totally umbilical support $H-$hypersurface $\mathcal{P}_{\delta_H}$ for $\ch_H(\pd_\infty\Sigma)$. Let $v_H$ be the sine of the distance between $\Sigma$ and $\mathcal{P}_{\delta_H}$. 
	
	The function $v_H$ is bounded from above by the width $H-$shifted convex hull of $\Sigma$ (Propositio~\ref{pro:v<w}), and the second derivatives of $v_H$ approximate the traceless shape operator of $\Sigma$ around $x$ (Corollary~\ref{cor:hess}).
	
	It turns out that $v_H$ solves the elliptic PDE \[\Delta_\Sigma v_H-nv_H=f_H,\] for $\Delta_\Sigma$ the Laplace-Beltrami operator on $\Sigma$ (Propostion~\ref{pro:hess}). Using Schauder-type estimates, we bound the $C^2-$norm of $v_H$ with its $C^0-$norm, achieving our goal.
	
	The technical part lies in proving that the procedure does not depend on the choice of the $H-$hypersurface $\Sigma$, the point $x$ and the umbilical hypersurface $\mathcal{P}_{\delta_H}$. Under necessary but not restrictive assumptions on $\mathcal{P}_{\delta_H}$ (Definition~\ref{de:CMCLP}), we give bounds for the gradient (Proposition~\ref{pro:gradbound}) and for the Hessian (Corollary~\ref{cor:hess}) of $v_H$ over an open ball around $x$ of fixed radius (Corollary~\ref{cor:metricschaud}), which not depend on the choice of $\Sigma$ and $x$.
	
	Finally, we prove that the Laplace-Beltrami operators are uniformly elliptic over the space of $H-$hypersurfaces (Lemma~\ref{lem:beltrami}), hence Schauder estimates do not depend on the choice of $\Sigma$, concluding the proof of Theorem~\ref{thm:Schauder}.
	
	\subsection{Graphs over totally geodesic hypersurfaces}
	Let us fix a totally geodesic hypersurface $\mathcal{P}$, and denote by $e$ its past dual point, defined in Definition~\ref{de:Pp}. Let us fix an entire spacelike hypersurface $\Sigma$ and consider the function
	\[\begin{tikzcd}[row sep=1ex]
		u\colon\Sigma\arrow[r] & \R\\
		x\arrow[r,maps to] & \pr{x,e}
	\end{tikzcd}.\]
	
	From a geometric point of view, if $p\in I(e)$, \textit{i.e.} $p$ is time-related to $e$, then $\pr{\psi(p),\psi(e)}$ is the sine of the signed distance between $p$ and $\mathcal{P}$ (compare with Proposition~\ref{pro:distprod}). This function reflects the geometry of $\Sigma$, as already exploited in \cite{univ,andreamax}.
	
	\begin{pro}[Proposition~1.8 in \cite{andreamax}]\label{pro:hess0}
		\[
		\hess u-u\Id=\sqrt{1-u^2+|\nabla u|^2}B.
		\]
	\end{pro}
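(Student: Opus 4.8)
The plan is to work in the quadric model $\hyp^{n,1}\sq\R^{n,2}$ and exploit that $u$ is the restriction to $\Sigma$ of the linear form $\pr{\cdot,e}$. Since $e$ is the past dual point of the totally geodesic spacelike hypersurface $\mathcal{P}$, it is a past-directed unit timelike vector, $\pr{e,e}=-1$. At a point $x\in\Sigma$ the position vector $x$ (normal to $\hyp^{n,1}$ in $\R^{n,2}$) and the future-directed unit normal $N$ of $\Sigma$ both satisfy $\pr{x,x}=\pr{N,N}=-1$, and $\R^{n,2}=T_x\Sigma\oplus\R N\oplus\R x$ is an orthogonal splitting. Writing $D$ for the flat connection of $\R^{n,2}$, for $X\in T_x\Sigma$ the flat derivative of the position vector is $D_Xx=X$, so $X(u)=\pr{D_Xx,e}=\pr{X,e}$; therefore $\nabla u$ is the orthogonal projection $e^\top$ of $e$ onto $T_x\Sigma$, and $|\nabla u|^2=\pr{e^\top,e^\top}$.

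First I would decompose $e=\nabla u-\pr{e,N}N-\pr{e,x}x=\nabla u-\pr{e,N}N-u\,x$, using $\pr{N,N}=\pr{x,x}=-1$ and $\pr{x,e}=u$. Taking the norm and using $\pr{e,e}=-1$ gives $-1=|\nabla u|^2-\pr{e,N}^2-u^2$, that is
\[\pr{e,N}^2=1-u^2+|\nabla u|^2.\]
Since $e$ is past-directed while $N$ is future-directed, the two timelike vectors lie in opposite cones, so $\pr{e,N}>0$ and hence $\pr{e,N}=\sqrt{1-u^2+|\nabla u|^2}$.

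Next I would compute the Hessian by combining two Gauss-type formulas. Differentiating $\pr{Y,x}=0$ along $X$ and using $D_Xx=X$ yields $D_XY=\nablah_XY+\pr{X,Y}x$ for the pseudosphere $\hyp^{n,1}\sq\R^{n,2}$; differentiating $\pr{Y,N}=0$ and using the Weingarten relation $\nablah_XN=BX$ (the sign convention of the preliminaries, in which $\sff$ is the $N$-coefficient of $\nablah_XY$) gives $\nablah_XY=\nabla_XY+\pr{BX,Y}N$. Hence
\[D_XY=\nabla_XY+\pr{BX,Y}N+\pr{X,Y}x.\]
Because $e$ is constant, $\hess u(X,Y)=X(Yu)-(\nabla_XY)u=\pr{D_XY,e}-\pr{\nabla_XY,e}$, and substituting the displayed formula cancels the tangential term, leaving
\[\hess u(X,Y)=\pr{e,N}\pr{BX,Y}+u\,\pr{X,Y}.\]
Reading this as an identity of $(1,1)$-tensors via the metric and inserting $\pr{e,N}=\sqrt{1-u^2+|\nabla u|^2}$ gives the claim.

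The only delicate point, and genuinely the crux, is the bookkeeping of signs: because $x$, $N$ and $e$ are all timelike, every orthogonal projection picks up a $-1$ from $\pr{\cdot,\cdot}$, and the final $+$ sign in front of $B$ relies on the combination of the shape-operator convention ($\nablah_XN=BX$, equivalently the normal part of $\nablah_XY$ being $+\pr{BX,Y}N$) together with the positivity $\pr{e,N}>0$ forced by $e$ being the \emph{past} dual point and $N$ future-directed. Once these are pinned down, the computation is a short and formal manipulation.
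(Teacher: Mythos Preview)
Your proof is correct and follows essentially the same route as the paper: decompose $e$ orthogonally along $T_x\Sigma\oplus\R N\oplus\R x$ to identify $\nabla u$ and $\pr{e,N}$, then differentiate to obtain $\hess u=u\,\Id+\pr{e,N}B$. Your treatment is in fact a bit more careful than the paper's on one point: you explicitly justify the sign $\pr{e,N}>0$ from the causal orientation of $e$ and $N$, which the paper leaves implicit when passing to the square root.
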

	\begin{proof}
		We have already remarked that $u$ is the restriction to $\Sigma$ of the function $U(p)=\pr{p,e}$, for $e$ the dual of the totally geodesic spacelike hypersurface $P$. A direct computation shows
		\begin{equation}\label{eq:gradU}
			\begin{split}
				\nablah U&=e+\pr{p,e}p\\
				\nabla u&=e+\pr{p,e}p+\pr{\nablah U,N}N=e+\pr{p,e}p+\pr{e,N}N.
			\end{split}
		\end{equation}
		It follows that
		\[
		\hess u(w)=\nabla_w\nabla u=\pr{e,p}w+\pr{e,N}B(w)=u(p)w+\pr{e,N}B(w).\]
		
		By Equation~\eqref{eq:gradU}, we have
		\begin{align*}
			|\nabla u|^2&=-1+u^2+\pr{e,N}^2,
		\end{align*}
		concluding the proof.
	\end{proof}
	
	The behaviour of CMC hypersurfaces is strictly linked to the umbilical hypersurfaces with the same mean curvature, which happens to be the equidistant hypersurfaces from a totally geodesic hypersurfaces. Hence, we define a new function encoding the signed distance from an umbilical hypersurface.
	
	Let us fix a spacelike totally geodesic hypersurface $\mathcal{P}$, its past dual point $e=\mathcal{P}^\perp_-$ and denote by $\mathcal{P}_\delta$ the hypersurface at signed distance $-\delta$ from $\mathcal{P}$. We remark that the mean curvature of $\mathcal{P}_\delta$ is $n\tan(\delta)$. Let us define the function
	\begin{equation}\label{eq:v}
		\begin{tikzcd}[row sep=1ex]
			v\colon\Sigma\arrow[r] & \R\\
			x\arrow[r,maps to] & \cos(\delta)u(x)+\sin(\delta)\sqrt{1-u(x)^2}
		\end{tikzcd},
	\end{equation}
	
	\begin{rem}\label{rem:geommeaning}
		Restricted to $\Sigma\cap I^+(e)$, the function $v$ equals $\sin\left(\dist(\cdot,\mathcal{P}_\delta)\right)$: indeed, for any $x\in\Sigma\cap I^+(e)$, we have 
		\[\dist(x,\mathcal{P})=\dist(x,\mathcal{P}_\delta)-\delta,\]
		since $\mathcal{P}_\delta$ is the set of points at constant distance $-\delta$ to $\mathcal{P}$. Then, we conclude 
		\begin{align*}
			\sin\left(\dist(x,\mathcal{P}_\delta)\right)&=\sin\left(\dist(x,\mathcal{P})+\delta\right)\\
			&=\cos(\delta)\sin\left(\dist(x,\mathcal{P})\right)+\sin(\delta)\sqrt{1-\sin\left(\dist(x,\mathcal{P})\right)^2}\\
			&=\cos(\delta)u(x)+\sin(\delta)\sqrt{1-u(x)^2}=v(x).
		\end{align*}
	\end{rem}
	
	Consider a $H-$hypersurface $\Sigma$ and a point $x\in\Sigma$. We want to study the geometry of $\Sigma$ in around $x$ through the function $v_H$ as in Equation~\eqref{eq:v}. Since $\Sigma$ is contained in $\ch_H(\Lambda)$ (Corollary~\ref{cor:Hmaxprin}) and the width of the $H-$shifted convex hull is at most $(\pi/2)-|\delta_H|$ (Corollary~\ref{cor:width}), we can choose $\mathcal{P}_{\delta_H}$ such that the distance from $x$ is at most $(\pi/4)-(|\delta_H|/2)$. We observe that $\mathcal{P}_{\delta_H}$ could lie in the past or in the future of $\Sigma$.
	
	In order to lighten the notation, we denote
	\begin{equation}\label{eq:UL}
		U_H(\mathcal{P}):=\left\{y\in\Sigma,|\dist(y,\mathcal{P}_{\delta_H})|\le\frac{\pi}{4}-\frac{|\delta_H|}{2}\right\}
	\end{equation} 
	and we introduce the following space.
	\begin{de}\label{de:CMCLP}
		Let $\mathcal{P}$ be a totally geodesic spacelike hypersurface, and $L\ge0$. We denote by $\cmclp(L,\mathcal{P})$ the space of properly embedded CMC hypersurfaces $\Sigma$ such that
		\begin{enumerate}
			\item $\Sigma$ has mean curvature $H\in[-L,L]$;
			\item $\mathcal{P}_{\delta_H}$ is a support hypersurface for $\ch_H(\pd_\infty\Sigma)$, for $\delta_H:=\arctan(H/n)$;
			\item $\Sigma\cap U_H(\mathcal{P})\ne\emptyset$.
		\end{enumerate}
	\end{de}
	
	Since the computations in the following have all a local nature, they will be carried over the open neighborhood of $U_H(\mathcal{P})$ given by
	\begin{equation}\label{eq:ULe}
		U_H(\mathcal{P},\varepsilon):=\left\{y\in\Sigma,|\dist(y,\mathcal{P}_{\delta_H})|<\frac{\pi}{4}-\frac{|\delta_H|}{2}+\varepsilon\right\},
	\end{equation}
	for a suitable choice of $\varepsilon$.
	
	Hereafter, we will assume at least $\varepsilon<\pi/4$, so that $U_H(\mathcal{P},\varepsilon)$ is contained in $I^+(e)$: as explained in Remark~\ref{rem:geommeaning}, this ensures that $v_H$ restricted to $U_H(\mathcal{P},\varepsilon)$ equals to $\sin(\cdot,\mathcal{P}_{\delta_H})$.
	
	\subsection{Gradient estimate}
	
	\begin{pro}\label{pro:hess}
		Let $\mathcal{P}$ be a totally geodesic spacelike hypersurface. For any spacelike hypersurface $\Sigma$, the function $v$ as in Equation~\eqref{eq:v}, restricted to $U_L(\mathcal{P},\varepsilon)$ (see Equation~\eqref{eq:ULe}) satisfies
		\[
		\hess v=v\Id+\sqrt{1-v^2+|\nabla v|^2}B-\frac{\tan(\delta)}{\sqrt{1-v^2}+v\tan(\delta)}\left(\Id+\frac{dv\nabla v}{1-v^2}\right).
		\]
	\end{pro}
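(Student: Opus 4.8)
The plan is to treat $v$ as the composition $v=F\circ u$ with $F(s)=\cos(\delta)s+\sin(\delta)\sqrt{1-s^2}$, and to feed Proposition~\ref{pro:hess0} into the chain rule for the Hessian of a composition. Viewing the Hessian as a $(1,1)$-tensor one has
\[\nabla v=F'(u)\nabla u,\qquad \hess v=F'(u)\hess u+F''(u)\,du\,\nabla u,\]
where $du\,\nabla u$ denotes the rank-one endomorphism $w\mapsto\pr{\nabla u,w}\nabla u$ (so that $du\,\nabla u=F'(u)^{-2}\,dv\nabla v$). Substituting $\hess u=u\Id+\sqrt{1-u^2+|\nabla u|^2}\,B$ reduces the whole statement to simplifying three scalar coefficients — those of $\Id$, of $B$, and of the rank-one term — and re-expressing each of them through $v$ rather than $u$.

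The restriction to $U_L(\mathcal{P},\varepsilon)\subseteq I^+(e)$ is exactly what fixes the branches of the square roots: there $u$ and $v$ are the sines of the signed distances of $\Sigma$ to $\mathcal{P}$ and to $\mathcal{P}_\delta$ (Remark~\ref{rem:geommeaning}), so that $\sqrt{1-u^2}>0$ and, crucially, $\sqrt{1-v^2}=\cos(\delta)\sqrt{1-u^2}-\sin(\delta)u$. Differentiating gives $F'(s)=\cos\delta-\sin\delta\,s/\sqrt{1-s^2}$ and $F''(s)=-\sin\delta/(1-s^2)^{3/2}$, and together with the branch identity this yields the two relations I would use throughout:
\[F'(u)=\frac{\sqrt{1-v^2}}{\sqrt{1-u^2}}>0,\qquad \sqrt{1-v^2}+v\tan\delta=\frac{\sqrt{1-u^2}}{\cos\delta}.\]
From the first relation $|\nabla v|^2=\tfrac{1-v^2}{1-u^2}|\nabla u|^2$, hence $1-u^2+|\nabla u|^2=\tfrac{1-u^2}{1-v^2}\bigl(1-v^2+|\nabla v|^2\bigr)$; multiplying the $B$-coefficient $F'(u)\sqrt{1-u^2+|\nabla u|^2}$ then collapses it to exactly $\sqrt{1-v^2+|\nabla v|^2}$, matching the target.

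For the identity part I would verify the scalar identity $F'(u)\,u=v-\tan\delta/(\sqrt{1-v^2}+v\tan\delta)$: using the second relation the subtracted term equals $\sin\delta/\sqrt{1-u^2}$, and then $v-\sin\delta/\sqrt{1-u^2}=\cos\delta\,u-\sin\delta\,u^2/\sqrt{1-u^2}=F'(u)u$ is elementary. For the rank-one part, $F''(u)\,du\,\nabla u=\tfrac{F''(u)}{F'(u)^2}\,dv\nabla v$, and
\[\frac{F''(u)}{F'(u)^2}=-\frac{\sin\delta}{\sqrt{1-u^2}\,(1-v^2)}=-\frac{1}{1-v^2}\cdot\frac{\tan\delta}{\sqrt{1-v^2}+v\tan\delta},\]
again by the second relation, which is precisely the coefficient in the statement. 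The only genuine subtlety is the sign bookkeeping of the square roots — justifying $\sqrt{1-v^2}=\cos\delta\sqrt{1-u^2}-\sin\delta\,u$ rather than its negative — which is exactly why the statement is localized to $U_L(\mathcal{P},\varepsilon)$; once this branch is fixed, the remaining steps are the elementary algebraic identities just indicated.
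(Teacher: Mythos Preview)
Your proposal is correct and follows essentially the same approach as the paper: both feed Proposition~\ref{pro:hess0} into the chain rule linking $u$ and $v$, and both rest on the same pair of identities $F'(u)=\sqrt{1-v^2}/\sqrt{1-u^2}$ and $\sqrt{1-v^2}+v\tan\delta=\sqrt{1-u^2}/\cos\delta$. The only cosmetic difference is the direction of the chain rule --- the paper inverts, writing $u=\cos(\delta)v-\sin(\delta)\sqrt{1-v^2}$ and expressing $\nabla u,\hess u$ in terms of $\nabla v,\hess v$, then solves for $\hess v$, whereas you compute $\hess v=F'(u)\hess u+F''(u)\,du\,\nabla u$ directly; the algebra is the same either way.
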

	\begin{proof}
		A direct computation shows that
		\begin{equation}\label{eq:hess}
			\begin{split}
				u&=\cos(\delta)v-\sin(\delta)\sqrt{1-v^2}\\
				\nabla u&=\left(\cos(\delta)+\frac{\sin(\delta)}{\sqrt{1-v^2}}v\right)\nabla v\\
				\hess u&=\left(\cos(\delta)+\frac{\sin(\delta)}{\sqrt{1-v^2}}v\right)\hess v+\frac{\sin(\delta)}{(1-v^2)^{3/2}}dv\nabla v.
			\end{split}		
		\end{equation}
		
		We need to compare the last equation with Proposition~\ref{pro:hess0}, which states
		\[\hess u=u\Id+\sqrt{1-u^2+|\nabla u|^2}B,\]
		for $B$ the shape operator of $\Sigma$. One can check that
		\begin{align*}
			u&=\left(\cos(\delta)+\frac{\sin(\delta)}{\sqrt{1-v^2}}v\right)v-\frac{\sin(\delta)}{\sqrt{1-v^2}}\\
			1-u^2&=\left(\cos(\delta)+\frac{\sin(\delta)}{\sqrt{1-v^2}}v\right)^2(1-v^2),
		\end{align*}
		hence 
		\[\hess u=\left(\cos(\delta)+\frac{\sin(\delta)}{\sqrt{1-v^2}}v\right)\left(v\Id+\sqrt{1-v^2+|\nabla v|^2}B\right)-\frac{\sin(\delta)}{\sqrt{1-v^2}}.\]
		
		We conclude by comparing the above formula with the expression of $\hess u$ made in Equation~\eqref{eq:hess}, after the observation that
		\[\left(\cos(\delta)+\frac{\sin(\delta)}{\sqrt{1-v^2}}v\right)^{-1}\frac{\sin(\delta)}{\sqrt{1-v^2}}=\frac{\tan(\delta)}{\sqrt{1-v^2}+\tan(\delta)v}.\]
	\end{proof}
	
	We prove here a technical inequality which will be useful througout this section.
	\begin{lem}\label{lem:boundtan}
		Let $\mathcal{P}$ be a totally geodesic spacelike hypersurface, and $L\ge0$. For any $\Sigma$ and $H-$hypersurface in $\cmclp(L,\mathcal{P})$. Then
		\[\left|\sqrt{1-v_H^2}+v_H\tan(|\delta_H|)\right|\ge\frac{\cos\left(\frac{\pi}{4}+\frac{|\delta_H|}{2}+\varepsilon\right)}{\cos(|\delta_H|)}.\]
		over $U_H(\mathcal{P},\varepsilon)$, for $\varepsilon<(\pi/4)-(|\delta_H|/2)$. In particular, for $\varepsilon<(\pi/8)-(|\delta_H|/4)$, we have the uniform bound
		\[\left|\sqrt{1-v_H^2}+v_H\tan(|\delta_H|)\right|\ge\frac{1}{4}.\]
	\end{lem}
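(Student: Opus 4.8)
The plan is to reduce the whole statement to a single trigonometric identity and then estimate the resulting angle. Write $\theta:=|\delta_H|\in[0,\pi/2)$ and let $d:=\dist(\cdot,\mathcal{P}_{\delta_H})$ be the signed distance, so that on $U_H(\mathcal{P},\varepsilon)$ one has $v_H=\sin(d)$ by Remark~\ref{rem:geommeaning} (recall $U_H(\mathcal{P},\varepsilon)\sq I^+(e)$ for $\varepsilon<\pi/4$). The hypothesis $\varepsilon<\frac{\pi}{4}-\frac{\theta}{2}$ forces $|d|<\frac{\pi}{2}-\theta<\frac{\pi}{2}$, hence $\cos(d)>0$ and $\sqrt{1-v_H^2}=\cos(d)$. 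First I would record the closed form
\[
\sqrt{1-v_H^2}+v_H\tan(\theta)=\cos(d)+\sin(d)\tan(\theta)=\frac{\cos(d-\theta)}{\cos(\theta)},
\]
an immediate consequence of the cosine subtraction formula, valid for either sign of $\delta_H$ since only $\theta=|\delta_H|$ enters.

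For the first inequality I would then control the range of the argument $d-\theta$. Since $|d|<\frac{\pi}{4}-\frac{\theta}{2}+\varepsilon$, the value $d-\theta$ lies in the open interval with endpoints $-\frac{\pi}{4}-\frac{\theta}{2}-\varepsilon$ and $\frac{\pi}{4}-\frac{3\theta}{2}+\varepsilon$; the bound $\varepsilon<\frac{\pi}{4}-\frac{\theta}{2}$ keeps both endpoints inside $(-\frac{\pi}{2},\frac{\pi}{2})$, so that $\cos(d-\theta)>0$ and the absolute value in the statement may be dropped. On $(-\frac{\pi}{2},\frac{\pi}{2})$ the cosine is even and decreasing in $|\cdot|$, so $\cos(d-\theta)$ is smallest at the endpoint farthest from the origin; comparing the two moduli shows this is the left one, of size $\frac{\pi}{4}+\frac{\theta}{2}+\varepsilon$. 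Dividing $\cos(d-\theta)\ge\cos\!\left(\frac{\pi}{4}+\frac{\theta}{2}+\varepsilon\right)$ by $\cos(\theta)>0$ gives the first bound.

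For the uniform bound, the sharper hypothesis $\varepsilon<\frac{\pi}{8}-\frac{\theta}{4}$ yields $\frac{\pi}{4}+\frac{\theta}{2}+\varepsilon<\frac{3\pi}{8}+\frac{\theta}{4}<\frac{\pi}{2}$, so by monotonicity of the cosine it suffices to establish the scalar inequality
\[
\frac{\cos\!\left(\frac{3\pi}{8}+\frac{\theta}{4}\right)}{\cos(\theta)}\ge\frac14,\qquad\theta\in[0,\tfrac{\pi}{2}).
\]
This is where I expect the only genuine work to lie. I would substitute $\phi:=\frac{3\pi}{8}+\frac{\theta}{4}\in[\frac{3\pi}{8},\frac{\pi}{2})$, compute $\cos(\theta)=\cos\!\left(4\phi-\frac{3\pi}{2}\right)=-\sin(4\phi)$, and rewrite the inequality as $4\cos(\phi)+\sin(4\phi)\ge0$. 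Using $\sin(4\phi)=4\sin(\phi)\cos(\phi)\bigl(1-2\sin^2(\phi)\bigr)$ and factoring out $4\cos(\phi)>0$ reduces it to $1+\sin(\phi)-2\sin^3(\phi)\ge0$ for $\sin(\phi)\in[\cos(\pi/8),1)$; since $s\mapsto1+s-2s^3$ has derivative $1-6s^2<0$ on this range and vanishes at $s=1$, it stays positive there. The main obstacle is precisely this last elementary inequality: it is sharp, the ratio tending to exactly $\frac14$ as $\theta\to\pi/2$, so there is no slack and the substitution must be carried out with care.
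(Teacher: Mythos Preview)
Your proof is correct and follows the same route as the paper: rewrite the quantity as $\cos(d-\theta)/\cos(\theta)$ via the subtraction formula, bound $|d-\theta|\le\frac{\pi}{4}+\frac{\theta}{2}+\varepsilon$, and then verify the scalar inequality $\cos\bigl(\frac{3\pi}{8}+\frac{\theta}{4}\bigr)/\cos(\theta)\ge\frac14$. The only difference is in this last step: the paper argues that $f(\theta)=-\sin\bigl(\frac{1}{4}(\theta-\frac{\pi}{2})\bigr)/\cos(\theta)$ is decreasing on $[0,\frac{\pi}{2})$ and tends to $\frac14$, while you substitute $\phi=\frac{3\pi}{8}+\frac{\theta}{4}$ and reduce to the polynomial inequality $1+s-2s^3\ge0$ for $s=\sin\phi\in[\cos(\pi/8),1)$; your reduction is arguably cleaner, since the paper's monotonicity claim (``numerator decreasing, denominator decreasing'') does not by itself force the ratio to be decreasing.
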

	\begin{proof}
		To lighten the notation, we assume $H\ge0$. Recalling that $v_H=\sin\left(\dist(\cdot,\mathcal{P}_{\delta_H})\right)$, one can check that
		\[
		\sqrt{1-v_H^2}+v_H\tan(\delta_H)=\frac{\sqrt{1-v_H^2}\cos(\delta_H)+v_H\sin(\delta_H)}{\cos(\delta_H)}=\frac{\cos\left(\delta_H-\dist(\cdot,\mathcal{P}_{\delta_H})\right)}{\cos(\delta_H)}.
		\]
		By choice of $\mathcal{P}_{\delta_H}$, we have a lower bound on its distance from $\Sigma$:
		\begin{align*}
			|\delta_H-\dist(\cdot,\mathcal{P}_{\delta_H})|&\le\delta_H+\frac{\pi}{4}-\frac{\delta_H}{2}+\varepsilon=\frac{\pi}{4}+\frac{\delta_H}{2}+\varepsilon<\frac{\pi}{2},
		\end{align*}
		for $\varepsilon<(\pi/4)-(\delta_H/2)$.	The cosine decreases in absolute value over $[-\pi/2,\pi/2]$. Hence,
		\[\left|\sqrt{1-v_H^2}+v_H\tan(\delta_H)\right|\ge\frac{\cos\left(\frac{\pi}{4}+\frac{\delta_H}{2}+\varepsilon\right)}{\cos(\delta_H)}.\]
		To conclude, assume $\varepsilon<(\pi/8)-(\delta_H/4)$, so that
		\begin{align*}
			\frac{\cos\left(\frac{\pi}{4}+\frac{\delta_H}{2}+\varepsilon\right)}{\cos(\delta_H)}>\frac{\cos\left(\frac{\pi}{2}+\frac{1}{4}\left(\delta_H-\frac{\pi}{2}\right)\right)}{\cos(\delta_H)}=-\frac{\sin\left(\frac{1}{4}\left(\delta_H-\frac{\pi}{2}\right)\right)}{\cos(\delta_H)}=:f(\delta_H).
		\end{align*}
		Over $[0,\pi/2]$, the function $f$ is decreasing: indeed, the sine is increasing over $[-\pi/8,0]$ and the cosine is decreasing over $[0,\pi/2]$. Hence, the minimum is achieved at $\pi/2$, namely
		\[f(\delta_H)>\lim_{t\to\pi/2} f\left(t\right)=\frac{1}{4},\]
		concluding the proof.
	\end{proof}
	
	\begin{pro}\label{pro:gradbound}
		For any $L\ge0$, there exists a universal constant $G_L>0$ with the following property. Let $\mathcal{P}$ be a totally geodesic spacelike hypersurface, and let $\Sigma\in\cmclp(L,\mathcal{P})$ be a $H-$hypersurface. The function $v_H$ as in Equation~\eqref{eq:v} satisfies
		\[|\nabla v_H|\le G_L|v_H|,\]
		over $U_H(\varepsilon)$, for $\varepsilon<(\pi/8)-(\delta_L/4)$.
	\end{pro}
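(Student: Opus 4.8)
The plan is to reduce the claim to a gradient estimate for a positive function and then obtain it by a maximum principle fed by the \emph{explicit} Hessian formula of Proposition~\ref{pro:hess}. First I would record the geometric meaning of the two sides. Since $\Sigma\in\cmclp(L,\mathcal{P})$, the umbilical hypersurface $\mathcal{P}_{\delta_H}$ is a support hypersurface for $\ch_H(\pd_\infty\Sigma)\supseteq\Sigma$ (Corollary~\ref{cor:Hmaxprin}), so $\Sigma$ lies on one side of $\mathcal{P}_{\delta_H}$ and, by Remark~\ref{rem:geommeaning} together with the assumption $\varepsilon<\pi/4$, the function $v_H=\sin\left(\dist(\cdot,\mathcal{P}_{\delta_H})\right)$ is nonnegative on $U_H(\mathcal{P},\varepsilon)$. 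If $\Sigma$ actually meets $\mathcal{P}_{\delta_H}$, the strong maximum principle (Proposition~\ref{pro:maxprin}), applied to two hypersurfaces of equal mean curvature $H$ with $\Sigma$ on one side, forces $\Sigma=\mathcal{P}_{\delta_H}$; in this umbilical case $B_0=0$ and the inequality is immediate. Hence I may assume $v_H>0$ throughout.

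Differentiating the definition \eqref{eq:v} gives $\nabla v_H=\left(\cos\delta_H-\tfrac{\sin\delta_H\,u}{\sqrt{1-u^2}}\right)\nabla u$, while Proposition~\ref{pro:hess0} yields $\sqrt{1-u^2+|\nabla u|^2}=|\langle e,N\rangle|$. Thus $|\nabla v_H|/v_H$ is an explicit function of $u$ and of $\langle e,N\rangle$, and the desired inequality $|\nabla v_H|\le G_L v_H$ is equivalent to a \emph{linear} bound, in terms of $v_H$, on the Lorentzian angle between the future normal $N$ of $\Sigma$ and the normal of the equidistant foliation of $\mathcal{P}$; equivalently $|\nabla\dist(\cdot,\mathcal{P}_{\delta_H})|\le G_L'\tan\left(\dist(\cdot,\mathcal{P}_{\delta_H})\right)$. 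Geometrically this says that $\Sigma$ must become tangent to $\mathcal{P}_{\delta_H}$, at a controlled linear rate, as it approaches it — a first–order refinement of the mere fact that they touch tangentially when they meet.

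To prove this I would run a Cheng--Yau type argument on the positive function $v_H$, exploiting that the Hessian of $v_H$ is \emph{explicitly known} rather than invoking a generic curvature hypothesis. Set $\psi=\log v_H$ and study $Q=|\nabla\psi|^2=|\nabla v_H|^2/v_H^2$ on a ball of fixed radius around a point of $U_H(\mathcal{P},\varepsilon)$, the radius and the uniform control of the induced metric coming from the uniform bound on the shape operator of $H$-hypersurfaces recalled in Remark~\ref{rem:shauder} (a lower Ricci bound then follows from the Gauss equation). Substituting Proposition~\ref{pro:hess} into $\hess\psi=v_H^{-1}\hess v_H-\nabla\psi\otimes\nabla\psi$, and using Lemma~\ref{lem:boundtan} to keep the coefficient $\tfrac{\tan\delta_H}{\sqrt{1-v_H^2}+v_H\tan\delta_H}$ under control, a Bochner computation produces a differential inequality for $\eta Q$ (with $\eta$ a standard cutoff) whose maximum can be estimated; evaluating at the maximum bounds $Q$ by a constant $G_L^2$ depending only on $L$.

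The main obstacle is the degeneracy as $v_H\to0$. In $\hess\psi$ the terms $\tfrac{|\langle e,N\rangle|}{v_H}B$ and $-\tfrac{\tan\delta_H}{v_H(\sqrt{1-v_H^2}+v_H\tan\delta_H)}\Id$ each blow up like $1/v_H$, and it is exactly the matching of mean curvatures $\tr B=H=n\tan\delta_H$ that makes their singular parts cancel to the order needed for the maximum principle to close; quantifying this cancellation \emph{uniformly} over $\cmclp(L,\mathcal{P})$ — independently of $\Sigma$, of the chosen point, and of $\mathcal{P}$ — is the crux, and is where the uniform shape-operator bound and Lemma~\ref{lem:boundtan} do the essential work. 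Alternatively, the statement can be obtained by contradiction from the compactness of $\cmclp(L,\mathcal{P})$ furnished by Proposition~\ref{pro:compact}: a sequence violating the bound with $v_H\to0$ produces, after blow-up, a positive solution of the linearized equation on the umbilical limit, whose logarithmic gradient is controlled by the Harnack inequality, contradicting the blow-up of the ratio.
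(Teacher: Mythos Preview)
Your route is genuinely different from the paper's, and rather heavier. The paper does not use Bochner or a Cheng--Yau cutoff argument. Instead it runs an ODE along the integral curves of $-\nabla v_H/|\nabla v_H|$: setting $y(t)=|\nabla v_H(\gamma(t))|$, one has $|y'(t)|\le\|\hess v_H\|$, and the explicit Hessian of Proposition~\ref{pro:hess}, together with the uniform bound on $\|B\|$ from \cite{kkn} and Lemma~\ref{lem:boundtan}, gives $\|\hess v_H\|\le A_H\sqrt{1+y^2}$ for an explicit constant $A_H$. Integrating this differential inequality twice and comparing with $v_H(\gamma(t))-v_H(p)=-\int_0^t y$, the positivity $v_H>0$ (from the strong maximum principle, exactly as you argue) forces $y(0)^2\le (A_H^2+2A_H)\,v_H(p)$. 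The CMC hypothesis enters only through the sign of $v_H$ and through the uniform bound on $\|B\|$; no cancellation between the singular terms $v_H^{-1}\sqrt{\cdots}\,B$ and $v_H^{-1}\tan\delta_H\,\Id$ is ever invoked, because the paper works with $\hess v_H$ itself (which is bounded) rather than with $\hess\log v_H$ (which is not).

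Your Cheng--Yau outline is plausible in spirit --- the trace cancellation you identify is real and would keep $\Delta\psi$ bounded --- but the full Bochner computation with the $1/v_H$ coefficients in $\hess\psi$ is not carried out, and it is precisely this analysis that the paper's more elementary ODE argument sidesteps. Your compactness alternative, however, has a logical problem in this paper's architecture: the compactness of $\cmclp(L,\mathcal{P})$ (Corollary~\ref{cor:comp}) is obtained \emph{after} and \emph{from} the present gradient bound, via the tilt estimate of Corollary~\ref{cor:tilt}; Proposition~\ref{pro:compact} alone does not exclude degeneration of the normal, which is exactly what one needs to pass to a smooth umbilical limit and apply Harnack.
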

	\begin{proof}
		We assume $H\ge0$ to lighten the notation. Let us fix an entire $H-$hypersurface $\Sigma$ and a point $p\in\Sigma$. Without loss of generality, one can assume $\nabla u(p)\ne0$ and consider the integral curve $\gamma$ of the (opposite) normalized gradient flow of $v$, \textit{i.e.} the solution of
		\[
		\begin{cases}
			\gamma(0)=p\\
			\gamma'(t)=-\frac{\nabla v_H}{|\nabla v_H|}
		\end{cases}.\] 
		Denoting $y(t):=|\nabla v_H\left(\gamma(t)\right)|$, the structure of the proof is the following: in the first step, we proof that if there exists a constant $A_H>0$ such that 
		\begin{equation}\label{eq:claimAB}
			\frac{d}{dt}y(t)\le A_H\sqrt{1+y(t)^2},
		\end{equation}
		then $\|\nabla v_H\|\le(A_H^2+2A_H)|v_H|$. In the second step, we actually produce the explicit (not necessarely sharp) constant
		\[A_H:=C(|H|,n)+n\left(1+4\tan(\delta_H)\right),\]
		for $C(|H|,n)$ as in \cite[Theorem~1]{kkn}, satisfing the inequality in Equation~\eqref{eq:claimAB}. 
		
		Hence, the universal constant $G_H:=A_H^2+2A_H$ satisfies the statement for $H-$hypersurfaces. Since the function $H\to G_H$ is increasing for $H\ge0$, the constant $G_L$ automatically works for any CMC hypersurface having mean curvature in $[0,L]$, concluding the proof.
		
		\step{Main argument} It follows from the claim that
		\[
		-A_H t\le\int_0^t\frac{y'(s)}{\sqrt{1+y(s)^2}}ds=\arcsinh\left(y(t)\right)-\arcsinh\left(y(0)\right).
		\]
		Applying the hyperbolic sine, which is increasing on $[0,+\infty)$, we obtain
		\begin{equation}\label{eq:sinh}
			-y(t)\le\sinh(A_H t)\sqrt{1+y(0)^2}-\cosh(A_H t)y(0).
		\end{equation}
		By construction of $\gamma$, we have
		\begin{align*}
			v_H\left(\gamma(t)\right)-v_H(p)&=\int_0^t\pr{\nabla v_H\left(\gamma'(s)\right),\gamma'(s)}ds\\
			&=\int_0^t\left\langle\nabla v_H\left(\gamma'(s)\right),-\frac{\nabla v_H\left(\gamma'(s)\right)}{|\nabla v_H\left(\gamma'(s)\right)|}\right\rangle ds=-\int_0^t y(s)ds.
		\end{align*}
		
		Integrating Equation~\eqref{eq:sinh}, one obtains
		\[v_H\left(\gamma(t)\right)-v_H(p)\le\frac{1}{A_H}\left((\cosh(A_H t)-1)\sqrt{1+y(0)^2}-\sinh(A_H t)y(0)\right)=:F(t).\]
		
		The function $F(t)$ admits a unique minimum over $[0,+\infty)$: by solving $F'(t)=0$, one finds that critical point solves
		\[\tanh(A_H t_{\min})=\frac{y(0)}{\sqrt{1+y(0)^2}},\] 
		hence
		\[F(t_{\min})=\frac{1}{A_H}\left(1-\sqrt{1+y(0)^2}\right)\]
		Now, we recall that $\mathcal{P}_{\delta_H}$ is a support $H-$umbilical hypersurface for $\Sigma$. By the strong maximum principle (Proposition~\ref{pro:maxprin}), either $\Sigma=\mathcal{P}_{\delta_H}$ or $\Sigma$ does not intersect $\mathcal{P}_{\delta_H}$. In the former case, the function $v_H$ identically vanishes, hence $\nabla v_H\equiv0$, concluding the proof. Otherwise, the function $v_H$ never vanishes. Up to a time-inverting isometry, we can assume $v_H>0$, hence $F(t_{\min})>-v_H(p)$. It follows, since $v_H$ takes values in $[-1,1]$, that
		\begin{align*}
			\frac{1}{A_H}\left(1-\sqrt{1+y(0)^2}\right)\ge-v_H(p) &\iff 1+y(0)^2\le A_H^2 v_H(p)^2+2A_H v_H(p)+1\\
			&\iff y(0)^2\le A_H^2\left(v_H(p)^2+2v_H(p)\right)\le(A_H^2+2A_H) v_H(p).
		\end{align*}
		Recalling that $y(0)^2=|\nabla v_H(p)|^2$ and $p$ was arbitrary, we found the seeken universal constant.
		
		\step{Producing $A_H$}	
		Observe that
		\begin{align*}
			y(t)\left|y'(t)\right|&=\frac{1}{2}\left|y'(t)^2\right|=|\pr{\nabla_{\gamma'(t)}\nabla v_H\left(\gamma(t)\right),\nabla v_H\left(\gamma(t)\right)}|\\
			&=|\pr{\hess v_H\left(\gamma'(t)\right),\nabla v_H\left(\gamma(t)\right)}|\le\|\hess v_H\|\,\left|\nabla v_H\left(\gamma(t)\right)\right|=\|\hess v_H\|\,y(t).
		\end{align*}
		Hence, to prove the claim it suffices to bound the norm of the Hessian of $v$. We recall that, by Proposition~\ref{pro:hess}, we have
		\begin{align*}
			\hess v_H=&v_H\Id+\sqrt{1-v_H^2+|\nabla v_H|^2}B-\frac{\tan(\delta_H)}{\sqrt{1-v_H^2}+v_H\tan(\delta_H)}\left(\Id+\frac{dv_H\nabla v_H}{1-v_H^2}\right).
		\end{align*}
		By \cite[Theorem~1]{kkn}, we have that $\|B\|$ is bounded from above by an explicit universal constant $C(|H|,n)$, Lemma~\ref{lem:boundtan} ensures that the denominator is bounded from below by $1/4$ and $|v_H|\le1$ by definition. Hence, an elementary computation shows that
		\[\left|\frac{d}{dt}y(t)\right|\le\|\hess v_H\|\le\underbrace{\left(n+C(|H|,n)+4n\tan(\delta_H)\right)}_{=:A_H}\sqrt{1+|\nabla v_H|^2}.\]
		satisfying the claim stated in Equation~\eqref{eq:claimAB}.	
	\end{proof}
	
	\begin{cor}\label{cor:tilt}
		For any $L\ge0$, there exists an angle $\alpha_L$ as follows: let $\mathcal{P}$ be totally geodesic spacelike hypersurface, and $\Sigma\in\cmclp(L,\mathcal{P})$. Let $\gamma$ be a timelike geodesic orthogonal to $\mathcal{P}_{\delta_H}$ and let $p:=\gamma\cap\Sigma$. Then, the angle between $\gamma$ and the timelike geodesic $\exp_p\left(tN(p)\right)$ is bounded by $\alpha_L$.
	\end{cor}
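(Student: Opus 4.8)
The plan is to read the required angle as the Lorentzian (hyperbolic) angle between the unit tangent vectors of the two geodesics at $p$, and then to recognize it as a function of $\nabla v_H$, so that the gradient estimate of Proposition~\ref{pro:gradbound} applies verbatim. Write $N:=N(p)$ for the future-directed unit normal of $\Sigma$ at $p$ and let $T$ be the future-directed unit tangent vector of $\gamma$ at $p$; since both geodesics are future-directed and timelike, the sought angle $\theta$ satisfies $\pr{T,N}=-\cosh(\theta)$, so it suffices to bound $\cosh(\theta)$. First I would compute $T$ explicitly in the quadric model. As $\mathcal{P}_{\delta_H}$ is equidistant from the totally geodesic hypersurface $\mathcal{P}$, its orthogonal geodesics are exactly those orthogonal to $\mathcal{P}$, hence $\gamma$ passes through the past dual point $e=\mathcal{P}^\perp_-$ and lies in the plane $\Span(e,p)\subseteq\R^{n,2}$. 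Imposing $\pr{T,p}=0$ and $\pr{T,T}=-1$ on a vector of this plane yields, up to sign, $T=(e+u\,p)/\sqrt{1-u^2}$ with $u=\pr{p,e}$; choosing the future-directed sign and using $\pr{p,N}=0$, we get
\[
\cosh(\theta)=-\pr{T,N}=\frac{|\pr{e,N}|}{\sqrt{1-u^2}}.
\]

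Next I would convert this into an expression in $v_H$. Equation~\eqref{eq:gradU} gives $|\nabla u|^2=-1+u^2+\pr{e,N}^2$, i.e. $\pr{e,N}^2=1-u^2+|\nabla u|^2$, whence
\[
\sinh^2(\theta)=\cosh^2(\theta)-1=\frac{|\nabla u|^2}{1-u^2}.
\]
The identities $\nabla u=\left(\cos\delta_H+\tfrac{\sin\delta_H}{\sqrt{1-v_H^2}}v_H\right)\nabla v_H$ and $1-u^2=\left(\cos\delta_H+\tfrac{\sin\delta_H}{\sqrt{1-v_H^2}}v_H\right)^2(1-v_H^2)$ recorded in the proof of Proposition~\ref{pro:hess} make the common factor cancel, so that this tilt angle is intrinsic to $v_H$:
\[
\sinh^2(\theta)=\frac{|\nabla v_H|^2}{1-v_H^2}.
\]

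Finally I would invoke Proposition~\ref{pro:gradbound}, which gives $|\nabla v_H|\le G_L\,|v_H|$ on $U_H(\mathcal{P},\varepsilon)$. On this set $|v_H|=|\sin(\dist(\cdot,\mathcal{P}_{\delta_H}))|$ with $|\dist(\cdot,\mathcal{P}_{\delta_H})|<\tfrac{\pi}{4}-\tfrac{|\delta_H|}{2}+\varepsilon\le\tfrac{3\pi}{8}<\tfrac{\pi}{2}$ for $\varepsilon<(\pi/8)-(\delta_L/4)$, so $v_H^2/(1-v_H^2)$ is bounded above by the universal constant $\tan^2(3\pi/8)$. Combining,
\[
\sinh^2(\theta)\le G_L^2\,\frac{v_H^2}{1-v_H^2}\le G_L^2\tan^2\!\left(\tfrac{3\pi}{8}\right),
\]
so the claim holds with $\alpha_L:=\arcsinh\!\left(G_L\tan(3\pi/8)\right)$. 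The substantive step is the geometric identity $\sinh^2(\theta)=|\nabla v_H|^2/(1-v_H^2)$, which turns the tilt bound into an immediate consequence of the gradient estimate; the only care needed is the sign bookkeeping in the future-directed choice of $T$ and the restriction of $p$ to the region $U_H(\mathcal{P},\varepsilon)$ where Proposition~\ref{pro:gradbound} is valid.
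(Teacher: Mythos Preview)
Your proof is correct and follows essentially the same approach as the paper: identify the tilt angle with the Lorentzian inner product $\langle T,N\rangle$, rewrite it via $\langle e,N\rangle$ and hence $|\nabla u|$, convert from $u$ to $v_H$ using the identities from Proposition~\ref{pro:hess}, and conclude with the gradient bound of Proposition~\ref{pro:gradbound}. Your packaging of the computation as the clean identity $\sinh^2(\theta)=|\nabla v_H|^2/(1-v_H^2)$ is in fact tidier than the paper's version, which bounds $\cosh(\theta)$ via $\langle e,N\rangle$ directly but with a couple of minor typos; both arguments share the same implicit restriction to $p\in U_H(\mathcal{P},\varepsilon)$.
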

	\begin{proof}
		As we have repeatedly done so far, we fix $H\in[0,L]$, claim that
		\[\cosh(\alpha_H):=\left(1+\frac{1}{\cos\left(\frac{\pi}{4}+\frac{\delta_H}{2}\right)}\right)\sqrt{1+G_H^2},\] 
		and conclude by the fact that the map $H\to\alpha_H$ is increasing for $H\in[0,L]$.
		
		To have an explicit formula for the exponential map, we work in the quadric model $\hyp^{n,1}$. We denote by $P:=\psi(\mathcal{P})$ and abusively call $\Sigma$ the image of $\Sigma$ via $\psi$. Remark the $P_\delta$'s are the level sets of the function $U(\cdot)=\pr{e,\cdot}$, for $e$ the past dual point of the totally geodesic hypersurface $P$, namely $P=P_+(e)$. A timelike geodesic orthogonal to $P_{\delta}$ is then of the form $\gamma(t)=\cos(t)e+\sin(t)w$, for $w\in P_+(e)=P$ (compare with Equation~\eqref{eq:geod}).
		
		In particular, $\gamma$ is an integral line for $\nablah U$, hence the angle between $\gamma$ and $\Sigma$ coincides with $\cosh\pr{\nablah U,N}$. By Equation~\eqref{eq:gradU}, we obtain
		\begin{align*}
			\pr{\nablah U,N}=\pr{e,N}=\sqrt{1-u^2+|\nabla u|^2}=\left(\cos(\delta_H)+\frac{\sin(\delta_H)}{\sqrt{1-v_H^2}}\right)\sqrt{1-v_H^2+|\nabla v_H|^2}.
		\end{align*}
		
		By hypothesis, we have \[\sqrt{1-v_H^2}=\cos\left(\dist(\cdot,\mathcal{P}_{\delta_H})\right)\ge\cos\left(\frac{\pi}{4}+\frac{\delta_H}{2}\right).\]
		By Proposition~\ref{pro:gradbound}, we have $|\nabla v_H|^2\le G_H^2$, proving the claim and concluding the proof.
	\end{proof}
	\begin{cor}\label{cor:comp}
		The space $\cmclp(L,\mathcal{P})$ is compact, for any $L\in\R$, and for any totally geodesic spacelike hypersurface $\mathcal{P}$. 
	\end{cor}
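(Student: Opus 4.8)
The plan is to present $\cmclp(L,\mathcal{P})$ as a closed subfamily of properly embedded achronal graphs and to run the compactness machinery of Proposition~\ref{pro:compact}. Fix a sequence $\Sigma_k\in\cmclp(L,\mathcal{P})$ of $H_k$-hypersurfaces. By condition (1) of Definition~\ref{de:CMCLP} we have $H_k\in[-L,L]$, so up to a subsequence $H_k\to H_\infty\in[-L,L]$ and hence $\delta_{H_k}\to\delta_{H_\infty}$; in particular the support hypersurfaces $\mathcal{P}_{\delta_{H_k}}$ converge to the fixed hypersurface $\mathcal{P}_{\delta_{H_\infty}}$.

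First I would establish precompactness. By Corollary~\ref{cor:Hmaxprin} each $\Sigma_k$ is contained in $\ch_{H_k}(\pd_\infty\Sigma_k)$, which by condition (2) has $\mathcal{P}_{\delta_{H_k}}$ as a support hypersurface and, by Corollary~\ref{cor:width}, width at most $(\pi/2)-|\delta_{H_k}|$. Thus each $\Sigma_k$ lies in the timelike slab bounded on one side by $\mathcal{P}_{\delta_{H_k}}$ and on the other by the parallel hypersurface at distance $(\pi/2)-|\delta_{H_k}|$ from it; since both bounding hypersurfaces converge, the $1$-Lipschitz graph functions of the $\Sigma_k$ are uniformly bounded in a splitting, so the family lies in a precompact subset of $\hypu^{n,1}\cup\pd_\infty\hypu^{n,1}$. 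Applying Proposition~\ref{pro:compact}, up to a further subsequence $\Sigma_k$ converges to an entire achronal graph $\Sigma_\infty$.

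It remains to exclude the two degenerate alternatives of Proposition~\ref{pro:compact} and to verify that the limit lies in $\cmclp(L,\mathcal{P})$. Case (2), $H_\infty=\pm\infty$, is impossible since $H_\infty\in[-L,L]$. To rule out case (1), in which $\Sigma_\infty$ would be a totally geodesic lightlike hypersurface, I would use the tilt estimate of Corollary~\ref{cor:tilt}: by condition (3) each $\Sigma_k$ meets the tube $U_{H_k}(\mathcal{P})$ at a point $p_k$ where the angle between $\Sigma_k$ and the timelike geodesic orthogonal to $\mathcal{P}_{\delta_{H_k}}$ is at most $\alpha_L$. Since all three defining conditions are invariant under the isometries preserving $\mathcal{P}$, which act transitively on $\mathcal{P}$, I may normalize so that the $p_k$ stay in a fixed compact region and converge to a point $p_\infty\in\Sigma_\infty$; the uniform curvature bound for $H$-hypersurfaces then spreads the tilt control to a fixed neighbourhood of $p_\infty$, forcing $\Sigma_\infty$ to be strictly spacelike there. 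As a lightlike hypersurface is nowhere spacelike, case (1) cannot occur, $\pd_\infty\Sigma_\infty$ is admissible, and we land in case (3): $\Sigma_\infty$ is an $H_\infty$-hypersurface and the convergence is smooth on compact sets.

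Finally I would check the three conditions for $\Sigma_\infty$. Condition (1) is clear. For condition (2), $H$-convexity is a closed property by Corollary~\ref{cor:Hconvclosed}; passing to the limit in condition (2) for $\Sigma_k$ and using $\ch_{H_k}(\pd_\infty\Sigma_k)\to\ch_{H_\infty}(\pd_\infty\Sigma_\infty)$ together with $\mathcal{P}_{\delta_{H_k}}\to\mathcal{P}_{\delta_{H_\infty}}$, one obtains that $\mathcal{P}_{\delta_{H_\infty}}$ supports $\ch_{H_\infty}(\pd_\infty\Sigma_\infty)$. Condition (3) follows from $p_\infty\in\Sigma_\infty$ and continuity of $\dist(\cdot,\mathcal{P}_{\delta_{H_\infty}})$. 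I expect the crux of the argument to be the exclusion of the lightlike degeneration (case (1)): this is exactly where the quantitative tilt bound of Corollary~\ref{cor:tilt} enters, and it must be combined with the normalization by the isometry group preserving $\mathcal{P}$, so that the first-order information carried by the anchoring points is not lost at infinity in the limit.
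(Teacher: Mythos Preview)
Your overall architecture matches the paper's: apply Proposition~\ref{pro:compact} to a sequence in $\cmclp(L,\mathcal{P})$, rule out the degenerate limit via Corollary~\ref{cor:tilt}, and conclude by closedness of the defining conditions. You even fill in a point the paper leaves implicit, namely the precompactness hypothesis of Proposition~\ref{pro:compact}, and your slab argument for that is correct.

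The normalisation step, however, is a genuine gap. Once you have extracted a subsequence with $\Sigma_k\to\Sigma_\infty$, applying isometries $\phi_k\in\mathrm{Isom}(\mathcal{P})$ so that $\phi_k(p_k)$ stays in a compact set produces a \emph{different} sequence $\phi_k(\Sigma_k)$ whose limit, if it exists, need not be isometric to $\Sigma_\infty$ when the $\phi_k$ diverge. Knowing that this new limit is spacelike at $\lim\phi_k(p_k)$ tells you nothing about $\Sigma_\infty$. More broadly, ``every sequence has a convergent subsequence after normalisation'' is cocompactness of the quotient, not compactness of $\cmclp(L,\mathcal{P})$ itself. Your verification of condition~(3) for $\Sigma_\infty$ inherits the same flaw, since the point $p_\infty$ you use comes from the normalised sequence.

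The fix is to read Corollary~\ref{cor:tilt} more carefully: the angle bound $\alpha_L$ holds at \emph{every} point $p=\gamma\cap\Sigma$, for every timelike geodesic $\gamma$ orthogonal to $\mathcal{P}_{\delta_H}$, not only at the anchoring points in $U_H(\mathcal{P})$. Hence the future unit normals $N_k$ of $\Sigma_k$ are uniformly bounded away from the light cone over all of $\Sigma_k$, and this passes to the limit at any fixed point of $\hyp^n$ in the splitting; a totally geodesic lightlike hypersurface cannot arise. This is exactly the paper's argument (``Corollary~\ref{cor:tilt} prevents the normal vector of $\Sigma_k$ to degenerate''), and it makes the normalisation unnecessary. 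With that correction, your more detailed treatment of precompactness and of conditions~(1)--(2) is a valid expansion of the paper's one-line appeal to closedness.
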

	\begin{proof}
		Consider a sequence $(\Sigma_k)_{k\in\N}$ in $\cmclp(L,\mathcal{P})$. By Proposition~\ref{pro:compact}, we can extract a subsequence $(\Sigma_{k_j})_{j\in\N}$ which converges to an entire acausal hypersurface $\Sigma_\infty$, which is either a $H-$hypersurface or a degenerate hypersurface.
		
		Corollary~\ref{cor:tilt} prevents the normal vector of $\Sigma_k$ to degenerate, namely the latter case cannot happen. Since $\cmclp(L,\mathcal{P})$ is closed (compare with Definition~\ref{de:CMCLP}), $\Sigma_\infty$ belongs to $\cmclp(L,\mathcal{P})$, concluding the proof. 
	\end{proof}
	
	The last result of this section shows that $U_L(\mathcal{P},\varepsilon)$ contains all points closed enough to $U_L(\mathcal{P})$, with respect to the induced metric on $\Sigma$. 
	
	\begin{cor}\label{cor:metricschaud}
		For any $L\ge0$ and $\varepsilon\in\left(0,(\pi/8)-(\delta_L/4)\right)$, there exists a universal constant $R_L(\varepsilon)>0$ with the following property. Let $\mathcal{P}$ a totally geodesic spacelike hypersurface, for any $\Sigma\in\cmclp(L,\mathcal{P})$, we have
		\[B_\Sigma\left(U_L(\mathcal{P}),R_L(\varepsilon)\right)\sq U_L(\mathcal{P},\varepsilon),\]
		for $U_L(\mathcal{P}), U_L(\mathcal{P},\varepsilon)$ as in Equation~\eqref{eq:UL} and Equation~\eqref{eq:ULe}.
	\end{cor}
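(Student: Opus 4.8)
The plan is to prove that the distance function $\rho:=\dist(\cdot,\mathcal{P}_{\delta_H})$ is uniformly Lipschitz on $U_H(\mathcal{P},\varepsilon)$ with respect to the metric induced on $\Sigma$, with a Lipschitz constant depending only on $L$ and $\varepsilon$. Since $U_H(\mathcal{P},\varepsilon)$ is obtained from $U_H(\mathcal{P})$ by relaxing the bound on $|\rho|$ by $\varepsilon$ (see Equation~\eqref{eq:UL} and Equation~\eqref{eq:ULe}), such a bound will force any intrinsic neighbourhood of $U_H(\mathcal{P})$ of sufficiently small radius to stay inside $U_H(\mathcal{P},\varepsilon)$.

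Throughout, fix $\Sigma\in\cmclp(L,\mathcal{P})$ of mean curvature $H\in[-L,L]$, write $U_H(\mathcal{P})$ and $U_H(\mathcal{P},\varepsilon)$ for the sets of Equation~\eqref{eq:UL} and Equation~\eqref{eq:ULe} attached to this $H$ (these are the sets denoted $U_L(\mathcal{P}),U_L(\mathcal{P},\varepsilon)$ in the statement), and let $\dist_\Sigma$ denote the Riemannian distance of the induced metric, whose metric ball is $B_\Sigma$. First I would record a uniform cosine bound. By Remark~\ref{rem:geommeaning}, on $U_H(\mathcal{P},\varepsilon)\sq I^+(e)$ one has $v_H=\sin(\rho)$, and by definition of $U_H(\mathcal{P},\varepsilon)$,
\[
|\rho|<\frac{\pi}{4}-\frac{|\delta_H|}{2}+\varepsilon\le\frac{\pi}{4}+\varepsilon<\frac{3\pi}{8}<\frac{\pi}{2},
\]
where the middle inequalities use $|\delta_H|\ge0$ and $\varepsilon<\pi/8$. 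Hence $\cos(\rho)\ge c_0:=\cos\left(\frac{\pi}{4}+\varepsilon\right)>0$, a lower bound uniform in $H\in[-L,L]$ and in the choice of $\Sigma$ and $\mathcal{P}$.

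Next I would differentiate. Since $v_H=\sin(\rho)$ with $\cos(\rho)>0$, the chain rule gives $\nabla\rho=(\cos\rho)^{-1}\nabla v_H$, so Proposition~\ref{pro:gradbound} together with $|v_H|\le1$ yields
\[
|\nabla\rho|=\frac{|\nabla v_H|}{\cos\rho}\le\frac{G_L|v_H|}{c_0}\le\frac{G_L}{c_0}\qquad\text{on }U_H(\mathcal{P},\varepsilon),
\]
that is, $\rho$ is $(G_L/c_0)$-Lipschitz there for the induced metric. I would then set $R_L(\varepsilon):=\frac{\varepsilon\,c_0}{2G_L}$, which depends only on $L$ and $\varepsilon$ since both $G_L$ (from Proposition~\ref{pro:gradbound}) and $c_0$ do.

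Finally I would close the argument with a first-exit estimate. Given $y\in B_\Sigma\left(U_H(\mathcal{P}),R_L(\varepsilon)\right)$, pick $x\in U_H(\mathcal{P})$ with $\dist_\Sigma(x,y)<R_L(\varepsilon)$ and a unit-speed path $\sigma\colon[0,\ell]\to\Sigma$ from $x$ to $y$ with $\ell<R_L(\varepsilon)$. Suppose, towards a contradiction, that $y\notin U_H(\mathcal{P},\varepsilon)$. As $|\rho(x)|\le\frac{\pi}{4}-\frac{|\delta_H|}{2}$ and $s\mapsto\rho(\sigma(s))$ is continuous, there is a first time $s_0\in(0,\ell]$ with $|\rho(\sigma(s_0))|=\frac{\pi}{4}-\frac{|\delta_H|}{2}+\varepsilon$, and $\sigma([0,s_0))\sq U_H(\mathcal{P},\varepsilon)$. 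Integrating the Lipschitz bound along $\sigma$ on $[0,s_0)$ and passing to the limit at $s_0$ gives
\[
\varepsilon\le|\rho(\sigma(s_0))|-|\rho(\sigma(0))|\le\int_0^{s_0}|\nabla\rho|\,ds\le\frac{G_L}{c_0}\,s_0\le\frac{G_L}{c_0}\,\ell<\frac{G_L}{c_0}\,R_L(\varepsilon)=\frac{\varepsilon}{2},
\]
a contradiction; hence $y\in U_H(\mathcal{P},\varepsilon)$, which is exactly the claimed inclusion. I expect this last step to be the only genuine obstacle: a priori a short intrinsic path leaving $U_H(\mathcal{P})$ could stray into the region where Proposition~\ref{pro:gradbound} is unavailable, and it is precisely the $\varepsilon$-collar separating $U_H(\mathcal{P})$ from $U_H(\mathcal{P},\varepsilon)$ that lets the gradient estimate run up to the first exit time and rule the escape out.
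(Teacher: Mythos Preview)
Your proof is correct and follows essentially the same route as the paper's: bound the intrinsic gradient of the signed distance to $\mathcal{P}_{\delta_H}$ via Proposition~\ref{pro:gradbound}, integrate along a short path from $x$ to $y$, and choose $R_L(\varepsilon)$ so that the drift in $|\rho|$ cannot reach $\varepsilon$. The paper works with $v_H$ and converts back through $\arcsin$ (obtaining $R_L(\varepsilon)=\varepsilon/(\sqrt{2}G_L)$), whereas you differentiate $\rho=\arcsin v_H$ directly; your explicit first-exit argument is in fact cleaner than the paper's version, which integrates the gradient bound along a minimizing geodesic without checking that the geodesic remains in $U_H(\mathcal{P},\varepsilon)$.
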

	\begin{proof}
		Let us fix $L\ge0$ $\varepsilon>0$ and $\Sigma\in\cmclp(L,\mathcal{P})$. For any pair $x,y\in\Sigma$ at distance $r$, let $\gamma$ be a length-minimizing geodesic connecting the two, whose existence is guaranteed by \cite[Theorem~B]{ecrin}. Assume $\gamma$ parameterized by arclength, then
		\begin{align*}
			|v_H(y)-v_H(x)|&=\left|\int_0^r\pr{\nabla v_H\left(\gamma(t)\right),\gamma'(t)}dt\nabla\right|\le\int_0^r \|\nabla v_H\|\|\gamma'\|\le G_L r
		\end{align*}
		for $G_L$ as in Proposition~\ref{pro:gradbound}.
		
		If $x\in U_L(\mathcal{P})$, namely $|\dist(x,\mathcal{P}_{\delta_H})|\le(\pi/2)-(|\delta_H|)/2$, then
		\begin{align*}
			\left|\dist(y,\mathcal{P}_{\delta_H})\right|&=\left|\arcsin\left(v_H(y)\right)\right|\le\left|\arcsin\left(v_H(x)+G_L r\right)\right|\\
			&\le\left|\dist(x,\mathcal{P}_{\delta_H})\right|+\sqrt{2}G_L r\le\frac{\pi}{4}-\frac{|\delta_H|}{2}+\sqrt{2}G_L r.
		\end{align*}
		It follows that $y\in U_L(\mathcal{P},\varepsilon)$ for \[r<\frac{\varepsilon}{\sqrt{2}G_L}=:R_L(\varepsilon),\] concluding the proof.
	\end{proof}
	
	We can finally (locally) bound the $C^0-$norm of $v$ with the width of the $H-$shifted convex hull.
	\begin{pro}\label{pro:v<w}
		For any $L\ge0$ and $\varepsilon\in\left(0,(\pi/8)-(\delta_L/4)\right)$, there exists a universal constant $E_L$ with the following property: for any spacelike $H-$hypersurface $\Sigma$ with $H\in[-L,L]$ and for any $x\in\Sigma$, there exists a support totally umbilical $H-$hypersurface for $\ch_H(\pd_\infty\Sigma)$ such that
		\[v_H(y)\le E_L\sin\left(\omega_H(\pd_\infty\Sigma)\right)\]
		over $B\left(x,R_L(\varepsilon)\right)$.
	\end{pro}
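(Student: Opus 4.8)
The plan is to control $v_H$ at the center $x$ by choosing the support hypersurface $\mathcal{P}_{\delta_H}$ adapted to the position of $x$ inside $\ch_H(\Lambda)$, and then to spread this single pointwise bound over the whole ball $B(x,R_L(\varepsilon))$ using the gradient estimate of Proposition~\ref{pro:gradbound}. Throughout I would write $\Lambda=\pd_\infty\Sigma$ and discard the trivial umbilical case, where $v_H\equiv0$.

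The geometric heart is the choice of $\mathcal{P}_{\delta_H}$, and here I would work with the \emph{distance-realizing} geodesics rather than with arbitrary supports. By Corollary~\ref{cor:Hmaxprin}, $\Sigma\sq\ch_H(\Lambda)$, so $x\in\ch_H(\Lambda)$. Let $\eta_\pm$ be the timelike geodesics issuing from $x$ that realize $d_\pm:=\dist(x,\pd_\pm\ch_H(\Lambda))$, meeting $\pd_\pm\ch_H(\Lambda)$ orthogonally at $q_\pm$; each stays inside $\ch_H(\Lambda)$. Their concatenation $q_-\to x\to q_+$ is a future-directed timelike curve contained in $\ch_H(\Lambda)$ of length $d_-+d_+$, so by the definition of the width $d_-+d_+\le\omega_H(\Lambda)$, giving $\min(d_-,d_+)\le\omega_H(\Lambda)/2$. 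Assuming $d_-\le\omega_H(\Lambda)/2$ (the future case is symmetric), I would invoke Corollary~\ref{cor:support} at $q_-$ and select, among the admissible past supports there, the one orthogonal to $\eta_-$; since $\eta_-$ realizes the distance to $\pd_-\ch_H(\Lambda)$ it is orthogonal to a support hyperplane at $q_-$, and this support is an $H$-umbilical \emph{spacelike} hypersurface $\mathcal{P}_{\delta_H}$ (being orthogonal to a timelike geodesic, it cannot be degenerate). Then $q_-$ is exactly the foot of the perpendicular from $x$ to $\mathcal{P}_{\delta_H}$, so
\[
\dist(x,\mathcal{P}_{\delta_H})=d_-\le\frac{\omega_H(\Lambda)}{2}\le\frac{\pi}{4}-\frac{|\delta_H|}{2},
\]
where the last inequality is Corollary~\ref{cor:width}. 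Consequently $x\in U_H(\mathcal{P})$, hence $\Sigma\in\cmclp(L,\mathcal{P})$, and by Remark~\ref{rem:geommeaning} together with the monotonicity of $\sin$ on $[0,\pi/2]$ I obtain the center estimate $v_H(x)=\sin\left(\dist(x,\mathcal{P}_{\delta_H})\right)\le\sin\left(\omega_H(\Lambda)\right)$.

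To propagate, I would use that $x\in U_H(\mathcal{P})$, so Corollary~\ref{cor:metricschaud} gives $B(x,R_L(\varepsilon))\sq U_H(\mathcal{P},\varepsilon)$, and any minimizing geodesic from $x$ to $y\in B(x,R_L(\varepsilon))$ stays in $U_H(\mathcal{P},\varepsilon)$. By the strong maximum principle (Proposition~\ref{pro:maxprin}), exactly as in the proof of Proposition~\ref{pro:gradbound}, $v_H$ either vanishes identically (trivial) or has a constant sign; after a time-orientation reversal I may assume $v_H>0$. Proposition~\ref{pro:gradbound} then yields $|\nabla v_H|\le G_L v_H$ on $U_H(\mathcal{P},\varepsilon)$, so along a unit-speed minimizing geodesic $\sigma$ from $x$ one has $\tfrac{d}{ds}v_H(\sigma(s))\le G_L v_H(\sigma(s))$, and integrating this differential inequality (Gronwall) gives $v_H(y)\le e^{G_L\dist(x,y)}v_H(x)\le e^{G_L R_L(\varepsilon)}v_H(x)$. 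Combined with the center estimate this produces
\[
v_H(y)\le e^{G_L R_L(\varepsilon)}\sin\left(\omega_H(\Lambda)\right)\qquad\text{on }B(x,R_L(\varepsilon)),
\]
so the universal constant $E_L:=e^{G_L R_L(\varepsilon)}$ works.

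The step I expect to be most delicate is the construction of the support hypersurface: producing, at a \emph{possibly non-smooth} boundary point $q_-$ of $\ch_H(\Lambda)$, an $H$-umbilical support orthogonal to the distance-realizing geodesic $\eta_-$ and with perpendicular distance to $x$ equal to $d_-$. The point of passing through $\eta_-$ (rather than an arbitrary support tangent to a far boundary point) is that it forces the perpendicular foot to land on the boundary, making the identity $\dist(x,\mathcal{P}_{\delta_H})=d_-$ exact and sidestepping the irregularity of one of the two boundary components noted after Corollary~\ref{cor:ch-time}. The factor $1/2$—which is exactly what is needed to land inside the neighborhood $U_H(\mathcal{P})$ where the gradient estimate is valid—comes precisely from concatenating $\eta_-$ and $\eta_+$ into a single timelike curve of length at most $\omega_H(\Lambda)$.
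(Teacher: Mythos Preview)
Your proposal is correct and follows the same two-step scheme as the paper: pick a support $H$-umbilical hypersurface so that $v_H(x)$ is controlled by $\sin\omega_H$, then propagate over $B(x,R_L(\varepsilon))$ via the gradient inequality $|\nabla v_H|\le G_L|v_H|$ and Gronwall, landing on the identical constant $E_L=e^{G_L R_L(\varepsilon)}$. The only difference is that the paper simply \emph{asserts} the existence of a support hypersurface with $v_H(x)\le\tfrac12\sin(\omega_H)$, whereas you construct one explicitly via distance-realizing geodesics to $\partial_\pm\ch_H(\Lambda)$ and the concatenation bound $d_-+d_+\le\omega_H$; the step you rightly flag as delicate---producing an $H$-umbilical support \emph{orthogonal} to the maximizing geodesic at a possibly non-smooth boundary point---can be secured through Proposition~\ref{pro:benbon} and the construction in the proof of Corollary~\ref{cor:ch-time}, where the integral curves of $\nabla\tau_{\past}$ and $\nabla\tau_{\fut}$ are precisely timelike geodesics orthogonal to explicit totally geodesic support planes, whose equidistant $H$-umbilical hypersurfaces then give the required orthogonal supports.
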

	\begin{proof}
		Let us fix $L\ge0$ and $\varepsilon\in\left(0,(\pi/8)-(\delta_L/4)\right)$, a spacelike $H-$hypersurface $\Sigma$ with $H\in[-L,L]$ and a point $x\in\Sigma$. 
		
		If $\Sigma$ is totally umbilical, there is nothing to prove: indeed, $\Sigma$ is a totally umbilical support $H-$hypersurface for $\ch_H(\pd_\infty\Sigma)$, and the corresponding function $v_H=\sin\left(\dist(\cdot,\Sigma)\right)$ identically vanishes, \textit{i.e.} any choice of $E_L$ satisfies the required inequality.
		
		Otherwise, there exists a totally umbilical support $H-$hypersurface $\mathcal{P}_{\delta_H}$ for $\ch_H(\pd_\infty\Sigma)$ such that \[v_H(x)\le\frac{1}{2}\sin\left(\omega_H(\pd_\infty\Sigma)\right).\]
		In particular, $|v_H(x)|\le(\pi/2)-(\delta_H/2)$, that is $x\in U_H(\mathcal{P})$.
		
		Let $y\in B_\Sigma\left(x,R_L(\varepsilon)\right)$, for $R_L(\varepsilon)$ as in Corollary~\ref{cor:metricschaud}, and let $\gamma$ be a geodesic of $\Sigma$ joining $x$ to $y$, parameterized by arclength so that $\gamma(0)=x$, $\gamma(r)=y$. Define $f(t):=v_H\left(\gamma(t)\right)$: by Proposition~\ref{pro:gradbound}, over $B_\Sigma\left(x,R_L(\varepsilon)\right)$ we have
		\[|f'(t)|=|\pr{\nabla v_H\left(\gamma(t)\right),\gamma'(t)}|\le G_L|v_H\left(\gamma(t)\right)|=G_L|f(t)|.\]
		Since $\mathcal{P}_{\delta_H}$ is a totally umbilical support $H-$hypersurface for $\ch_H(\pd_\infty\Sigma)$, by the strong maximum principle it cannot meet $\Sigma$, that is $f$ never vanishes over $\gamma$.	It follows that
		\begin{align*}
			\left|\ln\left(\frac{v(y)}{v(x)}\right)\right|&=\int_0^r\left|\frac{f'(t)}{f(t)}\right|dt\le G_L r\le G_L R_L(\varepsilon).
		\end{align*}
		Equivalently,
		\[|v_H(y)|\le e^{G_L R_L(\varepsilon)}|v_H(x)|\le e^{G_L R_L(\varepsilon)}\sin\left(\omega_H(\pd_\infty\Sigma)\right),\]
		concluding the proof.
	\end{proof}
	
	\subsection{Hessian estimate} Let $\Sigma$ be a properly embedded acausal hypersurface: a splitting naturally induces an embedding of $\Sigma$ in $\hypu^{n,1}$ as follows. Let $f\colon\hyp^n\to\R$ be the function such that $\Sigma=\gr f$. Then, define
	\[\begin{tikzcd}[row sep=1ex]
		\sigma\colon\hyp^n\arrow[r] &\hypu^{n,1}\\
		x\arrow[r,maps to] & \left(x,f(x)\right).
	\end{tikzcd}\]
	The precomposition by $\sigma$ induces a bijective correspondence between $C^{2}(\Sigma)$ and $C^{2}(\hyp^n)$.
	\begin{lem}\label{lem:hess}
		Let $L\ge0$, $R>0$. There exists a universal constant $D_L(R)>0$ with the following property: let $\mathcal{P}$ be a totally geodesic spacelike hypersurface and $\Sigma\in\cmclp(L,\mathcal{P})$. Consider a splitting $(p_0,\mathcal{P})$. Let $\sigma$ be the embedding of $\Sigma$ in $\hypu^{n,1}$ induced by the splitting. For any function $h\in C^2(\Sigma)$, we have
		\[\|\hess h\|_{\Sigma\cap\left(B_{\mathcal{P}}(p_0,R)\times\R\right)}\le D_L(R)\|h\circ\sigma\|_{C^2(B_{\mathcal{P}}(p_0,R))}.\]
	\end{lem}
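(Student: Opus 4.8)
The plan is to reduce the statement to a uniform comparison, over the compact family $\cmclp(L,\mathcal{P})$, between the intrinsic Levi-Civita connection of $\Sigma$ and the hyperbolic connection of $\mathcal{P}\cong\hyp^n$. Since $\mathrm{O}(n,2)$ acts transitively on totally geodesic spacelike hypersurfaces and preserves every quantity at play, I may fix $\mathcal{P}$ once and for all, so that the sought constant $D_L(R)$ is automatically independent of $\mathcal{P}$. Writing $\phi:=h\circ\sigma\in C^2(\hyp^n)$ and $g_\Sigma:=\sigma^*g_{\hypu^{n,1}}$, the embedding $\sigma$ is an isometry from $(\hyp^n,g_\Sigma)$ onto $\Sigma$, so that $\hess_\Sigma h$ pulls back to the Hessian of $\phi$ taken with respect to the Levi-Civita connection $\nabla^\Sigma$ of $g_\Sigma$. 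Thus it suffices to bound $\|\hess_{g_\Sigma}\phi\|_{g_\Sigma}$ on $B_{\mathcal{P}}(p_0,R)$ by $\|\phi\|_{C^2(B_{\mathcal{P}}(p_0,R))}$, the latter norm being measured with the hyperbolic metric $g_{\hyp^n}=g_{\mathcal{P}}$.

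Denote by $\nabla$ the hyperbolic connection and by $A:=\nabla^\Sigma-\nabla$ the associated difference tensor, the $(1,2)$-tensor given by the usual Koszul expression in terms of $g_\Sigma^{-1}$ and $\nabla g_\Sigma$. One has the pointwise identity
\[
\hess_{g_\Sigma}\phi(X,Y)=\hess_{\hyp^n}\phi(X,Y)-d\phi\big(A(X,Y)\big),
\]
whence
\[
\|\hess_{g_\Sigma}\phi\|_{g_\Sigma}\le C(g_\Sigma)\Big(\|\hess_{\hyp^n}\phi\|_{g_{\hyp^n}}+\|A\|_{g_{\hyp^n}}\,|\nabla\phi|_{g_{\hyp^n}}\Big),
\]
where $C(g_\Sigma)$ accounts for the comparison between the operator norms associated to $g_\Sigma$ and $g_{\hyp^n}$. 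Therefore the statement follows once I produce, uniformly over $\Sigma\in\cmclp(L,\mathcal{P})$ and over $B_{\mathcal{P}}(p_0,R)$, first a two-sided bound $c_1\,g_{\hyp^n}\le g_\Sigma\le c_2\,g_{\hyp^n}$ (uniform ellipticity), and second a bound $\|A\|_{g_{\hyp^n}}\le c_3$; these give $C(g_\Sigma)\le c$ and make the right-hand side above $\le D_L(R)\|\phi\|_{C^2}$.

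Both ingredients reduce to uniform $C^1$ control of $g_\Sigma$ on the compact ball $\overline{B_{\mathcal{P}}(p_0,R)}$. Writing $\Sigma=\gr f$, the induced metric reads $g_\Sigma=g_{\hyp^n}-\cosh^2\!\big(d_{\hyp^n}(p_0,\cdot)\big)\,df\otimes df$, so its coefficients and their first derivatives are controlled by $f$, $df$ and $\hess_{\hyp^n}f$, together with the smooth factor $\cosh^2(d_{\hyp^n}(p_0,\cdot))$, which is bounded on the fixed ball $\overline{B_{\mathcal{P}}(p_0,R)}$. The uniform bounds on these quantities follow from the compactness of $\cmclp(L,\mathcal{P})$ (Corollary~\ref{cor:comp}): by Proposition~\ref{pro:compact} any sequence in $\cmclp(L,\mathcal{P})$ subconverges smoothly as a graph over compact sets, so the family of graph functions $f$, as $\Sigma$ ranges over $\cmclp(L,\mathcal{P})$, is precompact in $C^\infty_{\mathrm{loc}}(\hyp^n)$ and hence uniformly bounded in $C^2$ on $\overline{B_{\mathcal{P}}(p_0,R)}$; the resulting metrics are uniformly positive definite there because each $g_\Sigma$ is Riemannian and the normal direction does not degenerate (Corollary~\ref{cor:tilt}). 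This yields at once the ellipticity bound and the bound on $\nabla g_\Sigma$, hence on $A$, concluding the proof.

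The main obstacle is precisely the bound on $A$: passing from the intrinsic control of $\Sigma$ (the uniform bound on the shape operator from \cite[Theorem~1]{kkn} together with the tilt bound) to a bound on the \emph{coordinate} second derivatives $\hess_{\hyp^n}f$, which enter the first derivatives of $g_\Sigma$ and thus the connection-difference tensor. Carrying this out by hand would require inverting the relation between the second fundamental form and $\hess_{\hyp^n}f$, an inversion made legitimate by the non-degeneracy of the normal direction (Corollary~\ref{cor:tilt}); invoking the compactness of $\cmclp(L,\mathcal{P})$ instead packages this analytic input cleanly and bypasses the explicit computation.
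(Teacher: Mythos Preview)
Your proof is correct and follows essentially the same approach as the paper: both express the intrinsic Hessian on $\Sigma$ in terms of the coordinate derivatives of $h\circ\sigma$ via the metric coefficients and Christoffel symbols (equivalently, your connection-difference tensor $A$), and then invoke the compactness of $\cmclp(L,\mathcal{P})$ (Corollary~\ref{cor:comp}) to bound these uniformly over $\overline{B_{\mathcal{P}}(p_0,R)}$. The only cosmetic difference is that the paper writes out the local-coordinate formula for $\hess h$ directly, while you phrase the same computation invariantly through $A=\nabla^\Sigma-\nabla$.
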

	\begin{proof}
		In the coordinates induced by the splitting, the hessian of a function $h\in C^2(\Sigma)$ writes as
		\[\hess h=\sum_{i,j,k,l=1}^n\left(\frac{\pd^2(h\circ\sigma)}{\pd x_k\pd x_l}g^{ki}g^{lj}+\sum_{m=1}^n\frac{\pd(h\circ\sigma)}{\pd x_k}g^{kl}g^{jm}\Gamma_{lm}^i\right)\pd_i\otimes\pd_j.\]
		Christoffel's symbols depend on the coefficients $g_{ij}$ of the metric expressed, which can be expressed as smooth functions of $\sigma$. By compactness (Corollary~\ref{cor:comp}), we can bound all these quantities by a suitable constant $c_L(R)$ that does not depend on the choice of the hypersurface $\Sigma$. It follows that the norm of the hessian is bounded by a polynomial of degree 1 in the derivatives of $h$ up to the second order, which concludes the proof.
	\end{proof}
	
	Hereafter, in order to lighten the notation, we will denote by
	\[\|h\|_{C^2(B_{\mathcal{P}}(p_0,R))}:=\|h\circ\sigma\|_{C^2(B_{\mathcal{P}}(p_0,R))}.\]
	\begin{cor}\label{cor:hess}
		For any $L\ge0$ and $\varepsilon\in\left(0,(\pi/8)-(\delta_L/4)\right)$, there exists a universal constant $A_L(\varepsilon)>0$ with the following property: let $\mathcal{P}$ be a totally geodesic spacelike hypersurface and let $\Sigma\in\cmclp(L,\mathcal{P})$ be a $H-$hypersurface. For any $p\in U_H(\mathcal{P})$ there exists a point $p_0\in\mathcal{P}$ such that the function $v_H$ as in Equation~\eqref{eq:v} satisfies
		\[\|B_0\|_{C^0\left(\Sigma\cap(B_{\mathcal{P}}(p_0,R_L(\varepsilon))\times\R)\right)}\le A_L(\varepsilon)\|v_H\|_{C^2(B_{\mathcal{P}}(p_0,R_L(\varepsilon)))},\]
		for $R_L(\varepsilon)$ as in Corollary~\ref{cor:metricschaud} $B_0$ the traceless shape operator of $\Sigma$.
	\end{cor}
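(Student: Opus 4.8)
The plan is to invert the pointwise identity of Proposition~\ref{pro:hess} so as to express the traceless shape operator $B_0=B-\tan(\delta_H)\Id$ (recall $H/n=\tan\delta_H$) in terms of the $2$-jet of $v_H$, and then to check that each resulting term is controlled by $\|v_H\|_{C^2}$ with a constant depending only on $L$. Assuming $H\ge0$ without loss of generality (by the time-symmetry of Remark~\ref{rem:timeorientation}) and writing $W:=\sqrt{1-v_H^2+|\nabla v_H|^2}$ and $D:=\sqrt{1-v_H^2}+v_H\tan(\delta_H)$, Proposition~\ref{pro:hess} rearranges into
\[
W\,B_0=\hess v_H+\left(\tan(\delta_H)\left(\frac1D-W\right)-v_H\right)\Id+\frac{\tan(\delta_H)}{D}\frac{dv_H\,\nabla v_H}{1-v_H^2}.
\]
The point of passing to $B_0$ rather than $B$ is that the scalar coefficient of $\Id$ vanishes at $v_H=0,\ \nabla v_H=0$ (there $W=D=1$), and the rank-one endomorphism $dv_H\,\nabla v_H$ has operator norm $|\nabla v_H|^2$; hence the whole right-hand side is small precisely when the $2$-jet of $v_H$ is.

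To make this uniform over $\cmclp(L,\mathcal{P})$, I would first collect the pointwise lower bounds valid on $U_H(\mathcal{P},\varepsilon)$: Lemma~\ref{lem:boundtan} gives $D\ge1/4$; the factor $1-v_H^2=\cos^2(\dist(\cdot,\mathcal{P}_{\delta_H}))$ is bounded below because the distance stays below $\pi/2$; and Proposition~\ref{pro:gradbound} gives $|\nabla v_H|\le G_L|v_H|$, so $|\nabla v_H|^2\le G_L^2 v_H^2\le G_L^2|v_H|$ and $W$ is pinched between two positive universal constants. Since $(v_H,|\nabla v_H|^2)\mapsto\tan(\delta_H)(1/D-W)$ is smooth and vanishes at the origin, it is Lipschitz on this region, hence bounded by $C_L(|v_H|+|\nabla v_H|^2)\le C_L'|v_H|$. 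Taking operator norms in the displayed identity and dividing by the uniformly positive $W$ then yields a pointwise estimate $\|B_0\|\le C_L(\|\hess v_H\|+|v_H|)$ with $C_L$ depending only on $L$.

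It remains to promote this pointwise bound to the $C^2$-norm over a ball of fixed radius. Given $p\in U_H(\mathcal{P})$, let $p_0\in\mathcal{P}\cong\hyp^n$ be its horizontal projection, so that $\Sigma\cap(B_{\mathcal{P}}(p_0,R)\times\R)=\sigma(B_{\mathcal{P}}(p_0,R))$. Since $\Sigma$ is spacelike, the induced metric satisfies $\sigma^*g_{\hypu^{n,1}}\le g_{\hyp^n}$, so $\sigma$ is $1$-Lipschitz and $\sigma(B_{\mathcal{P}}(p_0,R_L(\varepsilon)))\sq B_\Sigma(p,R_L(\varepsilon))\sq U_H(\mathcal{P},\varepsilon)$ by Corollary~\ref{cor:metricschaud}; thus the pointwise estimate holds throughout this set. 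Bounding $\|\hess v_H\|$ there by $D_L(R_L(\varepsilon))\|v_H\|_{C^2(B_{\mathcal{P}}(p_0,R_L(\varepsilon)))}$ via Lemma~\ref{lem:hess} and absorbing $|v_H|\le\|v_H\|_{C^0}\le\|v_H\|_{C^2}$ gives the claim with $A_L(\varepsilon):=C_L\big(D_L(R_L(\varepsilon))+1\big)$.

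The step I expect to be delicate is the uniform control of the scalar coefficients in the second paragraph: everything hinges on the denominators $D$, $W$ and $1-v_H^2$ being bounded away from $0$ by constants independent of the individual hypersurface, which is exactly what Lemma~\ref{lem:boundtan}, the sub-$\pi/2$ distance bound, and Proposition~\ref{pro:gradbound} provide. Without these uniform lower bounds the recombination of Proposition~\ref{pro:hess} would still isolate $B_0$, but the constant $A_L(\varepsilon)$ would degenerate as $\Sigma$ approaches the boundary of $\cmclp(L,\mathcal{P})$.
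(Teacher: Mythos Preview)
Your proposal is correct and follows essentially the same route as the paper: both isolate $W B_0$ from Proposition~\ref{pro:hess}, bound the scalar factor $W=\sqrt{1-v_H^2+|\nabla v_H|^2}$ from below on $U_H(\mathcal{P},\varepsilon)$, control the remaining coefficients via Lemma~\ref{lem:boundtan} and Proposition~\ref{pro:gradbound}, and finally convert $\|\hess v_H\|$ into the $C^2$-norm using Lemma~\ref{lem:hess}. Your argument is in fact slightly more explicit than the paper's on one point: you justify, via the $1$-Lipschitz property of $\sigma$ and Corollary~\ref{cor:metricschaud}, that the pointwise estimates hold throughout the cylinder $B_{\mathcal{P}}(p_0,R_L(\varepsilon))\times\R$, whereas the paper treats this tacitly.
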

	\begin{proof}
		Let us fix $L\ge0$ and $\varepsilon\in\left(0,(\pi/8)-(\delta_L/4)\right)$. To lighten the notation, assume $H\ge0$ and denote by $\delta_H=\arctan(H/n)$.
		We claim that the (non-sharp) constant
		\[
		A_H(\varepsilon):=\sqrt{2}\left(D_H\left(R_L(\varepsilon)\right)+n+2^{3/4}\tan(\delta_H)G_H^2+4n\left(1+G_H^2+G_H\tan(\delta_H)\right)\right)
		\]
		satisfies the statement for $H-$hypersurfaces. Since the functions $H\mapsto G_H$ and $H\mapsto\delta_H$ are increasing for $H\ge0$, the function $H\mapsto A_H$ is increasing, too. It follows that if the claim is satisfied for $H-$hypersurfaces, it is automatically satisfied for any CMC hypersurface having mean curvature in $[0,H]$, concluding the proof.
		
		Let us fix a $H-$hypersurface $\Sigma\in\cmclp(L,\mathcal{P})$. Since the shape operator of $\Sigma$ is $B=B_0+\tan(\delta_H)\Id$, Proposition~\ref{pro:hess} gives
		\begin{align*}
			\sqrt{1-v_H^2+|\nabla v_H|^2}B_0=&\hess v_H-v_H\Id+\frac{dv_H\nabla v_H}{(1-v_H^2)^{3/2}}\tan(\delta_H)\\
			&+\left(\frac{1}{\sqrt{1-v_H^2}+v_H\tan(\delta_H)}-\sqrt{1-v_H^2+|\nabla v_H|^2}\right)\tan(\delta_H)\Id.
		\end{align*}
		
		Since $p\in U_H(\mathcal{P})$, its distance from $\mathcal{P}_{\delta_H}$ is less than $(\pi/4)-(\delta_H/2)$, hence
		\[\sqrt{1-v_H^2+|\nabla v_H|^2}\ge\sqrt{1-v_H^2}\ge\cos\left(\frac{\pi}{4}-\frac{\delta_H}{2}\right)\ge\frac{1}{\sqrt{2}}.\]
		It follows that
		\[\|B_0\|\le\sqrt{2}\sqrt{1-v_H^2+|\nabla v_H|^2}\|B_0\|.\]
		Moreover, by Lemma~\ref{lem:hess} and Proposition~\ref{pro:gradbound}, we get
		\begin{equation}\label{eq:stimaf1}
			\begin{split}
				&\left\|\hess v_H-v_H\Id+\frac{dv_H\nabla v_H}{(1-v_H^2)^{3/2}}\tan(\delta_H)\right\|\\
				&\qquad\le D_L\left(R_L(\varepsilon)\right)\|v_H\|_{C^2}+n\|v_H\|_{C^0}+2^{3/4}\tan(\delta_H)G_H^2\|v_H\|^2_{C^0}\\
				&\qquad\le\left(D_L\left(R_L(\varepsilon)\right)+n+2^{3/4}\tan(\delta_H)G_H^2\right)\|v_H\|_{C^2}.
			\end{split}
		\end{equation}
		In the last line, we used that $\|v_H\|_{C^0}^2\le\|v_H\|_{C^0}$ since $|v_H|\le1$ over $U_H(\mathcal{P},\varepsilon)$, for $\varepsilon<\pi/4$.
		
		To conclude, use Lemma~\ref{lem:boundtan} and Proposition~\ref{pro:gradbound} to get
		\begin{equation}\label{eq:stimaf2}
			\begin{split}
				&\left\|\frac{1}{\sqrt{1-v_H^2}+v_H\tan(\delta_H)}-\sqrt{1-v_H^2+|\nabla v_H|^2}\right\|_{C^0}\\
				&\qquad\le4\left\|1-\sqrt{1-v_H^2+|\nabla v_H|^2}+\sqrt{1-v_H^2+|\nabla v_H|^2}v_H\tan(\delta_H)\right\|_{C^0}\\
				&\qquad\le4\left\|v_H^2+|\nabla v_H|^2+G_H v_H\tan(\delta_H)\right\|_{C^0}\\
				&\qquad\le4\left(1+G_H^2+G_H\tan(\delta_H)\right)\|v_H\|_{C^0},
			\end{split}
		\end{equation}
		proving the claim and concluding the proof.
	\end{proof}
	
	\begin{rem}
		The fact that the $C^2$ behaviour of $v_H$ encodes the curvature of $\Sigma$ should not surprise. The relevance of the statement is due to the universality of the constant $A_L(\varepsilon)$.
	\end{rem}
	
	\subsection{Schauder estimate}
	At this point, we showed that for a $H-$hypersurface, the norm of the traceless shape operator is a big O of the $C^2-$norm of the distance from a totally umbilical spacelike hypersurface with the same mean curvature. Through Schauder's methods, we prove that the $C^2-$norm of $v_H$ is in fact uniformly bounded by its $C^0-$norm.
	
	The $C^0-$norm of $v_H$ is related to the width of the $H-$shifted convex hull, since we always choose $v_H$ to be less than $\sin(\omega_H/2)$, while $B_0$ depends on the $C^2-$behaviour of $v_H$.
	
	The plan is to pullback the problem on $\hyp^n$ by projecting the graph of $\Sigma$ on a suitable splitting, in order to apply elliptic PDE's theory.
	
	We state a key result for this section, coming from elliptic PDE's theory, in an easier version. The original result can be found in \cite[Theorem~6.2]{pde}.
	\begin{pro}\label{pro:pde}
		Let $v\in C^{2}\left(B(0,R)\right)$ be a bounded solution of the elliptic PDE
		\[Lv=\sum_{i,j=1}^n a_{ij}\frac{\pd^2}{\pd x_i\pd x_j}v+\sum_{j=1}^nb_j\frac{\pd}{\pd x_j}v-nv=f,\]
		for $f\in C^0\left(B(0,R)\right)$. Assume the tensor $A=(a_{ij})$ is uniformly positive definite and that its coefficients are uniformly bounded, \textit{i.e.} there exist positive constants $\lambda,\Lambda>0$ such that 
		\begin{align*}
			A\ge\lambda\Id, &&\|a_{ij}\|_{C^{0}\left(B(0,R)\right)},\|b_{j}\|_{C^{0}\left(B(0,R)\right)}\le\Lambda.
		\end{align*}
		
		Then, there exists a constant $C=C(n,\lambda,\Lambda,R)$ such that
		\[\|v\|_{C^2\left(B(0,R/2)\right)}\le C\left(\|v\|_{C^0\left(B(0,R)\right)}+\|f\|_{C^0\left(B(0,R)\right)}\right).\]	
	\end{pro}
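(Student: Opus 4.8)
The statement is the classical interior Schauder estimate for a second-order uniformly elliptic operator in non-divergence form, so I would prove it by the method of \emph{freezing the coefficients}, reducing the variable-coefficient operator $L$ to its constant-coefficient model on small balls. The plan is first to dispose of the constant-coefficient case, then to treat $L$ as a perturbation of its frozen version and absorb the error produced by the oscillation of the $a_{ij}$, and finally to control the lower-order terms $\sum_j b_j\pd_j v$ and $-nv$ by interpolation, propagating the local bound to all of $B(0,R/2)$ by a covering argument.

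First I would record the constant-coefficient estimate. Fix $x_0\in B(0,R)$ and set $L_0:=\sum_{i,j}a_{ij}(x_0)\pd_i\pd_j$. Since $A(x_0)\ge\lambda\Id$ and $\|a_{ij}\|_{C^0}\le\Lambda$, a linear change of variables whose Jacobian is controlled by $\lambda$ and $\Lambda$ conjugates $L_0$ to the Laplacian $\Delta$. For $\Delta w=g$ the interior bound $\|w\|_{C^2(B(0,\rho/2))}\le C\bigl(\|w\|_{C^0(B(0,\rho))}+\|g\|_{C^{0,\alpha}(B(0,\rho))}\bigr)$ follows from the H\"older estimates on the second derivatives of the Newtonian potential (see \cite[Chapter~4]{pde}); undoing the change of variables yields the same estimate for $L_0$, with a constant depending only on $n$, $\lambda$, $\Lambda$.

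Next I would freeze at the center and iterate. Writing $L_0v=f-(L-L_0)v$, where $(L-L_0)v=\sum_{i,j}\bigl(a_{ij}-a_{ij}(x_0)\bigr)\pd_i\pd_j v+\sum_j b_j\pd_j v-nv$, the leading part of the perturbation is the oscillation of the $a_{ij}$ multiplied by $\hess v$. On a ball of radius $\rho$ small enough that this oscillation is below a threshold fixed by the ellipticity constant, the resulting contribution can be absorbed into the left-hand side of the $L_0$-estimate, leaving only lower-order terms; these in turn are dominated by interpolation inequalities of the shape $\|v\|_{C^1}\le\varepsilon\|v\|_{C^2}+C_\varepsilon\|v\|_{C^0}$, with $\varepsilon$ chosen so that the remaining $C^2$ part is also absorbed. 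A standard chain-of-balls argument then upgrades the estimate from a single ball to all of $B(0,R/2)$, the number of balls---and hence the final constant---depending only on $n$, $\lambda$, $\Lambda$, $R$.

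The delicate point is the absorption step: controlling the oscillation of the $a_{ij}$ requires strictly more than the mere boundedness stated here, namely (uniform) continuity—indeed H\"older continuity—of the leading coefficients, and correspondingly the H\"older norm $\|f\|_{C^{0,\alpha}}$ rather than $\|f\|_{C^0}$ on the right-hand side; without these the $C^2$-conclusion genuinely fails, and this is exactly the content hidden in the ``easier version'' of \cite[Theorem~6.2]{pde}. In the present application the distinction is immaterial: the coefficients $a_{ij}$ are the smooth inverse-metric coefficients $g^{ij}$ of the hypersurface $\Sigma$, and by the compactness of $\cmclp(L,\mathcal{P})$ (Corollary~\ref{cor:comp}) their modulus of continuity, together with $\lambda$, $\Lambda$, and the smooth datum $f=f_H$, is uniformly controlled over the whole family. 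It is precisely this uniformity that allows the $C^0$-bounds appearing in the statement to produce a \emph{universal} constant $C=C(n,\lambda,\Lambda,R)$.
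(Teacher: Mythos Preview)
The paper does not prove this proposition at all: it is stated as a black-box result imported from elliptic PDE theory, with a reference to \cite[Theorem~6.2]{pde} and the remark that it is presented ``in an easier version''. There is therefore no paper proof to compare against; your sketch of the classical freezing-of-coefficients argument is the standard route and is correct in outline.

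Your closing paragraph is particularly apt and worth keeping. As you note, the statement as written---with merely $C^0$ coefficients $a_{ij}$ and $C^0$ right-hand side $f$---does not actually imply a $C^2$ bound on $v$; the genuine Schauder estimate in \cite[Theorem~6.2]{pde} requires $a_{ij}\in C^{0,\alpha}$ and $f\in C^{0,\alpha}$, and yields a $C^{2,\alpha}$ conclusion. The paper's ``easier version'' is thus a deliberate simplification of the hypotheses, harmless in the intended application because the coefficients come from smooth metrics varying in the compact family $\cmclp(L,\mathcal{P})$ (Corollary~\ref{cor:comp}), so all the suppressed H\"older norms are uniformly controlled. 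You have identified exactly the gap between the stated and the cited result and explained correctly why it does not affect the argument.
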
  
	By tracing the expression of the Hessian in Proposition~\ref{pro:hess}, we obtain that $v_H$ solves the elliptic PDE $\Delta v_H-nv_H=f_H$, for $\Delta$ the Laplace-Beltrami operator on $\Sigma$ and 
	\[f_H=-\tan(\delta_H)\left(\frac{|\nabla v_H|^2}{(1-v_H^2)^{3/2}}-n\sqrt{1-v_H^2+|\nabla v_H|^2}+\frac{n}{\sqrt{1-v_H^2}+v_H\tan(\delta_H)}\right).\]
	
	First, we prove that the Laplace-Beltrami operator on $\Sigma$ satisfies the conditions of Proposition~\ref{pro:pde} and that the constant $C$ does not depend on $\Sigma$, for a suitable $R>0$.
	
	\begin{lem}\label{lem:beltrami}
		Let $L\ge0$ and $R>0$. There exist constants $\lambda_L(R),\Lambda_L(R)$ as follows. Let $(q_0,\mathcal{P})$ be a splitting, and let $\Sigma\in\cmclp(L,\mathcal{P})$ be a $H-$hypersurface and let
		\[\Delta_\Sigma:=\sum_{i,j=1}^n a_{ij}\frac{\pd^2}{\pd x_i\pd x_j}+\sum_{j=1}^nb_j\frac{\pd}{\pd x_j}\]
		be the Laplace-Beltrami operator of $\Sigma$ in the coordinates given by the splitting $(q_0,\mathcal{P})$.
		
		Let $p_0$ be the (unique) point of $\Sigma$ contained in the fiber $\{x_0\}\times\R$. If $p_0$ is contained in $U_H(\mathcal{P})$ then, $A\ge\lambda_L(R)\Id$ and
		\[\|a_{ij}\|_{C^{0}\left(B(x_0,R)\right)},\|b_{j}\|_{C^{0}\left(B(x_0,R)\right)}\le\Lambda_L(R).\]
	\end{lem}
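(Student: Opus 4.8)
The plan is to read off the coefficients $a_{ij}$ and $b_j$ from the induced metric of $\Sigma$ in the chart provided by the splitting $(q_0,\mathcal{P})$, and then to extract the uniform bounds from the compactness of $\cmclp(L,\mathcal{P})$ (Corollary~\ref{cor:comp}). First I would write the Laplace-Beltrami operator in the coordinates $x\mapsto(x,f(x))$ induced by the splitting, where $\Sigma=\gr f$. In these coordinates $a_{ij}=g^{ij}$ is the inverse of the induced metric $g_{ij}$, while $b_j=\tfrac{1}{\sqrt{\det g}}\,\pd_i\!\left(\sqrt{\det g}\,g^{ij}\right)$ is an explicit expression in the metric coefficients and their first derivatives. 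Since the $g_{ij}$ are smooth functions of the $1$-jet of $f$ (the pullback of $g_{\hypu^{n,1}}$ by $\sigma$), both $a_{ij}$ and $b_j$ are continuous functions of that $1$-jet, wherever the metric is non-degenerate.

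Next, observe that $a_{ij}(x)$, $b_j(x)$, and the smallest eigenvalue $\mu(x)$ of $A(x)=(g^{ij}(x))$ all depend continuously on the pair $(\Sigma,x)\in\cmclp(L,\mathcal{P})\times\overline{B(x_0,R)}$, where $\cmclp(L,\mathcal{P})$ carries the topology of smooth convergence of graphs over compact sets (Proposition~\ref{pro:compact}). The subset of $\cmclp(L,\mathcal{P})$ cut out by the condition $p_0\in U_H(\mathcal{P})$ is closed, hence compact by Corollary~\ref{cor:comp}, and $\overline{B(x_0,R)}$ is compact in $\hyp^n$; therefore each of these continuous functions attains its extrema. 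Taking $\Lambda_L(R)$ to be the maximum of $\max_{i,j}\|a_{ij}\|_{C^0}$ and $\max_j\|b_j\|_{C^0}$ over the family, and $\lambda_L(R)$ to be the minimum of $\mu$, yields the asserted bounds, provided $\lambda_L(R)>0$.

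It remains to verify that $\lambda_L(R)>0$, which is the only point where the argument could fail. This is equivalent to the statement that the induced metric does not degenerate anywhere on $\overline{B(x_0,R)}$ along the compact family. Were $\lambda_L(R)=0$, a minimizing sequence $(\Sigma_k,x_k)$ would subconverge, by compactness, to some $(\Sigma_\infty,x_\infty)$ at which $A(x_\infty)$ has a zero eigenvalue, i.e.\ the limit graph would cease to be spacelike at $x_\infty$. But the tilt estimate of Corollary~\ref{cor:tilt} — which is precisely what rules out degenerate (lightlike) limits in the proof of Corollary~\ref{cor:comp} — forces $\Sigma_\infty$ to be a genuine spacelike hypersurface in $\cmclp(L,\mathcal{P})$, whose induced metric is uniformly Riemannian on the compact ball, a contradiction.

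The main (and essentially only) obstacle is thus this uniform non-degeneracy of the induced metric: the upper bounds $\Lambda_L(R)$ follow from compactness alone by continuity and extreme-value considerations, but the uniform ellipticity constant $\lambda_L(R)>0$ genuinely requires that no sequence in the family degenerate toward a lightlike graph over the fixed ball. This is exactly what the tilt bound built into the compactness of $\cmclp(L,\mathcal{P})$ secures, and it is the role played by the normalization $p_0\in U_H(\mathcal{P})$, which guarantees that the relevant portion of each hypersurface lies in the controlled region $U_H(\mathcal{P},\varepsilon)$ over $B(x_0,R)$.
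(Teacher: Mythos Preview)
Your argument is correct and follows essentially the same route as the paper: compactness of $\cmclp(L,\mathcal{P})$ (Corollary~\ref{cor:comp}) together with the fact that the coefficients of $\Delta_\Sigma$ are continuous functions of the graph $f$ in the $C^\infty$-topology, hence uniformly bounded on the compact family restricted to $\overline{B(x_0,R)}$. One small slip: you write that $a_{ij}$ and $b_j$ are continuous functions of the $1$-jet of $f$, but your own formula $b_j=\tfrac{1}{\sqrt{\det g}}\,\pd_i(\sqrt{\det g}\,g^{ij})$ involves derivatives of $g^{ij}$, so $b_j$ depends on the $2$-jet; this is harmless since the compactness is in $C^\infty$.

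Your treatment of the ellipticity constant $\lambda_L(R)>0$ is in fact more explicit than the paper's, which only asserts that ``the coefficients of $\Delta_\Sigma$ are uniformly bounded'' and leaves the positive lower bound for the smallest eigenvalue of $(g^{ij})$ implicit in the compactness. Your contradiction argument via the tilt bound is the right mechanism, though note that what is really being used is simply that limits in $\cmclp(L,\mathcal{P})$ are genuine spacelike $H$-hypersurfaces (case~3 of Proposition~\ref{pro:compact}), hence their induced metric is Riemannian over all of $\overline{B(x_0,R)}$; you do not need the auxiliary claim that the portion of $\Sigma$ over $B(x_0,R)$ lies in $U_H(\mathcal{P},\varepsilon)$.
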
	
	In other words, for any pointed $H-$hypersurface $(p_0,\Sigma)$, we can find a splitting $(q_0,\mathcal{P})$ such that the Laplace-Beltrami operator of $\Sigma$ is a uniformly elliptic operator around $p_0$, and the constants of ellipticity do not depend on the hypersurface nor on the point.
	\begin{proof}
		By Corollary~\ref{cor:comp}, the space $\cmclp(L,\mathcal{P})$ is compact with respect to the Hausdorff topology, which is equivalent to the $C_0^\infty(\hyp^n)$ on spacelike CMC graph by Proposition~\ref{pro:compact}. It follows that $\cmclp(L,\mathcal{P})$ embedds as a compact subset of $C^\infty\left(\overline{B_{\hyp^n}(x_0,R)}\right)$. 
		
		Pick $\Sigma\in\cmclp(L,\mathcal{P})$ and let $\Sigma=\gr f$ in the splitting $(q_0,\mathcal{P})$. Then,
		\[p_0=\left(x_0,f(x_0)\right)\in U_H(\mathcal{P})\iff |f(x_0)-\delta_H|\le\frac{\pi}{4}-\frac{\delta_H}{2}.\]
		
		Hence, the CMC satisfying this technical property are a compact subset of $\cmclp(L,\mathcal{P})$ inside $C^\infty_0(\hyp^n)$. The Laplace-Beltrami operator of $\Sigma$ can be explicitly written as a smooth function of $f$, its first and second order derivatives. Hence, the coefficients of $\Delta_\Sigma$ are uniformly bounded over $\overline{B_{\hyp^n}\left(x_0,R\right)}$, and the bound does not depend on $\Sigma$, but only on $L$ and $R$, which concludes the proof.
	\end{proof}
	
	We are finally ready to prove the main result of this subsection.
	\begin{repthmx}{thm:Schauder}
		Let $L\ge K\ge0$. There exists a universal constant $C_L$ with the following property: let $\Sigma$ a properly embedded $H-$hypersurface in $\hyp^{n,1}$ with $H\in[K,L]$, and let $B_0$ be its traceless shape operator. Then,
		\[\|B_0\|_{C^0(\Sigma)}\le C_L\sin\left(\omega_K(\pd_\infty\Sigma)\right).\]
	\end{repthmx}
	\begin{proof}
		Let us fix a properly embedded CMC hypersurface $\Sigma$, with $H\in[K,L]$, and $p_0\in\Sigma$. Let us also fix \[\varepsilon<\frac{\pi}{8}-\frac{\delta_L}{4}\le\frac{\pi}{8}-\frac{\delta_H}{4},\]
		and denote by  $R=R_L(\varepsilon)$ as in Corollary~\ref{cor:metricschaud}.
		
		Take a support $H-$umbilical hypersurface $\mathcal{P}_{\delta_H}$ as in Proposition~\ref{pro:v<w}. In particular,
		\[|\dist(p_0,\mathcal{P}_{\delta_H})|\le\frac{\omega_H(\Lambda)}{2}\le\frac{\pi}{4}-\frac{|\delta_H|}{4},\]
		the last inequality coming from Corollary~\ref{cor:width}. 
		
		In other words, $p_0\in U_H(\mathcal{P})$, that is $\Sigma\in\cmclp(L,\mathcal{P})$, for $\mathcal{P}$ the totally geodesic spacelike hypersurface equidistant to $\mathcal{P}_{\delta_H}$. Let $q_0\in \mathcal{P}$ a point realizing the distance $\dist(p_0,\mathcal{P})$. Then, in the splitting $(q_0,\mathcal{P})$, the point belongs to the fiber $\{x_0\}\times\R$.
		
		By tracing the expression of the Hessian find in Proposition~\ref{pro:hess}, we obtain that $v_H$ solves the elliptic PDE $\Delta_\Sigma v_H-nv_H=f_H$, for $\Delta_\Sigma$ the Laplace-Beltrami operator on $\Sigma$ and 
		\[f_H=-\tan(\delta_H)\left(\frac{|\nabla v_H|^2}{(1-v_H^2)^{3/2}}-n\sqrt{1-v_H^2+|\nabla v_H|^2}+\frac{n}{\sqrt{1-v_H^2}+v_H\tan(\delta_H)}\right).\]
		
		By Lemma~\ref{lem:beltrami}, the Beltrami-Laplacian operator of $\Sigma$ is an elliptic operator with coefficients bounded by $\lambda_L(R),\Lambda_L(R)$ over $B(x_0,R)$. Hence, by Schauder interior estimates (Proposition~\ref{pro:pde}), there exists a universal constant 
		\[c_L:=C(n,\lambda_L(R),\Lambda_L(R),R)\] 
		such that, for any solution $L_\Sigma v=f$, we have
		\[\|v\|_{C^2\left(B(0,R/2)\right)}\le c_L\left(\|v\|_{C^0\left(B(0,R)\right)}+\|f\|_{C^0\left(B(0,R)\right)}\right).\]
		
		Comparing with the proof of Corollary~\ref{cor:hess}, and more precisely with Equation~\eqref{eq:stimaf1} and Equation~\eqref{eq:stimaf2}, we get \[\|f_H\|_{C^0\left(B(0,R)\right)}\le nA_L(R)\|v_H\|_{C^0\left(B(0,R)\right)}.\]
		Hence, denoting by $C'_L:=c_L A_L(R)\left(1+nA_L(R)\right)$, we have
		\[\|B_0(p_0)\|\le\|B_0\|_{C^2\left(B(0,R/2)\right)}\le C'_L\|v_H\|_{C^0\left(B(0,R)\right)}\le C'_L E_L(\varepsilon)\sin\left(\omega_H(\pd_\infty\Sigma)\right),\]
		for $E_L(\varepsilon)$ as in Proposition~\ref{pro:v<w}.
		
		Finally, let us denote by $C_L:=C'_L E_L(\varepsilon)$: by Lemma~\ref{lem:width}, we have
		\[\|B_0(p_0)\|\le C_L\omega_H(\pd_\infty\Sigma)\le C_L\omega_K(\pd_\infty\Sigma),\] which concludes the proof since the choice of $p_0$ was arbitrary.
	\end{proof}
	
	\section{Application I: sectional curvature}\label{sec:sectional}
	This brief section wants to stress the link between the the width of the $H-$shifted convex hull of an admissible boundary $\Lambda$ and the sectional curvature of the corresponding $H-$hypersurface.
	
	Let $\Sigma$ be a spacelike hypersurface in the Anti-de Sitter space. Gauss equation allows to compute the sectional curvature of $\Sigma$ through its shape operator. Indeed, let $v,w\in T_x\Sigma$ two orthonormal vectors, then
	\begin{align*}
		K_\Sigma\left(\Span(v,w)\right)&=-1-\sff(v,v)\sff(w,w)+\sff(v,w)^2\\
		&=-1-\pr{B(v),v}\pr{B(w),w}+\pr{B(v),w}^2.
	\end{align*}
	The orthonormal basis $(v,w)$ can be chosen so that $\pr{B(v),w}=0$, as a consequence of the spectral theorem. It follows that 
	\begin{align*}
		-K_\Sigma\left(\Span(v,w)\right)-1&=\pr{B(v),v}\pr{B(w),w}=\left(\pr{B_0(v),v}+(H/n)\right)\left(\pr{B_0(w),w}+(H/n)\right)\\
		&=\pr{B_0(v),v}\pr{B_0(w),w}+(H/n)^2+(H/n)\pr{B_0(v+w),v+w}\\
		&\ge-\|B_0\|(x)^2+(H/n)^2-2(|H|/n)\|B_0\|(x).
	\end{align*}
	In the second line we used that $\pr{B(v),w}=0$ is equivalent to $\pr{B_0(v),w}=0$.
	
	Hence, the sectional curvature of $\Sigma$ is uniformly bounded by an explicit function of the norm of the traceless operator:
	\begin{equation}\label{eq:gauss}
		\begin{split}
			\max_{\mathrm{Gr}_2(T_{x}\Sigma)}K_\Sigma\le-1-(H/n)^2+\|B_0(x)\|^2+2(H/n)\|B_0(x)\|.
		\end{split}
	\end{equation}
	
	To our knowledge, it is still an open question wheter CMC hypersurfaces are Hadamard manifolds in higher dimension. However, a direct consequence of Theorem~\ref{thm:Schauder} recovers the existence of many CMC hypersurfaces in Anti-de Sitter space with \textit{uniform} negative sectional curvature.
	
	\begin{repcorx}{cor:sectional}
		For any $H\in\R$, there exists a universal constant $K_H>0$ such that
		\[\sup_{\mathrm{Gr}_2(T\Sigma)}K_\Sigma<-1-\left(\frac{H}{n}\right)^2+K_H\sin\left(\omega_H(\pd_\infty\Sigma)\right),\]
		for any properly embedded $H-$hypersurface $\Sigma$ in $\hyp^{n,1}$.
	\end{repcorx}
	\begin{proof}
		The proof consists in comparing Equation~\eqref{eq:gauss} with the statement of Theorem~\ref{thm:Schauder}. 
		
		Indeed, the supremum of the sectional curvature of $\Sigma$ is bounded by
		\begin{align*}\sup_{\mathrm{Gr}_2(T\Sigma)}K_\Sigma&\le-1-\left(\frac{H}{n}\right)^2+\frac{2|H|}{n}\|B_0\|_{C^0(\Sigma)}+\|B_0\|^2_{C^0(\Sigma)}\\
			&\le-1-\left(\frac{H}{n}\right)^2+\left(\frac{2|H|}{n}+\|B_0\|_{C^0(\Sigma)}\right)\|B_0\|_{C^0(\Sigma)}.
		\end{align*}
		By \cite[Theorem~1]{kkn}, the norm of $B$ is bounded by a uniform constant $C(|H|,n)$: hence, $(2|H|/n)+\|B_0\|_{C^0(\Sigma)}$ is bounded by a constant $K'_H$, too. By Theorem~\ref{thm:Schauder}, we have
		\begin{align*}\sup_{\mathrm{Gr}_2(T\Sigma)}K_\Sigma&-1-(H/n)^2+K'_H\|B_0\|_{C^0(\Sigma)}\\
			&\le-1-(H/n)^2+K'_H C_H\sin\left(\omega_H(\Lambda)\right).
		\end{align*}
		Then, the constant $K_H:=K'_H C_H$ satisfies the statement, concluding the proof.
	\end{proof} 
	Unfortunately, this result cannot be state in terms of neighbourhoods of totally geodesics boundaries with respect to the Hausdorff topology: in the $2-$dimensional case, it is known that if an admissible boundary $\Lambda$ contains two transverse lightlike rays, then the width of its convex hull is $\pi/2$. Conversely, in \cite{alex}, a class of admissible boundaries in $\hyp^{2,1}$ whose associated maximal surface is asymptotically flat is presented: such a class is dense in the space of admissible boundaries with respect to the Hausdorff topology.
	
	For $n=2$, the sectional curvature of CMC surfaces is non-positive. For the sake of completeness we add an \textit{ad hoc} proof, which need a preliminary result, peculiar of surfaces, which is proved for example in \cite[Lemma~3.11]{particles} or in \cite[Lemma~3.1]{chrisandrea}:
	\begin{lem}
		Let $(\Sigma,g)$ be a Riemannian $2-$manifold, and let $B_0$ be a traceless, $g-$symmetric and $g-$Codazzi $(1,1)-$smooth tensor. Denote by $\chi:=\log(-\det B_0)$, then
		\[\frac{1}{4}\Delta^g\chi=K_\Sigma,\] 
		for $\Delta^g \chi:=\tr(\mathrm{Hess}\,\chi)$ the Laplace-Beltrami operator on $\Sigma$.
	\end{lem}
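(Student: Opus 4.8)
The plan is to pass to a local isothermal coordinate and to recognise that a traceless, symmetric, Codazzi $(1,1)$-tensor on a surface is the real part of a holomorphic quadratic differential; the curvature then emerges from the conformal factor alone, while the determinant of $B_0$ carries a harmonic (hence Laplacian-invisible) contribution coming from the differential.

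First I would fix a local conformal coordinate $z=x+iy$ in which $g=e^{2u}|dz|^2$, so that the Gauss curvature reads $K_\Sigma=-e^{-2u}\Delta_0 u$, where $\Delta_0=\partial_x^2+\partial_y^2$, and the Laplace--Beltrami operator $\Delta^g=\tr(\hess\,\cdot)$ satisfies the two-dimensional conformal rule $\Delta^g=e^{-2u}\Delta_0$. Writing the $(0,2)$-tensor $h:=g(B_0\,\cdot,\cdot)$ in these coordinates, the tracelessness of $B_0$ forces $h_{z\bar z}=0$, so that $h=\phi\,dz^2+\bar\phi\,d\bar z^2$ with $\phi:=h(\partial_z,\partial_z)$.

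The key step is that the Codazzi condition is equivalent to the holomorphicity of $\phi$: unwinding $(\nabla_X B_0)Y=(\nabla_Y B_0)X$ in the coordinate frame, using the Christoffel symbols of the conformal metric, yields $\partial_{\bar z}\phi=0$. I expect this to be the main computational obstacle, since one must verify that the conformal-factor contributions cancel exactly, leaving the flat $\bar\partial$-equation; this is precisely the classical identification of symmetric traceless Codazzi tensors with holomorphic quadratic differentials (the Hopf differential mechanism).

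Finally I would compute the determinant in coordinates. Since $\det B_0=\det(h_{ij})/\det(g_{ij})$ with $\det(g_{ij})=e^{4u}$ and $\det(h_{ij})=-4|\phi|^2$, one gets $-\det B_0=4|\phi|^2e^{-4u}>0$ on the open set where $B_0\neq0$, which is exactly where $\chi$ is defined and smooth. Hence
\[
\chi=\log(-\det B_0)=\log 4+2\log|\phi|-4u.
\]
Because $\phi$ is holomorphic and nonvanishing there, $\log|\phi|=\operatorname{Re}(\log\phi)$ is harmonic, so $\Delta_0\log|\phi|=0$ and $\Delta_0\chi=-4\Delta_0 u$. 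Applying $\Delta^g=e^{-2u}\Delta_0$ then gives
\[
\Delta^g\chi=e^{-2u}(-4\Delta_0 u)=4\bigl(-e^{-2u}\Delta_0 u\bigr)=4K_\Sigma,
\]
which is the claim. Note that any constant factor in front of $|\phi|^2$ is annihilated by the Laplacian, so the precise normalisation of $\det(h_{ij})$ is immaterial; only the exponent $e^{-4u}$ matters for producing the factor $4$.
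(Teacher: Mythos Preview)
Your argument is correct and is the standard route to this identity: pass to isothermal coordinates, identify the traceless symmetric Codazzi tensor with a holomorphic quadratic differential $\phi\,dz^2$, and then observe that $\chi=\log 4+2\log|\phi|-4u$ so that the harmonic term $\log|\phi|$ drops under $\Delta_0$ and only the conformal factor survives to produce $4K_\Sigma$. The computations of $\det(g_{ij})=e^{4u}$, $\det(h_{ij})=-4|\phi|^2$, and the conformal rule $\Delta^g=e^{-2u}\Delta_0$ are all accurate, and your remark that the formula is only asserted on the open set $\{B_0\neq 0\}$ is the right caveat.

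There is nothing to compare against in the paper itself: the lemma is stated there without proof and attributed to external references (\cite[Lemma~3.11]{particles} and \cite[Lemma~3.1]{chrisandrea}). Your proof is precisely the classical Hopf-differential argument one would expect to find in those sources.
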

	\begin{lem}\label{lem:curv}
		Let $\Sigma$ be a properly embedded spacelike CMC hypersurface in $\hyp^{2,1}$, then the sectional curvature of $\Sigma$ is non-positive.
	\end{lem}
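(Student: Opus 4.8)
The plan is to combine the scalar Gauss equation in dimension two with the preliminary lemma on $\chi=\log(-\det B_0)$ and Yau's generalized maximum principle. First I would specialize the Gauss equation recorded in the excerpt to $n=2$: writing the shape operator as $B=B_0+(H/2)\Id$ with $B_0$ traceless, its eigenvalues are $\pm a$, so $\det B=(H/2)^2-a^2$ and
\[
K_\Sigma=-1-\det B=-1-\left(\tfrac{H}{2}\right)^2+a^2,
\]
where $a^2=-\det B_0=\|B_0\|^2\ge 0$. In particular $K_\Sigma\ge-1-(H/2)^2$ everywhere, so the Gauss curvature of $\Sigma$ is bounded below; since $\Sigma$ is properly embedded, hence complete, the Omori--Yau maximum principle is available.

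Next I would set $w:=-\det B_0=\|B_0\|^2$, a smooth, non-negative function on $\Sigma$ which is bounded thanks to the uniform bound on $\|B\|$ (and hence on $\|B_0\|$) from \cite[Theorem~1]{kkn}. If $\Sigma$ is totally umbilic then $w\equiv0$ and $K_\Sigma=-1-(H/2)^2<0$, so we may assume $w>0$ somewhere. Because $B_0$ is traceless, $g$-symmetric and $g$-Codazzi (the latter since $B$ is Codazzi in the constant-curvature ambient and $(H/2)\Id$ is parallel), the preliminary lemma applies on the open set $\{w>0\}$ and yields $\tfrac14\Delta^g\log w=K_\Sigma$. Substituting the Gauss relation gives the pointwise identity
\[
\Delta^g\log w=4\left(w-1-\left(\tfrac{H}{2}\right)^2\right)\qquad\text{on }\{w>0\},
\]
equivalently $\Delta^g w=\dfrac{|\nabla w|^2}{w}+4w\left(w-1-(H/2)^2\right)$.

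Finally I would argue by contradiction. Suppose $S:=\sup_\Sigma w>1+(H/2)^2$, noting $S<\infty$. By Omori--Yau there is a sequence $p_k\in\Sigma$ with $w(p_k)\to S$, $|\nabla w(p_k)|\to0$ and $\limsup_k\Delta^g w(p_k)\le0$. For $k$ large $w(p_k)>0$, so the displayed identity applies, and since $w(p_k)\to S>0$ the term $|\nabla w(p_k)|^2/w(p_k)\to0$, whence
\[
\limsup_k\Delta^g w(p_k)=4S\left(S-1-\left(\tfrac{H}{2}\right)^2\right)>0,
\]
contradicting $\limsup_k\Delta^g w(p_k)\le0$. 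Therefore $w\le1+(H/2)^2$ on all of $\Sigma$, and the Gauss relation gives $K_\Sigma\le0$.

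The main obstacle is analytic rather than algebraic: the identity $\tfrac14\Delta^g\log w=K_\Sigma$ degenerates on the umbilic locus $\{w=0\}$, and on a non-compact complete $\Sigma$ the supremum of $w$ need not be attained. Both difficulties are handled by running the maximum principle on the globally smooth function $w=-\det B_0$ (rather than on $\log w$), using that the Omori--Yau points satisfy $w(p_k)\to S>0$ and therefore eventually lie in the region where the PDE is valid; the curvature lower bound $K_\Sigma\ge-1-(H/2)^2$ is precisely what legitimizes the use of Omori--Yau.
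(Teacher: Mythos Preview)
Your proof is correct and follows essentially the same strategy as the paper: the Gauss equation together with the identity $\tfrac14\Delta^g\chi=K_\Sigma$ reduces the question to a maximum-principle argument for $\tfrac14\Delta^g\chi=e^\chi-1-(H/2)^2$. The paper simply invokes ``the strong maximum principle'' at that point, whereas you supply the analytic detail by passing to the globally defined $w=e^\chi$ and applying Omori--Yau---a welcome clarification, since $\Sigma$ is non-compact and $\chi$ is singular on the umbilic locus.
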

	\begin{proof}
		The shape operator $B$ of $\Sigma$ is automatically a $g-$symmetric and $g-$Codazzi $(1,1)-$tensor. Since $\Sigma$ has constant mean curvature $H$, the same yelds for the traceless shape operator $B_0:=B-H\Id$. By Gauss equation, \begin{equation}\label{eq:gauss3d}
			K_\Sigma=-1-\det B=-1-(H/2)^2-\det B_0.
		\end{equation}
		It follows that
		\[\frac{1}{4}\Delta^g\chi=-1+(H/2)^2-\det B_0=e^\chi-1-(H/2)^2.\]
		By the strong maximum principle, we have $e^\chi\le 1+(H/2)^2$, that is $K_\Sigma$ is either strictly less than $0$ or identically vanishes.
	\end{proof}
	Hence, in the $3-$dimensional case, Corollary~\ref{cor:w<B} can be stated in terms of the sectional curvature of $\Sigma$. 
	\begin{repcorx}{lem:sect}
		Let $\Lambda$ be an admissible boundary in $\pd_\infty\hypu^{2,1}$. Let $B_0$ be the traceless shape operator of the properly embedded spacelike $H-$hypersurface such that $\pd_\infty\Sigma=\Lambda$. Then,
		\[\tan\left(\omega_H(\Lambda)\right)\le-\frac{2\|B_0\|_{C^0(\Sigma)}}{\sup_{\Sigma}K_\Sigma}.\]
	\end{repcorx}
	\begin{proof}
		In the $3-$dimensional case, the eigenvalues of the traceless shape operator are opposite, hence $\|B_0\|=\|a_1\|_{C^0(\Sigma)}=\|a_2\|_{C^0(\Sigma)}$. In particular, by Equation~\eqref{eq:gauss3d}, we have $\|B_0\|^2\le1+(H/2)^2$. Hence, by Corollary~\ref{cor:w<B}, we have
		\[\tan\omega_H(\Lambda)\le\frac{2\|B_0\|_{C^0(\Sigma)}}{1+(H/2)^2-\|B_0\|^2}=-\frac{2\|B_0\|_{C^0(\Sigma)}}{\sup_{\Sigma}K_\Sigma},\]
		concluding the proof.
	\end{proof}	
	
	\section{Application II: Teichm\"uller theory}\label{sec:teich}
In this chapter, we focus on \textit{quasiconformal} (Definition~\ref{de:quasiconf}) $\theta-$landslides. They have been studied in \cite{areapres} from a qualitative point of view. We are interested in a quantitative investigation: the \textit{quasiconformal dilatation} $\mathcal{K}(\Phi)$ (Equation~\eqref{eq:maxdil}) of a $\theta-$landslide $\Phi$ is bounded by the principal curvatures of the CMC surface whose asymptotic boundary is the graph of $\Phi|_{\pd_\infty\hyp^2}$, as already remarked in \cite{tamb}. As a consequence of consequence of Theorem~\ref{thm:Schauder}, we bound $\mathcal{K}(\Phi)$ with a multiple of the $H-$width, which is related to the \textit{cross-ratio norm} of $\Phi|_{\pd_\infty\hyp^2}$ thanks to the work \cite{andreamax}.
\subsection{Universal Teichm\"uller space}
The \textit{cross-ratio} of a quadruple $(z_1,z_2,z_3,z_4)$ of points in $\pd_\infty\hyp^2$ is
\[cr(z_1,z_2,z_3,z_4):=\frac{z_4-z_1}{z_2-z_1}\frac{z_3-z_2}{z_3-z_4}.\] 
\begin{de}\label{de:quasisym}
	Let $\phi\colon\pd_\infty\hyp^2\to\pd_\infty\hyp^2$ be an orientation preserving homeomorphism. The \textit{cross-ratio norm} of $\phi$ is
	\[\|\phi\|_{cr}:=\sup_{cr(z_1,z_2,z_3,z_4)=-1}\ln\left|cr\left(\phi(z_1),\phi(z_2),\phi(z_3),\phi(z_4)\right)\right|\in[0,+\infty].\]
	
	We call $\phi$ is \textit{quasi-symmetric} if $\|\phi\|_{cr}<+\infty$.
\end{de}
\begin{rem}
	By associating a geodesic to its end points, the space of oriented geodesics of $\hyp^2$ identifies with $\left(\pd_\infty\hyp^2\times\pd_\infty\hyp^2\right)\setminus\Delta$. One can check that \[cr(z_1,z_2,z_3,z_4)=-1\] if and only if the geodesics $(z_1,z_3)$ and $(z_2,z_4)$ are orthogonal. The geometric meaning of quasi-symmetric homeomorphism is then that the angle between the geodesics $\left(\phi(z_1),\phi(z_3)\right)$ and $\left(\phi(z_2),\phi(z_4)\right)$ is uniformly bounded, for any couple of orthogonal geodesics $(z_1,z_3),(z_2,z_4)$.
\end{rem}

The \textit{universal Teichm\"uller space} is the space of quasi-symmetric homeomorphisms of the circle, \textit{modulo} the action of $\psl$ by postcomposition.

The boundary of $\hyp^{2,1}$ is a quadric ruled by two family of lightlike lines. For each point $x\in\pd_\infty\hyp^{2,1}$, we call $l_x$ (resp. $r_x$) the lightlike line through $x$ belonging to the first (resp. the second) family. Let us introduce a new parameterization of $\pd_\infty\hyp^{2,1}$: we fix a totally geodesic spacelike plane $P\cong\hyp^2$, and define
\[
\begin{tikzcd}[row sep=1ex]
	\left(\pi_l,\pi_r\right)\colon\pd_\infty\hyp^{2,1}\arrow[r]&\pd_\infty P\times\pd_\infty P\cong\pd_\infty\hyp^2\times\pd_\infty\hyp^2\\
	\qquad\quad x\arrow[r,mapsto]&\left(l_x\cap\pd_\infty P,r_x\cap\pd_\infty P\right)
\end{tikzcd}.\]
It turns out that, via this identification, both orientation-preserving homemorphisms and quasi-symmetric homemorphisms of $\pd_\infty\hyp^2$ can be characterized purely in terms of Anti-de Sitter geometry, as proved in \cite[Theorem~1.12]{univ}:
\begin{pro}\label{pro:bonsch}
	A subset $\Lambda$ in $\pd_\infty\hyp^{2,1}$ is an acausal admissible boundary if and only if the map \[\phi:=\pi_r\circ\pi_l^{-1}\colon\pi_l(\Lambda)\sq\pd_\infty\hyp^2\to\pd_\infty\hyp^2\]
	is well defined and an orientation preserving homeomorphism.
	
	Moreover, $\phi$ is quasi-symmetric if and only if $\omega_0(\Lambda)<\pi/2$.
\end{pro}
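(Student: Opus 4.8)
The plan is to reduce the first claim to Lemma~\ref{lem:graph} via the double null foliation of the boundary, and to bridge the second claim to the extrinsic geometry of the maximal surface spanning $\Lambda$. For $n=2$ the asymptotic boundary is conformal to the Einstein universe $\sph^1\times\sph^1$ with metric $g_{\sph^1}-g_{\sph^1}$; writing $(\theta,s)$ for the spacelike and timelike circle coordinates, the null coordinates $a=\theta+s$ and $b=\theta-s$ trivialize the two rulings, so that the left (resp. right) lightlike line through a point is $\{a=\mathrm{const}\}$ (resp. $\{b=\mathrm{const}\}$). Taking $P$ to be the totally geodesic spacelike plane $\{s=0\}$, its boundary $\pd_\infty P$ is the diagonal, and one checks directly that $\pi_l,\pi_r$ coincide, under $\pd_\infty P\cong\sph^1$, with the null coordinates $a,b$.

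First I would prove the topological equivalence. An admissible boundary is $\Lambda=\gr f$ for a $1$-Lipschitz map $f\colon\sph^1\to\sph^1$ with no antipodal points; parametrizing $\Lambda$ by $\theta$ gives $a(\theta)=\theta+f(\theta)$ and $b(\theta)=\theta-f(\theta)$. By Lemma~\ref{lem:graph}, $\Lambda$ is acausal exactly when $f$ is strictly $1$-Lipschitz, which is equivalent to both $a(\cdot)$ and $b(\cdot)$ being strictly increasing circle maps; the absence of antipodal points forces each to have degree one, so they are in fact orientation-preserving homeomorphisms onto $\pd_\infty P$. Injectivity of $\pi_l|_\Lambda=a(\cdot)$ is precisely well-posedness of $\pi_l^{-1}$, and then $\phi=\pi_r\circ\pi_l^{-1}=b\circ a^{-1}$ is an orientation-preserving homeomorphism. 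Conversely, the graph of such a $\phi$ is a strictly $1$-Lipschitz curve winding once around each factor, hence an acausal admissible boundary.

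The second claim is the crux. I would bridge both conditions through the unique maximal surface $\Sigma_0$ asymptotic to $\Lambda$ and its Gauss map, a diffeomorphism of $\hyp^2$ whose boundary value is $\phi$ and whose quasiconformal dilatation is an explicit increasing function of $\|B_0\|_{C^0(\Sigma_0)}$, finite precisely when $\sup_{\Sigma_0}\|B_0\|<1$; by the Gauss equation the latter condition is exactly uniform negative curvature of $\Sigma_0$. Via Ahlfors--Beurling, quasi-symmetry of $\phi$ is then equivalent to $\sup_{\Sigma_0}\|B_0\|<1$. One implication follows at once: if $\phi$ is quasi-symmetric, then $\sup_{\Sigma_0}\|B_0\|<1$, the principal curvatures lie in a compact subinterval of $(-1,1)$, and Theorem~\ref{pro:w<B} gives $\omega_0(\Lambda)\le 2\arctan(\sup_{\Sigma_0}\|B_0\|)<\pi/2$.

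The reverse implication, $\omega_0(\Lambda)<\pi/2\Rightarrow\sup_{\Sigma_0}\|B_0\|<1$, is the main obstacle, and it is sharp at the threshold $\pi/2$. Theorem~\ref{thm:Schauder} supplies the quantitative estimate $\|B_0\|_{C^0(\Sigma_0)}\le C_0\sin(\omega_0(\Lambda))$, which settles the claim only while $C_0\sin(\omega_0(\Lambda))<1$, i.e. for widths below some threshold strictly smaller than $\pi/2$; pushing the conclusion all the way to $\pi/2$ is exactly the delicate content. Here I would argue by contraposition and analyze the pleated boundary $\pd_\pm\ch_0(\Lambda)$: a width equal to $\pi/2$ produces a timelike segment of maximal length $\pi/2$ crossing the convex core, forcing the closure of $\Lambda$ to contain a pair of transverse lightlike segments sharing an endpoint. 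In the null coordinates above this means one of $a(\cdot),b(\cdot)$ degenerates to a constant along an arc, and one can then exhibit a sequence of quadruples with $cr=-1$ whose $\phi$-images have cross-ratio tending to $0$ or $\infty$, so that $\|\phi\|_{cr}=+\infty$. The quantitative translation between the purely causal datum (the timelike width) and the conformal distortion measured by cross-ratios of orthogonal quadruples, made sharp at $\pi/2$, is where the real work concentrates; this is the step I expect to require the finest control, relating the bending lamination of $\pd_\pm\ch_0(\Lambda)$ to $\|\phi\|_{cr}$ through the nearest-point projection, complemented by the reverse bound of the width by the cross-ratio norm (Lemma~\ref{lem:cross}).
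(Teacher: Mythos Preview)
First, note that the paper does not prove Proposition~\ref{pro:bonsch}: it is quoted from \cite[Theorem~1.12]{univ}. So there is no ``paper's own proof'' to compare against; what follows is an assessment of your argument on its own merits and against the original source.

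Your treatment of the first (topological) assertion via the null coordinates $a=\theta+s$, $b=\theta-s$ is correct and is essentially the standard argument.

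For the second assertion there are two genuine gaps.

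\emph{Direction ``quasi-symmetric $\Rightarrow\omega_0(\Lambda)<\pi/2$''.} Your route goes through the maximal surface: quasi-symmetry of $\phi$ should force $\sup_{\Sigma_0}\|B_0\|<1$, and then Theorem~\ref{pro:w<B} finishes. But Ahlfors--Beurling only says that $\phi$ admits \emph{some} quasiconformal extension; it does not tell you that the specific extension given by the Gauss map of $\Sigma_0$ is quasiconformal. That implication is precisely part of the content of \cite{univ}, so you are assuming what has to be proved. The clean argument for this direction is the one you mention only in passing at the end: Lemma~\ref{lem:cross} gives $\tan(\omega_0(\Lambda))\le\sinh(\|\phi\|_{cr}/2)<\infty$, hence $\omega_0(\Lambda)<\pi/2$ directly, with no detour through $\Sigma_0$.

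\emph{Direction ``$\omega_0(\Lambda)<\pi/2\Rightarrow$ quasi-symmetric''.} Here there is a logical slip. You announce a proof by contraposition, i.e.\ ``$\phi$ not quasi-symmetric $\Rightarrow\omega_0(\Lambda)=\pi/2$'', but what you actually sketch is ``$\omega_0(\Lambda)=\pi/2\Rightarrow\Lambda$ contains transverse lightlike segments $\Rightarrow\|\phi\|_{cr}=+\infty$''. That is the contrapositive of the \emph{other} direction, already handled. The argument in \cite{univ} for the direction you need does go through the maximal surface, but not via Theorem~\ref{thm:Schauder} nor via the bending lamination: one assumes $\omega_0(\Lambda)<\pi/2$ and, supposing by contradiction $\sup_{\Sigma_0}\|B_0\|=1$, takes a sequence $x_k\in\Sigma_0$ with $\|B_0(x_k)\|\to1$, recenters by isometries, and extracts a limit maximal surface on which $\|B_0\|$ attains the value $1$. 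The strong maximum principle for the intrinsic curvature (Lemma~\ref{lem:curv}) then forces the limit to be a flat ``horospherical'' surface, whose boundary is a lightlike sawtooth with width $\pi/2$; semicontinuity of the width under the limiting process yields the contradiction. Your proposal does not contain this compactness-plus-maximum-principle step, which is the actual mechanism making the implication sharp at $\pi/2$.
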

In \cite[Proposition~3.A]{andreamax}, this result has been improved, from a quantitative point of view:
\begin{lem}\label{lem:cross}
	Let $\phi\colon\pd_\infty\hyp^2\to\pd_\infty\hyp^2$ be a orientation-preserving homeomorphism, and denote by $\Lambda$ its graph. Then,
	\[\tan\left(\omega_0(\Lambda)\right)\le\sinh\left(\frac{\|\phi\|_{cr}}{2}\right).\] 
\end{lem}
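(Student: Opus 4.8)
The plan is to turn the inequality into a \emph{lower} bound for the cross-ratio norm, namely $\sinh\!\big(\|\phi\|_{cr}/2\big)\ge\tan\big(\omega_0(\Lambda)\big)$, and to extract it from a single pair of support planes of the convex hull $\ch_0(\Lambda)=\ch(\Lambda)$ (here $\delta_0=0$). If $\phi$ is not quasi-symmetric both sides are $+\infty$ by Proposition~\ref{pro:bonsch}, so I may assume $\|\phi\|_{cr}<\infty$ and hence $\omega_0(\Lambda)<\pi/2$. By Definition~\ref{de:width} the width is the timelike diameter of $\ch(\Lambda)$; the realizing timelike geodesic $\gamma$ runs from $\pd_-\ch(\Lambda)$ to $\pd_+\ch(\Lambda)$ and, by maximality of its length, meets orthogonally a past support plane $P_-$ and a future support plane $P_+$, both totally geodesic and spacelike. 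Thus $\omega_0(\Lambda)$ equals the length of the common timelike perpendicular of $P_-$ and $P_+$.

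First I would transfer this to $\psl$. In the identification $\pd_\infty\hyp^{2,1}\cong\R\pj^1\times\R\pj^1$ of Proposition~\ref{pro:bonsch}, totally geodesic spacelike planes are exactly graphs of Möbius transformations, and a past (resp. future) support plane for $\Lambda=\operatorname{graph}(\phi)$ is the graph of some $g_-$ (resp. $g_+$) with $g_-\le\phi\le g_+$ in the sense of graphs, touching $\phi$ at the endpoints of the associated ruling geodesic. Using the model $\hyp^{2,1}\cong\mathrm{SL}(2,\R)$ with $\pr{M,N}=-\tfrac12\tr\big(M\,\mathrm{adj}\,N\big)$, the plane attached to $g$ is the left translate $G\cdot\mathrm{Sym}^+_1$ of the copy of $\hyp^2$ through the identity, with dual point $G\cdot J$, where $J$ is the rotation by $\pi/2$. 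A direct computation then gives
\[\cos\big(\omega_0(\Lambda)\big)=\tfrac12\big|\tr(G_-G_+^{-1})\big|,\]
so that $g_-g_+^{-1}$ is \emph{elliptic of rotation angle} $2\,\omega_0(\Lambda)$. This is the geometric core: the timelike distance between the two support planes is read off from the rotation angle of the Möbius map comparing them.

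Next I would normalize. The quantity $\|\phi\|_{cr}$ is invariant under $\phi\mapsto b\circ\phi\circ a^{-1}$ for $a,b\in\psl$ (the cross ratio is Möbius invariant in each variable, and such a substitution permutes the admissible quadruples), while the same substitution is induced by an isometry of $\hyp^{2,1}$ and therefore preserves $\omega_0(\Lambda)$. Hence I may assume $g_-=\operatorname{id}$ and $g_+=$ the rotation by $2\,\omega_0(\Lambda)$ about an interior fixed point. Reading $\phi$ through the angular coordinate on $\R\pj^1$, the sandwiching $g_-\le\phi\le g_+$ becomes the statement that the advance $\vartheta\mapsto\Phi(\vartheta)-\vartheta$ takes values in $[0,2\,\omega_0(\Lambda)]$ and attains both endpoints (at the contact points of $g_-$ and of $g_+$).

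Finally I would estimate the cross ratio on a suitable admissible quadruple, using for four circle points $e^{i\beta_k}$ the closed form
\[cr=\frac{\sin\frac{\beta_4-\beta_1}{2}\,\sin\frac{\beta_3-\beta_2}{2}}{\sin\frac{\beta_2-\beta_1}{2}\,\sin\frac{\beta_3-\beta_4}{2}},\]
together with the classical identity $cr=-\tan^2(\theta/2)$ for the endpoints of two geodesics meeting at angle $\theta$ (so $cr=-1$ is exactly orthogonality, matching the definition of $\|\phi\|_{cr}$). Choosing an orthogonal quadruple adapted to the rotation $g_+$ and inserting the bounds on the advance function, the image cross ratio should satisfy $\ln\big|cr(\phi(z_i))\big|\ge 2\,\arcsinh\!\big(\tan\omega_0(\Lambda)\big)$; as the quadruple is orthogonal in the domain, the definition of the norm yields $\|\phi\|_{cr}\ge 2\,\arcsinh\!\big(\tan\omega_0(\Lambda)\big)$, which is the assertion. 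The hard part is precisely this last step: the contact points of $g_\pm$ need not themselves form an orthogonal quadruple, so one cannot simply feed them in, and one must instead select an admissible quadruple that still detects the full angle $2\,\omega_0(\Lambda)$ — equivalently, identify the extremal $\phi$ achieving equality, whose advance is a two-step function and for which the sine-product formula is explicit. The reductions of the first three paragraphs are robust; it is this optimization over admissible quadruples, reconciling the orthogonality constraint in the domain with the rotation data in the range, that produces the exact constant $\sinh(\|\phi\|_{cr}/2)$.
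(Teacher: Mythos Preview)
The paper does not prove this lemma at all: it is quoted verbatim from \cite[Proposition~3.A]{andreamax} and used as a black box. So there is no ``paper's own proof'' to compare against; what follows is an assessment of your outline on its merits.

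Your first three reductions are correct and useful. The identification of totally geodesic spacelike planes with graphs of M\"obius maps, the formula $\cos\big(\omega_0(\Lambda)\big)=\tfrac12\big|\tr(G_-G_+^{-1})\big|$ for the distance between two such planes in the $\mathrm{SL}(2,\R)$ model, and the conclusion that $g_-^{-1}g_+$ is elliptic of rotation angle $2\,\omega_0(\Lambda)$ are all right (with the usual convention that a M\"obius rotation by $\beta$ lifts to trace $2\cos(\beta/2)$). The normalization step is also fine, and the resulting sandwich $0\le\Phi(\vartheta)-\vartheta\le 2\,\omega_0(\Lambda)$ with both bounds attained is the correct translation of ``$P_\pm$ are support planes touching $\Lambda$''.

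The gap is exactly where you say it is, and it is a genuine one rather than a routine computation. Having fixed the two support planes, you only know the advance hits $0$ at some point $a$ and $2\,\omega_0$ at some point $b$; nothing forces $a$ and $b$ into any particular position, so no single orthogonal quadruple is handed to you. Your heuristic (``identify the extremal $\phi$, a two-step advance'') points in the right direction --- for the step map one can check by the sine formula that the symmetric orthogonal quadruple gives $\big|\log|cr|\big|=2\,\arcsinh(\tan\omega_0)$ --- but you have not shown that a general $\phi$ with advance in $[0,2\omega_0]$ dominates this extremal on \emph{some} orthogonal quadruple. That monotonicity is not automatic: moving the contact points $a,b$ changes which quadruples are sensitive to the advance, and the orthogonality constraint in the source couples the four image angles in a way that can cancel contributions. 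In short, the optimization you describe is the whole content of the lemma, and it is not carried out. As written, the proposal is a correct set-up followed by a correctly identified, unresolved difficulty; it is not yet a proof.
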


\subsection{Quasiconformal maps} Let $U\sq\C$ be a domain and $f\colon U\to\C$ an orientation preserving homeomorphism onto its image. 

\begin{de}\label{de:quasiconf}
	The map $f\colon U\to\C$ is \textit{quasiconformal} if $f$ is absolutely continuous over lines and there exists a constant $k<1$ such that
	\[|\pd_{\bar{z}}f|\le k|\pd_z f|\]
	almost everywhere.
\end{de}
The function $\mu_f:=\pd_{\bar{z}}f/\pd_z f$, which is defined almost-everywhere, is called \textit{complex dilatation} of $f$. From a geometric point of view, $f$ is quasiconformal if and only the distortion of \[\sph^1=T^1_z\hyp^2\sq T_z\hyp^2\] through $d_zf$ is uniformely bounded as $z$ varies over $\hyp^2$. Indeed, one can prove (see for example \cite{ahlquasi}) that the ratio between the major and the minor axis of the ellipse $d_z f(\sph^1)$ is precisely
\[\frac{\left|d_z f(\pd_z)\right|+\left|d_z f(\pd_{\bar{z}})\right|}{\left|d_z f(\pd_z)\right|+\left|d_z f(\pd_{\bar{z}})\right|}=\frac{1+|\mu_f|}{1-|\mu_f|}\le\frac{1+\|\mu_f\|_{L^\infty(\hyp^2)}}{1-\|\mu_f\|_{L^\infty(\hyp^2)}}\]
The number
\begin{equation}\label{eq:maxdil}
	\mathcal{K}(f):=\frac{1+\|\mu_f\|_{L^\infty(\hyp^2)}}{1-\|\mu_f\|_{L^\infty(\hyp^2)}}
\end{equation}
is the \textit{maximal dilatation} of $f$ and $f$ is said to be $K-$quasiconformal if $\mathcal{K}(f)\le K$.

The relation between quasiconformal maps and quasi-symmetric homeomorphism is classical, thanks to \cite[Theorem~1]{ahlbeu}, which states: 
\begin{thm}[Ahlfors-Beuring]
	Let $f\colon\hyp^2\to\hyp^2$ be a quasiconformal map, then $f$ extends to a unique quasi-symmetric homeomorphism of $\pd_\infty\hyp^2=\pd_\infty\hyp^2$.
	
	Conversely, any quasi-symmetric homeomorphism $\phi\colon\pd_\infty\hyp^2\to\pd_\infty\hyp^2$ admits an extension to the disc which is quasiconformal in the interior.
\end{thm}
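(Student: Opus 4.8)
The plan is to pass to the upper half-plane model of $\hyp^2$ via a M\"obius transformation, under which both quasiconformality and the cross-ratio norm are preserved (conformal maps distort neither), so that one may work with self-maps fixing the boundary line $\R\cup\{\infty\}$. The two implications are then established independently, each exploiting that quasiconformality admits an equivalent \emph{geometric} formulation in terms of the distortion of the conformal modulus of quadrilaterals, whereas quasi-symmetry (Definition~\ref{de:quasisym}) is precisely a uniform bound on the distortion of cross-ratios of boundary quadruples.

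For the direct implication, I would first show that a $K$-quasiconformal self-map $f$ of $\hyp^2$ extends continuously to $\pd_\infty\hyp^2$; this follows from a length--area (modulus of annuli) argument showing that the $f$-image of a shrinking neighbourhood of a boundary point has vanishing diameter. To see that the boundary extension $\phi$ is quasi-symmetric, I would use that a $K$-quasiconformal map distorts the modulus of every topological quadrilateral by a factor at most $K$. Applying this to the quadrilateral whose vertices are four boundary points in harmonic position, i.e. with $cr=-1$ (which has a fixed, universal modulus), yields a two-sided bound on the modulus of the image quadrilateral depending only on $K$; since the modulus is a monotone function of the modulus of the cross-ratio for this configuration, this is equivalent to a uniform bound on $|cr(\phi(z_1),\phi(z_2),\phi(z_3),\phi(z_4))|$. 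Hence $\|\phi\|_{cr}$ is finite, that is, $\phi$ is quasi-symmetric.

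For the converse I would construct an explicit quasiconformal extension (the Beurling--Ahlfors extension). Writing the boundary map as $\phi\colon\R\to\R$ and points of the upper half-plane as $x+iy$ with $y>0$, set $F=u+iv$ with
\[
u(x,y)=\tfrac12\int_0^1\bigl(\phi(x+ty)+\phi(x-ty)\bigr)\,dt,\qquad
v(x,y)=\tfrac12\int_0^1\bigl(\phi(x+ty)-\phi(x-ty)\bigr)\,dt.
\]
One checks directly that $F$ maps the upper half-plane to itself and restricts to $\phi$ on the boundary. After the change of variable $s=ty$ the partials $u_x,u_y,v_x,v_y$ are expressed through increments of $\phi$, and the quasi-symmetry of $\phi$ --- equivalently the Beurling--Ahlfors $M$-condition $M^{-1}\le\frac{\phi(x+t)-\phi(x)}{\phi(x)-\phi(x-t)}\le M$ for all $x\in\R$ and $t>0$, which is the real-line reformulation of a finite cross-ratio norm --- gives two-sided bounds on these increments. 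From these one estimates the complex dilatation $\mu_F=\pd_{\bar z}F/\pd_z F$ and shows $\|\mu_F\|_{L^\infty}\le k(M)<1$, so that $F$ is quasiconformal.

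The main obstacle is this dilatation estimate for the Beurling--Ahlfors extension: one must bound $|\mu_F|$ pointwise by a constant strictly below $1$ depending only on the quasi-symmetry constant, which requires a careful simultaneous control of the four integrals defining $u_x,u_y,v_x,v_y$ together with the Jacobian $u_xv_y-u_yv_x$. Checking that $F$ is absolutely continuous on lines and a genuine orientation-preserving, proper homeomorphism is routine but must be verified in order to legitimately invoke the analytic definition of quasiconformality (Definition~\ref{de:quasiconf}). In the direct implication the corresponding technical point is the continuity of the boundary extension, again obtained from the modulus estimates above.
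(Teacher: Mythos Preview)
Your outline is essentially the classical Ahlfors--Beurling argument and is correct in spirit: the modulus-of-quadrilaterals estimate for the direct implication and the explicit integral extension for the converse are exactly the standard ingredients, and the technical points you flag (continuity of the boundary extension, the dilatation bound for the Beurling--Ahlfors extension) are indeed the places where the work lies.

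However, you should be aware that the paper does \emph{not} prove this theorem at all: it is quoted as a classical result from \cite{ahlbeu} and used as a black box. So there is no ``paper's own proof'' to compare against. Your sketch is a faithful summary of the original proof in the cited reference, which is precisely what the paper defers to.
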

The quasiconformal extension is far from being unique: it can be useful to choose a specific class of quasiconformal maps to build a bijective correspondence with the universal Teichm\"uller space. We focus on landslides, which have been introduced in \cite{theta} as smooth version of earthquakes.
\begin{de}\label{de:theta}
	Let $\theta\in(0,\pi)$. An orientation preserving diffeomorphism $f\colon\hyp^2\to\hyp^2$ is a \textit{$\theta-$landslide} if there exists a $(1,1)-$tensor $m\in\Gamma\left(\mathrm{End}(T\hyp^2)\right)$
	such that 
	\[f^*g_{\hyp^2}=g_{\hyp^2}\left(\cos(\theta)\Id+\sin(\theta)Jm,\cos(\theta)\Id+\sin(\theta)Jm\right),\]
	for $J$ the complex structure of $\hyp^2$, and
	\begin{enumerate}
		\item $d^\nabla m=0$;
		\item $\det m=1$;
		\item $m$ is positive and self-adjoint for $g_{\hyp^2}$.
	\end{enumerate}
\end{de}

\subsection{Gauss map}\label{sub:gauss} The bridge between diffeomorphisms of the hyperbolic space and spacelike surfaces in the $3-$dimensional Anti-de Sitter space is given by the \textit{Gauss map}, whose construction is briefly described in the following. A refined presentation of the Gauss map can be found in \cite{bonsep}. 

The space of timelike geodesic of $\hyp^{2,1}$ naturally identifies with $\hyp^2\times\hyp^2$ (see for example \cite[Lemma~3.5.1]{bonsep}). Hence, embedded spacelike surfaces in $\hyp^{2,1}$ induce immersed surfaces in $\hyp^2\times\hyp^2$: indeed, let $\Sigma$ be an embedded spacelike surface in the Anti-de Sitter space: any point $x\in\Sigma$ identifies one preferred timelike geodesic, namely $\exp_x(\R N_x\Sigma)$, which corresponds to a point $\mathcal{G}(x)\in\hyp^2\times\hyp^2$.

Let $p_l,p_r\colon\hyp^2\times\hyp^2\to\hyp^2$ the projection respectively on the first and second coordinate. Denote by $\mathcal{G}_i=\mathcal{G}\circ p_i$ the projection of the Gauss map $\mathcal{G}$ on each factor. 

\begin{lem}[Lemma~4.2 in \cite{areapres}]
	Let $\Sigma$ be convex spacelike surface, then $\mathcal{G}_l,\mathcal{G}_r$ are injective, hence they induce a homeomorphism between open subsets of $\hyp^2$. 
	\[\Phi_\Sigma:=\mathcal{G}_r\circ\mathcal{G}_l^{-1}\colon\mathcal{G}_l(\Sigma)\to\mathcal{G}_r(\Sigma)\]
\end{lem}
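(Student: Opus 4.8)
The plan is to reduce the statement to a differential computation powered, at the global level, by convexity. Fix the model $\hyp^{2,1}\cong\psl$ with its bi-invariant Lorentzian metric, so that a timelike geodesic is a translate of an elliptic one-parameter subgroup: writing $\mathrm{Stab}(p)$ for the stabiliser of $p\in\hyp^2$, every timelike geodesic has the form $x\cdot\mathrm{Stab}(p)=\{h:h\cdot p=x\cdot p\}$, and the assignment $x\cdot\mathrm{Stab}(p)\mapsto(p,\,x\cdot p)$ realises the identification of the space of timelike geodesics with $\hyp^2\times\hyp^2$. Under this identification $\mathcal{G}_l(x)=p$ is the \emph{left endpoint} of the normal geodesic $\gamma_x=\exp_x(\R N_x\Sigma)$, while $\mathcal{G}_r(x)=x\cdot\mathcal{G}_l(x)$; in particular $x\in\gamma_x$.

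First I would compute the differentials of the two projections in terms of the shape operator $B$ of $\Sigma$ and the complex structure $J$ induced by the metric and the orientation. The standard computation gives $d\mathcal{G}_l=E_l\circ(\Id+JB)$ and $d\mathcal{G}_r=E_r\circ(\Id-JB)$ for fibrewise linear isometries $E_l,E_r\colon(T\Sigma,g_\Sigma)\to(T\hyp^2,g_{\hyp^2})$ (up to a harmless constant). In an orthonormal eigenbasis of $B$ one reads off $\det(\Id\pm JB)=1+\det B$. Since $\Sigma$ is convex its shape operator is semidefinite, so $\det B\ge0$ and hence $\det(\Id\pm JB)\ge1>0$. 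Consequently $\mathcal{G}_l$ and $\mathcal{G}_r$ are immersions between surfaces, i.e. local diffeomorphisms, and $\mathcal{G}_l^*g_{\hyp^2}=g_\Sigma\big((\Id+JB)\cdot,(\Id+JB)\cdot\big)$, $\mathcal{G}_r^*g_{\hyp^2}=g_\Sigma\big((\Id-JB)\cdot,(\Id-JB)\cdot\big)$ are genuine Riemannian metrics on $\Sigma$. This is exactly the computation exhibiting $\Phi_\Sigma=\mathcal{G}_r\circ\mathcal{G}_l^{-1}$ as a landslide, cf. Definition~\ref{de:theta}.

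The remaining, and main, difficulty is to upgrade local injectivity to \emph{global} injectivity of each projection, which is where convexity must genuinely be used. Suppose $\mathcal{G}_l(x)=\mathcal{G}_l(y)=p$ with $x\neq y$: then $\gamma_x,\gamma_y$ are both leaves of the foliation by left cosets of $\mathrm{Stab}(p)$, equivalently both orbits of the timelike Killing field $V_p$ generating right multiplication by $\mathrm{Stab}(p)$, and $\Sigma$ is orthogonal to $V_p$ at both points. If $\gamma_x=\gamma_y$ then $x$ and $y$ lie on a single timelike geodesic, impossible for the acausal surface $\Sigma$ (Proposition~\ref{pro:spaceacausal}); so one must exclude $\gamma_x\neq\gamma_y$, and this is the crux. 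I would close it either (i) directly, showing that on a convex surface the tangential field $V_p^\top$ has at most one zero — convexity forces $\pr{N_{\cdot},V_p}$ to vary monotonically along $\Sigma$, so $V_p$ is normal at a single point — or (ii) by a covering-space argument: a convex surface is an entire graph, hence simply connected, and $\mathcal{G}_l$ is proper onto its image, so the proper local diffeomorphism $\mathcal{G}_l$ is a covering onto the open set $\mathcal{G}_l(\Sigma)$, which one checks is simply connected (e.g. a convex subset of $\hyp^2$), forcing the covering to be trivial. Either way $\mathcal{G}_l$, and symmetrically $\mathcal{G}_r$, is an injective local diffeomorphism, hence a diffeomorphism onto an open subset of $\hyp^2$, and $\Phi_\Sigma=\mathcal{G}_r\circ\mathcal{G}_l^{-1}$ is well defined. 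I expect this last step to be the obstacle: the differential computation and the determinant identity are routine, whereas the global argument requires exploiting convexity rather than merely pointwise non-degeneracy.
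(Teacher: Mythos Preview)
The paper does not prove this lemma: it is quoted verbatim from \cite{areapres} and no argument is supplied, so there is nothing to compare your attempt against on the paper's side.

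On the merits of your attempt: the local part is correct and standard. The formulae $d\mathcal{G}_l\sim\Id+JB$, $d\mathcal{G}_r\sim\Id-JB$ (up to an isometric factor and a constant) and the identity $\det(\Id\pm JB)=1+\det B$ are right, and convexity gives $\det B\ge0$, so both projections are local diffeomorphisms. This is exactly the computation one finds in the survey \cite{bonsep}.

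The global step, however, is not settled by either of your two options as written. In option (i) the sentence ``$\pr{N_\cdot,V_p}$ varies monotonically along $\Sigma$'' has no meaning until you say along which curves, and there is no evident reason convexity alone forces a unique critical point of that function on a surface. In option (ii) you need $\mathcal{G}_l(\Sigma)$ simply connected to conclude, and asserting it is ``e.g.\ a convex subset of $\hyp^2$'' is exactly the unproved point. A cleaner way to close the argument for a \emph{properly embedded} convex surface is to use the boundary: $\mathcal{G}_l$ extends continuously to $\pd_\infty\Sigma\to\pd_\infty\hyp^2$ as the left projection $\pi_l$, which is a homeomorphism by Proposition~\ref{pro:bonsch}; a degree/properness argument then shows $\mathcal{G}_l\colon\Sigma\to\hyp^2$ is proper and surjective, hence a covering of the simply connected $\hyp^2$ by the simply connected entire graph $\Sigma$, hence a diffeomorphism. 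That replaces the missing ``image is simply connected'' step with something you can actually verify.
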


This fact has been exploited in order to study homeomorphisms of the hyperbolic space: the pioneer of this tradition is \cite{mess}, where convex pleated spacelike surfaces are proved to correspond to earthquakes. In \cite{univ}, Gauss map links maximal surfaces in $\hyp^{2,1}$ and minimal Lagrangian diffeomorphisms of $\hyp^2$. The latter results is particular case of a bigger picture, studied in \cite{areapres}: surfaces with constant sectional curvature $K$ (which are convex by Gauss equation) correspond to $\theta-$landslides, for $\theta=2\arccos(\sqrt{-1/K})$ (see \cite[Proposition~4.13]{areapres}). In particular, $(\pi/2)-$landslide are minimal Lagrangian diffeomorphisms and maximal surfaces are equidistant to surfaces with constant sectional curvature $K=-2$ (Lemma~\ref{lem:HKduality}), hence they induce the same Gauss map. 

In $\hyp^{2,1}$, CMC surfaces are equidistant to surfaces with constant sectional curvature, as proved in the following lemma. 
	\begin{lem}\label{lem:HKduality}
	Let $\Sigma$ be a $H-$surface, with $H\in\R$. Then, the equidistant surface $\Sigma_{d(H)_\pm}$ is a $K_\pm(H)-$surface, for
	\[d(H)_\pm=\arctan\left(\frac{H}{2}\pm\sqrt{1+\frac{H^2}{4}}\right),\qquad K_\pm(H)=-1-\frac{4}{(H\pm\sqrt{1+H^2})^2}.\]
	
	Conversely, let $S$ be a future-convex (resp. past-convex) $K-$surface, for $K\in(-\infty,-1)$. Then, the equidistance $S_{d(K)}$ (resp. $S_{-d(K)}$) is a $H(K)-$surface (resp. $(-H(K))-$surface), for
	\begin{equation}\label{eq:dKHK}
		d(K)=\arctan\left(\frac{1}{\sqrt{-1-K}}\right),\qquad H(K)=\frac{2-K}{\sqrt{-1-K}}.
	\end{equation}
\end{lem}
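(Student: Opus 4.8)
The plan is to extract everything from the normal flow, using Corollary~\ref{cor:principalcurvature}: if $\lambda_1,\lambda_2$ are the principal curvatures of $\Sigma$ at a point, then those of the leaf $\Sigma_t$ at the same point are $\lambda_i^t=\tan(\arctan\lambda_i-t)$. Setting $\tau=\tan t$ and applying the tangent subtraction formula, this reads $\lambda_i^t=(\lambda_i-\tau)/(1+\lambda_i\tau)$, so both the sum and the product of the $\lambda_i^t$ become explicit rational functions of the elementary symmetric functions $S=\lambda_1+\lambda_2$ and $P=\lambda_1\lambda_2$ of the base surface. In dimension two Gauss' equation (Equation~\eqref{eq:gauss3d}) gives $K_{\Sigma_t}=-1-\det B_t=-1-\lambda_1^t\lambda_2^t$, while the mean curvature of $\Sigma_t$ is $\lambda_1^t+\lambda_2^t$; the lemma is then a matter of forcing one of these to be constant.

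For the direct statement $\Sigma$ is an $H$-surface, so $S\equiv H$ is fixed while $P$ varies over $\Sigma$ (if $\Sigma$ is umbilical every leaf is umbilical and the claim is immediate). I would compute
\[\lambda_1^t\lambda_2^t=\frac{P-\tau H+\tau^2}{1+\tau H+\tau^2 P},\]
and note that, as a function of the single free quantity $P$, this is a Möbius transformation. The leaf $\Sigma_t$ has constant Gaussian curvature exactly when the expression is independent of $P$, i.e. when this Möbius transformation is degenerate; setting its determinant to zero and factoring out $1+\tau^2>0$ yields the quadratic $\tau^2-H\tau-1=0$, whose two roots are precisely $\tau=\tan(d(H)_\pm)=\tfrac H2\pm\sqrt{1+\tfrac{H^2}{4}}$. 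At such $\tau$ the product collapses to $\lambda_1^t\lambda_2^t=1/\tau^2$, so $K_{\Sigma_t}=-1-1/\tau^2$, and substituting the two roots produces $K_\pm(H)$.

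The converse is the same computation with the roles of $S$ and $P$ exchanged. Now $P\equiv\det B=-1-K$ is fixed and positive (since $K<-1$) while $S$ varies, and I would impose that the mean curvature $\lambda_1^t+\lambda_2^t$ be constant along $S_t$; writing this sum as a Möbius transformation in $S$ and vanishing its determinant gives $P\tau^4+(P-1)\tau^2-1=0$, whose only nonnegative root is $\tau^2=1/P=1/(-1-K)$, that is $\tau=\tan(d(K))=1/\sqrt{-1-K}$. Substituting back into $\lambda_1^t+\lambda_2^t$ gives the constant value $H(K)$. The choice of sign of the square root — hence of $d(K)$ versus $-d(K)$, and of $H(K)$ versus $-H(K)$ — is dictated by whether $S$ is future- or past-convex, through the characterization of convexity by the sign of the shape operator in Lemma~\ref{lem:Hconvex}.

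Two points deserve care rather than mere bookkeeping. First, one must ensure the relevant leaf is genuinely spacelike at the flow time in question, so that its shape operator and curvature are defined; this follows from Lemma~\ref{pro:tempodeg} once one verifies that $d(H)_\pm$ (resp. $d(K)$) falls inside the non-degeneracy interval $(A_-,A_+)$ of Equation~\eqref{eq:Apm}, using the a priori bounds on the principal curvatures. Second, the step ``a Möbius map that is constant on the range of $P$ must be degenerate'' relies on that range being a nondegenerate interval, which holds for a non-umbilical connected surface by continuity; the umbilical case is disposed of separately. The one genuinely conceptual move — and the only thing beyond substitution — is this translation of the CMC (resp. CGC) condition into the degeneracy of a Möbius transformation in the \emph{free} symmetric function of the principal curvatures.
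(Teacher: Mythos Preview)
Your proposal is correct and follows exactly the route the paper indicates: the paper's proof is a single sentence, ``The proof consists in an explicit (local) computation carried in light of Corollary~\ref{cor:principalcurvature},'' and your sketch is precisely that computation, carried out via the tangent subtraction formula and the elementary symmetric functions of the principal curvatures. Your observation that the constancy condition is the degeneracy of a M\"obius map in the free symmetric function is a clean way to organize the algebra, and your two caveats (non-degeneracy of the leaf via Lemma~\ref{pro:tempodeg}, and the need for $P$ to vary on a genuine interval) are the right points to flag.
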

\begin{proof}
	The proof consists in an explicit (local) compution carried in light of Corollary~\ref{cor:principalcurvature}. 
\end{proof}
By definition, Gauss map is invariant along the normal flow: it follows by \cite[Proposition~4.13]{areapres} and Lemma~\ref{lem:HKduality} that $H-$surfaces induce $\theta-$landslide, as well.
\begin{cor}\label{cor:thetaH}
	Let $\Sigma$ be a properly embedded $H-$surface in $\hyp^{2,1}$, then the diffeomorphism $\Phi_\Sigma$ induced by the Gauss map is a $\theta-$landslide, for
	\[\theta=2\arccos\left(\frac{(H+\sqrt{4+H^2})}{\sqrt{{(H+\sqrt{4+H^2})^2}+4}}\right).\]
\end{cor}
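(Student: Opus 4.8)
The plan is to reduce to the constant-curvature case already handled in \cite{areapres}, using the equidistance duality of Lemma~\ref{lem:HKduality} together with the invariance of the Gauss map along the normal flow. Given the properly embedded $H-$surface $\Sigma$, I would first apply Lemma~\ref{lem:HKduality} to the equidistant surface $S:=\Sigma_{d(H)_+}$, which is a $K-$surface with $K=K_+(H)=-1-\frac{4}{(H+\sqrt{4+H^2})^2}$. Since $K<-1$, Gauss equation (Equation~\eqref{eq:gauss3d}) gives $\det B_S=-1-K>0$, so both principal curvatures of $S$ have the same sign and $S$ is convex. Hence the Gauss map lemma \cite[Lemma~4.2]{areapres} applies to $S$: the maps $\mathcal{G}_l,\mathcal{G}_r$ are injective and $\Phi_S=\mathcal{G}_r\circ\mathcal{G}_l^{-1}$ is a diffeomorphism between open subsets of $\hyp^2$.

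Next I would invoke the invariance of the Gauss map along the normal flow, noted just before the statement. Because the leaves of the normal flow of $\Sigma$ are orthogonal to the same family of timelike geodesics, each point of $S=\Sigma_{d(H)_+}$ lies on the same normal geodesic as the corresponding point of $\Sigma$, so $\mathcal{G}(\Sigma)=\mathcal{G}(S)$ as subsets of $\hyp^2\times\hyp^2$. In particular $\mathcal{G}_l,\mathcal{G}_r$ agree for $\Sigma$ and $S$, whence $\Phi_\Sigma=\Phi_S$, and it suffices to identify the landslide parameter of $\Phi_S$.

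By \cite[Proposition~4.13]{areapres}, a convex $K-$surface with $K\in(-\infty,-1)$ induces through the Gauss map a $\theta-$landslide with $\theta=2\arccos\left(\sqrt{-1/K}\right)$. Substituting $K=K_+(H)$ and using that $H+\sqrt{4+H^2}>0$, I would compute
\[
\sqrt{-1/K_+(H)}=\frac{H+\sqrt{4+H^2}}{\sqrt{(H+\sqrt{4+H^2})^2+4}},
\]
which is exactly the argument of $\arccos$ appearing in the claimed formula for $\theta$, concluding the proof.

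The argument is essentially a chaining of the three cited facts, so I do not expect a serious obstacle. The only points requiring care are the sign bookkeeping between $d(H)_+$ and $d(H)_-$, that is, the choice of which of the two equidistant constant-curvature surfaces one flows to, so that the resulting angle lies in $(0,\pi)$ and matches the stated branch of $\arccos$; and the verification that the convexity hypothesis of \cite[Lemma~4.2]{areapres} and \cite[Proposition~4.13]{areapres} indeed holds for the chosen $K-$surface, which follows from $K<-1$ via Gauss equation as above.
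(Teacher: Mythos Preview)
Your proposal is correct and follows exactly the approach the paper indicates: combine Lemma~\ref{lem:HKduality} with the invariance of the Gauss map along the normal flow to reduce to the constant-curvature case, then apply \cite[Proposition~4.13]{areapres} and compute $\sqrt{-1/K_+(H)}$. The paper does not spell out the convexity check or the algebraic simplification, so your write-up is in fact more detailed than the paper's one-line justification.
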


\subsection{Quasiconformal dilatation} In the same spirit of \cite{andreamax}, we compare the quasiconformal dilatation of a $\theta-$landslide with the principal curvature of the corresponding $H-$surface. In fact, let $\Sigma$ be a properly embedded $H-$surface in $\hyp^{2,1}$, and let $a\in C^0(\Sigma)$ be the non-negative eigenvalue of the traceless shape operator of $\Sigma$, that is
\[B(x)=\begin{pmatrix}
	\frac{H}{2}+a(x) & \\
	& \frac{H}{2}-a(x)
\end{pmatrix}\] in a suitable orthonormal basis of $T_x\Sigma$. Let $x\in\Sigma$, we can apply \cite[Proposition 6.2]{tamb} to get
\begin{equation}\label{eq:dilatation}
	\mu_{\Phi_{\Sigma}}\left(\Gau_\Sigma(x)\right)=-a(x)\frac{(H/2)+i}{1+(H/2)^2},
\end{equation}
for $\mu_{\Phi_{\Sigma}}$ the complex dilatation of the $\theta-$landslide $\Phi_\Sigma$ associated to $\Sigma$.

In light of Theorem~\ref{thm:Schauder}, this allows to estimate the quasiconformal dilatation with the cross-ratio norm of its extension to the boundary.
\begin{repthmx}{thm:qconformal}
	For any $\alpha\in(0,\pi/2)$, there exist universal constants $Q_\alpha,\eta_\alpha>0$ such that 
	\[
	\ln\left(\mathcal{K}(\Phi_\theta)\right)\le Q_\alpha\|\phi\|_{cr},
	\]
	for $\Phi_\theta$ the only $\theta-$landslide extending $\phi$, with $\theta\in[\alpha,\pi-\alpha]$ and $\|\phi\|_{cr}\le\eta_\alpha$.
\end{repthmx}
\begin{proof}
	Let us fix $\theta\in[\alpha,\pi-\alpha]$, and a quasi-symmetric homeomorphism $\phi$. Let $\Phi_\theta\colon\hyp^2\to\hyp^2$ be the unique $\theta-$landslide extending $\phi$ and denote by $\Sigma$ the corresponding $H-$surface, \textit{i.e.} $\Sigma$ is the unique $H-$surface bounded by $\Lambda=\gr\phi$. By Equation~\eqref{eq:dilatation}, the maximal dilatation of $\Phi_\theta$ at $z=\Gau_\Sigma(x)$ is 
	\[\left|\mu_{\Phi_\theta}\left(\Gau_\Sigma(x)\right)\right|^2=\frac{a(x)^2}{1+(H/2)^2}.\]
	By substituting in Equation~\eqref{eq:maxdil}, the maximal dilatation of $\Phi_\theta$ is
	\begin{equation}\label{eq:dil1}
		\mathcal{K}\left(\Phi_\theta\right)=\frac{1+|\mu_{\Phi_\theta}|^2_{L^{\infty}(\hyp^2)}}{1-|\mu_{\Phi_\theta}|^2_{L^{\infty}(\hyp^2)}}=\frac{1+(H/2)^2+\|B_0\|^2_{C^0(\Sigma)}}{1+(H/2)^2-\|B_0\|^2_{C^0(\Sigma)}}.
	\end{equation}
	The function $\theta=\theta(H)$ introduced in Corollary~\eqref{cor:thetaH} is a proper diffeomorphism, hence there exists $L\ge0$ such that 
	\[\theta^{-1}\left([\alpha,\pi-\alpha]\right)\sq[-L,L].\]
	In particular, combining Theorem~\ref{thm:Schauder}, Lemma~\ref{lem:width} and Lemma~\ref{lem:cross} we obtain	
	\begin{align*}
		\|B_0\|^2_{C^0(\Sigma)}&\le C_L\sin\left(\omega_H(\pd_\infty\Sigma)\right)\le C_L\sin\left(\omega_0(\pd_\infty\Sigma)\right)\\
		&\le C_L\tan\left(\omega_0(\pd_\infty\Sigma)\right)\le C_L\sinh\left(\frac{\|\phi\|_{cr}}{2}\right).
	\end{align*}
	The last formula in Equation~\eqref{eq:dil1} is increasing in $\|B_0\|^2_{C^0(\Sigma)}$, hence
	\begin{align*}\mathcal{K}(\Phi_\theta)&\le\frac{1+(H/2)^2+C_L\sinh\left(\frac{\|\phi\|_{cr}}{2}\right)}{1+(H/2)^2-C_L\sinh\left(\frac{\|\phi\|_{cr}}{2}\right)}\\
		&\le\frac{1+C_L\sinh\left(\frac{\|\phi\|_{cr}}{2}\right)}{1-C_L\sinh\left(\frac{\|\phi\|_{cr}}{2}\right)},\end{align*}
	since the function is decreasing in $H^2$. The inequality is valid by setting $\eta_\alpha<2\arcsinh(C_L^{-1})$, which ultimately depends on $\alpha$ since $L$ depends on it.
	
	Finally, we remark that
	\[\frac{d}{dx}\ln\left(\frac{1+C_L\sinh(x)}{1-C_L\sinh(x)}\right)=\frac{2C_L\cosh(x)}{1-C_L^2\sinh^2(x)},\]
	hence there is a constant $Q_\alpha>0$ depending on $C_L,\eta_\alpha$, hence ultimately only on $\alpha$, such that
	\[\ln\left(\frac{1+C_L\sinh\left(\frac{\|\phi\|_{cr}}{2}\right)}{1-C_L\sinh\left(\frac{\|\phi\|_{cr}}{2}\right)}\right)\le Q_\alpha\|\phi\|_{cr},\]
	which concludes the proof.
\end{proof}

\printbibliography

\end{document}